\DeclarePairedDelimiter\abs{\lvert}{\rvert}
\DeclarePairedDelimiter\autobracket{(}{)}
\newcommand{\pb}[1]{\autobracket*{#1}}
\newcommand{\Mod}[1]{ (\mathrm{mod}\ #1)}
\newcommand{\ov}{\overline}
\newcommand{\shortmod}{\ensuremath{\negthickspace \negthickspace \negthickspace \pmod}}
\newcommand{\legendre}[2]{\ensuremath{\left( \frac{#1}{#2} \right) }}
\theoremstyle{plain}		
	\newtheorem{mytheo}{Theorem} [section]
	\newtheorem{myprop}[mytheo]{Proposition}
	\newtheorem{mycoro}[mytheo]{Corollary}
     \newtheorem{mylemma}[mytheo]{Lemma}
	\newtheorem{mydefi}[mytheo]{Definition}
    \newtheorem{myexam}[mytheo]{Example}
	\newtheorem{myremark}[mytheo]{Remark}
\theoremstyle{remark}
\numberwithin{equation}{section}
\numberwithin{figure}{section}
\begin{document}
\author{Agniva Dasgupta}
\address{Mathematical Sciences Dept, University of Texas at Dallas, USA}
\email{agniva.dasgupta@utdallas.edu}
\thanks{The author was partially supported by the NSF grants DMS-2302210 (P.I.-Matthew P. Young) and DMS-2341239 (P.I.- Rizwanur Khan) while working on this paper.}
\title{Short Second Moment Bound for GL(2) $L$-functions in $q$-Aspect}
\date{}
\begin{abstract}
 We prove a Lindel\"{o}f-on-average upper bound for the second moment of the $L$-functions associated to a level 1 holomorphic cusp form, twisted along a coset of subgroup of the characters modulo $q^{2/3}$ (where $q = p^3$ for some odd prime $p$). This result should be seen as a $q$-aspect analogue of Anton Good's (1982) result on upper bounds of the second moment of cusp forms in short intervals. The results generalize easily to higher prime powers as well.
\end{abstract}
\maketitle

\section{Introduction}

\begin{subsection}{Statement of Results}
The study of moments of $L$-functions at the central point, $s=\frac12$, has been an important area of research in analytic number theory. While originally this interest grew due to connections with the Lindel\"{o}f Hypothesis, estimation of similar moments for a family of $L$-functions has now become an interesting point of study on its own.

One key result in this area is the following proposition, due to Iwaniec in 1978 (Theorem $3$ in \cite{iw4m}).

\begin{myprop} 
    \label{prop:IwaniecZeta}
    For $T \geq 2$ and $\varepsilon >0$, 
    \begin{equation}
        \label{eq:IwaniecZeta}
        \int_{T}^{T+T^{\frac23}} \abs{\zeta\pb{\tfrac12+it}}^4 dt \ll_{\varepsilon} T^{\frac23+\varepsilon}.
    \end{equation}
\end{myprop}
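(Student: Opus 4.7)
The plan is to expand $|\zeta(1/2+it)|^4$ via the approximate functional equation (AFE) for $\zeta(s)^2$,
\[
\zeta\pb{\tfrac12+it}^2 = \sum_{n \ge 1} \frac{d(n)}{n^{1/2+it}} V\pb{\frac{n}{X}} + \chi\pb{\tfrac12+it}^2 \sum_{n \ge 1} \frac{d(n)}{n^{1/2-it}} V\pb{\frac{n}{X'}},
\]
with $XX' = (t/2\pi)^2$, $\chi$ the gamma ratio in the functional equation of $\zeta^2$, and $V$ a smooth, rapidly decaying cut-off of size $1$ on $[0,1]$. Writing $|\zeta|^4 = |\zeta^2|^2$ and integrating $t$ over $[T, T+T^{2/3}]$ then reduces the problem (after treating the dual piece symmetrically and splitting via $|a+b|^2 \le 2(|a|^2+|b|^2)$) to estimating the mean square of the Dirichlet polynomial $D(t) := \sum_n d(n) n^{-1/2-it} V(n/X)$ of length $\asymp T$.

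Expanding $|D(t)|^2$ yields a diagonal and an off-diagonal. The diagonal ($m=n$) contributes $T^{2/3} \sum_n d(n)^2 n^{-1} V(n/X)^2 \ll T^{2/3} (\log T)^4$, which already matches the target. For the off-diagonal I would write $n = m+h$ with $h \neq 0$; the inner $t$-integral $\int_T^{T+T^{2/3}} (m/(m+h))^{it}\, dt$ is of size $\min\pb{T^{2/3},\, m/|h|}$, weighted by $d(m)d(m+h)/\sqrt{m(m+h)}$ and summed over $m$ and $h$.

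The main obstacle is the off-diagonal: a direct application of the Montgomery--Vaughan mean value theorem to $\int |D(t)|^2\, dt$ only gives $O(T \log^c T)$, losing a full factor of $T^{1/3}$, so one has to extract genuine cancellation from the shifted convolutions, using oscillation in $t$ and $m$ jointly rather than bounding the $t$-integral first. My plan is to dissect dyadically in $(M, h)$, apply the $\delta$-method of Duke--Friedlander--Iwaniec (or, equivalently, spectral theory via the Kuznetsov formula) to the binary additive divisor sum $\sum_{m \sim M} d(m) d(m+h)$, and analyze the resulting triple sum in $(m, h, t)$ by stationary phase. The tightest regime is the transition $|h| \asymp M/T^{2/3}$, where the $t$-integral is neither essentially constant nor genuinely oscillatory; matching the two regimes uniformly to obtain the full saving down to $T^{2/3+\varepsilon}$ is where I expect the bulk of the technical work to lie.
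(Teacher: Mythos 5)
The paper does not actually prove Proposition~\ref{prop:IwaniecZeta}; it quotes it from Iwaniec (Theorem~3 of \cite{iw4m}), and the paper's own work is devoted to the $q$-aspect analogue, Theorem~\ref{thm:MainThm}. That said, your outline tracks both Iwaniec's original argument and the paper's proof of its own theorem quite faithfully: approximate functional equation, reduction to a shifted-divisor (here, shifted-convolution) sum, delta-method or direct Kuznetsov, spectral decomposition, spectral large sieve. Your diagnosis of the obstruction is also right: Montgomery--Vaughan gives $(H+N)\sum|a_n|^2 \asymp T\log^c T$ when $H=T^{2/3}$ and $N\asymp T$, losing a factor $T^{1/3}$, and the saving must come from cancellation in the shifted sum taken jointly with the $t$-integral, not from bounding either separately.

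The genuine gap is that the proposal ends exactly where the substance begins. After the delta method (or Kuznetsov applied to $\sum_c c^{-1}S(h,0;c)\cdots$), one has a spectral sum over Maass, Eisenstein, and holomorphic forms with Hecke eigenvalues weighted by a Bessel-type integral transform; the entire $T^{1/3}$ saving comes from (i) analyzing that archimedean transform to show it effectively truncates the spectral parameter, with a case split between a non-oscillatory and an oscillatory range, and then (ii) applying the spectral large sieve to the truncated sums. This is precisely the content of Sections~\ref{subseq:INVanalysis} and \ref{sec:SpectralAnalysis} in the $q$-aspect version, and it is the part you leave as ``analyze by stationary phase'' and ``matching the two regimes uniformly.'' Your heuristic that the hard regime is $|h|\asymp M/T^{2/3}$ is a reasonable first guess but isn't the dichotomy that actually governs the argument --- what matters is whether the Bessel-integral is in its oscillatory or non-oscillatory range, which controls how far the spectral parameter can reach and which form of the Kuznetsov transform one must use. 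Without carrying out that analysis, the plan is sound but the proof is not there; the spectral step, which is the whole point, is a black box in your write-up.
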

Proposition \ref{prop:IwaniecZeta} is one of the first examples of an upper bound on a `short' moment of $L$-functions. Most of the earlier results were concerned with estimating integrals of the type $\int_{0}^{T}  \abs{\zeta\pb{\frac12+it}}^k dt$, for some positive even integer $k$. 

Note that \eqref{eq:IwaniecZeta} is consistent with the Lindel\"{o}f Hypothesis, and is an example of a Lindel\"{o}f-on-average upper bound. For level $1$ holomorphic cusp forms, an analogous result follows from Good \cite{go2m},
\begin{myprop} 
    \label{prop:GoodCusp}
    Let $f$ be a fixed level $1$ holomorphic cusp form. For $T \geq 2$ and $\varepsilon >0$, 
    \begin{equation}
        \label{eq:GoodCusp}
        \int_{T}^{T+T^{\frac23}} \abs{L\pb{f,\tfrac12+it}}^2 dt \ll_{f,\varepsilon} T^{\frac23+\varepsilon}.
    \end{equation}
\end{myprop}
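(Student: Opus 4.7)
My plan is to follow the classical recipe for a short second moment: apply an approximate functional equation, open the modulus squared, isolate the diagonal via Rankin--Selberg, and reduce the off-diagonal to a shifted convolution estimate handled by spectral methods.

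First, since the analytic conductor of $L\pb{f,\tfrac12+it}$ is $\asymp t^2$, an approximate functional equation represents it as a smoothly truncated Dirichlet series of length $\asymp t$ plus its dual piece. I would insert a smooth majorant $\omega$ of $\mathbf{1}_{[T,T+T^{2/3}]}$ into the integral in \eqref{eq:GoodCusp} and expand the modulus squared. After splitting according to the two halves of the AFE, the problem reduces to estimating
\[
\sum_{m,n \ll T} \frac{\lambda_f(m)\lambda_f(n)}{\sqrt{mn}}\, W\!\pb{\tfrac{m}{T},\tfrac{n}{T}} \int_{\mathbb{R}} \pb{\tfrac{n}{m}}^{it} \omega(t)\, dt,
\]
together with a cross term whose $t$-integral carries an additional oscillatory phase and is handled by stationary phase. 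Since $\log(n/m) \approx (n-m)/m$, the rapid Fourier decay of $\omega$ forces $\abs{n-m} \ll T^{1/3+\varepsilon}$ up to negligible tails, and the diagonal $m=n$ contributes
\[
\asymp T^{2/3} \sum_{n \ll T} \frac{\lambda_f(n)^2}{n} \ll_{f,\varepsilon} T^{2/3+\varepsilon}
\]
by Rankin--Selberg (the simple pole of $L(s, f\otimes\bar f)$ at $s=1$), matching the target size exactly.

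The heart of the argument is then the off-diagonal, which reduces to controlling the shifted convolution
\[
\mathcal{D}_f(h;T) := \sum_{n \asymp T} \lambda_f(n)\,\lambda_f(n+h)\, W_h(n)
\]
uniformly for $0 < \abs{h} \leq T^{1/3+\varepsilon}$. A trivial application of Deligne's bound only gives $T^{1+\varepsilon}$, so I must save a full factor of $T^{1/3}$ and establish $\mathcal{D}_f(h;T) \ll_{f,\varepsilon} T^{2/3+\varepsilon}$. To obtain this, I would detect $m-n=h$ by Jutila's variant of the circle method (or a $\delta$-symbol expansion), apply $\mathrm{GL}(2)$ Voronoi summation in both variables to convert the congruences into sums of Kloosterman sums, and invoke the Kuznetsov trace formula together with the spectral large sieve to extract the needed cancellation. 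Alternatively, following Good's original route, one could realise the moment as a Petersson inner product of $\abs{f}^2 y^k$ against a Poincar\'e series and appeal directly to spectral decomposition.

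The main obstacle is precisely this shifted-convolution bound with uniformity in $h$: the full $T^{1/3}$ saving must be achieved uniformly as $h$ ranges over an interval of length $T^{1/3+\varepsilon}$, and any loss in $h$-uniformity propagates verbatim into the final bound. This is where the exponent $2/3$ is hard-coded, reflecting the square-root cancellation available on the dual (Kloosterman) side and the inability of the method to go beyond it without a deeper input such as a subconvex bound for the underlying Rankin--Selberg $L$-function.
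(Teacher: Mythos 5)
Proposition \ref{prop:GoodCusp} is not proved in the paper: it is Good's 1982 theorem, quoted from \cite{go2m} as motivation, and the paper's actual contribution is the $q$-aspect analogue (Theorem \ref{thm:MainThm}). So there is no in-paper argument for this statement to compare against. That said, your sketch is a sound reconstruction along one of the standard lines, and it runs in close parallel to the method the paper does carry out for Theorem \ref{thm:MainThm}: approximate functional equation, open the square, Rankin--Selberg for the diagonal, reduce the off-diagonal to a shifted convolution at small shifts, and attack the latter with a $\delta$-symbol, $\mathrm{GL}(2)$ Voronoi summation in both variables, Kuznetsov, and the spectral large sieve (compare the reduction to Theorem \ref{thm:redthm} and Sections \ref{sec:harmonic}--\ref{sec:SpectralAnalysis}). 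The one structural difference is the source of the conductor drop: in the paper it comes from orthogonality of characters modulo $q^{2/3}$, which forces the shift $m - n = p^2 l$; in your $t$-aspect version it comes from Fourier decay of the length-$T^{2/3}$ weight $\omega$, which forces $|m-n| \ll T^{1/3+\varepsilon}$. That is precisely the archimedean versus non-archimedean analogy the paper spells out in Section \ref{subsec:shortmominlit}. You also correctly flag Good's original route, realising the moment as a Petersson inner product of $|f|^2 y^k$ against a Poincar\'e series and invoking the spectral decomposition directly; this is genuinely different and more compact for the $t$-aspect, but less directly portable to the $q$-aspect, which is why the paper adapts the $\delta$-symbol route instead. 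Your bookkeeping checks out: a pointwise shifted-convolution bound $\mathcal{D}_f(h;T) \ll T^{2/3+\varepsilon}$ uniformly for $0 < |h| \ll T^{1/3+\varepsilon}$ does suffice after summing over $h$, and such a bound is well within reach of the spectral methods you cite.
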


Equations \eqref{eq:IwaniecZeta} and \eqref{eq:GoodCusp} imply a Weyl-type subconvexity bound for the respective $L$ functions, $\zeta(\cdot)$, and $L(f,\cdot)$. In fact, for holomorphic cusp forms, \cite{go2m} was the first instance where such a result was obtained. 

In their paper on the Weyl bound for Dirichlet $L$-functions \cite{py4m}, Petrow and Young, proved a $q$-aspect analogue of Proposition \ref{prop:IwaniecZeta}. A special case of Theorem $1.4$ (with $q=p^3, d =p^2$) in \cite{py4m} can be stated as follows. 

\begin{myprop}
    \label{prop:PYshortmoment}
    Let $q = p^3$, for an odd prime $p$. Let $\alpha$ be a fixed primitive character mod $q$. For any $\varepsilon >0$, we have
    \begin{equation}
        \label{eq:PYshortmoment}
        \sum_{\psi \shortmod{q^{\frac23}}} \abs{L\pb{\alpha \cdot \psi, \tfrac12}}^4 \ll_{\varepsilon} q^{\frac23 + \varepsilon}.
    \end{equation}
\end{myprop}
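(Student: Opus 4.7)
The plan is to apply an approximate functional equation for $L(\alpha\psi,s)^2$, use orthogonality of characters modulo $p^2$ to collapse the $\psi$-sum to a congruence condition, extract a main term from the diagonal, and exploit the $p$-adic structure of the primitive character $\alpha$ modulo $p^3$ to obtain cancellation in the off-diagonal shifted divisor sum.

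For each $\psi$, the AFE gives
\[
L(\alpha\psi,\tfrac12)^2 \approx \sum_{n\ll q^{1+\varepsilon}} \frac{d(n)(\alpha\psi)(n)}{\sqrt n}\, V\pb{\tfrac{n}{q}}
\]
for a rapidly decaying cutoff $V$ (plus the symmetric dual term). Expanding $\abs{L}^4 = \abs{L^2}^2$, interchanging summation, and applying the orthogonality relation for characters modulo $p^2$ yields
\[
\sum_{\psi \shortmod{p^2}} \abs{L\pb{\alpha\psi,\tfrac12}}^4 \approx \varphi(p^2) \sum_{\substack{n,m \ll q^{1+\varepsilon}\\ n\equiv m \shortmod{p^2}\\(nm,p)=1}} \frac{d(n)d(m)\alpha(n)\bar\alpha(m)}{\sqrt{nm}}\, V\pb{\tfrac{n}{q}}V\pb{\tfrac{m}{q}}.
\]
The diagonal $n=m$ contributes $\varphi(p^2)\sum_n d(n)^2 V(n/q)^2/n \ll p^2(\log q)^4 \ll q^{2/3+\varepsilon}$, which already matches the bound in \eqref{eq:PYshortmoment}.

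For the off-diagonal, write $n = m + hp^2$ with $1 \le \abs{h} \le q/p^2 = p$. Since $\alpha$ is primitive modulo $p^3$, its restriction to the subgroup $1 + p^2\mathbb Z/p^3\mathbb Z$ of $(\mathbb Z/p^3\mathbb Z)^\times$ is a nontrivial additive character, so $\alpha(1+up^2) = e_p(c_\alpha u)$ for some $c_\alpha$ with $(c_\alpha,p)=1$. Consequently $\alpha(m+hp^2)\bar\alpha(m) = e_p(c_\alpha h \ov m)$, where $\ov m$ denotes the inverse of $m$ modulo $p$. The off-diagonal contribution reduces (up to the symmetry $h\mapsto -h$) to a twisted shifted divisor sum
\[
\varphi(p^2) \sum_{1\le \abs{h}\le p} \sum_{m\ll q} \frac{d(m)d(m+hp^2)}{\sqrt{m(m+hp^2)}}\, e_p(c_\alpha h \ov m)\, W_h(m),
\]
with $W_h$ a smooth weight encoding both AFE cutoffs.

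The main obstacle is gaining cancellation of size $q^{1/3}$ from this sum, as the trivial bound via the Ingham shifted divisor estimate delivers only $O(q^{1+\varepsilon})$. I would attack this by applying Voronoi summation to $d(m)$ and $d(m+hp^2)$ after a smooth dyadic partition of $m$, so that the $m$-variable dualizes into a complete exponential sum modulo $p$; combined with the additive twist $e_p(c_\alpha h \ov m)$, this produces sums of Kloosterman (or hyper-Kloosterman) type that enjoy square-root cancellation by Weil's bound. Summing over $h$ and tracking the effective lengths of the dual sums should yield the required $q^{2/3+\varepsilon}$ bound. An alternative route, more in the spirit of \cite{py4m}, is the DFI $\delta$-symbol method applied to the shifted convolution with a modulus adapted to $p$, followed by a spectral treatment of the resulting sums of Kloosterman sums via Kuznetsov's formula. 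In either approach, the subtle point is the interplay between the $p$-adic stationary phase of the twisted weight and the conductor dropping that occurs as $h$ varies.
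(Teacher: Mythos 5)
This proposition is not proved in the paper; it is quoted verbatim as a special case ($q=p^3$, $d=p^2$) of Theorem~1.4 of Petrow--Young \cite{py4m}, so there is no in-paper proof to compare against. Your opening reductions are sound and match the common skeleton of such arguments: squaring the approximate functional equation, collapsing the $\psi$-average by orthogonality modulo $p^2$, extracting the diagonal (which indeed gives $\asymp p^2=q^{2/3}$), and using the Postnikov-type identity $\alpha(m+hp^2)\ov{\alpha}(m)=e_p(c_\alpha h\ov{m})$ to exhibit conductor dropping. These steps are exactly the $GL(1)$ analogue of Section~\ref{sec:ProofReduction} and Proposition~\ref{prop:Postnikov}.

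The genuine gap is your first proposed route for the off-diagonal. You assert that after Voronoi on the shifted divisor sum, the resulting complete exponential sums modulo $p$ ``enjoy square-root cancellation by Weil's bound,'' and that ``summing over $h$ and tracking the effective lengths of the dual sums should yield the required $q^{2/3+\varepsilon}$ bound.'' That last step does not close. The paper's own computation for the directly analogous $GL(2)$ problem (Section~1.3, after \eqref{eq:IntroE0}) shows that applying Weil's bound to the Kloosterman sum and Deligne's bound to the hyper-Kloosterman sum, after Voronoi, leaves one short by a factor of $p^{3/4}$: one lands at $Np^{3/4+\varepsilon}$ rather than $Np^{\varepsilon}$. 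The $GL(1)$ version has exactly the same deficiency; pointwise square-root cancellation in the arithmetic sums, on its own, is not enough. The spectral decomposition of the $c$-sum (Bruggeman--Kuznetsov) followed by the spectral large sieve inequality, which you relegate to an ``alternative route,'' is in fact the essential ingredient, not an alternative.

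It is also worth noting that the route Petrow--Young actually take (as the paper itself describes in Section~1.3) differs from both of your proposals: they do \emph{not} use a $\delta$-symbol for the shifted convolution, relying instead on an approximate-functional-equation-type formula for the divisor function; after the spectral decomposition they obtain a product of \emph{three} $L$-functions, which they estimate via H\"older's inequality, reducing the problem to a fourth moment handled by the spectral large sieve. Your second alternative ($\delta$-symbol plus Kuznetsov) is closer in spirit to this paper's $GL(2)$ argument than to the Petrow--Young $GL(1)$ argument. If you wish to turn your sketch into a proof, you should commit to the spectral treatment and carry out the large-sieve accounting; the Weil-bound route will not reach $q^{2/3+\varepsilon}$.
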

We discuss this analogy in more detail in Section \ref{subsec:shortmominlit}. 

In this paper, following up on the ideas in \cite{py4m}, we derive a $q$-aspect analogue of Good's result in Proposition \ref{prop:GoodCusp}. We prove the following theorem.

\begin{mytheo}
    \label{thm:MainThm}
    Let $f$ be a level $1$ cusp form. Let $q = p^3$, for an odd prime $p$, and let $\alpha$ be a primitive character modulo $q$. For any $\varepsilon >0$, 
    \begin{equation} 
        \label{eq:MainThm}
        \sum_{\psi\Mod{q^\frac23}} \abs{ L\pb{ f \otimes \pb{\alpha\cdot \psi}, \tfrac{1}{2}}}^2 \ll_{f,\varepsilon} q^{\frac23+\varepsilon}.
    \end{equation}
\end{mytheo}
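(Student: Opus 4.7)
The plan is to adapt the approach of Petrow--Young from the proof of Proposition \ref{prop:PYshortmoment} in \cite{py4m} to the $\mathrm{GL}(2)$ second-moment setting. First, I would apply the approximate functional equation to $L(f\otimes(\alpha\psi),\tfrac12)$; since $f$ has level $1$ and the twist $\alpha\psi$ has conductor $q=p^3$, the analytic conductor is $q^{2+o(1)}$, so the Dirichlet series is truncated at effective length $\sim q$. Opening the absolute square yields a sum of the shape
\[
\sum_{m,n\ge 1}\frac{\lambda_f(m)\lambda_f(n)}{\sqrt{mn}}\,\alpha(m)\ov{\alpha}(n)\,\psi(m)\ov{\psi}(n)\, V\pb{\tfrac{m}{q},\tfrac{n}{q}},
\]
with $V$ a smooth cutoff having rapid decay beyond $m,n\gg q^{1+\varepsilon}$.

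Executing the sum over $\psi\shortmod{q^{2/3}}$ via orthogonality of characters modulo $p^2$ forces $m\equiv n\Mod{p^2}$ with $(mn,p)=1$, introducing an overall factor of $\phi(p^2)$. The \emph{diagonal} $m=n$ is handled by the Rankin--Selberg estimate $\sum_{n\leq X}\abs{\lambda_f(n)}^2\ll_f X$ and contributes the expected main term of size $q^{2/3+\varepsilon}$.

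For the \emph{off-diagonal}, write $m=n+p^2 h$ with $h\neq 0$ and $\abs{h}\ll p^{1+\varepsilon}$. The crucial structural input, inherited from \cite{py4m}, is that
\[
\alpha(n+p^2 h)\,\ov{\alpha}(n)=\alpha\pb{1+p^2 h\,\ov{n}},
\]
and Postnikov's formula, applied to the primitive character $\alpha$ of prime-power modulus $p^3$, collapses this ratio to an additive character $e_p(c\,h\,\ov{n})$ with $c$ depending only on $\alpha$. The off-diagonal thus becomes a twisted shifted convolution
\[
\sum_{h\neq 0}\,\sum_{n}\lambda_f(n)\lambda_f(n+p^2 h)\,e_p(ch\,\ov{n})\,W(n,h),
\]
which I would treat via a $\delta$-method decomposition of the shift (Heath-Brown or DFI style), followed by $\mathrm{GL}(2)$ Voronoi summation on both the $m$- and $n$-variables. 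After dualization the arithmetic kernel becomes a short complete exponential/Kloosterman-type sum modulo $p$, and the required saving of size $p$ over the trivial off-diagonal bound $p^{3+\varepsilon}$ is extracted from Weil's bound combined with cancellation in the $h$-sum via Cauchy--Schwarz and Poisson.

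The main technical obstacle is arranging the $\delta$-method and subsequent Voronoi steps so that the additive twist $e_p(ch\,\ov{n})$ interlocks cleanly with the Kloosterman factors emerging from Voronoi, producing complete sums modulo $p$ with genuine square-root cancellation rather than mere trivial bounds. The essential feature that makes the strategy go through is the ``conductor-dropping'' phenomenon exploited in \cite{py4m}: because $\alpha$ has conductor $p^3$ while the summation modulus $q^{2/3}=p^2$ is strictly smaller, the high-level character ratio collapses to an additive character of the much shorter modulus $p$, and it is precisely this structural gain which opens the door to transplanting the Petrow--Young method to the present $\mathrm{GL}(2)$ analogue of Good's theorem.
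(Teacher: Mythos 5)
Your opening moves — approximate functional equation, orthogonality over $\psi\shortmod{p^2}$, the diagonal via Rankin--Selberg, Postnikov's formula to collapse $\alpha(n+p^2h)\ov{\alpha}(n)$ to an additive character $e_p(c\,h\,\ov{n})$, and a DFI delta-method followed by $\mathrm{GL}(2)$ Voronoi in both variables — all match the paper's route, including the modified Voronoi needed to accommodate the extra $e_p$-twist in the $n$-sum. You have also correctly identified conductor dropping as the structural engine. But your endgame is where the plan fails.

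After Voronoi you propose to finish via ``Weil's bound combined with cancellation in the $h$-sum via Cauchy--Schwarz and Poisson,'' with the belief that square-root cancellation in the complete sums mod $p$ is what closes the argument. The paper's proof sketch explicitly computes what this buys: after the delta method and Voronoi, applying Weil's bound to the Kloosterman sum in $c$ and Deligne's bound to the hyper-Kloosterman sum mod $p$, together with the trivial bound on the archimedean integral, yields $S_1(N,\alpha)\ll Np^{3/4+\varepsilon}$ — short of the target $Np^{\varepsilon}$ by a factor of $p^{3/4}$. Pointwise bounds on the exponential sums, even with full square-root cancellation, are not enough, and it is not at all clear that Cauchy--Schwarz and Poisson in the shift variable $h$ recovers the missing $p^{3/4}$. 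The essential step you are missing is the spectral decomposition of the $c$-sum: one writes the hyper-Kloosterman sum as a Gauss-sum average over characters $\chi\shortmod{p}$, applies the Bruggeman--Kuznetsov formula for $\Gamma_0(p)$ with nebentypus $\chi^2$ to the Kloosterman sum $S(\ov{p}(n-p^2m),-pl;c)$, and then — after reassembling the $l$-sum into $L(\ov{\pi}\otimes\chi,\cdot)$ and separating variables by Cauchy--Schwarz — estimates both resulting quantities with the spectral large sieve inequality, exploiting that the map $(\pi,\chi)\mapsto\ov{\pi}\otimes\chi$ lands in $\mathcal{H}_{it}(p^2,1)$. This is the same step that drives Petrow--Young's argument (they likewise pass to a spectral moment and apply the large sieve), so the spectral reduction is not optional in this framework; a proposal that stops at Weil/Deligne plus classical averaging over $h$ does not reach the Lindel\"{o}f-on-average bound.
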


We note that, similar to Propositions \ref{prop:IwaniecZeta}, \ref{prop:GoodCusp}, \ref{prop:PYshortmoment}, this is also a Lindel\"{o}f-on-average bound.

Also, even though Theorem \ref{thm:MainThm} is stated for $q=p^3$, we expect this to generalise to a short moment exactly analogous to the Theorem $1.4$ \cite{py4m}. While the authors in \cite{py4m} needed the fully general result to prove the Weyl bound for all Dirichlet $L$-functions (see also \cite{pyweyl}), we do not have any such demand. So, for the sake of simplicity, we choose to work with $q=p^3$. We also outline a sketch for the higher prime power cases ($q=p^j, j \geq 4$) in Section \ref{sec:higherppower} .

Using a lower bound on the associated first moment, we can deduce from Theorem \ref{thm:MainThm} the following result about non-vanishing of $L$-functions within this family.
\begin{mytheo}
\label{thm:nonv}
Let $f,p,q,\alpha$ be as in Theorem \ref{thm:MainThm}. Then for $\varepsilon > 0$,
\begin{equation}
   \# \{ \psi \shortmod {q^{\frac23}} ; \ L\pb{ f \otimes \pb{\alpha\cdot \psi}, \tfrac{1}{2}}  \neq 0\} \gg_{\varepsilon} p^{2-\varepsilon}.  
\end{equation}
\end{mytheo}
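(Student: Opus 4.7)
The approach is the standard Cauchy--Schwarz argument combining the second-moment bound from Theorem \ref{thm:MainThm} with a matching lower bound on the first moment. Set
$M_1 := \sum_{\psi \Mod{q^{2/3}}} L\pb{f \otimes (\alpha\cdot\psi), \tfrac12}$,
$M_2 := \sum_{\psi \Mod{q^{2/3}}} \abs{L\pb{f \otimes (\alpha\cdot\psi), \tfrac12}}^2$,
and $S := \{\psi \Mod{q^{2/3}} : L(f \otimes (\alpha\cdot\psi), \tfrac12) \neq 0\}$. Since $L$ vanishes outside $S$, Cauchy--Schwarz gives $\abs{M_1}^2 \leq \abs{S} \cdot M_2$. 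Theorem \ref{thm:MainThm} supplies $M_2 \ll_{f,\varepsilon} q^{2/3+\varepsilon} = p^{2+\varepsilon}$, so the entire task reduces to establishing $\abs{M_1} \gg_{f,\varepsilon} p^{2-\varepsilon}$, after which $\abs{S} \gg p^{2-\varepsilon}$ follows upon renaming $\varepsilon$.

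To lower-bound $M_1$, I would apply an approximate functional equation of length $\sim q$ to each $L(f \otimes \alpha\psi, \tfrac12)$ and swap the $\psi$-sum inside. Orthogonality $\sum_{\psi \Mod{p^2}} \psi(n) = \phi(p^2)\, \mathbf{1}[n \equiv 1 \Mod{p^2},\ (n,p)=1]$ applied to the direct half of the AFE isolates the diagonal $n=1$, producing a main term of size $\phi(p^2) \asymp p^2$. The off-diagonal terms $n = 1 + k p^2$ with $1 \leq k \ll p^{1+\varepsilon}$ contribute at most $\phi(p^2)\sum_k (k p^2)^{-1/2+\varepsilon} \ll p^{3/2+\varepsilon}$ via Deligne's bound $\abs{\lambda_f(n)} \ll n^\varepsilon$, and are therefore negligible.

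The dual half of the AFE is the delicate ingredient, since the root numbers $\epsilon(f \otimes \alpha\psi) = i^k \tau(\alpha\psi)^2/q$ depend on $\psi$. Expanding the Gauss sums and swapping sums reduces the inner average to $\sum_{\psi \Mod{p^2}} \tau(\alpha\psi)^2 \overline{\psi(n)}$, a character sum over pairs $x,y \Mod{p^3}$ constrained by $xy \equiv n \Mod{p^2}$. Writing $y = y_0 + k p^2$ for $0 \leq k < p$ and using that $\alpha$, being primitive mod $p^3$, restricts to a nontrivial additive character $e(c_\alpha\,\cdot/p)$ on the subgroup $1 + p^2 \mathbb{Z}/p^3\mathbb{Z}$, the $k$-sum collapses by additive orthogonality and pins down $x$ in a single residue class mod $p$. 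What remains is a short complete exponential sum modulo $p^3$, which after Voronoi summation in $n$ (or a direct Weil-type estimate) can be shown to contribute $o(p^2)$ to $M_1$. Consequently the diagonal dominates and $\abs{M_1} \asymp p^2$.

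The main obstacle is the execution of this dual-sum analysis: one has to track the interplay of the fixed primitive character $\alpha \Mod{p^3}$ with the additive twists $e((x+y)/p^3)$ on residues of increasing $p$-adic depth and extract the required cancellation. Once that is in hand, the Cauchy--Schwarz setup above combined with Theorem \ref{thm:MainThm} delivers the non-vanishing bound immediately.
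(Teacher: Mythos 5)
Your Cauchy--Schwarz reduction is exactly the paper's: with $M_1$ the first moment, $M_2$ the second moment, and $S$ the non-vanishing set, $|M_1|^2 \le |S|\cdot M_2$, Theorem \ref{thm:MainThm} gives $M_2 \ll p^{2+\varepsilon}$, so everything rests on $|M_1| \gg p^{2-\varepsilon}$. Your sketch of the first-moment evaluation also follows the paper's route: approximate functional equation, orthogonality over $\psi \bmod p^2$ to extract the diagonal $n=1$ of size $\phi(p^2) \asymp p^2$, and an analysis of the dual term via the Gauss-sum expansion of the root number together with the Postnikov-type additive structure of $\alpha$ on $1 + p^2\mathbb{Z}/p^3\mathbb{Z}$. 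Your root-number expression with $\tau(\alpha\psi)^2/p^3$ (up to a unit) is the correct one, and matches what the paper actually uses in its computation.

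The one substantive gap is the balancing parameter $X$ in the approximate functional equation, which you suppress by taking the AFE ``of length $\sim q$.'' That choice makes your direct off-diagonal estimate $O(p^{3/2+\varepsilon})$ come out fine, but for the dual half the Gauss-sum analysis ultimately lands on a bound of the shape $B \ll p^{1/2+\varepsilon}(q/X)^{1/2}$; with $X=1$ this is $O(p^{2})$, which is \emph{not} $o(p^2)$ and does not close the argument. The paper takes $X = p^{1/2}$, shortening the dual sum to length $q/X = p^{5/2}$ so that $B = O(p^{7/4+\varepsilon})$, while the direct off-diagonal grows only to $O(X^{1/2}p^{3/2}) = O(p^{7/4})$, giving $M_1 = p^2 + O(p^{7/4+\varepsilon})$ and hence $|S| \gg p^{2-\varepsilon}$. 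You correctly identify the dual-sum analysis as the delicate step and leave it as an outline rather than a proof; beyond supplying those details, the $X$-balancing is the ingredient you need to make your claimed $o(p^2)$ bound on the dual contribution actually true.
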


 Moments of a family of $L$-functions encode a lot of information about the individual members of the family. For one such example, the strength of the second moment bound in $\eqref{eq:MainThm}$ is enough to immediately deduce a Weyl-type subconvexity bound for individual $L$-functions in this family. This result was originally proven by Munshi and Singh in 2019  (Theorem $1.1$ in \cite{ms19} with $r=1, t=0$). The authors use a completely different approach. They do not compute moments for an associated family; relying instead on a novel variant of the circle method, first introduced in \cite{mdelta}.
 
 We state their result as a corollary.

\begin{mycoro}
    \label{cor:Weylsubcon}
    Let $f,p,q,\alpha$ be as in Theorem \ref{thm:MainThm}. Let $\psi$ be a character modulo $q^{\frac23}$. Then for any $\varepsilon>0$,
    \begin{equation}
        L\pb{ f \otimes \pb{\alpha\cdot \psi},\tfrac{1}{2}} \ll_{f,\varepsilon} q^{\frac13+\varepsilon}. 
    \end{equation}
\end{mycoro}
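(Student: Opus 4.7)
The plan is to deduce the corollary from Theorem \ref{thm:MainThm} by the standard positivity argument of dropping all but one term in the moment sum. Since every summand $\abs{L\pb{f \otimes \pb{\alpha \cdot \psi'}, \tfrac12}}^2$ is nonnegative, for the fixed character $\psi$ in the statement we have
\begin{equation*}
\abs{L\pb{f \otimes \pb{\alpha \cdot \psi}, \tfrac12}}^2 \;\leq\; \sum_{\psi' \Mod{q^{2/3}}} \abs{L\pb{f \otimes \pb{\alpha \cdot \psi'}, \tfrac12}}^2.
\end{equation*}
Theorem \ref{thm:MainThm} bounds the right-hand side by $\ll_{f,\varepsilon} q^{2/3+\varepsilon}$, and extracting a square root gives $\abs{L\pb{f \otimes \pb{\alpha \cdot \psi}, \tfrac12}} \ll_{f,\varepsilon} q^{1/3 + \varepsilon/2}$. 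Relabelling $\varepsilon$ yields the claimed bound.

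There is no genuine obstacle here since all of the analytic content is already packaged into Theorem \ref{thm:MainThm}. It is worth remarking that the resulting bound is honestly subconvex: one checks that $\alpha \cdot \psi$ remains primitive modulo $q$ (since $\alpha$ is non-trivial on the kernel of $(\mathbb{Z}/p^3)^* \to (\mathbb{Z}/p^2)^*$ while $\psi$ is trivial there), so the analytic conductor of $L\pb{f \otimes \pb{\alpha \cdot \psi}, s}$ is of size $\asymp q^2$, giving a convexity exponent of $1/2$ at the central point. The exponent $1/3$ is therefore of Weyl quality, matching exactly the $t$-aspect Weyl bound that Good \cite{go2m} deduced from Proposition \ref{prop:GoodCusp}. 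This consistency is precisely what one expects given that the short moment over $\psi \Mod{q^{2/3}}$ is the natural $q$-aspect analogue of the short $t$-interval of length $T^{2/3}$.
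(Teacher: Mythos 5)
Your proof is correct and is exactly the "drop all but one term" positivity argument the paper has in mind when it says the subconvexity bound is "immediately" deduced from Theorem \ref{thm:MainThm}; the paper does not spell it out further. Your remark that $\alpha\cdot\psi$ stays primitive modulo $q=p^3$ (so the bound is genuinely of Weyl strength relative to the conductor $\asymp q^2$) is accurate and a worthwhile sanity check, consistent with the paper's framing of the result as the $q$-aspect analogue of Good's theorem.
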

\subsection{Notations}
 We use standard conventions present in analytic number theory. We use $\varepsilon$ to denote an arbitrarily small positive constant. For brevity of notation, we allow $\varepsilon$ to change depending on the context. 
\\The expression $F \ll G$ implies there exists some constant $k$ for which $\abs{F} \leq k\cdot G$ for all the relevant $F$ and $G$. We use $F \ll_{\varepsilon} G$ to emphasize that the implied constant $k$ depends on $\varepsilon$ (it may also depend on other parameters). For error terms, we often use the big $O$ notation, so $f(x) = O(g(x))$ implies that $f(x) \ll g(x)$ for sufficiently large $x$. We use the term `small' or `very small' to refer to error terms which are of the size $O_A(p^{-A})$, for any arbitrarily large $A$.
\\By a dyadic interval, we mean an interval of the type $[2^{\frac{k}{2}}M, 2^{\frac{k+1}{2}}M]$ for some $k \in \mathbb{Z}$, $M \in \mathbb{R}$. We also use $m \asymp M_0$ to mean $m$ ranges over the dyadic interval $[M_0,2M_0]$.\\
We also use $\sideset{}{^*}\sum_{n}$ to denote $\smashoperator{\sum_{\substack{n \\ \text{gcd}(n,p)=1}}}$ . Similarly, $ \sideset{}{^*}\sum_{a \Mod{c}}$ is used for $\smashoperator{\sum_{\substack{a \Mod{c} \\ \text{gcd}(a,c)=1}}}$ , and $\sideset{}{^*}\sum_{\chi(p)}$ for $\smashoperator{\sum_{\substack{ \chi \Mod{p} \\ \chi \text{ primitive}}}}$.
\\As usual, $e(x) = e^{2\pi i x}$. Also, $e_p(x) = e(\frac{x}{p}) = e^{2\pi i \frac{x}{p}}$.
\end{subsection}

\subsection{Sketch of Proof}
We give a brief sketch of Theorem $\ref{thm:MainThm}$ in this section. 
\\In this sketch, we use the symbol `$\approx$' between two expressions to mean the expressions on the left side can be written as an expression similar to the right side, along with an acceptable error term. 

Using an approximate functional equation and orthogonality of characters (see Section \ref{sec:ProofReduction}), it suffices to prove the following result on an associated shifted convolution problem.

\begin{mytheo}
    \label{thm:redthm}
    Let $f,p,q, \alpha$ be as in Theorem \ref{thm:MainThm}. Let $\lambda_f(\cdot)$ be the coefficients of the associated Dirichlet series. Let
    \begin{equation}
        \label{eq:redthm}
        S(N,\alpha) \coloneqq \sideset{}{^*}\sum_{l,n} \lambda_f(n+p^2l)\ov{\lambda_f}(n)\alpha(n+p^2l)\ov{\alpha(n)}w_N(n+p^2l) w_N(n), 
    \end{equation}
     where  $N  \ll_{\varepsilon} p^{3+\varepsilon}$, and $w_N(\cdot)$ is some smooth function supported on $[N,2N]$ satisfying $w_N^{(j)}(x) \ll N^{-j}$. We then have 
    \begin{equation}
        \label{eq:shiftedconvbound}
        S(N,\alpha) \ll_{f,\varepsilon} Np^{\varepsilon}.
    \end{equation}
   
\end{mytheo}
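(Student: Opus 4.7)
My plan is to convert the multiplicative character twist into an additive one, apply GL(2) Voronoi summation, and bound the resulting exponential sums via Weil-type estimates, mirroring the strategy used in \cite{py4m} for the fourth moment of $\zeta$.

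The first step exploits the $p$-adic structure of $\alpha$. Since $\alpha$ is primitive of modulus $p^3$ and the shift $p^2 l$ is divisible by $p^2$, the restriction of $\alpha$ to the subgroup $1 + p^2(\mathbb{Z}/p^3\mathbb{Z})$ factors through an additive character of modulus $p$. Explicitly, for $(n,p)=1$,
$$\alpha(n+p^2 l)\,\ov{\alpha(n)} = \alpha\pb{1 + p^2 l \ov{n}} = e_p\pb{\beta l \ov{n}},$$
where $\beta \in (\mathbb{Z}/p\mathbb{Z})^*$ depends only on $\alpha$ and $\ov n$ denotes the inverse of $n$ modulo $p$. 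This reduces $S(N,\alpha)$ to a shifted convolution sum of Hecke eigenvalues with a simple additive mod-$p$ twist.

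For the second step, writing $m = n + p^2 l$ and detecting the congruence $m \equiv n \pmod{p^2}$ via Fourier inversion, the $m$-sum acquires an additive phase $e_{p^3}\pb{(ap + \beta \ov n)\,m}$ whose coefficient is coprime to $p$. I would then apply the level-$1$ GL(2) Voronoi formula at modulus $p^3$ to the $m$-sum: this replaces $\lambda_f(m)$ by a dual sum over $m'$ essentially supported in $m' \ll p^{3+\varepsilon}$, weighted by a Bessel-type transform of $w_N$ and a phase $e_{p^3}\pb{-\ov{ap+\beta\ov n}\,m'}$. Executing the inner $a$-sum over $\mathbb{Z}/p^2\mathbb{Z}$ then produces a Kloosterman-type sum modulo $p^3$, to which Weil's bound and the Sali\'e evaluation apply, yielding square-root cancellation in that variable.

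In the final step, the remaining sum against $\ov{\lambda_f(n)}\, w_N(n)$ is controlled by Cauchy-Schwarz together with Rankin-Selberg on average, or alternatively by a second application of Voronoi/Poisson in the $n$-variable. Combining the savings from the Voronoi transform, the Kloosterman bound, and the mean-square of $\lambda_f$ should give $S(N,\alpha) \ll N p^\varepsilon$, a saving of the factor $N/p^2$ needed to go from the trivial bound $N^2/p^2$ down to the target. The main obstacle will be the character-sum analysis: because the length $N \asymp p^3$ matches the modulus $p^3$ of the post-Voronoi phase, the Bessel kernel sits at its transition range and the dual sum does not shorten, so the savings must come entirely from phase cancellation. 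Accurately tracking the resulting exponential sums, isolating the degenerate contributions (the zero-frequency mode $m'=0$ from Voronoi and any diagonal-type contribution from the $n$-sum), and showing that these either vanish or contribute at most $Np^\varepsilon$ will be the delicate part.
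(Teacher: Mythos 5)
Your opening step (Postnikov: $\alpha(n+p^2l)\ov{\alpha(n)} = e_p(\beta l\ov n)$) matches the paper's Proposition~\ref{prop:Postnikov}, but the approach diverges sharply after that, and there is a genuine gap in the middle that I do not think can be repaired with the tools you list. You detect the shift by Fourier inversion modulo $p^2$ and then apply Voronoi at the fixed modulus $p^3$. Two problems. First, the $a$-sum over $\mathbb{Z}/p^2\mathbb{Z}$ acting on the dual phase $e_{p^3}\bigl(-\ov{ap+\beta\ov n}\,m'\bigr)e_{p^2}(-an)$ is \emph{not} a Kloosterman or Sali\'e sum over $\mathbb{Z}/p^3\mathbb{Z}$: writing $a=a_0+pa_1$ and expanding $\ov{ap+\beta\ov n}$ $p$-adically, the $a_1$-sum gives pure orthogonality (a congruence on $m'\bmod p$), while the $a_0$-sum gives a quadratic Gauss sum of size $\sqrt p$ whose phase depends on $n\bmod p^2$. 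So the cancellation you obtain is a factor $\sqrt p$ plus a support condition, not the generic square-root cancellation of a Kloosterman sum modulo $p^3$. Second, and more seriously, that Gauss-sum phase injects a quadratic additive character of modulus $p^2$ into the $n$-sum, so the $n$-sum is no longer $\sum_{n\equiv n_0 (p)}\ov{\lambda_f}(n)w_N(n)$ (which Voronoi would render essentially as small as $\sqrt N/p$) but rather $\sum_n\ov{\lambda_f}(n)\,e_{p^2}(\cdots)\,w_N(n)$, which a second Voronoi only truncates to dual length about $p$, giving the much weaker bound $N/p$. Tallying the factors $\frac{1}{p^2}\cdot\frac{1}{p^3}\cdot p^{3/2}\cdot(\text{$m'$-sum}\ll p^3\cdot N)\cdot(N/p)$ yields roughly $N p^{3/2}$, which is worse than the trivial $Np$. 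The needed extra cancellation must come from the remaining phase $e_{p^3}(-\ov u m')$ in the $m'$-sum, and you have not offered a mechanism for it.

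The paper's proof is structured precisely to avoid this wall. Rather than Fourier inversion at the fixed modulus $p^2$, it inserts the Duke--Friedlander--Iwaniec $\delta$-symbol (Section~\ref{subsec:Delta}), producing an averaging variable $c\le 2\sqrt N$, and applies Voronoi at the \emph{variable} modulus $c$ (and $pc$). Because $c\ll\sqrt N$, the dual $m$- and $n$-sums collapse to lengths $\ll p^\varepsilon$ and $\ll p^{2+\varepsilon}$ respectively --- a genuine shortening that your modulus-$p^3$ Voronoi cannot achieve. Even then, as the paper notes after \eqref{eq:IntroE0}, the Weil/Deligne bounds only give $Np^{3/4+\varepsilon}$, which still falls short of $Np^\varepsilon$; the remaining saving comes from spectrally decomposing the $c$-average via the Bruggeman--Kuznetsov formula (Proposition~\ref{prop:Kuznetsov}) and the spectral large sieve (Proposition~\ref{prop:spectrallargesieve}). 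Your plan contains no analogue of the $c$-averaging or of the spectral step, and ``Cauchy--Schwarz plus Rankin--Selberg on average'' is not a substitute for the large-sieve input that closes the final $p^{3/4}$ gap. If you want to pursue a route without spectral theory, you would need a different elementary idea (compare the approach of Munshi--Singh \cite{ms19}, which the paper explicitly contrasts with its own); as written, the proposal stalls well short of the stated bound.
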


\begin{myremark}
    \label{rem:nlbound}
     As $w_N(\cdot)$ is supported in $[N,2N]$, we have that, in \eqref{eq:redthm}, $ 0< l \leq \frac{N}{p^2}$, and $n \asymp N$.
\end{myremark}
  The trivial bound on $S(N,\alpha)$ is $\frac{N^2}{p^2}p^{\varepsilon} \ll Np^{1+\varepsilon}$. To improve on this, we first note that $S(N,\alpha)$ displays a conductor dropping phenomenon, notably
\begin{equation}
\label{eq:introcond}
    \alpha(n+p^2l)\ov{\alpha(n)} = e_p(a_\alpha l \ov{n}).
\end{equation}
for some non-zero $a_\alpha \Mod{p}$.
 
Notice that, when $p \mid l$ in \eqref{eq:redthm}, $S(N,\alpha)$ does not have any cancellations. However, we are saved by the fact that the number of such terms is $O(Np^{\varepsilon})$. So for the rest of the sketch, we assume $(l,p)=1$.

In order to separate the variables $n$ and $(n+p^2l)$ we introduce a delta symbol (see Section \ref{subsec:Delta}). This introduces a new averaging variable $c$, and once again we split the resulting expression into whether gcd$(c,p) =1$, or not.
\\The former requires more work, and we focus on this term for the sketch.  We have 
\begin{multline}
    \label{eq:IntroS3}
    S(N,\alpha) \approx \sideset{}{^*}\sum_{l \leq \frac{N}{p^2}}\sideset{}{^*}\sum_{c \leq \sqrt{N}} \ \sideset{}{^*}\sum_{a \shortmod{c}} e\pb{\frac{-ap^2l}{c}} \int_{-\infty}^{\infty} g_c(v)e(-p^2lv)  \\ \cdot \sum_{m \asymp N} \lambda_f(m)e\pb{\frac{am}{c}}w_N(m)e(mv)   \sideset{}{^*}\sum_{n \asymp N} \ov{\lambda}_f(n)e_p(a_\alpha l \ov{n})e\pb{\frac{-an}{c}}w_N(n)e(-nv) \ dv.
\end{multline}
Here, $g_c(v)$ is a smooth function which is small when $v \gg \frac{1}{c\sqrt{N}}$.

We can now use the Voronoi summation formula (see Sec. \ref{subsec:voronoi}), for the $m$ and $n$ sums in \eqref{eq:IntroS3}. Note that, as there is an extra additive character $e_p(a_\alpha l \ov{n})$ in the $n$ sum, we need to use a modified Voronoi summation formula (see Prop \ref{prop:modifiedVoronoi}) here. While this modification does introduce some extra terms to the final expression, these terms are pretty easily dealt with (see Section \ref{subsec:Remtermsvoronoi}). We get that
\begin{equation}
    \label{eq:IntroE0}
    S(N,\alpha) \approx \sideset{}{^*}\sum_{\substack{l \leq \frac{N}{p^2} \\ m \ll p^{\varepsilon} \\ n \ll p^{2+\varepsilon}}} \frac{\lambda_f(m)\ov{\lambda_f}(n)}{p^2} \sideset{}{^*}\sum_{c \leq \sqrt{N}}\frac{1}{c^2} S(\ov{p}^2n-m,-p^2l;c)\text{Kl}_3(n\ov{c}^2a_\alpha l,1,1;p) I_N(c,l,m,n),
\end{equation}
with  $S(a,b;c) = \sum_{mn \equiv 1 \Mod{c}} e\pb{\frac{am + bn}{c}}$, and $\text{Kl}_3(x,y,z;c) = \sum_{pqr \equiv 1 \Mod{c}} e\pb{\frac{px + qy + rz}{c}}$ denoting the Kloosterman and hyper-Kloosterman sums respectively. Also, $I_N(\cdot)$ is an integral of special functions (see \eqref{eq:INclmn}) which trivially satisfies $I_N(c,l,m,n) \ll_{f,\varepsilon} \frac{1}{c}N^{\frac32+\varepsilon}$.

Here, we should indicate that while the original sums were of length $N$ each (and $N \ll p^{3+\varepsilon}$), the dual sums are significantly shorter - one is $m \ll p^{\varepsilon}$, and the other $n \ll p^{2 + \varepsilon}$. Even though this represents significant savings (by a factor of $\frac{N^2}{p^2}$), it is still not sufficient. If we use Weil's bound for the Kloosterman sum, and Deligne's bound for the hyper-Kloosterman sum, along with the trivial bound on $I_N(\cdot)$, we get \eqref{eq:IntroE0} is bounded by $Np^{\frac54 + \varepsilon}$. This still falls short of the desired bound by a factor of $p^{-\frac54}$.

In order to get more cancellations, we want to use a spectral decomposition for the $c$-sum. We do this by using the Bruggeman Kuznetsov formula (Prop. \ref{prop:Kuznetsov}). Note that if gcd$(r,p)=1$, the hyper-Kloosterman sum can be decomposed as 
\begin{equation*}
    \text{Kl}_3(r,1,1;p) = \frac{1}{\phi(p)}\sum_{\chi(p)} \tau(\chi)^3 \chi(r).
\end{equation*}
Using this we rewrite \eqref{eq:IntroE0}  as
\begin{equation*}
   \sideset{}{^*}\sum_{\substack{l \leq \frac{N}{p^2} \\ m \ll p^{\varepsilon} \\ n \ll p^{2+\varepsilon}}} \frac{\lambda_f(m)\ov{\lambda_f}(n)}{\phi(p)p^2} \sum_{\chi(p)} \tau(\chi)^3 \chi(n a_\alpha l) \sideset{}{^*}\sum_{c}\frac{1}{c^2}S(\ov{p}(n-p^2m),-pl;c) \ov{\chi}^2(c) I_N(c,l,m,n). 
\end{equation*}
We now use the Bruggeman-Kuznetsov formula for $\Gamma_0(p)$ with central character $\ov{\chi}^2$ at the cusps $\infty$ and $0$. We use two different versions of the integral transforms, depending on whether the integral $I_{N}(c,l,m,n)$ has oscillatory behavior, or not (see Section \ref{subsec:kuznetsov}). We focus on the non-oscillatory case here, the other one is similar. We have

\begin{multline}
\label{eq:sketchfinal}
    S(N,\alpha) \approx \frac{N}{p^2} \sideset{}{^*}\sum_{\chi(p)} \chi(a_\alpha) \frac{\tau(
    \chi)^3}{p^3} \sum_{t_j}  \sum_{\pi \in \mathcal{H}_{it_j}(p,\chi^2)}  L(\tfrac12, \ov{\pi}\otimes\chi) \sum_{\substack{m \ll p^{\varepsilon} \\ n \ll p^{2+\varepsilon}}} \frac{\ov{\lambda}_f(n)\ov{\lambda}_{\pi}(\abs{p^2m-n})\chi(n)}{\sqrt{\abs{p^2m-n}}} \\ +\text{(Holomorphic terms)} + \text{(Eisenstein terms)}.
\end{multline}
Here $\mathcal{H}_{it} (n,\psi)$ denotes the set of Hecke-Maass newforms of conductor $n$, central character $\psi$, and spectral parameter $it$. Analysing the associated integral transforms, it suffices to restrict \eqref{eq:sketchfinal} to when $t_j \ll p^{\varepsilon}$.

The contribution of the holomorphic and Eisenstein terms are similar to the Maass form term written down here. 
\\Using the Cauchy-Schwarz inequality, it then suffices to bound the sums 
\begin{equation*}
    \sum_{t_j \ll p^{\varepsilon}} \sideset{}{^*}\sum_{\chi(p)} \sum_{\pi \in \mathcal{H}_{it_j}(p,\chi^2)}   \abs{L(\tfrac12, \ov{\pi}\otimes\chi) }^2 \ , \ \  \sum_{t_j \ll p^{\varepsilon}} \sideset{}{^*}\sum_{\chi(p)} \sum_{\pi \in \mathcal{H}_{it_j}(p,\chi^2)}  \sum_{n \ll p^{2+\varepsilon}} \left \vert\frac{\ov{\lambda}_f(n)\ov{\lambda}_{\pi}(\abs{p^2m-n})\chi(n)}{\sqrt{\abs{p^2m-n}}} \right \vert ^2.
\end{equation*}

Using the fact that $\ov{\pi} \otimes \chi  \in \mathcal{H}_{it_j}(p^2,1) $, we can get the required bounds by applying the spectral large sieve inequality on each of the two sums. The details can be seen in Section \ref{sec:SpectralAnalysis}.

If we compare our proof and the proof of Prop \ref{prop:PYshortmoment}, we can see that the authors in \cite{py4m} do not use a delta symbol for the shifted convolution sum at the beginning, instead relying on  an approximate functional equation-type formula for the divisor function. A more significant difference can be observed later, by considering the shape of the terms in \eqref{eq:sketchfinal} and equation (1.20) in \cite{py4m}. Unlike our case, the authors get a product of three $L$-functions that they then bound using Holder's inequality. This reduces their problem to bounding the fourth moment of L -functions, which is accomplished via the use of the spectral large sieve inequality. 

\begin{myremark}
    We expect the proof steps outlined here to easily generalise to the case when $f$ is a Hecke-Maass cusp form. We do not use any result which is known for holomorphic cusp forms, but not for Maass forms. In particular, our proof does not need Deligne's bound on the Fourier coefficients of holomorphic cusp forms.
\end{myremark}

\subsection{Short Moments}
\label{subsec:shortmominlit}
Theorem \ref{thm:MainThm} joins a growing list of results on short moments of families of $L$-functions. The general idea here is to work with a subfamily of $L$-functions that exhibit a conductor lowering phenomenon. Say, we start with a family $\mathcal{F}$ of analytic $L$-functions; we choose a subfamily $\mathcal{F}_0$, such that the quantity $C(\pi_1 \otimes \ov{\pi_2})$, where $C(\pi)$ denotes the analytic conductor of $\pi$, is of a comparatively smaller size, when $\pi_1, \pi_2$ are in $\mathcal{F}_0$. (See Section 1.5 in \cite{py4m} for a more detailed discussion on this). 

We see an archimedean version of this in Propositions \ref{prop:IwaniecZeta} and \ref{prop:GoodCusp}. The full family of $L$-functions considered in Proposition \ref{prop:GoodCusp},  for example, is $\mathcal{F}=\{ L(f \otimes \abs{\cdot}^{it},\tfrac12 ), T\leq t \leq\ 2T\}.$ Here $f$ is a level $1$ cusp form. The analytic conductor of $L(f \otimes \abs{\cdot}^{it_1} \otimes \ov{f \otimes \abs{\cdot}^{it_2}})$ is proportional to $\abs{t_1 - t_2}^4$. Now, for arbitrary $t_1,t_2$ in $[T,2T]$, this can be as high as $T^4$. However, if we restrict to the subfamily, $\mathcal{F}_0=\{ L(f \otimes \abs{\cdot}^{it},\tfrac12 ), T\leq t \leq\ T+T^{\tfrac23}\}$ - as considered in \eqref{eq:GoodCusp}, we have that $\abs{t_1 - t_2}^4 \leq T^{\tfrac83}$, which is significantly smaller than $T^4$.
This leads to a conductor lowering analogous to \eqref{eq:introcond}.

For our work, we start with the family, $\mathcal{F}=\{ L(f \otimes \chi,\tfrac12), \chi \Mod{q}\}$, where $q=p^3$. Using some of the results in \cite{Li}, we know that the analytic conductor of $L\pb{(f \otimes \chi_1 \otimes \ov{(f \otimes \chi_2)},\tfrac12 }$ depends on 
the fourth power of the conductor of $(\chi_1\ov{\chi}_2)$. Now, for arbitrary $\chi_1$, and $\chi_2$, this can be as high as $q^4$. To get a short moment in the level aspect, we choose a subfamily of $\mathcal{F}$ that lowers this significantly. We consider the subfamily $\mathcal{F}_0=\{ L(f \otimes (\alpha\cdot \psi), \tfrac12 ), \psi \Mod{q^{\tfrac23}}\},$ for a fixed primitive character $\alpha \Mod{q}$.Now, the analytic conductor of $L\pb{(f \otimes (\alpha\cdot\psi_1)) \otimes \ov{(f \otimes (\alpha\cdot\psi_2))},\tfrac12 }$ (more accurately, of $L\pb{(f\otimes \ov{f} \otimes (\psi_1\ov{\psi}_2),\tfrac12 }$) depends on $\pb{\text{conductor}(\psi_1 \ov{\psi_2})}^4 \leq q^{\frac83}$, which is significantly lower than $q^4.$

Short moments in the level aspect, in a $GL(1)$ setting, have also been considered in \cite{Nun}, and in \cite{MW}. Another similar application was considered along Galois orbits in \cite{KMN}.

\subsection{Acknowledgements} I am deeply grateful to my doctoral advisor, Matthew P. Young, for suggesting this problem, and for engaging in extensive discussions on this subject throughout the process of working on this paper. I would also like to thank Peter Humphries and Chung-Hang Kwan for some insightful discussions regarding this work. I am also grateful to the reviewer for their valuable comments and suggestions.

\section{Preliminary Results from Analytic Number Theory}
\label{sec:ANTresults}
In this section, we note down some of the analytic number theory preliminaries we will use later. We omit proofs in most cases. Interested readers can check the references listed next to the results.
\subsection{Approximate Functional Equation}
\label{subsec:spproxfe}
We state a version of the approximate functional equation for analytic $L$-functions. 
For proofs, see Theorem $5.3$ and Proposition $5.4$ in \cite{ikant}.
\begin{myprop}
Let $X>0$, and $L(f,s)$ be an $L$-function given by $L(f,s) = \sum_{n\geq1} \frac{\lambda_f(n)}{n^s} $ when $\Re{(s)}>1$. If the completed $L$-function $\Lambda(f,s)$ is entire, then we have the following approximation for $L\left(f,s \right ) $, when $ 0 \leq \Re{(s)} \leq 1 $ - 
\label{prop:approxfe}
    \begin{equation} 
    L\left(f,s \right ) = \sum_{n} \frac{\lambda_f(n)}{n^s}V\left ( \frac{n}{X\sqrt{q}} \right ) + \varepsilon\left(f,s\right)\sum_{n}\frac{\ov{\lambda_f}(n)} {n^{1-s}}V\left ( \frac{nX}{\sqrt{q}} \right ).
    \end{equation}

Here, $q$ is the conductor of $L(f,s)$, and $V(y)$ is a smooth function that satisfies the following bounds -
\begin{equation}
    V(y) \ll \left ( 1 +     \frac{y}{\sqrt{\mathfrak{q}_\infty}} \right )^{-A},
\end{equation}
where $q\cdot\mathfrak{q}_\infty$ is the analytic conductor of $L(f,s)$ and $A>0$.

\end{myprop}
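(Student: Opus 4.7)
My plan is to derive the identity via a standard Mellin--Barnes contour shift combined with the functional equation of $\Lambda(f,s)$, essentially following the presentation in Iwaniec--Kowalski.

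I first fix an entire test function $G(u)$ satisfying $G(0)=1$, $G(-u)=G(u)$, and rapid decay on vertical strips (for instance $G(u)=e^{u^2}$). Writing the completed $L$-function as $\Lambda(f,s) = q^{s/2}\gamma_\infty(f,s)L(f,s)$, I define the test function $V$ by
\begin{equation*}
    V(y) = \frac{1}{2\pi i}\int_{(2)} \frac{\gamma_\infty(f,s+u)}{\gamma_\infty(f,s)}\, G(u)\, y^{-u}\, \frac{du}{u},
\end{equation*}
where $(2)$ denotes the vertical line $\Re u = 2$. I then consider the Mellin--Barnes integral
\begin{equation*}
    I(s) = \frac{1}{2\pi i}\int_{(2)} \Lambda(f,s+u)\, X^u\, G(u)\, \frac{du}{u}.
\end{equation*}
On $\Re u = 2$ the Dirichlet series for $L(f,s+u)$ converges absolutely, so expanding and interchanging sum and integral produces
\begin{equation*}
    I(s) = q^{s/2}\gamma_\infty(f,s)\sum_n \frac{\lambda_f(n)}{n^s}\, V\!\left(\frac{n}{X\sqrt q}\right).
\end{equation*}

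Next I shift the contour from $\Re u = 2$ down to $\Re u = -2$. Since $\Lambda(f,s)$ is entire by hypothesis, this crosses only the simple pole of $1/u$ at $u=0$, which contributes the residue $\Lambda(f,s)$. On the shifted line I apply the functional equation $\Lambda(f,s+u) = \varepsilon(f)\Lambda(\ov{f},1-s-u)$, substitute $u\mapsto -u$ (permissible since $G$ is even), and expand $L(\ov{f},1-s+u)$ as a Dirichlet series. After dividing through by $q^{s/2}\gamma_\infty(f,s)$ and collecting the archimedean prefactors into $\varepsilon(f,s) = \varepsilon(f) q^{\frac12-s}\gamma_\infty(\ov f,1-s)/\gamma_\infty(f,s)$, the second sum of the proposition emerges, with the weight $V(nX/\sqrt q)$ being the analogous integral with $\gamma_\infty(\ov f,1-s+u)/\gamma_\infty(\ov f,1-s)$ in place of the gamma ratio. (A symmetric choice of $G$ ensures the two occurrences of $V$ can be packaged as a single function to the accuracy required here.)

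For the decay bound on $V(y)$: by Stirling's formula, on any vertical line the ratio $\gamma_\infty(f,s+u)/\gamma_\infty(f,s)$ behaves essentially like $\mathfrak q_\infty^{u/2}$, with polynomial factors in $|\Im u|$ that are dominated by the super-exponential decay of $G(u)$. For $y \gg \sqrt{\mathfrak q_\infty}$, shifting the contour in the definition of $V$ to $\Re u = A$ for an arbitrary $A>0$ produces $V(y) \ll (y/\sqrt{\mathfrak q_\infty})^{-A}$, while for $y \ll \sqrt{\mathfrak q_\infty}$ the trivial bound $V(y)\ll 1$ suffices; combining gives the stated estimate. The only genuine care needed is the bookkeeping of the archimedean factors when applying Stirling uniformly in $\Im u$, but since $G$ can be chosen with arbitrarily rapid decay this is routine rather than a real obstacle.
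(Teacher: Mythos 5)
Your proof is correct and follows exactly the standard Mellin--Barnes argument from Iwaniec--Kowalski (Theorem 5.3 and Proposition 5.4), which is precisely the reference the paper cites in lieu of giving a proof. The one subtlety you flag --- that in general the two sums carry slightly different weight functions (built from $\gamma_\infty(f,\cdot)$ and $\gamma_\infty(\ov f,\cdot)$ respectively), and the paper's single-$V$ formulation is a notational simplification --- is real, and your remark that a suitable choice of $G$ handles it is the right thing to say.
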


As $\abs{\varepsilon\left(f,\tfrac{1}{2}\right)} = 1$, we have the following immediate corollary (using $s = \frac12, X=1$), 
\begin{mycoro}
\label{cor:approx.func}
Let $L(f,s), q, \lambda$ be as above.  For any fixed $\varepsilon>0, A>0$,
    \begin{equation}\left \lvert L\left(f,\tfrac{1}{2} \right ) \right \rvert^2 \leq 4\left \lvert \sum_{n \leq q^{1+\varepsilon}} \frac{\lambda_f(n)}{\sqrt{n}}V\left ( \frac{n}{\sqrt{q}} \right ) \right \rvert^2 + O(q^{-A}).
    \end{equation}

\end{mycoro}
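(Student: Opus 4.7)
The plan is to apply Proposition \ref{prop:approxfe} directly at $s = \tfrac{1}{2}$ and $X = 1$. This gives
\[
L(f, \tfrac{1}{2}) = S_1 + \varepsilon(f, \tfrac{1}{2})\, S_2, \qquad S_1 = \sum_n \frac{\lambda_f(n)}{\sqrt{n}}\, V\!\left(\tfrac{n}{\sqrt{q}}\right), \quad S_2 = \sum_n \frac{\overline{\lambda_f}(n)}{\sqrt{n}}\, V\!\left(\tfrac{n}{\sqrt{q}}\right).
\]
Since the root number satisfies $|\varepsilon(f,\tfrac{1}{2})| = 1$, the triangle inequality yields $|L(f,\tfrac{1}{2})| \leq |S_1| + |S_2|$. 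Because $V$ arises as an inverse Mellin transform of a real-valued test function and may be taken to be real, one has $S_2 = \overline{S_1}$, and hence $|S_1| = |S_2|$. Consequently $|L(f,\tfrac{1}{2})|^2 \leq 4 |S_1|^2$, which already accounts for the factor $4$ in the statement.

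It remains to truncate $S_1$ at $n \leq q^{1+\varepsilon}$. In the intended application $f$ is fixed and only the level $q$ varies, so the archimedean conductor $\mathfrak{q}_\infty$ is $O(1)$ and the proposition's rapid-decay bound reduces to $V(y) \ll_{A'} (1+y)^{-A'}$. Writing $S_1 = S_1^{\mathrm{trunc}} + \mathrm{Tail}$, we have for every $A' > 0$,
\[
|\mathrm{Tail}| \leq \sum_{n > q^{1+\varepsilon}} \frac{|\lambda_f(n)|}{\sqrt{n}} \left(\tfrac{n}{\sqrt{q}}\right)^{-A'} \ll q^{A'/2} \sum_{n > q^{1+\varepsilon}} n^{-\tfrac{1}{2} - A' + \varepsilon'} \ll q^{-A},
\]
using the bound $\lambda_f(n) \ll_{f,\varepsilon'} n^{\varepsilon'}$ and choosing $A'$ sufficiently large in terms of $A$. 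Since $|S_1^{\mathrm{trunc}}| \ll q^{O(\varepsilon)}$ trivially, expanding $|S_1|^2 = |S_1^{\mathrm{trunc}} + \mathrm{Tail}|^2$ gives $|S_1|^2 \leq |S_1^{\mathrm{trunc}}|^2 + O(q^{-A})$ after adjusting $A$. Combined with $|L(f,\tfrac{1}{2})|^2 \leq 4|S_1|^2$, this yields the claimed inequality.

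The argument is essentially a bookkeeping exercise on top of Proposition \ref{prop:approxfe}, so there is no real obstacle; the only subtle point is checking that $V$ can be taken real-valued (so that $|S_1| = |S_2|$) and that $\mathfrak{q}_\infty = O(1)$ in the setting of interest (so that the truncation at $q^{1+\varepsilon}$ lies safely in the rapid-decay range of $V$). Both are standard in the fixed-form, level-aspect setup of the paper.
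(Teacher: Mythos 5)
Your proof is correct and takes essentially the same route the paper has in mind: the paper presents the corollary as an immediate consequence of Proposition \ref{prop:approxfe} at $s=\tfrac12$, $X=1$ with $|\varepsilon(f,\tfrac12)|=1$, which is exactly your combination of the triangle inequality, the observation $S_2=\overline{S_1}$ (so the factor $4$ appears), and truncation via the rapid decay of $V$. One small slip is that $|S_1^{\mathrm{trunc}}|\ll q^{1/2+O(\varepsilon)}$ trivially, not $q^{O(\varepsilon)}$; this is harmless, since taking $A'$ sufficiently large still makes the cross term in $|S_1^{\mathrm{trunc}}+\mathrm{Tail}|^2$ of size $O(q^{-A})$.
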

\subsection{Postnikov Formula}
\label{subsec:postnikov}
We will need to use a particular case of the Postnikov formula stated below.

\begin{myprop}
\label{prop:Postnikov}
    Let $p$ be an odd prime, $\alpha$ be a primitive character modulo $p^3$ and $l \in \mathbb{Z}_{>0}$. Then for all $n$ with  gcd$(n,p)= 1, \ \alpha(n+p^2l)\ov{\alpha(n)} = e_p(a_\alpha l \ov{n}),$ for some $a_\alpha \in \pb{\mathbb{Z}/{p\mathbb{Z}}}^{\times},$ independent of $n$.
\end{myprop}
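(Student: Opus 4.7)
The plan is to apply multiplicativity of $\alpha$ to reduce the identity to an evaluation on the principal filtration subgroup $U_2 := 1 + p^2 \mathbb{Z}/p^3\mathbb{Z}$ of $(\mathbb{Z}/p^3\mathbb{Z})^\times$. First, I note that the hypothesis should read $\gcd(n,p) = 1$ (since $\alpha(n) = 0$ otherwise, the statement would be vacuous). Under this assumption, let $\bar{n}$ denote the multiplicative inverse of $n$ modulo $p^3$. Then $(n+p^2 l)\bar{n} \equiv 1 + p^2 l \bar{n} \pmod{p^3}$, and multiplicativity of $\alpha$ gives
$$\alpha(n+p^2 l)\,\overline{\alpha(n)} = \alpha(1 + p^2 l \bar{n}).$$

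Next, I would analyze the restriction of $\alpha$ to $U_2$. The key computation is that for odd $p$,
$$(1 + p^2 m_1)(1 + p^2 m_2) = 1 + p^2(m_1 + m_2) + p^4 m_1 m_2 \equiv 1 + p^2(m_1 + m_2) \pmod{p^3},$$
so the map $m \mapsto 1 + p^2 m$ is a group isomorphism from $\mathbb{Z}/p\mathbb{Z}$ (additive) to $U_2$ (multiplicative). Hence $\alpha\big|_{U_2}$ corresponds to a character of $\mathbb{Z}/p\mathbb{Z}$, and there is a unique $a_\alpha \in \mathbb{Z}/p\mathbb{Z}$ with
$$\alpha(1 + p^2 m) = e_p(a_\alpha m)$$
for every $m$. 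Taking $m = l\bar{n}$ produces the claimed identity.

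To see that $a_\alpha$ is in fact a \emph{unit} modulo $p$, I would argue by contrapositive: if $a_\alpha \equiv 0 \pmod p$, then $\alpha$ is trivial on $U_2$; but $U_2$ is precisely the kernel of the natural reduction $(\mathbb{Z}/p^3\mathbb{Z})^\times \twoheadrightarrow (\mathbb{Z}/p^2\mathbb{Z})^\times$, so $\alpha$ would descend to a character modulo $p^2$, contradicting the primitivity of $\alpha$. The argument has no real obstacle; the only care needed is to confirm that in the multiplication on $U_2$, the cross term $p^4 m_1 m_2$ vanishes modulo $p^3$ (so that $U_2$ is genuinely an additive copy of $\mathbb{F}_p$), which is where a uniform $p$-adic logarithm on this filtration layer becomes available.
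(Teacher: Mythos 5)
Your proof is correct and takes essentially the same approach as the paper: both exploit the fact that $(1+p^2m_1)(1+p^2m_2)\equiv 1+p^2(m_1+m_2)\pmod{p^3}$ to realize $\alpha$ restricted to $1+p^2\mathbb{Z}/p^3\mathbb{Z}$ as an additive character $e_p(a_\alpha\,\cdot\,)$ of $\mathbb{Z}/p\mathbb{Z}$, then reduce $\alpha(n+p^2l)\overline{\alpha(n)}$ to $\alpha(1+p^2l\overline{n})$ via multiplicativity. You also correctly note the typo in the statement (the hypothesis should read $\gcd(n,p)=1$), and your explanation of why $a_\alpha\neq 0$ (via the kernel of $(\mathbb{Z}/p^3\mathbb{Z})^\times\twoheadrightarrow(\mathbb{Z}/p^2\mathbb{Z})^\times$ and primitivity) is slightly more explicit than the paper's one-line remark, but the substance is identical.
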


\begin{proof}
Define a function $\psi$ on $\mathbb{Z}$ as $\psi(n) = \alpha(1+p^2n)$. We have
\begin{equation*}
\psi(m+n)= \alpha (1+p^2(m+n))= \alpha (1+p^2(m+n)+p^4mn)= \alpha(1+p^2n)\cdot\alpha(1+p^2m) = \psi(m)\cdot\psi(n).
\end{equation*}
Hence, $\psi$ is an additive character modulo $p$, and $\psi(n) = e_p(a_\alpha n)$, for some  $a_\alpha \in \pb{\mathbb{Z}/{p\mathbb{Z}}}^{\times}$ (as $\alpha$ is primitive, $a_\alpha$ cannot be $0$).
\\Now, when gcd$(n,p)=1$, we have $\alpha(n+p^2l)\ov{\alpha(n)} = \alpha(1 + p^2l\ov{n}) = \psi(l\ov{n}) = e_p(a_\alpha l \ov{n})$.
\end{proof}
\subsection{Delta Symbol}
\label{subsec:Delta}

This section is a brief review of the delta-symbol method from Section $20.5$ in \cite{ikant}. 

Let $w(u)$ be a smooth, compactly supported function on $[C,2C]$ for some $C>0$. Additionally suppose that $\sum_{q=1}^\infty w(q)= 1$.  Then we can rewrite the Kronecker delta function at zero as
\begin{equation}
\label{eq:delta}
    \delta(n) = \sum_{q \mid n } \pb{w(q) - w\pb{\frac{\lvert n \rvert}{q}}}.
\end{equation} 

\begin{myprop}
\label{prop:delta}
Let $\delta$ be as in \eqref{eq:delta}. Using orthogonality of the additive characters modulo $q$, we can rewrite \eqref{eq:delta} as
\begin{equation}
    \label{eq:deltamodified}
    \delta(n)= \sum_{c=1}^{\infty} S(0,n;c) \Delta_c(n).
\end{equation}

\end{myprop}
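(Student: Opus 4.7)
The plan is to start from the divisor-sum expression \eqref{eq:delta} and replace the indicator $\mathbf{1}_{q \mid n}$ by its character expansion. Using orthogonality of the additive characters modulo $q$, I would write
\[
\mathbf{1}_{q\mid n} \;=\; \frac{1}{q}\sum_{a \Mod{q}} e\!\left(\frac{an}{q}\right),
\]
and substitute this into \eqref{eq:delta} to get
\[
\delta(n) \;=\; \sum_{q=1}^{\infty} \frac{w(q)-w(|n|/q)}{q}\sum_{a\Mod{q}} e\!\left(\frac{an}{q}\right).
\]

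Next, I would decompose the inner additive-character sum by the common factor of $a$ and $q$. Writing $d=q/\gcd(a,q)$, each residue class $a \Mod{q}$ corresponds uniquely to a divisor $d \mid q$ together with a reduced residue $a' \Mod{d}$ via $a/q = a'/d$. This produces the familiar Ramanujan-type identity
\[
\sum_{a\Mod{q}} e\!\left(\frac{an}{q}\right) \;=\; \sum_{d\mid q}\,\sideset{}{^*}\sum_{a'\Mod{d}} e\!\left(\frac{a'n}{d}\right) \;=\; \sum_{d\mid q} S(0,n;d),
\]
since $S(0,n;d) = \sum^{*}_{a \Mod{d}} e(an/d)$ is precisely the Ramanujan sum.

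Finally, I would swap the order of summation, letting $c$ play the role of $d$ and writing $q=ck$. This gives
\[
\delta(n) \;=\; \sum_{c=1}^{\infty} S(0,n;c)\sum_{\substack{q \geq 1 \\ c\mid q}} \frac{w(q)-w(|n|/q)}{q},
\]
so that the claim \eqref{eq:deltamodified} holds with
\[
\Delta_c(n) \;=\; \sum_{k=1}^{\infty}\frac{w(ck)-w(|n|/(ck))}{ck}.
\]
The only subtlety to verify is absolute convergence, which justifies the interchange of summations. Since $w$ is smooth and compactly supported on $[C,2C]$, both $w(ck)$ and $w(|n|/(ck))$ vanish outside a bounded range of $k$ (depending on $c$ and $n$), so each $\Delta_c(n)$ is a finite sum and the double sum over $(c,k)$ is in fact finite for every fixed $n$. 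Hence the manipulations above are unconditional, and no further estimates are required.
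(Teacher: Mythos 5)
Your proof is correct and is essentially the standard argument (as found in Iwaniec--Kowalski, which the paper cites without reproducing a proof): expand the divisibility indicator by orthogonality of additive characters, reorganize the full sum $\sum_{a\Mod{q}} e(an/q)$ into Ramanujan sums over divisors $d\mid q$, and swap the order of summation, with the compact support of $w$ making every sum finite. No issues.
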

Here, $$S(0,n;c) = \sideset{}{^*}\sum_{d \shortmod{c}} e\pb{\frac{dn}{c}}, \ \text {and } \Delta_c(u) = \sum_{r=1}^{\infty} \frac{1}{cr}\pb{w(cr) - w\pb{\frac{\abs{u}}{cr}}}. $$  
Since $\Delta_c(u)$ is not compactly supported, we multiply it by a smooth function $f$  supported on $[-2N,2N]$  ($N >0$) such that $f(0)$ = 1. We also assume, for any $a\in \mathbb{N}$, $f^{(a)}(u) \ll N^{-a}$.

Thus we have 
\begin{equation}
\label{eq:deltanobound}
    \delta(n) = \sum_{c=1}^{\infty} S(0,n;c)\Delta_c(n)f(n).
\end{equation}

As $w$ is supported on $[C,2C]$ and $f$ is supported on $[N,2N]$, the sum on the right hand side is only upto $c \leq 2 \text{ max } \pb{C, \frac{N}{C}} = X$. The optimal choice would thus be to take $C= \ \sqrt{N}$, giving $X = 2C = 2 \sqrt{N}$.

Thus \eqref{eq:deltanobound} becomes

\begin{equation}
\label{eq:deltabound}
    \delta(n) = \sum_{c \leq 2C} S(0,n;c) \Delta_c(n)f(n).
\end{equation}

We can get additional bounds on the derivatives of $\Delta_c(u)$ if we assume more conditions on $w$. 

\begin{myprop}
Suppose $w(u)$ is smooth, compactly supported in the segment $C \leq u \leq 2C$ with $C \geq 1$. Additionally, suppose that $\sum_{q=1}^{\infty} w(q) =1$, and $w(u)$ has derivatives satisfying 
\begin{equation}
    w^{(a)} (u) \ll C^{-a-1},
\end{equation}
for arbitrarily large $a \in \mathbb{N}$. Then for any $c \geq 1$ and $u \in \mathbb{R}$  we have 
\begin{equation}
\label{eq:Deltacbound}
    \Delta_c(u) \ll \frac{1}{(c+C)C} + \frac{1}{\pb{\lvert u \rvert +cC}},
\end{equation}

\begin{equation}
\label{eq:Deltacderbound}
    \Delta_c^{(a)}(u) \ll (cC)^{-1} (\abs{u} + cC)^{-a}.
\end{equation}
\end{myprop}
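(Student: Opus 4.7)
The proposition is a standard bound in the delta-method literature, essentially appearing in the treatment of the Kloosterman delta symbol in \cite{ikant}. My plan is to decompose
\[
\Delta_c(u) = A_c - B_c(u), \qquad A_c := \sum_{r \geq 1} \frac{w(cr)}{cr}, \quad B_c(u) := \sum_{r \geq 1} \frac{w(\abs{u}/(cr))}{cr},
\]
and to exploit the key identity (via the substitutions $s = cr$ and $s = \abs{u}/(cr)$ respectively)
\[
\int_0^\infty \frac{w(cr)}{cr}\,dr \;=\; \int_0^\infty \frac{w(\abs{u}/(cr))}{cr}\,dr \;=\; \frac{1}{c}\int_0^\infty \frac{w(s)}{s}\,ds.
\]
Thus the continuous analogues of the two sums cancel exactly, and $\Delta_c(u)$ is purely a discretization error to be controlled by Poisson summation in the $r$-variable.

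I would split the first bound into two regimes. When $\abs{u} < cC$, the support condition $\abs{u}/(cr) \in [C, 2C]$ combined with $r \geq 1$ forces $B_c(u) = 0$ outright, so $\Delta_c(u) = A_c$, and a trivial estimate on the support $r \in [C/c, 2C/c]$ of $A_c$ (giving $O(C/c + 1)$ integer terms, each of size $O(C^{-2})$) yields $A_c \ll (cC)^{-1} + C^{-2}$, which is absorbed by the second term of the claim since $(\abs{u}+cC)^{-1} \geq (2cC)^{-1}$ in this regime. When $\abs{u} \geq cC$, both $A_c$ and $B_c(u)$ are genuinely of size $(cC)^{-1}$ and the cancellation must be extracted: I apply Poisson summation to each sum, observe that the $k=0$ Fourier modes cancel by the integral identity above, and bound the $k \neq 0$ Fourier coefficients via repeated integration by parts using $w^{(a)} \ll C^{-a-1}$. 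The first term $((c+C)C)^{-1}$ then arises as the residual discretization error of $A_c$, and the second term $(\abs{u}+cC)^{-1}$ from the analogous estimate for $B_c(u)$, whose relevant support contains $\asymp \abs{u}/(cC)$ lattice points.

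For the derivative bound, the argument is more direct since $A_c$ is independent of $u$, so
\[
\Delta_c^{(a)}(u) = -\mathrm{sign}(u)^a \sum_{r \geq 1} \frac{w^{(a)}(\abs{u}/(cr))}{(cr)^{a+1}}.
\]
The support condition again restricts $r \in [\abs{u}/(2cC), \abs{u}/(cC)]$, so the sum is empty when $\abs{u} < cC$ and otherwise contains $\asymp \abs{u}/(cC)$ integer values, each contributing at most $(C/\abs{u})^{a+1} \cdot C^{-a-1} \asymp \abs{u}^{-a-1}$. Summing yields $\abs{\Delta_c^{(a)}(u)} \ll (cC)^{-1}\abs{u}^{-a}$, matching the claim since $\abs{u} \asymp \abs{u} + cC$ in the nontrivial range (and the claim is trivial otherwise).

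The main technical obstacle is the Poisson summation step in the $\abs{u} \geq cC$ regime: the non-zero Fourier coefficients of $f_B(r) = w(\abs{u}/(cr))/(cr)$ must be controlled via integration by parts against the oscillatory phase $e(-kr)$, and the fact that the support of $f_B$ depends non-trivially on $\abs{u}$ complicates the bookkeeping. However, the derivative hypothesis $w^{(a)} \ll C^{-a-1}$ is precisely what is needed to make these integration-by-parts error bounds collapse to the claimed $(\abs{u}+cC)^{-1}$ decay.
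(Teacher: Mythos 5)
The paper cites Iwaniec--Kowalski \S 20.5 for this proposition and supplies no proof of its own, so there is no internal argument to compare against; your strategy (exact cancellation of the two continuous integrals plus Poisson summation to control the discretization error) is correct and standard, and the derivative bound is handled cleanly by the direct count of lattice points in the support. A couple of bookkeeping points are worth nailing down when you write this out in full. First, the integration-by-parts bound $\hat f_A(k) \ll k^{-j} c^{j-1} C^{-j-1}$ \emph{worsens} as $j$ increases once $c > C$, so one cannot uniformly attribute $((c+C)C)^{-1}$ to the Poisson tail of $A_c$. You should treat $c > 2C$ separately: there $A_c \equiv 0$ outright, since the support $[C/c, 2C/c]$ contains no positive integers, and the piece $((c+C)C)^{-1} \asymp (cC)^{-1}$ of the claimed bound is then contributed by the now-unpaired zero-mode $\hat f_B(0) = c^{-1}\int w(s)s^{-1}\,ds$, not by a Poisson tail of $A_c$. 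For $C < c \leq 2C$ the choice $j=2$ is still fine and gives $\sum_{k\neq 0}\lvert\hat f_A(k)\rvert \ll c/C^3 \asymp ((c+C)C)^{-1}$. Second, in your $\lvert u\rvert < cC$ regime, the $C^{-2}$ portion of the trivial bound on $A_c$ is absorbed by the first term $((c+C)C)^{-1}$, not the second as you wrote: the extra $C^{-2}$ arises from the ``$+1$'' in the lattice-point count, which is only relevant when $A_c \neq 0$, hence when $c \leq 2C$, in which regime $((c+C)C)^{-1} \asymp C^{-2}$. Neither point is a gap in the mathematics; both are slips of attribution that a careful write-up would need to fix.
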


Finally, it is often useful to express $\Delta_c(n)f(n)$ in terms of additive characters, by considering its Fourier transform.
\\Let 
\begin{equation}
\label{eq:gc}
g_c(v) = \int_{-\infty}^{\infty} \Delta_c(u) f(u)e(-uv)du,    
\end{equation}
be the Fourier transform of $\Delta_c(u)f(u)$. By the Fourier inversion formula we have
\begin{equation}
    \Delta_c(u)f(u) = \int_{-\infty}^{\infty} g_c(v)e(uv)dv.
\end{equation}

Using this in  \eqref{eq:deltabound} , we have
\begin{equation}
\label{eq:deltafourier}
    \delta(u) = \sum_{c \leq 2C} S(0,n;c)\int_{-\infty}^{\infty} g_c(v)e(nv)dv.
\end{equation}

We have the following bounds on $g_c(v)$ and its derivatives.
\begin{myprop}
\label{prop:gcbound}
Suppose $g_c(v)$ is defined as in \eqref{eq:gc}, and $C= \sqrt{N}$. Then, for $a>1$,
\begin{equation}
 \label{eq:gcbound1}
    g_c(v) = 1 + O\pb{\frac{C}{c}\pb{\frac{c}{C}+ \abs{v}cC}^a},
\end{equation}
\begin{equation}
  \label{eq:gcbound2}
    g_c(v) \ll (\abs{v}cC)^{-a},
\end{equation}
and
\begin{equation}
 \label{eq:gcboundder}
    \frac{\partial^{j}}{\partial v^j}g_c(v) = \abs{vcC}^{-j} \min(\abs{vcC}^{-1}, C/c) \log{C}, \ \ \ \text{for }j \geq 1.
\end{equation}
\end{myprop}

Proposition \ref{prop:gcbound} follows from \cite[eq. (20.158), (20.159)]{ikant}, and the bounds in \eqref{eq:Deltacbound} and \eqref{eq:Deltacderbound}. See \cite[Lemma 15]{Huang} for a complete proof. As pointed out by Huang, there is a typo in \cite[eq. (20.158)]{ikant}, which has been corrected here. We thank the reviewer for bringing this to our attention.

\begin{myremark}
    Proposition \ref{prop:gcbound} allows us to essentially restrict the integral in \eqref{eq:gc} to the range $[-\frac{p^\varepsilon}{cC}, \frac{p^\varepsilon}{cC}]$. In the language of inert functions (see the next section for definitions), $g_c(v)$ satisfies the same derivative bounds as an $p^{\varepsilon}$-inert family of smooth functions. See also the discussion immediately following \cite[Lemma 15]{Huang}.
\end{myremark}

\subsection{Inert Functions and Stationary Phase}
\label{subsec:inertfunctions}
We mention some properties of inert functions in this section. Inert functions are special families of smooth functions characterised by certain derivative bounds. See  Sections $2$ and $3$ in \cite{kpyosc}, and Section $4$ in \cite{ky5m} for proofs.

\begin{mydefi}
Let $\mathcal{F}$ be an index set. A family $\{w_T\}_{T \in \mathcal{F}}$ of smooth function supported on a product of dyadic intervals in $\mathbb{R}_{>0}^{d}$ is called $X$-inert if for each $j = (j_1,j_2,\cdots,j_d) \in \mathbb{Z}_{>0}^d$, we have
\begin{equation*}
    C(j_1,j_2,\cdots,j_d) \coloneqq \sup_{T \in \mathcal{F}} \ \sup_{(x_1,x_2,\cdots,x_d) \in \mathbb{R}_{>0}^{d}}  X^{-j_1-\cdots -j_d} \left\vert x_1^{j_1}\cdots x_d^{j_d} {w_T}^{(j_1,\cdots,j_d)}(x_1,\cdots,x_d)\right\vert < \infty.
\end{equation*}
\end{mydefi}

We will often denote the sequence of constants $C(j_1,j_2,\dots,j_d)$ associated with this inert function as $C_{\mathcal{F}}$.

We note that the requirements for the functions to be supported on dyadic intervals can be easily achieved by applying a aartition of unity.

We give one simple example to highlight how such families can be constructed.

\begin{myexam}
    \label{ex:InertFamily}
    Let $w(x_1,\cdots,x_d)$ be a fixed smooth function that is supported on $[1,2]^d$, and define,
    \begin{equation}
        w_{X_1,\cdots,X_d} (x_1,\cdots,x_d) = w\pb{\frac{x_1}{X_1},\cdots,\frac{x_d}{X_d}}.
    \end{equation}
    Then with $\mathcal{F} = \{ T = (X_1,\cdots,X_d) \in \mathbb{R}_{>0}^d\}$, the family $\{ w_T\}_{T \in \mathcal{F}}$ is $1$-inert.
\end{myexam}

The following propositions can be checked immediately:

\begin{myprop}
    \label{prop:inertxpowerinside}
    Let $a,b \in \mathbb{R}$ with $b>0$. If $w_T(x)$ is a family of $X$-inert functions supported on $[N,2N]$, then the family $W_T(x)$ given by, $W_T(x) =  w_T(bx^a)$ is also $X$-inert, with support $\left [\pb{\frac{N}{b}}^{\frac{1}{a}}, 2\pb{\frac{N}{b}}^{\frac{1}{a}} \right ]$.

\end{myprop}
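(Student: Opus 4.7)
The plan is to verify the derivative bounds directly, via Faà di Bruno's formula applied to the composition $W_T(x) = w_T(g(x))$ with $g(x) := bx^a$, exploiting the fact that derivatives of a monomial scale homogeneously in $x$. Explicitly,
\[
W_T^{(j)}(x) \;=\; \sum_{k=0}^{j} w_T^{(k)}\bigl(g(x)\bigr)\, B_{j,k}\bigl(g'(x), g''(x), \ldots, g^{(j-k+1)}(x)\bigr),
\]
where $B_{j,k}$ denotes the partial Bell polynomial: a finite sum of monomials $\prod_m \bigl(g^{(m)}\bigr)^{c_m}$ indexed by partitions with $\sum_m c_m = k$ and $\sum_m m\, c_m = j$. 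Since $g^{(m)}(x) = b\,a(a-1)\cdots(a-m+1)\,x^{a-m}$, each monomial is of size $O_{a,b,j}\bigl(x^{\sum_m (a-m)c_m}\bigr) = O_{a,b,j}\bigl(x^{ak-j}\bigr)$.

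Next, I would use that on the support of $W_T$ we have $g(x) \asymp N$, so the $X$-inert hypothesis on $w_T$ gives
\[
\bigl|w_T^{(k)}(g(x))\bigr| \;\ll\; C_{\mathcal F}(k)\,\frac{X^k}{g(x)^k} \;\asymp\; \frac{X^k}{(bx^a)^k}.
\]
Inserting this together with the monomial bound above, each of the $O(j)$ summands contributing to $W_T^{(j)}(x)$ is bounded by a constant (depending on $j$, $a$, $b$, and $C_{\mathcal F}$) times $X^k b^{-k} x^{-j} \le X^j x^{-j}$, where I absorb the harmless $b^{-k}$ into the implied constant and use the standard convention $X \ge 1$. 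Summing in $k$ yields $x^j |W_T^{(j)}(x)| \ll X^j$, which is exactly the $X$-inert bound for $W_T$, with constants depending only on the allowed parameters.

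Finally, for the support claim I would simply solve $bx^a \in [N,2N]$: when $a>0$ this is the interval $[(N/b)^{1/a}, (2N/b)^{1/a}]$, and when $a<0$ the endpoints are interchanged. In either case the support lies in (and is comparable to) a dyadic interval about $(N/b)^{1/a}$; if one insists on exact dyadic support one can apply a smooth dyadic partition of unity as in Example \ref{ex:InertFamily} without affecting the inert bounds. The only real obstacle is the combinatorial bookkeeping around Faà di Bruno, which is standard; no deeper analytic input is required beyond the chain rule and the scaling identity $x^m g^{(m)}(x) \ll_{a,b,m} g(x)$ for the monomial $g$.
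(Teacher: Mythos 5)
Your argument is correct, and since the paper asserts this proposition ``can be checked immediately'' without supplying a proof, the direct verification via Fa\`a di Bruno is exactly what is intended. The key scaling identity $\sum_m (a-m)c_m = ak-j$ (using $\sum c_m = k$, $\sum m c_m = j$) is the heart of the matter, and you apply it correctly; when combined with the $X$-inert bound $|w_T^{(k)}(g(x))| \ll X^k (bx^a)^{-k}$, the factors of $b$ in fact cancel exactly (the Bell-polynomial monomial carries a $b^k$ that cancels the $(bx^a)^{-k}$), so the ``absorb $b^{-k}$'' step is harmless but unnecessary; either way one arrives at $x^j|W_T^{(j)}(x)| \ll_j X^j$. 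Two small points worth registering: (i) the paper's stated support $\bigl[(N/b)^{1/a},\,2(N/b)^{1/a}\bigr]$ is a minor misprint --- the correct preimage of $[N,2N]$ is $\bigl[(N/b)^{1/a},\,2^{1/a}(N/b)^{1/a}\bigr]$ for $a>0$ (endpoints reversed for $a<0$), as you note; this has no effect on the inert bounds or on any later use, and your remark about a dyadic partition of unity disposes of it cleanly; (ii) the case $a=0$ is degenerate (constant function, ill-defined support formula) and should tacitly be excluded, as the paper implicitly does.
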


\begin{myprop}
    \label{prop:inertxpower}
    Let $a \in \mathbb{R}$. If $w_T(x)$ is a family of $X$-inert functions supported on $x \asymp N$, then so is the family $W_T(x)$ given by $W_T(x) = (\frac{x}{N})^{-a} w_T(x)$.

\end{myprop}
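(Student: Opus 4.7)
My plan is to treat this as a direct application of the Leibniz rule, writing $W_T(x) = \phi(x)\, w_T(x)$ with $\phi(x) = (x/N)^{-a}$, and exploiting the fact that $\phi$ scales naturally on the dyadic support $x \asymp N$. Since multiplication by the smooth, nowhere-vanishing factor $\phi$ does not alter the support, the only content is in verifying the derivative bounds in the definition of $X$-inertness.

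First I would observe that $\phi^{(k)}(x) = (-a)(-a-1)\cdots(-a-k+1)\, N^{-k} (x/N)^{-a-k}$, so on $x \asymp N$ one has $|\phi^{(k)}(x)| \ll_{a,k} N^{-k}$, since the factor $(x/N)^{-a-k}$ is bounded above and below by constants depending only on $a$ and $k$ there. Then I would apply Leibniz,
\begin{equation*}
W_T^{(j)}(x) = \sum_{k=0}^{j} \binom{j}{k}\, \phi^{(k)}(x)\, w_T^{(j-k)}(x),
\end{equation*}
multiply through by $x^j \asymp N^j$, and plug in the $X$-inert bound $x^{j-k}|w_T^{(j-k)}(x)| \ll_{C_{\mathcal{F}},j} X^{j-k}$ on each summand. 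The typical term is then bounded by $N^j \cdot N^{-k} \cdot N^{-(j-k)}\, X^{j-k} = X^{j-k} \leq X^j$, under the standard convention $X \geq 1$ (otherwise the finite geometric sum over $k$ is absorbed into the implied constant). Summing over $k$ yields $x^j |W_T^{(j)}(x)| \ll_{a,j,C_{\mathcal{F}}} X^j$, which is precisely the bound required for $W_T$ to be $X$-inert.

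There is no serious obstacle here; this is a bookkeeping exercise. The only minor subtlety is confirming the scaling $|\phi^{(k)}(x)| \ll N^{-k}$, which becomes transparent once one writes $\phi(x) = \tilde\phi(x/N)$ for the fixed smooth function $\tilde\phi(u) = u^{-a}$ on a fixed compact subinterval of $(0,\infty)$. Equivalently, $\phi$ is itself $1$-inert on $x \asymp N$, and the general principle at work is that the product of a $1$-inert family and an $X$-inert family on the same dyadic support is again $X$-inert, a fact that drops out of the same Leibniz calculation.
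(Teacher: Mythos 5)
Your argument is correct and complete. The paper itself gives no proof (it simply states that this and the neighbouring propositions ``can be checked immediately''), and your Leibniz computation, combined with the observation that $\phi(x) = (x/N)^{-a}$ is $1$-inert on the dyadic support $x \asymp N$, is precisely the intended immediate verification.
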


\begin{myprop}
If $w$ is an $X$-inert function and $v$ is a $Y$-inert function, then their product $w\cdot v$ is a $\max{(X,Y)}$-inert function.
\end{myprop}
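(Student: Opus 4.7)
The plan is a direct application of the multivariate Leibniz rule for derivatives of a product, combined with the inert bounds for $w$ and $v$ separately. Writing $j = (j_1,\dots,j_d)$ for a multi-index with $|j| = j_1 + \cdots + j_d$, and abbreviating $x^j = x_1^{j_1}\cdots x_d^{j_d}$ and $\binom{j}{k} = \prod_i \binom{j_i}{k_i}$, I would first expand
$$(wv)^{(j)}(x) = \sum_{0 \leq k \leq j} \binom{j}{k}\, w^{(k)}(x)\, v^{(j-k)}(x),$$
where the sum is over multi-indices $k$ with $0 \leq k_i \leq j_i$ for all $i$. Multiplying both sides by $x^j$ and distributing as $x^j = x^k \cdot x^{j-k}$ cleanly separates the two factors:
$$x^j (wv)^{(j)}(x) = \sum_{0 \leq k \leq j} \binom{j}{k} \bigl(x^k w^{(k)}(x)\bigr) \bigl(x^{j-k} v^{(j-k)}(x)\bigr).$$

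Next, I would invoke the two inert hypotheses term by term. By definition there exist constants $C_w(k)$ and $C_v(j-k)$, uniform over the index parameter and over $x$, such that $|x^k w^{(k)}(x)| \leq C_w(k)\, X^{|k|}$ and $|x^{j-k} v^{(j-k)}(x)| \leq C_v(j-k)\, Y^{|j-k|}$. The key observation, and really the only substantive point in the proof, is that $X^{|k|} Y^{|j-k|} \leq \max(X,Y)^{|k|+|j-k|} = \max(X,Y)^{|j|}$, so the entire sum is bounded by
$$\bigl|x^j (wv)^{(j)}(x)\bigr| \leq \max(X,Y)^{|j|} \sum_{0 \leq k \leq j} \binom{j}{k}\, C_w(k)\, C_v(j-k).$$
The coefficient multiplying $\max(X,Y)^{|j|}$ is a finite constant depending only on $j$, which is exactly the required inert bound for $w\cdot v$ with parameter $\max(X,Y)$.

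The support condition from the definition is immediate: the pointwise product $w \cdot v$ is supported on the intersection of the two supports, which is still a product of (possibly truncated) dyadic intervals, and after a trivial dyadic refinement if necessary it fits the definition. I do not anticipate any real obstacle here; the proof is essentially the bookkeeping observation that Leibniz's rule interacts cleanly with the multiplicative structure of the inert bounds, and that the exponents of $X$ and $Y$ collapse to a single exponent of $\max(X,Y)$ by the crude inequality above.
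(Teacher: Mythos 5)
Your proof is correct, and since the paper states this proposition without proof (as something that "can be checked immediately"), your Leibniz-rule argument is exactly the standard verification one would write out. The only point worth flagging is the one you already noticed: the inequality $X^{|k|}Y^{|j-k|} \le \max(X,Y)^{|j|}$ uses that all exponents $|k|$, $|j-k|$ are nonnegative and that $X,Y>0$, which is implicit in the definition, so everything is in order.
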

Suppose that $w_T(x_1,\cdots,x_d)$ is $X$-inert and is supported on $x_i \asymp X_i$. Let
\begin{equation}
    \label{eq:InertFourierdefn}
    \widehat{w}_T(t_1,x_2,\cdots,x_d) = \int_{-\infty}^{\infty} w_T(x_1,\cdots,x_d) e(-x_1t_1)dx_1,
\end{equation}
denote its Fourier transform in the $x_1$-variable.

We state the following result (Prop 2.6 in \cite{kpyosc}) regarding the Fourier transform of inert functions.
\begin{myprop}
\label{prop:FourierInert}
Suppose that $\{ w_T : T \in \mathcal{F}\}$ is a family of $X$-inert functions such that $x_1$ is supported on $x_1 \asymp X_1$, and $\{w_{\pm Y_1}: Y_1 \in (0,\infty)\}$ is a $1$-inert family of functions with support on $\pm t_1 \asymp Y_1$. Then the family $\{ X_1^{-1} w_{\pm Y_1}(t_1)  \widehat{w}_T(t_1,x_2,\cdots,x_d) \ : \ (T, \pm Y_1) \in \mathcal{F} \times \pm(0,\infty)\}$ is $X$-inert. Furthermore if $Y_1 \gg p^\varepsilon \frac{X}{X_1}$, then for any $A>0$, we have
\begin{equation}
    \label{eq:inertfourier}
     X_1^{-1} w_{\pm Y_1}(t_1)  \widehat{w}_T(t_1,x_2,\cdots,x_d) \ll_{\varepsilon, A} p^{-A}.
\end{equation}
\end{myprop}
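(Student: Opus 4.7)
The strategy is standard: derivatives in $x_2,\ldots,x_d$ pass directly through the Fourier transform in $x_1$, so the inert bounds in those variables are automatic; the work is in $t_1$, where the plan is to integrate by parts in $x_1$ to convert each $t_1$-derivative into a factor of $1/t_1$.

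First I would observe that $\partial_{x_i}^{j_i}\widehat{w}_T = \widehat{(\partial_{x_i}^{j_i}w_T)}$ for $i \geq 2$, so applying the $X$-inert bound on $w_T$ and integrating over $x_1 \asymp X_1$ (producing the length factor $X_1$ that is canceled by the external $X_1^{-1}$) yields the required bounds in $x_2,\ldots,x_d$. For the $t_1$ derivatives, differentiating under the integral and then integrating by parts $k$ times gives
\begin{equation*}
\partial_{t_1}^{i_1}\widehat{w}_T(t_1,x_2,\ldots) \;=\; \frac{1}{(2\pi i t_1)^k}\int \partial_{x_1}^k\!\bigl[(-2\pi i x_1)^{i_1}w_T(x_1,\ldots)\bigr]\, e(-x_1 t_1)\, dx_1.
\end{equation*}

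Using Leibniz and the $X$-inertness bound $|\partial_{x_1}^b w_T| \ll (X/X_1)^b$ on the support $x_1 \asymp X_1$, a short calculation shows $|\partial_{x_1}^k(x_1^{i_1}w_T)| \ll X_1^{i_1}(X/X_1)^k$ (assuming $X \geq 1$, which is the case of interest; the maximum over the Leibniz sum is attained by the term where all $k$ derivatives hit $w_T$). Integrating over $x_1 \asymp X_1$ and dividing by $X_1$ yields
\begin{equation*}
\bigl|X_1^{-1}\partial_{t_1}^{i_1}\widehat{w}_T(t_1,x_2,\ldots)\bigr| \;\ll\; X_1^{i_1}\left(\frac{X}{|t_1|X_1}\right)^{k},
\end{equation*}
valid for every $k \geq 0$.

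On the support $|t_1|\asymp Y_1$, taking $k=i_1$ gives $t_1^{i_1}X_1^{-1}|\partial_{t_1}^{i_1}\widehat{w}_T| \ll X^{i_1}$. The family $w_{\pm Y_1}$ being $1$-inert means $t_1^a\partial_{t_1}^a w_{\pm Y_1} \ll 1$ on $t_1 \asymp Y_1$; combining via Leibniz with the previous display (and with the $x_i$-derivative bounds) proves the $X$-inertness of the full family. For the decay claim, taking $i_1=0$ and $k$ arbitrarily large gives $|X_1^{-1}w_{\pm Y_1}(t_1)\widehat{w}_T| \ll (X/(Y_1X_1))^k \ll_{k} p^{-k\varepsilon}$ whenever $Y_1 \geq p^{\varepsilon} X/X_1$, which is the required $O(p^{-A})$ bound. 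The only technical step worth double-checking is the Leibniz expansion of $\partial_{x_1}^k(x_1^{i_1}w_T)$: the general term $X_1^{i_1-a}(X/X_1)^{k-a}$ for $0 \leq a \leq \min(k,i_1)$ is, under $X\geq 1$, dominated by the $a=0$ term $X_1^{i_1}(X/X_1)^k$, after which the rest of the argument is routine repeated integration by parts.
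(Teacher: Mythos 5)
Your proof is correct and follows the standard integration-by-parts argument that the cited source (Proposition 2.6 in \cite{kpyosc}) uses: differentiate under the integral, integrate by parts $k$ times in $x_1$, control the Leibniz expansion of $\partial_{x_1}^k(x_1^{i_1}w_T)$ via the $X$-inert bounds (correctly noting the $a=0$ term dominates when $X\geq 1$), and choose $k=i_1$ for inertness on $t_1\asymp Y_1$ or $k$ large for the rapid decay when $Y_1\gg p^\varepsilon X/X_1$. The paper itself only cites this result without reproducing the proof, and your argument correctly fills it in.
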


We can similarly describe the Mellin transform of such functions as well. We state the following result (Lemma 4.2 in \cite{ky5m}).

\begin{myprop}
\label{prop:MellinInert}
Suppose that $\{ w_T(x_1,x_2,\cdots,x_d) : T \in \mathcal{F}\}$ is a family of $X$-inert functions such that $x_1$ is supported on $x_1 \asymp X_1$. Let
\begin{equation}
    \label{eq:inertmellin}
     \widetilde{w}_T(s,x_2,\cdots,x_d) = \int_{0}^{\infty} w_T(x,x_2,\cdots,x_d) x^s \frac{dx}{x}.
\end{equation}
Then we have $ \widetilde{w}_T(s,x_2,\cdots,x_d) = {X_1}^s W_T(s,x_2,\cdots,x_d)$ where $W_T(s,\cdots)$ is a family of $X$-inert functions in $x_2,\cdots,x_d$, which is entire in $s$, and has rapid decay for $\abs{\Im(s)} \gg {X_1}^{1+\varepsilon}$.
\end{myprop}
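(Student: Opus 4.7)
The plan is to effect the change of variables $x = X_1 u$ inside the defining integral \eqref{eq:inertmellin} and then read off the three claims from the resulting representation. Substituting yields
\[
\widetilde{w}_T(s, x_2, \dots, x_d) = X_1^s \int_0^\infty w_T(X_1 u, x_2, \dots, x_d) \, u^s \, \frac{du}{u},
\]
so I would define $W_T(s, x_2, \dots, x_d)$ to be the remaining $u$-integral. Since $w_T$ is supported on $x_1 \asymp X_1$ with constants uniform in $T$, after rescaling the integrand is compactly supported in $u$ on a fixed dyadic interval away from $0$ and $\infty$. Because $u \mapsto u^s$ is entire in $s$ for each fixed $u > 0$, differentiating under the integral sign shows $W_T(s, \cdot)$ is entire in $s$.

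The $X$-inertness of $W_T$ in the remaining variables $x_2, \dots, x_d$ follows by differentiating under the integral. The hypothesis
\[
\bigl| x_2^{j_2} \cdots x_d^{j_d} \, \partial_{x_2}^{j_2} \cdots \partial_{x_d}^{j_d} w_T \bigr| \ll X^{j_2 + \cdots + j_d}
\]
transfers directly to the same bound on $W_T$, since the $u$-integral is over a bounded set and $|u^s|$ is bounded on the $u$-support uniformly for $s$ in any fixed vertical strip.

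The main analytical step is the rapid decay in $|\Im(s)|$, which I would obtain by repeated integration by parts in $u$. Writing $u^s \, du/u = d(u^s/s)$ and using that $w_T(X_1 u, \cdots)$ has compact support in $u$, the boundary terms vanish and $k$ iterations give
\[
W_T(s, \cdots) = \frac{(-1)^k}{s(s+1) \cdots (s+k-1)} \int_0^\infty \partial_u^k \bigl[ w_T(X_1 u, \cdots) \bigr] \, u^{s+k-1} \, du.
\]
Each $\partial_u$ acting on $w_T(X_1 u, \cdots)$ produces a factor $X_1 \partial_{x_1}$, and the inert bound $|x_1^k \partial_{x_1}^k w_T| \ll X^k$ on $x_1 \asymp X_1$ gives $|\partial_u^k w_T(X_1 u, \cdots)| \ll X^k$. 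Hence
\[
|W_T(s, \cdots)| \ll \bigl( C X / |s| \bigr)^k
\]
for some absolute constant $C$ and large $|\Im(s)|$, and taking $k$ sufficiently large yields rapid decay once $|\Im(s)|$ exceeds any fixed positive power of $X$ — in particular once $|\Im(s)| \gg X_1^{1 + \varepsilon}$, since in the applications at hand $X \leq X_1$.

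The only real obstacle is bookkeeping: making sure all implicit constants are uniform across $T \in \mathcal{F}$ and across a fixed vertical strip in $s$. This uniformity is built into the definition of $X$-inertness itself, which takes suprema over $T \in \mathcal{F}$, so the step is routine.
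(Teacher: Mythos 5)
The paper does not give its own proof of this proposition; it cites Lemma 4.2 of \cite{ky5m}. Your proof is the standard one: rescale $x = X_1 u$, note the rescaled integrand is compactly supported on $u \asymp 1$ with derivative bounds $\partial_u^k\bigl[w_T(X_1 u,\cdots)\bigr] \ll X^k$, then iterate integration by parts in $u$. All the steps check out, including the transfer of inertness in $x_2,\dots,x_d$.

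One remark worth making explicit: your argument actually establishes rapid decay once $\abs{\Im(s)} \gg X^{1+\varepsilon}$, not merely $\abs{\Im(s)} \gg X_1^{1+\varepsilon}$, and you flag this by appealing to $X \leq X_1$ in applications. In fact the $X^{1+\varepsilon}$ threshold is the correct one and is what the paper actually uses downstream: in the analysis of $\widetilde{\Phi}(s+1,\cdot)$ around \eqref{eq:Phimellin2}, $W_T$ is $p^{\varepsilon}$-inert and is claimed to be small once $\abs{\Im(s-1)} \gg p^{\varepsilon}$, even though the support scale there is $X_1 = \sqrt{p}\,C_0$, which is polynomially large. If the threshold really were $X_1^{1+\varepsilon}$, that deduction would not follow. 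So the $X_1$ in the proposition statement appears to be a typographical slip for $X$; your integration-by-parts argument supplies the bound that is genuinely needed, and the hedge ``in the applications at hand $X \leq X_1$'' can be dropped.
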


The following is a restatement of Lemma 3.1 in \cite{kpyosc}.
\begin{myprop}
\label{prop:stationeryphaseinert}
Suppose that $w$ is an $X$-inert function, with compact support on $[Z,2Z]$, so that $w^{(j)}(t) \ll (Z/X)^{-j}$. And suppose that $\phi$ is smooth and satisfies $\phi^{(j)}(t) \ll \frac{Y}{Z^j}$ for some $Y/X^2 \geq R \geq 1$, and for all $t \in [Z,2Z]$. Let
\begin{equation}
    I = \int_{-\infty}^{\infty} w(t)e^{i\phi(t)} dt.
\end{equation}
We then have
\begin{enumerate}
    \item[(a)] If $\abs{\phi'(t)} \geq \frac{Y}{Z}$ for all $t \in [Z,2Z]$, then $I \ll ZR^{-A}$ for $A$ arbitrary large.
    \item[(b)] If $\phi''(t) \gg \frac{Y}{Z^2}$ for all $t \in [Z,2Z]$, and there exists a (necessarily unique) $t_0 \in \mathbb{R}$ such that $\phi'(t_0) = 0$, then 
\begin{equation}
\label{eq:statphaselemmabound}
    I = \frac{e^{i\phi(t_0)}}{\sqrt{\phi''(t_0)}} F(t_0) + O_A(Z R^{-A}),
\end{equation}
where $F$ is an $X$-inert function supported on $t_0 \asymp Z$.
\end{enumerate}
\end{myprop}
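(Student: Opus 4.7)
The plan is to treat the two parts separately: part (a) by iterated integration by parts against the non-vanishing phase, and part (b) by a standard stationary-phase expansion localized around $t_0$, with careful bookkeeping to see that the resulting amplitude depends $X$-inertly on $t_0$.

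For part (a), I would introduce the first-order differential operator $D = \frac{1}{i\phi'(t)} \frac{d}{dt}$, so that $D(e^{i\phi(t)}) = e^{i\phi(t)}$, and integrate by parts $k$ times to rewrite
\begin{equation*}
    I = \int_{-\infty}^{\infty} (D^{*})^{k}[w](t)\, e^{i\phi(t)}\, dt,
\end{equation*}
where $D^{*}f = -\frac{1}{i}\frac{d}{dt}\bigl(f/\phi'\bigr)$. Each application of $D^*$ produces either a derivative hitting $w$, which by $X$-inertness contributes a factor $\ll X/Z$ balanced against $1/\phi' \ll Z/Y$ to give $X/Y$, or a derivative hitting $1/\phi'$, which contributes $\phi''/(\phi')^2 \ll 1/Y$. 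Inductively, one shows $\bigl|(D^{*})^k w(t)\bigr| \ll_k (X^2/Y)^{k}$, uniformly for $t \in [Z,2Z]$. Since the support has length $\asymp Z$, we conclude $I \ll_k Z (X^2/Y)^k \leq Z R^{-k}$, and choosing $k = A$ gives the claimed bound.

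For part (b), I would first insert a smooth dyadic partition of unity in the variable $t - t_0$ to split $I$ into a piece supported on $|t - t_0| \leq Z/\sqrt{R}$ and a tail. On the tail, the lower bound $\phi''(t) \gg Y/Z^2$ combined with the mean value theorem yields $|\phi'(t)| \gg Y|t-t_0|/Z^2$, which is large enough that part (a) (applied to each dyadic piece after rescaling so that the piece is $X$-inert on its support) gives a contribution of size $O_A(ZR^{-A})$. On the localized piece, I would perform the Morse-type change of variables $t = t_0 + u/\sqrt{\phi''(t_0)}$, expand $\phi$ in a Taylor series around $t_0$, and extract the Gaussian factor $e^{iu^2/2}$; the residual phase $e^{i\psi(u,t_0)}$ with $\psi$ a convergent power series in $u$ starting at cubic order contributes a bounded $X$-inert factor after inserting $\int e^{iu^2/2}\,du = \sqrt{2\pi}e^{i\pi/4}$, by the standard Hörmander-type stationary phase expansion. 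Collecting the leading constants yields the main term in \eqref{eq:statphaselemmabound}.

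The main obstacle is not the size estimate but the structural claim that $F(t_0)$ is itself $X$-inert in the variable $t_0$. To verify this, I would note that $t_0$ is implicitly defined by $\phi'(t_0) = 0$, so differentiating in $t_0$ (viewed as a parameter via the family index $T$) introduces factors of $\phi^{(j)}(t_0)/\phi''(t_0) \ll Z^{1-j}\cdot (Z^2/Y) \cdot (Y/Z^{j})$ that are harmless on the scale $Z/X$; combined with the inertness of $w$ and the chain rule applied to the Morse substitution, every $t_0$-derivative of $F$ of order $j$ picks up at most $(X/Z)^j$, which is exactly the $X$-inert bound on support $t_0 \asymp Z$. This bookkeeping, while routine, is the delicate point of the argument and is the content of Lemma 3.1 of \cite{kpyosc} which we are invoking here.
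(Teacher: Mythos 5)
The paper does not prove this proposition: it is introduced as ``a restatement of Lemma 3.1 in \cite{kpyosc}'' and no proof is given, so there is no internal argument to compare against. Your outline is the standard proof of the inert stationary phase lemma---repeated non-stationary-phase integration by parts for (a); localize around $t_0$, make the Gaussian substitution, and dispatch the dyadic tails by (a) for (b)---and this is indeed the route taken in the cited work (which in turn builds on the Blomer--Khan--Young asymptotic expansion). You correctly identify that the substantive content of the lemma is the verification that the output function $F$ is again $X$-inert in $t_0$, and you defer to the reference for that bookkeeping, exactly as the paper does. In that sense your proposal and the paper's treatment are at the same level of completeness.

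Two small algebraic slips that do not affect the conclusion. In part (a), each application of $D^*$ actually saves a factor $X/Y$ rather than $X^2/Y$: one shows inductively that $(D^*)^k w$ is $X$-inert of size $(X/Y)^k$, since $f'/\phi' \ll (X/Z)(Z/Y) = X/Y$ and $f\phi''/(\phi')^2 \ll 1/Y$ applied to an $X$-inert $f$ reproduce an $X$-inert function of size $X/Y$ times that of $f$. Since $X \geq 1$, one still has $(X/Y)^k \leq (X^2/Y)^k \leq R^{-k}$, so the bound $I \ll ZR^{-A}$ is unchanged. In your inertness check at the end, the quantity you want is $\phi^{(j)}(t_0)/\phi''(t_0) \ll (Y/Z^{j})/(Y/Z^{2}) = Z^{2-j}$, whereas the product you display simplifies to $Z^{3-2j}$; again this is a notational slip, and the correct ratio $Z^{2-j}$ is exactly what makes each $t_0$-derivative cost $\ll 1/Z \leq X/Z$, which is what you need.
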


The previous result has a natural generalisation (Main Theorem in \cite{kpyosc}). 

\begin{myprop}

Suppose that $w$ is an $X$-inert function in $t_1,\cdots, t_d $, supported on $t_1 \asymp Z$ and $t_i \asymp X_i$ for $i=2,3,\dots,d$. Suppose that the smooth function $\phi$ satisfies
\begin{equation}
  \frac{\partial^{a_1+a_2+\cdots+a_d}}{\partial t_1^{a_1} \partial t_2^{a_2}\dots\partial t_d^{a_d}} \phi(t_1,t_2,\dots,t_d) \ll_C \frac{Y}{Z^{a_1}}\frac{1}{X_2^{a_2}\dots X_d^{a_d}},
\end{equation}
 for all $a_1,a_2,\dots,a_d \in \mathbb{N}$.
\\Now, suppose that $\phi'(t_1,t_2,\dots,t_d) \gg \frac{Y}{Z^2}$ (here $\phi''$ and $\phi'$ denote the derivatives of $\phi$ with respect to $t_1$), for all $t_1,t_2,\dots,t_d$ in the support of $w$, and there exists a (necessarily unique) $t_0 \in \mathbb{R}$ such that $\phi'(t_0)=0$. Suppose also that $\frac{Y}{X^2} \geq R \geq 1$, then
\begin{equation}
    I = \int_{\mathbb{R}} e^{i\phi(t_1,\dots,t_d) w(t_1,\dots,t_d)}dt_1 = \frac{Z}{\sqrt{Y}}e^{i\phi(t_0,t_2,\dots,t_d)}W(t_2,\dots,t_d) + O_A(ZR^{-A}),
\end{equation}
for some $X$-inert function $W$, and $A$ can be taken to be arbitrarily large. The implied constant depends on $A$ and $C$.

\end{myprop}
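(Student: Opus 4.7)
The proof plan is to apply the one-variable result (Proposition~\ref{prop:stationeryphaseinert}) pointwise in the auxiliary variables $(t_2,\ldots,t_d)$, and then to check that the resulting amplitude is $X$-inert in those variables. Because the integral is taken only in $t_1$, one may freeze $(t_2,\ldots,t_d)$ throughout the stationary phase analysis, which is the key observation that reduces everything to the one-dimensional case together with a chain-rule verification.

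First, for each fixed $(t_2,\ldots,t_d)$ in the support of $w$, the one-variable function $t_1 \mapsto w(t_1,t_2,\ldots,t_d)$ is $X$-inert with support on $t_1 \asymp Z$, and the phase $t_1 \mapsto \phi(t_1,t_2,\ldots,t_d)$ satisfies the derivative hypothesis of Proposition~\ref{prop:stationeryphaseinert}. The nondegeneracy $\phi'' \gg Y/Z^2$ together with the assumed existence of a stationary point yields a unique $t_0 = t_0(t_2,\ldots,t_d)$ with $\partial_{t_1}\phi(t_0,t_2,\ldots,t_d) = 0$, and Proposition~\ref{prop:stationeryphaseinert}(b) then gives
\[
I = \frac{e^{i\phi(t_0,t_2,\ldots,t_d)}}{\sqrt{\phi''(t_0,t_2,\ldots,t_d)}}\, F(t_0,t_2,\ldots,t_d) + O_A(ZR^{-A}),
\]
where $F$ is $X$-inert in its arguments and the error is uniform in $(t_2,\ldots,t_d)$. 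Since $\phi''(t_0,\ldots) \asymp Y/Z^2$, the prefactor contributes the expected factor of $Z/\sqrt{Y}$, and one defines
\[
W(t_2,\ldots,t_d) \coloneqq \frac{\sqrt{Y}}{Z}\cdot \frac{F(t_0(t_2,\ldots,t_d),\,t_2,\ldots,t_d)}{\sqrt{\phi''(t_0(t_2,\ldots,t_d),\,t_2,\ldots,t_d)}}.
\]

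Next, one establishes that $t_0$ depends $X$-inertly on $(t_2,\ldots,t_d)$. Implicit differentiation of $\partial_{t_1}\phi(t_0(\cdot),t_2,\ldots,t_d) = 0$ in $t_j$ ($j \geq 2$) gives
\[
\partial_{t_j} t_0 \;=\; -\frac{\partial_{t_1}\partial_{t_j}\phi(t_0,t_2,\ldots,t_d)}{\partial_{t_1}^2\phi(t_0,t_2,\ldots,t_d)} \;\ll\; \frac{Y/(Z X_j)}{Y/Z^2} \;=\; \frac{Z}{X_j},
\]
so that $X_j\,\partial_{t_j} t_0 \ll Z$, which is exactly the scaling needed for $t_0$ to behave like a $1$-inert (hence $X$-inert) function of the parameters. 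Iterating with the Fa\`a di Bruno formula yields analogous bounds for higher-order mixed partials. Applying the chain rule to the composition $F(t_0(t_2,\ldots,t_d),t_2,\ldots,t_d)$ and to $\phi''(t_0,\ldots)^{-1/2}$, and using the uniform lower bound $\phi'' \gg Y/Z^2$ to control the reciprocal, one sees that $W$ inherits $X$-inert derivative bounds in each $t_j$.

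The main obstacle is the combinatorial bookkeeping in these chain-rule computations: one must verify inductively that each derivative $\partial_{t_2}^{a_2}\cdots \partial_{t_d}^{a_d} W$ accumulates a factor of at most $X^{a_2+\cdots+a_d}/(X_2^{a_2}\cdots X_d^{a_d})$, with no spurious $X$-factors appearing from the nested differentiations of $\phi''(t_0,\ldots)^{-1/2}$ or from the dependence of $t_0$ on $(t_2,\ldots,t_d)$. Since $\phi$ itself has no $X$-scaling in its derivatives, all the $X$-factors ultimately come from $F$, and the counting works out. This is precisely the content of the Main Theorem of \cite{kpyosc}, whose argument amounts to a careful Taylor expansion of the phase about the stationary point combined with an induction on the total order of differentiation.
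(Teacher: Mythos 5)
The paper does not prove this proposition; it records it as the Main Theorem of \cite{kpyosc} and moves on. Your sketch follows the strategy of that reference: freeze $(t_2,\ldots,t_d)$, apply the one-dimensional expansion, establish inertness of the stationary point $t_0(t_2,\ldots,t_d)$ by implicit differentiation, and propagate inert derivative bounds through the chain rule and Fa\`a di Bruno. The estimate $\partial_{t_j}t_0 = -\partial_{t_1}\partial_{t_j}\phi/\partial_{t_1}^2\phi \ll (Y/(ZX_j))/(Y/Z^2) = Z/X_j$ is the right computation, and the observation that the lower bound $\phi''\gg Y/Z^2$ controls the reciprocal square root is the right remark.

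There is, however, a genuine gap in your first step. You assert that Proposition~\ref{prop:stationeryphaseinert}(b), applied pointwise in the frozen variables, produces an $F$ that is ``$X$-inert in its arguments,'' i.e.\ jointly in $(t_0,t_2,\ldots,t_d)$. The cited proposition does not say this: it is a one-variable statement, its $F$ is $X$-inert in $t_0$ alone, and it gives no information about how $F$ varies as a function of the amplitude and phase data. When that data is itself parametrized by $(t_2,\ldots,t_d)$, the dependence of $F$ on those parameters is exactly what needs to be established, and it cannot be read off from the parameter-free result. This is the non-trivial content of the Main Theorem of \cite{kpyosc}: one must re-run the one-dimensional stationary phase derivation carrying the parameters along, Taylor-expanding the phase about $t_0$ and inducting on the total order of differentiation to show the resulting amplitude satisfies the joint inert bounds. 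Your subsequent chain-rule bookkeeping is correct but sits on top of an input (joint inertness of $F$) that you have not actually obtained, so the argument as written is circular at that point rather than a derivation of the multi-variable result from the one-variable one.
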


\subsection{Voronoi Summation Formula}
\label{subsec:voronoi}
We will need the following two Voronoi-type summation formulas.
\begin{myprop}
\label{prop:Voronoi}
    Let $f$ be a cusp form of weight $k \geq 1$, level $1$ and trivial central character. Suppose $c \geq 1$, and  $ad \equiv 1 \Mod{c}$. Then for any smooth, compactly supported function $g$ on $\mathbb{R}^+$, we have
\begin{equation}
\label{eq:Voronoi}
    \sum_{m=1}^\infty \lambda_f(m)e\pb{\frac{am}{c}}g(m) = \frac{1}{c}\sum_{n=1}^\infty \lambda_f(n)e\pb{\frac{-dn}{c}}H(n),
\end{equation}
where
\begin{equation}
\label{eq:VoronoiH}
H(n) = 2\pi i^k \int_0^\infty g(x)J_{k-1}\pb{\frac{4\pi}{c}\sqrt{xn}}dx.
\end{equation}

\end{myprop}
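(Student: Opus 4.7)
The plan is to derive this Voronoi-type identity from the functional equation of the additively twisted $L$-function attached to $f$, in the classical Mellin-inversion style. First, since $g$ is smooth and compactly supported in $(0,\infty)$, its Mellin transform $\widetilde{g}(s) = \int_0^\infty g(x) x^{s-1} dx$ is entire with super-polynomial decay in vertical strips, so for $\sigma$ large,
\begin{equation*}
\sum_{m=1}^{\infty} \lambda_f(m) e\pb{\frac{am}{c}} g(m) = \frac{1}{2\pi i} \int_{(\sigma)} \widetilde{g}(s)\, L_{a/c}(s, f) \, ds,
\end{equation*}
where $L_{a/c}(s,f) = \sum_{m \geq 1} \lambda_f(m) e(am/c) m^{-s}$ is the additively twisted $L$-function.

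The core analytic input is a functional equation for $L_{a/c}(s,f)$. I would obtain it by examining $\int_0^\infty f\pb{iy + \tfrac{a}{c}} y^{s+(k-1)/2}\tfrac{dy}{y}$, splitting the range at $y = 1/c$, and on the small range applying $y \mapsto 1/(c^2 y)$ together with the modular transformation
\begin{equation*}
f\pb{\frac{az+b}{cz+d}} = (cz+d)^k f(z), \qquad \begin{pmatrix} a & b \\ c & d \end{pmatrix} \in SL_2(\mathbb{Z}),
\end{equation*}
available because $f$ has level $1$. Choosing integers $b,d$ with $ad - bc = 1$ (so $d \equiv \ov{a} \pmod{c}$, matching the hypothesis $ad \equiv 1 \pmod{c}$) yields, after collecting gamma factors,
\begin{equation*}
L_{a/c}(s,f) = i^{k} c^{1-2s} (2\pi)^{2s-1} \frac{\Gamma\pb{1-s+\tfrac{k-1}{2}}}{\Gamma\pb{s+\tfrac{k-1}{2}}} L_{-d/c}(1-s,f).
\end{equation*}

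Next I would shift the contour from $\Re(s) = \sigma$ down to $\Re(s) = 1-\sigma$; no residues are encountered because $f$ is cuspidal, so $L_{a/c}(s,f)$ is entire, while $\widetilde{g}$ has rapid vertical decay. Substituting the functional equation, expanding $L_{-d/c}(1-s, f) = \sum_n \lambda_f(n) e(-dn/c) n^{s-1}$, and interchanging sum and integral (justified by absolute convergence after the shift) produces
\begin{equation*}
\sum_{m=1}^{\infty} \lambda_f(m) e\pb{\frac{am}{c}} g(m) = \frac{1}{c}\sum_{n=1}^{\infty} \lambda_f(n) e\pb{\frac{-dn}{c}} H(n),
\end{equation*}
with $H(n)$ a Mellin-Barnes integral involving $\widetilde{g}(1-s)$, the gamma quotient, and $(2\pi \sqrt{n}/c)^{2s-1}$. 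Finally, inserting the classical Mellin-Barnes representation
\begin{equation*}
J_{k-1}(y) = \frac{1}{2\pi i} \int_{(\sigma_0)} \frac{\Gamma\pb{s+\tfrac{k-1}{2}}}{\Gamma\pb{\tfrac{k+1}{2}-s}} \pb{\frac{y}{2}}^{k-1-2s} ds,
\end{equation*}
and unfolding $\widetilde{g}$ back to $g$ via Fubini identifies $H(n)$ with $2\pi i^k \int_0^\infty g(x) J_{k-1}(4\pi \sqrt{xn}/c)\, dx$, as in \eqref{eq:VoronoiH}.

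The only real obstacle is the careful bookkeeping of the root number $i^k$, the powers of $c$ and $2\pi$, and the gamma factors in the functional equation step, so that the kernel comes out precisely to be $J_{k-1}(4\pi\sqrt{xn}/c)$; once those constants are tracked correctly, the contour shift, Fubini, and Mellin inversion are all routine consequences of the compact support of $g$.
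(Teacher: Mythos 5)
The paper does not prove Proposition \ref{prop:Voronoi}; it is stated without proof as a classical result (see the header of Section \ref{sec:ANTresults}, which says proofs are omitted and references should be consulted). So there is no in-paper argument to compare against, and you are on your own here.

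Your argument is the standard Hecke--Mellin derivation and is correct in outline: the modular transformation applied to $\int_0^\infty f\bigl(\tfrac{a}{c}+iy\bigr)y^{s+(k-1)/2}\tfrac{dy}{y}$, with the range split at $y=1/c$ and the substitution $y\mapsto 1/(c^2y)$, does yield the functional equation you wrote, $L_{a/c}(s,f)=i^k c^{1-2s}(2\pi)^{2s-1}\tfrac{\Gamma(1-s+\tfrac{k-1}{2})}{\Gamma(s+\tfrac{k-1}{2})}L_{-d/c}(1-s,f)$ (I checked the constants); the twisted $L$-function is entire because $f$ is cuspidal; and $\widetilde g$ decays super-polynomially on vertical lines since $g\in C_c^\infty(\mathbb R^+)$, which legitimizes the contour shift and the interchange of sum and integral. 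One formula is off, though: with the gamma quotient $\Gamma\bigl(s+\tfrac{k-1}{2}\bigr)/\Gamma\bigl(\tfrac{k+1}{2}-s\bigr)$, the Mellin--Barnes representation of $J_{k-1}(y)$ should carry exponent $-2s$ rather than $k-1-2s$; equivalently, if you want exponent $k-1-2s$ the quotient must be $\Gamma(s)/\Gamma(k-s)$. As written, the spurious factor $(y/2)^{k-1}$ would inject extra powers of $n$, $x$, and $c$ at the closing Fubini/Mellin-inversion step and the kernel would not collapse to $J_{k-1}(4\pi\sqrt{xn}/c)$. This is a bookkeeping slip rather than a conceptual gap --- you already flagged that the constants need care --- but it should be corrected before the final identification of $H(n)$.
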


We will also use a modified Voronoi summation formula, which we state and prove below.

\begin{myprop}
\label{prop:modifiedVoronoi}
    Let $f, g, a, c,$ and $d$ be as in Proposition \ref{prop:Voronoi}. Additionally, assume that $f$ is a Hecke eigenform. Let $p$ be a prime with gcd$(c,p)=1$ and fix a constant $r\not\equiv 0 \Mod{p}$. We then have

\begin{multline}
\label{eq:Voronoifinal}
    \sum_{\text{gcd}(m,p)=1}\ov{\lambda_f}(m) e\pb{\frac{r\ov{m}}{p}}e\pb{-\frac{am}{c}}g(m) = \\ \sum_{\text{gcd}(n,p) = 1} \frac{\ov{\lambda_f}(n)H_1(n)}{p^2c}e\pb{\frac{\ov{ap^2}n}{c}}\text{Kl}_3(n\ov{c}^2r,1,1;p)+ \sum_{\text{gcd}(n,p^2) = p} \frac{\ov{\lambda_f}(n)H_1(n)}{p^2c}e\pb{\frac{\ov{ap^2}n}{c}} \\+\sum_{n=1}^{\infty} \frac{\ov{\lambda_f}(np) \ov{\lambda_f}(p)H_1(p^2n)}{p^2c}e\pb{\frac{\ov{a}n}{c}}  -\pb{1+\frac{1}{p}}\sum_{n=1}^{\infty}\frac{\ov{\lambda_f}(n)H_1(p^2n)} {pc}e\pb{\frac{\ov{a}n}{c}}.
\end{multline}
\end{myprop}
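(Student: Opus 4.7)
The plan is to separate the twist $e_p(r\ov{m})$ from the main $m$-variable by orthogonality of additive characters modulo $p$, apply the standard Voronoi summation formula (Proposition~\ref{prop:Voronoi}) to each resulting piece, and then reassemble via the Chinese Remainder Theorem and Hecke multiplicativity. Concretely, detecting the residue class $m\equiv b_0\Mod{p}$ through $\mathbf{1}_{m\equiv b_0} = \tfrac{1}{p}\sum_{k\Mod p}e_p(k(m-b_0))$ and recognising the inner $b_0$-sum as the Kloosterman sum $S(r,-k;p)$ rewrites the left-hand side as
\[
\frac{1}{p}\sum_{k\Mod p} S(r,-k;p)\sum_{m}\ov{\lambda}_f(m)\,e_p(km)\,e\!\pb{\tfrac{-am}{c}}g(m),
\]
after which the two additive characters combine into $e((kc-ap)m/(pc))$. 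I would then split on $k=0$ (where $S(r,0;p)=-1$ is the Ramanujan sum and Voronoi is applied at modulus $c$) versus $\gcd(k,p)=1$ (where $(kc-ap,pc)=1$ and Voronoi is applied at modulus $pc$), using $\ov{\lambda}_f=\lambda_f$ since $f$ is a level $1$ newform with trivial central character.

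For the $\gcd(k,p)=1$ piece, CRT decomposes the dual character produced by Voronoi as $e_{pc}(-\ov{(kc-ap)}n)=e_c(\ov{ap^2}n)\,e_p(-\ov{k}\,\ov{c}^{2}n)$, using $\ov{(kc-ap)}\equiv \ov{k}\ov{c}\Mod p$ and $\equiv -\ov{a}\ov{p}\Mod c$. I would split the dual $n$-sum by the $p$-adic valuation of $n$ and evaluate the resulting inner sum
\[
T_n \coloneqq \sideset{}{^*}\sum_{k\Mod p} S(r,-k;p)\,e_p\!\pb{-\ov{k}\,\ov{c}^{2}n}.
\]
For $\gcd(n,p)=1$, opening $S(r,-k;p)$, swapping the $k$- and $u$-sums, and making the substitutions $k\mapsto \ov{k}$ followed by $w=\ov{k}\ov{u}$ recasts the double sum as $\sum_{x_1 x_2 x_3\equiv 1\Mod p}e_p(rx_1-x_2-\ov{c}^{2}nx_3)$ via $x_1=u,\,x_2=w,\,x_3=\ov{uw}$; the hyper-Kloosterman scaling symmetry $\text{Kl}_3(a_i\lambda_i;p)=\text{Kl}_3(a_i;p)$ whenever $\lambda_1\lambda_2\lambda_3\equiv 1\Mod p$ converts this to $\text{Kl}_3(\pm n\ov{c}^{2}r,1,1;p)$, giving the first term on the right-hand side. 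For $p\mid n$ the factor $e_p(-\ov{k}\ov{c}^{2}n)$ collapses to $1$ and $T_n=\sideset{}{^*}\sum_k S(r,-k;p)=1$ (using $(r,p)=1$), producing the second term from the range $\gcd(n,p^2)=p$.

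Finally I would combine the $k=0$ contribution (with integral transform $H(n)$ coming from Voronoi at modulus $c$) with the $p^2\mid n$ part of the $\gcd(k,p)=1$ contribution (with transform $H_1(n)$ at modulus $pc$) via the Bessel-function identity $H_1(p^2 n)=H(n)$, which is immediate from $4\pi\sqrt{xp^2n}/(pc)=4\pi\sqrt{xn}/c$ inside the $J$-Bessel argument. After writing $n=p^2 n'$ in the $p^2\mid n$ piece and applying the Hecke relation $\lambda_f(p^2 n')=\lambda_f(p)\lambda_f(pn')-\lambda_f(n')$ valid for a level $1$ Hecke eigenform, the sum reorganises into contributions of $\ov{\lambda}_f(pn)\ov{\lambda}_f(p)$ and $\ov{\lambda}_f(n)$; the arithmetic $-\tfrac{1}{pc}-\tfrac{1}{p^2 c}=-\tfrac{1+1/p}{pc}$ then matches the coefficients of the last two terms exactly.

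The main obstacle is the careful CRT and sign bookkeeping in the evaluation of $T_n$ for $\gcd(n,p)=1$: finding the right sequence of substitutions to collapse the double Kloosterman-type sum into a standard hyper-Kloosterman, and then applying the scaling symmetry so that the two trailing arguments become $1$. Once $T_n$ is pinned down, the remaining steps are routine manipulations with Voronoi, CRT, and the Hecke relations.
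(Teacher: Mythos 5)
Your proposal is correct and follows essentially the same path as the paper: expand $e_p(r\ov m)$ by additive orthogonality (the paper keeps the auxiliary $h$-sum explicit rather than naming it $S(r,-k;p)$, but it is the same identity), split at $t\equiv 0$ versus $t\not\equiv 0$, apply Voronoi at moduli $c$ and $pc$ respectively, use CRT to factor the dual additive character, recognize the resulting $(h,t)$-sum as a hyper-Kloosterman sum, and finally split the dual variable by $p$-adic valuation and reassemble the $p^2\mid n$ range with the $t=0$ term via $H(n)=H_1(p^2n)$ and the Hecke relation. Your hedge $\mathrm{Kl}_3(\pm n\ov c^2 r,1,1;p)$ is well placed: a careful chase of the scaling gives $\mathrm{Kl}_3(+n\ov c^2 r,1,1;p)$, and the minus sign in the paper's displayed formula is a typo that does not affect any subsequent estimate (it amounts to an extra $\chi(-1)$ after the Gauss-sum expansion, which disappears once absolute values are taken).
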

 Here, 
    \begin{equation}
    \label{eq:Kl3}
     \text{Kl}_3(x,y,z;p) = \sum_{abc\equiv 1\Mod{p}} e\pb{\frac{ax+by+cz}{p}},   
    \end{equation}
    and 
    \begin{equation}
    \label{eq:H1}
     H_1(y) = 2\pi i^k \int_o^\infty J_{k-1}\pb{\frac{4\pi\sqrt{xy}}{pc}}g(x)dx.   
    \end{equation}

\begin{proof}[Proof]
Let 
\begin{equation}
\label{eq:s}
    S = \sum_{\text{gcd}(m,p)=1}\ov{\lambda_f}(m) e\pb{\frac{r\ov{m}}{p}}e\pb{-\frac{am}{c}}g(m). 
\end{equation}
From the orthogonality of characters modulo $p$, we know that when gcd$(m,p)=1$,
\begin{equation}
\label{eq:orthogonality}
    e\pb{\frac{r\ov{m}}{p}} = \frac{1}{p}\sum_{t\shortmod{p}} \  \sideset{}{^*}\sum_{h \shortmod{p}} e\pb{\frac{r\ov{h}-ht+mt}{p}}. 
\end{equation}
We note that if gcd$(m,p) \neq 1$, then the right hand side of \eqref{eq:orthogonality} is zero. We thus have

\begin{equation}
\label{eq:simp.Voronoi}
    S = \frac{1}{p}\sum_{t\shortmod{p}} \ \sideset{}{^*}\sum_{h \shortmod{p}} e\pb{\frac{r\ov{h}-ht}{p}}\sum_{m=1}^{\infty}\ov{\lambda_f}(m)g(m)e\pb{\frac{-am}{c}+\frac{tm}{p}}.
\end{equation}

Separating the $t\equiv 0 \Mod{p}$ term, \eqref{eq:simp.Voronoi} can be rewritten as 
\begin{equation}
\label{eq:modifiedsimp.Voronoi}
    S= S_1 + S_2,
\end{equation}
where
\begin{equation}
\label{eq:s1}
    S_1 = \frac{1}{p} \ \sideset{}{^*}\sum_{h,t\Mod{p}} e\pb{\frac{r\ov{h}-ht}{p}}\sum_{m=1}^{\infty}g(m)\ov{\lambda_f}(m)e\pb{\frac{m(-ap+tc)}{pc}},  
\end{equation}
and
\begin{equation}
\label{eq:s2}
    S_2 = \frac{1}{p} \ \sideset{}{^*}\sum_{h\Mod{p}} e\pb{\frac{r\ov{h}}{p}}\sum_{m=1}^{\infty}g(m)\ov{\lambda_f}(m)e\pb{\frac{-am}{c}}.
\end{equation}
We can now use Proposition \ref{prop:Voronoi} to simplify $S_1$ and $S_2$. Note that gcd$(a,c)=1$ and gcd$(-ap+tc,pc)=1$ if gcd$(t,p)=1$. Thus \eqref{eq:s1} becomes
\begin{equation}
\label{eq:s1Voronoi}
    S_1 = \frac{1}{p}\ \sideset{}{^*}\sum_{h,t\Mod{p}}e\pb{\frac{r\ov{h}-ht}{p}} \pb{\frac{1}{pc}\sum_{n=1}^{\infty} \ov{\lambda_f}(n) e\pb{-\frac{\ov{tc-ap}}{pc}\cdot n}H_1(n)},
\end{equation}
where $H_1(y)$ is defined in \eqref{eq:H1}.

We now use the Chinese Remainder Theorem to simplify  \eqref {eq:s1Voronoi} further.
\begin{multline}
\label{eq:s1expanded}
    S_1 = \frac{1}{p^2c}\sum_{n=1}^{\infty}\ov{\lambda_f}(n) H_1(n)\pb{\ \sideset{}{^*}\sum_{h,t\Mod{p}}e\pb{\frac{r\ov{h}-ht}{p}}e\pb{\frac{-\ov{p}(\ov{tc-ap})n}{c}} e\pb{\frac{-\ov{c}(\ov{tc-ap})n}{p}}}\\ =\frac{1}{p^2c}\sum_{n=1}^{\infty}\ov{\lambda_f}(n) H_1(n)e\pb{\frac{\ov{ap^2}n}{c}}\pb{\ \sideset{}{^*}\sum_{h,t\Mod{p}}e\pb{\frac{r\ov{h}-ht-\ov{tc^2}n}{p}} } \\ = \sum_{n=1}^{\infty} \frac{\ov{\lambda_f}(n)H_1(n)}{p^2c}e\pb{\frac{\ov{ap^2}n}{c}}\text{Kl}_3(n\ov{c}^2r,1,1;p),
\end{multline}
where $\text{Kl}_3(\cdot)$ is defined in \eqref{eq:Kl3}.
\\Note that if $p \mid n$, then this hyper-Kloosterman sum can be simplified as
\begin{equation}
    \text{Kl}_3(n\ov{c}^2r,1,1;p) = \sum_{xyz \equiv 1 \Mod{p}}e\pb{\frac{x(n\ov{c}^2r) + y + z}{p}} = \ \sideset{}{^*}\sum_{y,z \Mod{p}} e\pb{\frac{y+z}{p}} = 1.
\end{equation}
This allows us to rewrite \eqref{eq:s1expanded} as,
\begin{equation}
\label{eq:s1sum}
    S_1 = S_1^{*} + S_1^p + S_1^{p^2}.
\end{equation}
Here,
\begin{equation}
\label{eq:s1*}
    S_1^{*} = \sum_{\text{gcd}(n,p) = 1} \frac{\ov{\lambda_f}(n)H_1(n)}{p^2c}e\pb{\frac{\ov{ap^2}n}{c}}\text{Kl}_3(n\ov{c}^2r,1,1;p),  
\end{equation}

\begin{equation}
\label{eq:s1p}
    S_1^p = \sum_{\text{gcd}(n,p^2) = p} \frac{\ov{\lambda_f}(n)H_1(n)}{p^2c}e\pb{\frac{\ov{ap^2}n}{c}}\text{Kl}_3(n\ov{c}^2r,1,1;p) =\sum_{\text{gcd}(n,p^2) = p} \frac{\ov{\lambda_f}(n)H_1(n)}{p^2c}e\pb{\frac{\ov{ap^2}n}{c}}, 
\end{equation}
and
\begin{equation}
\label{eq:s1p^2}
    S_1^{p^2} =\sum_{\text{gcd}(n,p^2) = p^2}  \frac{\ov{\lambda_f}(n)H_1(n)}{p^2c}e\pb{\frac{\ov{ap^2}n}{c}}\text{Kl}_3(n\ov{c}^2r,1,1;p)=  \sum_{\text{gcd}(n,p^2) = p^2} \frac{\ov{\lambda_f}(n)H_1(n)}{p^2c}e\pb{\frac{\ov{ap^2}n}{c}}.
\end{equation}
Using Proposition \ref{prop:Voronoi} again on \eqref{eq:s2} we get
\begin{equation}
\label{eq:s2Voronoi}
    S_2 = \frac{1}{p}\ \sideset{}{^*}\sum_{h\Mod{p}} e\pb{\frac{r\ov{h}}{p}}\frac{1}{c}\sum_{n=1}^{\infty}\ov{\lambda_f}(n)e\pb{\frac{\ov{a}n}{c}}H(n) = \sum_{n=1}^{\infty}\frac{\ov{\lambda_f}(n)H(n)}{pc}e\pb{\frac{\ov{a}n}{c}}S(r,0;p),
\end{equation}
where $S(r,0;p)$ is the Ramanujan sum modulo $p. \ H(y)$ is the same as in \eqref{eq:VoronoiH}. 
\\Note that $S(r,0;p) = -1$ as $r \not\equiv 0 \Mod{p}$. Also, $H(y) = H_1(p^2y)$.
\\Finally, as $f$ is a Hecke eigenform, $\lambda_f(p^2n) = \lambda_f(p)\lambda_f(np) - \lambda_f(n)$. Now, \eqref{eq:modifiedsimp.Voronoi}, \eqref{eq:s1sum} and \eqref{eq:s2Voronoi}, together imply \eqref{eq:Voronoifinal}.
\end{proof}

\subsection{Bruggeman-Kuznetsov Formula}
\label{subsec:kuznetsov}
We state the version of Bruggeman-Kuznetsov formula that we will use. This is a restatement of Theorem $6.10$ in \cite{py4m}.

Let $\chi$ be an even Dirichlet character modulo $q$ and $\Phi \in C_C^{\infty}(\mathbb{R}_{>0})$,

\begin{equation}
\label{eq:Volofq}
    V(q) = \text{Vol }(\Gamma_0(q)\backslash \mathcal{H}) = \frac{\pi}{3}q\prod_{p \mid q} \pb{1+\frac{1}{p}},
\end{equation}
and
\begin{equation}
\label{eq:KuznetsovLHS}
    \mathcal{K} = \sum_{(c,q)=1}\ov{\chi}(c)S(\ov{q}m,n,c)\Phi(q^{\frac12}c),
\end{equation}

where $S(a,b;c)$ denotes the usual Kloosterman sum.
 
Additionally, for fixed integers $m$ and $n$, we define the following integral transforms - 
\begin{equation}
\label{eq:LholPhialt}
 \mathcal{L}^{\text{hol}} \Phi(k) = \int_0^{\infty} J_{k-1} \pb{\frac{4\pi\sqrt{\abs{mn}}}{x}} \Phi(x) dx .
\end{equation}
Also,

\begin{equation}
\label{eq:L+/-Phialt}
    \mathcal{L}^{\pm} \Phi(t) = \int_0^\infty B_{2it}^{\pm}\pb{\frac{4\pi\sqrt{\abs{mn}}}{x}}  \Phi(x) dx,
\end{equation}

where

\begin{equation}
    \label{eq:Bplus}
    B_{2it}^+(x) = \frac{i}{2\sin{(\pi t)}} \pb{J_{2it}(x) - J_{-2it}(x)},
\end{equation}

\begin{equation}
    \label{eq:Bminus}
    B_{2it}^-(x) = \frac{2}{\pi} \cosh{(\pi t)} K_{2it}(x).
\end{equation}

Here, $J$ and $K$ denote the usual $J$-Bessel and $K$-Bessel functions.
\\We could also use Mellin inversion formula to rewrite these integral transforms as-

\begin{equation}
\label{eq:LholPhi}
 \mathcal{L}^{\text{hol}} \Phi(k) = \frac{1}{2\pi i}\int_{(1)} \frac{2^{s-1}\Gamma\pb{\frac{s+k-1}{2}}}{\Gamma\pb{\frac{k+1-s}{2}}}\tilde{\Phi}(s+1)(4\pi\sqrt{\abs{mn}})^{-s}ds,
\end{equation}

\begin{equation}
\label{eq:L+/-Phi}
    \mathcal{L}^{\pm} \Phi(t) = \frac{1}{2\pi i}\int_{(2)}h_{\pm}(s,t)\tilde{\Phi} (s+1)(4\pi\sqrt{\abs{mn}})^{-s}ds,
\end{equation}
where
\begin{equation}
\label{eq:h+/-}
    h_{\pm}(s,t) = \frac{2^{s-1}}           {\pi}\Gamma(\frac{s}{2}+it)\Gamma(\frac{s}{2} - it)
    \begin{cases}
    \cos(\pi s/2), & \pm=+\\
    \cosh(\pi t), & \pm=- 
    \end{cases}
\end{equation}
and $\tilde{\Phi}(\cdot)$ denotes the Mellin transform of $\Phi(\cdot)$.

Let $\mathcal{H}_{it}(m,\psi)$ denote the set of Hecke-Maass newforms of conductor $m$, central character $\psi$, and spectral parameter  $it$. Similarly, we can define $\mathcal{H}_{k}(m,\psi)$ as the set of Hecke newforms of conductor $m$, central character $\psi$ and weight $k$. We can also define $\mathcal{H}_{it, \text{Eis}}(m,\psi)$ as the set of newform Eisenstein series of level $m$ and character $\chi$.

We work with the same explicit choice of basis as in \cite[\S 6.6]{py4m}. 

For any Hecke eigenform $\pi$ define
\begin{equation}
\label{eq:lambdapidelta}
    \lambda_\pi^{(\delta)} (n) = \sum_{d \mid \delta} d^{\frac12} \xi_\delta(d)\lambda_\pi(n/d),
\end{equation}

where $\lambda_\pi(n/d)=0$ if $\frac{n}{d}$ is not an integer. $\xi_\delta$'s are certain coefficients that arise during the construction of this basis. We refer the reader to \cite[\S 6.6]{py4m} for definitions, but mention that $\xi_\delta(d)$ is jointly multiplicative in $d$ and $\delta$, and satisfies $\xi_\delta (d)\ll (d\delta)^\varepsilon$
.

We can now state a version of the Bruggeman-Kuznetsov formula -
\begin{myprop}
\label{prop:Kuznetsov}
Let $\Phi \in C_C^{\infty}(\mathbb{R}_{>0})$. We have
\begin{equation}
\label{eq:Kuznetsov}
    \sum_{\text{gcd}(c,q)=1} \ov{\chi}(c)S(\ov{q}m,n,c)\Phi(q^{\frac12}c) = \mathcal{K}_{\text{Maass}}+\mathcal{K}_{\text{Eis}}+\mathcal{K}_{\text{hol}} \ .
\end{equation}
Here, 
\begin{equation}
\label{eq:KMaass}
    \mathcal{K}_{\text{Maass}} = \sum_{t_j} \mathcal{L}^{\pm}\Phi(t_j) \sum_{lr=q} \sum_{\pi \in \mathcal{H}_{it_j}(r,\chi)} \frac{4\pi\epsilon_{\pi}}{V(q)\mathscr{L}_\pi^*(1)}\sum_{\delta \mid l} \ov{\lambda}_\pi^{(\delta)}(\abs{m})\ov{\lambda}_\pi^{(\delta)}(\abs{n}),
\end{equation}
and
\begin{equation}
\label{eq:KEis}
    \mathcal{K}_{\text{Eis}} = \frac{1}{4\pi}\int_{-\infty}^{\infty}\mathcal{L}^{\pm}\Phi(t_j) \sum_{lr=q} \sum_{\pi \in \mathcal{H}_{it, \textrm{Eis}}(r,\chi)} \frac{4\pi\epsilon_{\pi}}{V(q)\mathscr{L}_\pi^*(1)}\sum_{\delta \mid l} \ov{\lambda}_\pi^{(\delta)}(\abs{m})\ov{\lambda}_\pi^{(\delta)}(\abs{n})dt,
\end{equation}
where one takes $\Phi+$ (resp. $\Phi^-$) if $mn > 0$ (resp. $mn<0$), and $\epsilon_{\pi}$ is the finite root number of $\pi$. Also,
\begin{equation}
\label{eq:Khol}
    \mathcal{K}_{\text{hol}} = \sum_{k>0, \text{ even}} \mathcal{L}^{\text{hol}}\Phi(k) \sum_{lr=q} \sum_{\pi \in \mathcal{H}_{k}(r,\chi)} \frac{4\pi\epsilon_{\pi}}{V(q)\mathscr{L}_\pi^*(1)}\sum_{\delta \mid l} \ov{\lambda}_\pi^{(\delta)}(\abs{m})\ov{\lambda}_\pi^{(\delta)}(\abs{n}),
\end{equation}
if $mn>0$, and $ \mathcal{K}_{\text{hol}}=0$ if $mn<0$.
\end{myprop}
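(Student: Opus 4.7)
The proposition is stated as a restatement of Theorem 6.10 in \cite{py4m}, so my plan is to outline the standard Bruggeman--Kuznetsov derivation via Poincaré series and verify that the normalizations align with those used here. Concretely, for fixed $m \geq 1$ and $\Re(s)$ large, I would introduce the Poincaré series
\begin{equation*}
P_m(z, s; \chi) = \sum_{\gamma \in \Gamma_\infty \backslash \Gamma_0(q)} \ov{\chi}(\gamma)\, \Im(\gamma z)^s\, e(m\, \gamma z),
\end{equation*}
which transforms under $\Gamma_0(q)$ with nebentypus $\chi$. Computing its $n$-th Fourier coefficient by the standard Bruhat double-coset decomposition of $\Gamma_\infty \backslash \Gamma_0(q) / \Gamma_\infty$ yields, for every $c$ with $\gcd(c,q) = 1$, the Kloosterman sum $S(\ov{q}m, n; c)$ convolved against a Bessel-type kernel depending on $s$. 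Replacing the $s$-dependence by integration against a suitable test function, or, equivalently, starting directly from a Poincaré series built from $\Phi$, matches the geometric side to the left-hand side $\mathcal{K}$ in \eqref{eq:KuznetsovLHS}.

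The spectral side comes from expanding $P_m$ in an orthonormal basis of $L^2(\Gamma_0(q) \backslash \mathcal{H}, \chi)$, namely the Maass cuspidal spectrum together with the Eisenstein continuous spectrum, and separately via a holomorphic Poincaré series (or, equivalently, the Petersson trace formula) for the weight-$k$ cuspidal spaces. The inner product $\langle P_m, \pi \rangle$ unfolds to the $m$-th Fourier coefficient of $\pi$, so pairing with the $n$-th coefficient produces $\ov{\lambda}_\pi(\abs{m}) \ov{\lambda}_\pi(\abs{n})$. The integral transforms $\mathcal{L}^{\pm}\Phi(t_j)$ and $\mathcal{L}^{\text{hol}}\Phi(k)$ then arise as the weight functions coming from the asymptotic shape of the $K$- and $J$-Bessel functions in the Whittaker expansion, after a contour shift that passes from the Mellin variable $s$ to the spectral parameter $t_j$ or the weight $k$; the dichotomy between $\mathcal{L}^+$ and $\mathcal{L}^-$ reflects the sign of $mn$.

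The last structural step is to rewrite the raw spectral sum, which is initially indexed by an orthonormal basis for the full level-$q$ space, in terms of newforms $\pi$ of conductor $r \mid q$. Each newform of conductor $r$ contributes through its oldform translates at level $q$, indexed by divisors $\delta$ of $l = q/r$; assembling the Hecke eigenvalues of $\pi$ against an orthonormal oldform basis produces exactly the twisted coefficients $\lambda_\pi^{(\delta)}(\abs{m})$ and $\lambda_\pi^{(\delta)}(\abs{n})$ defined in \eqref{eq:lambdapidelta}, while the harmonic weights collapse to $\tfrac{4\pi\epsilon_\pi}{V(q)\mathscr{L}_\pi^*(1)}$. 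I expect the main obstacle to be precisely this oldform-to-newform passage in the presence of a nontrivial nebentypus, since it requires diagonalising the Gram matrix of oldform translates and identifying local factors of $L(\mathrm{Ad}\,\pi, 1)$ at the bad primes; this is carried out in Section 6 of \cite{py4m}, whose conventions I would match against rather than redo from scratch.
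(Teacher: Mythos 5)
The paper gives no proof here: Proposition \ref{prop:Kuznetsov} is introduced as ``a restatement of Theorem $6.10$ in \cite{py4m}'' and used as a black box, so your ultimate fall-back (matching conventions against Section~6 of Petrow--Young rather than rederiving) is exactly what the paper does.

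That said, the sketch you give before invoking the reference has a concrete gap. You build the Poincar\'e series at $\infty$, decompose by $\Gamma_\infty \backslash \Gamma_0(q) / \Gamma_\infty$, and then assert this yields the sum over $c$ with $\gcd(c,q)=1$ weighted by $S(\ov{q}m,n;c)$. But that double-coset decomposition is the \emph{same-cusp} $(\infty,\infty)$ one: it produces moduli $c\equiv 0\pmod q$ and ordinary Kloosterman sums $S(m,n;c)$. The formula you are proving is the \emph{opposite-cusp} version --- Section~\ref{subsec:kuznetsov} and the sketch in the introduction make explicit that it is the Bruggeman--Kuznetsov formula ``at the cusps $\infty$ and $0$'' --- so the geometric side must come from $\Gamma_\infty\backslash\Gamma_0(q)/\sigma_0\Gamma_\infty\sigma_0^{-1}$, with $\sigma_0$ a scaling matrix for the cusp $0$; it is this decomposition that forces $\gcd(c,q)=1$ and introduces the factor $\ov{q}$ inside the Kloosterman sum. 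The same point affects your spectral side: unfolding $\langle P_m,\pi\rangle$ gives the $m$-th coefficient at $\infty$, but the $n$ must be read off at the cusp $0$, and relating that Fourier expansion back to Hecke eigenvalues requires the Atkin--Lehner operator --- which is exactly where the finite root number $\epsilon_\pi$ in \eqref{eq:KMaass}--\eqref{eq:Khol} comes from, and your sketch never accounts for it. So while the overall architecture (Poincar\'e series $\to$ geometric/spectral sides $\to$ oldform-to-newform passage) is correct and you correctly identify the newform sieving as the hard technical step, the derivation as written would produce the $q\mid c$ formula without $\epsilon_\pi$, not the one stated.
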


\subsection{Spectral Large Sieve Inequality}
\label{subsec:speclargesieve}

We state the version of spectral large sieve inequality that we will use. This is a restatement of Lemma $7.4$ in \cite{py4m}.

Let us denote by

\begin{equation*}
    \int_{*\leq T} \ \text{ any of } \sum_{\abs{t_j}\leq T}, \sum_{k \leq T}, \text{ or } \int_{\abs{t}\leq T} dt,
\end{equation*}
according to whether $* = it_j, k,$ or $it,\text{Eis}$.

\begin{myprop}
\label{prop:spectrallargesieve}
For any sequence of complex numbers $a_n$, we have
\begin{equation}
    \int_{* \leq T} \sum_{\pi \in \mathcal{H}_{*}(q)} \left \vert \sum_{n\leq N} a_n \lambda_{\pi}(n) \right \vert^2 \ll_{\varepsilon} (T^2q+N)(qTN)^{\varepsilon}\sum_{n \leq N} \abs{a_n}^2.
\end{equation}
    
\end{myprop}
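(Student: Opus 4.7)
The plan is to follow the classical Deshouillers–Iwaniec strategy, adapted to the level $q$ and central character $\chi$ setting already captured by the Bruggeman–Kuznetsov formula in Proposition \ref{prop:Kuznetsov}. The first step is to reformulate the inequality by duality (the standard large-sieve duality principle), which reduces the claim to its dual form: for any square-integrable coefficient sequence $\{b_\pi\}$ on the spectral side,
\begin{equation*}
 \sum_{n \leq N} \left\vert\, \int_{*\leq T} \sum_{\pi \in \mathcal{H}_*(q)} b_\pi \,\ov{\lambda_\pi(n)} \,\right\vert^2 \ll_\varepsilon (T^2 q + N)(qTN)^\varepsilon \int_{*\leq T} \sum_{\pi \in \mathcal{H}_*(q)} \abs{b_\pi}^2.
\end{equation*}
Either form can be attacked directly; I would expand the square (on whichever side is more convenient) and introduce a nonnegative smooth weight $h(t)$ with $h(t) \geq 1$ on the range $\abs{t} \leq T$, and of Paley–Wiener type so that its Kloosterman-side transform $\Phi$ has controlled behavior.

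The second step is to apply Proposition \ref{prop:Kuznetsov} to the resulting inner sum
\begin{equation*}
 \sum_{\pi \in \mathcal{H}_*(q)} h(t_\pi)\lambda_\pi(m)\ov{\lambda_\pi(n)},
\end{equation*}
which converts it into a sum of Kloosterman sums $S(\ov{q}m,n;c)\Phi(q^{1/2}c)$ together with a diagonal contribution from $m=n$. The diagonal contributes the $N\sum_m \abs{a_m}^2$ part of the final bound. For the off-diagonal, one applies Weil's bound for the Kloosterman sum and bounds the resulting sum over $c$ (restricted to $(c,q)=1$) together with the double sum in $m,n \leq N$; after careful bookkeeping, this produces precisely the $T^2 q$ contribution. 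The lower bounds of Hoffstein–Lockhart type for $\mathscr{L}_\pi^*(1) \gg (qT)^{-\varepsilon}$ and the volume asymptotic $V(q) \asymp q^{1+o(1)}$ from \eqref{eq:Volofq} are needed to convert the spectral normalisation in \eqref{eq:KMaass}--\eqref{eq:Khol} into the clean form stated.

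The holomorphic and Eisenstein spectra are handled in parallel, using the corresponding transforms $\mathcal{L}^{\mathrm{hol}}\Phi$ and $\mathcal{L}^{\pm}\Phi$ on the Eisenstein side. The main technical obstacle is the construction and analysis of the test pair $(h,\Phi)$: one needs $h \geq 1$ on $*\leq T$ while simultaneously ensuring that $\Phi$ localises the $c$-variable in a range where Weil's bound is sharp and the integral transforms $\mathcal{L}^{\pm}\Phi(t)$ decay sufficiently outside $\abs{t} \ll T$. Standard choices (for instance $\Phi(x) = (x/X)^{2}e^{-(x/X)^2}$ with $X$ tuned to $T$) admit evaluation via stationary phase through the Mellin representations \eqref{eq:LholPhi}--\eqref{eq:L+/-Phi}, at which point the analysis reduces to routine estimation. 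A detailed execution in this level-and-character setting is carried out in \cite{py4m}, to which we refer for the precise bookkeeping.
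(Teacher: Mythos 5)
The paper does not actually prove this proposition; it is presented as a restatement of Lemma 7.4 in \cite{py4m}, and the reader is referred there. Your sketch outlines the standard Deshouillers--Iwaniec template (duality, a positive test function, Kuznetsov, Weil's bound, Hoffstein--Lockhart lower bounds for $\mathscr{L}_\pi^*(1)$), which is consistent with how \cite{py4m} proves it, and you also defer the bookkeeping to \cite{py4m}, so in the end the two ``approaches'' coincide.

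One technical point worth flagging if you wanted to make the sketch precise: Proposition \ref{prop:Kuznetsov} as stated in this paper goes in the Kloosterman-to-spectral direction and has no $\delta_{m=n}$ term on the geometric side, so it cannot be applied directly to the spectral sum $\sum_\pi h(t_\pi)\lambda_\pi(m)\ov{\lambda_\pi(n)}$ as written in your step two. You would need the converse (spectral-to-geometric) form of Kuznetsov, which produces the diagonal $\delta_{m=n}$ term that yields the $N\sum_n\abs{a_n}^2$ piece, together with a Sears--Bessel-type inversion argument constructing a $\Phi$ for which all three transforms $\mathcal{L}^{\pm}\Phi$ and $\mathcal{L}^{\mathrm{hol}}\Phi$ majorize the weight $h$ on the range $\lvert t\rvert\le T$ simultaneously. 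This nonnegativity-across-three-spectra step is the nontrivial part of constructing the test pair and is glossed over in the sketch, though it is handled in the cited reference.
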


\subsection{Fourth Moment of Fourier Coefficients of Cusp Forms}

We will use the following result that bounds the fourth moment of the Fourier coefficients of cusp forms.

\begin{myprop}
    \label{prop:fourthmomtbound}
    Let $\lambda_f(n)$ denote the $n^{\text{th}}$ normalised Fourier coefficient of a primitive holomorphic or Maass cusp form $f$ for $SL_2(\mathbb{Z})$. Let $x \in \mathbb{R}$ be positive. Then, for any $\varepsilon >0$,
    \begin{equation}
        \label{eq:fouthmomtbound}
        \sum_{n\leq x} \abs{\lambda_f(n)}^4 \ll_f x^{1+\varepsilon}.
    \end{equation}
\end{myprop}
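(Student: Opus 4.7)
The plan is to reduce the fourth-moment sum for $\lambda_f$ to a second-moment sum for the symmetric-square lift $\mathrm{sym}^2 f$, and then apply the Rankin--Selberg mean value for $\mathrm{sym}^2 f$. I would start from the classical Euler-product identity
\begin{equation*}
\sum_{n \geq 1} \frac{|\lambda_f(n)|^2}{n^s} \;=\; \frac{\zeta(s)\, L(s, \mathrm{sym}^2 f)}{\zeta(2s)},
\end{equation*}
valid for any Hecke eigen cusp form with trivial central character (which covers all forms on $SL_2(\mathbb{Z})$). Writing $\zeta(s)/\zeta(2s) = \sum_n \mu^2(n) n^{-s}$ and comparing Dirichlet coefficients, I would read off the divisor identity $|\lambda_f(n)|^2 = \sum_{ab=n} \mu^2(a)\, \lambda_{\mathrm{sym}^2 f}(b)$.

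Squaring this identity and applying Cauchy--Schwarz to the $O(d(n))$-term divisor sum on the right yields
\begin{equation*}
|\lambda_f(n)|^4 \;\leq\; d(n) \sum_{b \mid n} |\lambda_{\mathrm{sym}^2 f}(b)|^2.
\end{equation*}
Next, I would sum over $n \leq x$, swap the order of summation so that $b$ becomes the outer variable, bound the count of $n \leq x$ with $b \mid n$ by $x/b$, and use the trivial divisor estimate $d(n) \ll n^\varepsilon$, obtaining
\begin{equation*}
\sum_{n \leq x} |\lambda_f(n)|^4 \;\ll\; x^{1+\varepsilon} \sum_{b \leq x} \frac{|\lambda_{\mathrm{sym}^2 f}(b)|^2}{b}.
\end{equation*}

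For the remaining inner sum, I would invoke the Rankin--Selberg mean value $\sum_{b \leq y} |\lambda_{\mathrm{sym}^2 f}(b)|^2 \ll_f y$. This is classical for holomorphic $f$ (Rankin, Shimura); for Maass $f$ it follows from the Gelbart--Jacquet construction of $\mathrm{sym}^2 f$ as a cuspidal automorphic representation on $GL_3$, combined with the standard analytic theory of Rankin--Selberg $L$-functions on $GL_3 \times GL_3$. Partial summation then gives $\sum_{b \leq x} |\lambda_{\mathrm{sym}^2 f}(b)|^2/b \ll_f \log x$, and back-substitution completes the proof.

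The only substantive obstacle is the Rankin--Selberg mean value for $\mathrm{sym}^2 f$ in the Maass case, which ultimately rests on Gelbart--Jacquet; every other step is elementary. In the holomorphic case one could shortcut even this by appealing to Deligne's bound $|\lambda_f(n)| \leq d(n) \ll n^\varepsilon$ together with Rankin--Selberg for $f$ itself, but the symmetric-square route handles both cases uniformly and is consistent with the author's stated preference (in the remark following the sketch) to avoid Deligne.
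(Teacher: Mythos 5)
Your proof is correct, but it takes a genuinely different route from what the paper does. The paper does not prove the bound at all: it simply cites Lau--L\"u (Theorem 1.5 and Remark 1.7 of \cite{laulu}), together with the earlier work of Moreno--Shahidi and L\"u. Those references establish the much finer \emph{asymptotic} $\sum_{n\leq x}\lambda_f(n)^4 \sim c_f\, x\log x$, and the method there rests on decomposing $\lambda_f(p)^4$ into a linear combination of Satake data for $\mathrm{sym}^k f$ up to $k=4$, so it ultimately needs the Kim--Shahidi/Kim automorphy of $\mathrm{sym}^3 f$ and $\mathrm{sym}^4 f$. You instead aim only for the upper bound, which is all the paper actually uses, and this allows the argument to terminate at $\mathrm{sym}^2 f$: the identity $\lambda_f(n)^2 = \sum_{ab=n}\mu^2(a)\lambda_{\mathrm{sym}^2 f}(b)$ from $\zeta(s)L(s,\mathrm{sym}^2 f)/\zeta(2s)$, Cauchy--Schwarz over the $O(d(n))$ divisors to get $\lambda_f(n)^4 \leq d(n)\sum_{b\mid n}\lambda_{\mathrm{sym}^2 f}(b)^2$, a divisor swap, and the second-moment bound $\sum_{b\leq y}\lambda_{\mathrm{sym}^2 f}(b)^2\ll_f y$ from $GL(3)\times GL(3)$ Rankin--Selberg (available because $\mathrm{sym}^2 f$ is cuspidal on $GL(3)$ by Gelbart--Jacquet, for both holomorphic and Maass $f$). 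Every step is sound; you lose log factors compared to the cited asymptotics, but none of them matter for the $x^{1+\varepsilon}$ target. The tradeoff is therefore: the paper's citation invokes deeper automorphy inputs ($\mathrm{sym}^4$) and gets a sharp asymptotic, while your argument is self-contained at the level of $\mathrm{sym}^2$ and gets exactly what is needed. You are also right that in the holomorphic case Deligne's bound plus Rankin--Selberg for $f$ itself gives an even shorter proof, and that your route is more in the spirit of the author's stated aim to avoid Deligne.
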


Proposition \ref{prop:fourthmomtbound} follows from Theorem 1.5 and Remark 1.7 in \cite{laulu}. In fact, Moreno and Shahidi first obtained a similar bound for the fourth moment of the Ramanujan $\tau$-function in \cite{morsha}. They were able to prove $\sum_{n\leq x} \tau(n)^4 \sim cx\log{x}$, for some positive constant $c$.

This result was then extended to primitive holomorphic cusp forms by L{\"u} (see \cite{lu}) and to primitive Maass cusp forms by Lau and L{\"u} (see \cite{laulu}).

\section{Reduction of Theorem \ref{thm:MainThm} to Theorem \ref{thm:redthm}}
\label{sec:ProofReduction}
In this section, we will prove Theorem \ref{thm:MainThm}, assuming Theorem \ref{thm:redthm} is true.

Using Corollary \ref{cor:approx.func} and a dyadic partition of unity, it suffices to show (via an application of Cauchy-Schwarz inequality) that for any $\varepsilon >0$, 
\begin{equation}
    \label{eq:SwNalpha}
    S(N,\alpha) \coloneqq \sum_{\psi \Mod{p^2}} \abs*{\sideset{}{^*}\sum_{n \asymp N} \lambda_f (n)\psi(n)\alpha(n)w_N(n) }^2 \ll N p^{2+\varepsilon}, 
\end{equation}
where $w_N$ is a smooth function supported on $[N,2N]$ satisfying $w_N^{(j)}(x) \ll N^{-j}$ and $N \ll p^{3+\varepsilon}$. Recall that $\sideset{}{^*}\sum_n$ denotes that the sum is over the values of $n$ relatively prime to $p$.

Expanding the square in $S(N,\alpha)$ and rearranging terms, we get
\begin{equation}
\label{eq:expandsq}
    S(N,\alpha) = \sideset{}{^*}\sum_{m,n}  \lambda_f(m)\ov{\lambda_f}(n)\alpha(m)\ov{\alpha(n)}w_N(m)w_N(n) \sum_{\psi \Mod{p^2}} \psi(m\ov{n}).
\end{equation}
The inner sum on the right-hand side of \eqref{eq:expandsq} vanishes unless $m \equiv n \Mod{p^2}$, in which case it is equal to $\phi(p^2)$. We can then rewrite $S(N,\alpha)$ as a sum of diagonal and off-diagonal terms. We get

\begin{equation}
    S(N,\alpha) =\sideset{}{^*} \sum_{n}  \phi(p^2) \left \lvert \lambda_f(n) \right \rvert^2w_N(n)^2 + S_0(N,\alpha),
\end{equation}
where 
\begin{equation}
S_0(N,\alpha) = \sideset{}{^*}\sum_{\substack{m \neq n \\ m\equiv n \Mod{p^2}}} \phi(p^2)\lambda_f(m) \ov{\lambda_f}(n)\alpha(m) \ov{\alpha(n)}w_N(m)w_N(n),
\end{equation}
is the off-diagonal term.

Using known bounds on $w_N$ and $\lambda_f(n)$ the diagonal term is of the order $O(Np^{2+\varepsilon})$. Thus \eqref{eq:SwNalpha} follows if we can show that the off-diagonal terms ($S_0(N,\alpha)$) satisfy the same bound. 

Now, for $S_0(N,\alpha)$, we note that it suffices to consider the sum for the terms with $m>n$, since the sum for the terms with $m<n$ is just the complex conjugate of the $m>n$ sum.

Letting $l = \frac{m-n}{p^2}$, and considering only positive values of $l$, we get (using $\Re(z)$ to denote the real part of $z$ , $z \in \mathbb{C}$)
\begin{equation}
\label{eq:S0bound}
   S_0(N,\alpha) = 2 \Re \pb{\phi(p^2)\sum_{l} \ \sideset{}{^*}\sum_{n} \lambda_f(n+p^2l) \ov{\lambda_f}(n) \alpha(n+p^2l)\ov{\alpha(n)}w_N(n+p^2l) w_N(n)}.
\end{equation}

We define 
\begin{equation} 
\label{eq:lintro}
    S_1(N,\alpha) \coloneqq \sideset{}{^*} \sum_{n,l}  \lambda_f(n+p^2l)\ov{\lambda_f}(n)\alpha(n+p^2l)\ov{\alpha(n)}w_N(n+p^2l) w_N(n),
\end{equation}
and 
\begin{equation} 
\label{eq:lintro2}
    S_2(N,\alpha) \coloneqq \sum_{l \equiv 0 \shortmod{p}}  \ \sideset{}{^*}\sum_{n} \lambda_f(n+p^2l)\ov{\lambda_f}(n)\alpha(n+p^2l)\ov{\alpha(n)}w_N(n+p^2l) w_N(n).
\end{equation}

Note that $S_1(N,\alpha)$ is precisely the left hand side in \eqref{eq:redthm} in Theorem \ref{thm:redthm}.

Then \eqref{eq:S0bound} can be written as 
\begin{equation}
     S_0(N,\alpha) = 2\phi(p^2) \Re\pb{(S_1(N,\alpha)  +  S_2(N,\alpha))}. 
\end{equation}

So, in order to prove Theorem \ref{thm:MainThm}, it suffices to show that

\begin{equation}
\label{eq:S1finalbound}
    S_1(N,\alpha) \ll \frac{Np^{2+\varepsilon}}{\phi(p^2)} \ll Np^{\varepsilon}, 
\end{equation}

and
\begin{equation}
\label{eq:S2finalbound}
    S_2(N,\alpha) \ll Np^{\varepsilon}.
\end{equation}
The bound in \eqref{eq:S1finalbound} is just a restatement of Theorem \ref{thm:redthm}. As $N \ll p^{3+\varepsilon}$, \eqref{eq:S2finalbound} can be obtained by simply taking trivial bounds for all the terms. Thus Theorem \ref{thm:MainThm} follows.

\section{Harmonic Analysis}
\label{sec:harmonic}
We note that bounding all the terms trivially, we get that $S_1(N,\alpha) \ll \frac{N^2}{p^2}p^{\varepsilon} \ll Np^{1+\varepsilon}$.
\\The proof for Theorem \ref{thm:redthm} starts with the introduction of the delta symbol, followed by the use of Voronoi summation formula to get additional cancellations on the terms that appear. Barring a `main term' this approach proves to be very fruitful, and we end up getting the desired upper bounds. 
\subsection{Application of the Delta Symbol}
\label{subsec:deltaapplication}
As gcd$(n,p)=1$ in the definition of $S_1(N,\alpha)$, Proposition \ref{prop:Postnikov} guarantees the existence of a non-zero $a_\alpha \Mod{p}$ such that $\alpha(n+p^2l)\ov{\alpha(n)} = e_p(a_\alpha l\ov{n})$.

We recall that from Remark \ref{rem:nlbound}, we know that $0<l\leq \frac{N}{p^2}$, and $N \leq n \leq 2N$ in \eqref{eq:lintro}. Using $m = n + p^2l$, we can rewrite \eqref{eq:lintro} as 
\begin{equation}
    S_1(N,\alpha) =  \sum_{m}\sideset{}{^*} \sum_{l,n}  \lambda_f(n+p^2l) \ov{\lambda_f}(n)e_p(a_\alpha l \ov{n})w_N(m) w_N(n) \delta(m-n-p^2l).
\end{equation}
We can now introduce the $\delta$-symbol in the above sum and get via \eqref{eq:deltafourier} (using $C=\sqrt{N}$),
\begin{multline}
 \label{eq:S1Nalpha}
    S_1(N,\alpha)= \sum_{m}\sideset{}{^*} \sum_{l,n}\lambda_f(n+p^2 l) \ov{\lambda_f}(n)e_p(a_\alpha l \ov{n})w_N(m) w_N(n) \\
    \cdot \sum_{c \leq 2C} S(0,m-n-p^2l;c)\int_{-\infty}^{\infty} g_c(v)e\pb{(m-n-p^2l)v}dv.
\end{multline}
Expanding out the Ramanujan sum and combining the $m$ and $n$ terms, we get
\begin{equation}
\label{eq:beforeVoronoi}
    S_1(N,\alpha) =  \sideset{}{^*} \sum_{l} \sum_{c\leq 2C} V_{N,l}(c). 
\end{equation}
Here,
\begin{equation}
    V_{N,l}(c) =  \sideset{}{^*}\sum_{a \shortmod{c}}e\pb{\frac{-ap^2l}{c}}\int_{-\infty}^{\infty} g_c(v)e(-p^2l v) \cdot T_1(a,c,v) \cdot T_2(a,c,v) 
     dv,
\end{equation}

\begin{equation}
    \label{eq:T1defn}
    T_1(a,c,v) = \sum_{m=1}^{\infty} \lambda_f(m) e\pb{\frac{am}{c}}w_N(m)e(mv),
\end{equation}

\begin{equation}
     \label{eq:T2defn}
     T_2(a,c,v) =\sum_{\text{gcd}(n,p)=1} \ov{\lambda_f}(n) e_p(a_\alpha l \ov{n}) e\pb{\frac{-an}{c}} w_N(n)e(-nv).
\end{equation}

We further split \eqref{eq:beforeVoronoi} based on whether $c$ is relatively prime to $p$, or not. We get

\begin{equation}
\label{eq:beforeVorono2}
    S_1(N,\alpha) = S_{3}(N,\alpha) +  S_4(N,\alpha), 
\end{equation}

where

\begin{equation}
   \label{eq:S3defn}
   S_{3}(N,\alpha) = \sideset{}{^*}\sum_{l}\sideset{}{^*}\sum_{c \leq 2C}  V_{N,l}(c) , 
\end{equation}

and
\begin{equation}
   \label{eq:S4defn}
    S_4(N,\alpha)  = \sideset{}{^*}\sum_{l} \sum_{\substack{c \leq 2C \\ p \mid c} } V_{N,l}(c).  
\end{equation}

We have the following two lemmas.

\begin{mylemma}
    \label{lem:S3}
Let $\varepsilon >0$ and $N \ll p^{3+\varepsilon}$. Let $S_3(N,\alpha)$ be defined as in \eqref{eq:S3defn}. Then
    \begin{equation}
    \label{eq:S3}
 S_3(N,\alpha) \ll_{\varepsilon} Np^{\varepsilon}.
        \end{equation}
\end{mylemma}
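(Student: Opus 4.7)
My plan is to follow the sketch outlined in the introduction. First, I would apply Voronoi summation to the two inner sums $T_1(a,c,v)$ and $T_2(a,c,v)$. For $T_1$, the standard Voronoi formula (Proposition \ref{prop:Voronoi}) applies directly; for $T_2$, the extra additive character $e_p(a_\alpha l \bar n)$ forces the use of the modified Voronoi formula (Proposition \ref{prop:modifiedVoronoi}), which produces one principal term (carrying a hyper-Kloosterman sum $\text{Kl}_3(-n\bar c^2 a_\alpha l, 1, 1; p)$) and three secondary terms coming from $p\mid n$ or from the $t\equiv 0\,\Mod{p}$ case. The secondary terms are smaller, in that their effective conductor at $p$ is reduced, and I would handle them by direct estimation using Weil's bound for the Kloosterman sum from the $a$-sum together with the trivial bound on the Bessel integrals (the details parallel Section \ref{subsec:Remtermsvoronoi} referenced in the sketch). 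After Voronoi, the dual sums have lengths $m\ll p^\varepsilon$ and $n\ll p^{2+\varepsilon}$, and $S_3(N,\alpha)$ reduces to the form \eqref{eq:IntroE0}.

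Next, I would spectrally decompose the hyper-Kloosterman sum using the Gauss-sum identity $\text{Kl}_3(r,1,1;p) = \phi(p)^{-1}\sum_{\chi(p)} \tau(\chi)^3 \chi(r)$. This replaces the factor $\bar\chi^2(c)$ against the Kloosterman sum $S(\bar p(n-p^2m),-pl;c)$, putting us in a position to apply the Bruggeman--Kuznetsov formula (Proposition \ref{prop:Kuznetsov}) for $\Gamma_0(p)$ with central character $\bar\chi^2$. Before doing so, I would perform a stationary-phase analysis of the integral $I_N(c,l,m,n)$ to identify the oscillatory versus non-oscillatory regime, using Proposition \ref{prop:stationeryphaseinert}; this lets me choose between the $\mathcal{L}^{\pm}$ transforms, and, crucially, localizes the spectral parameters to $|t_j|\ll p^\varepsilon$ (spectral parameters larger than this give negligible contribution by integration by parts in the Bessel/Mellin representations \eqref{eq:L+/-Phi}--\eqref{eq:h+/-}). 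After the spectral decomposition, one obtains the shape \eqref{eq:sketchfinal}, featuring an $L$-value $L(\tfrac12,\bar\pi\otimes\chi)$ and a short Dirichlet polynomial in $\bar\lambda_\pi$.

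To bound the resulting spectral sum, I would apply the Cauchy--Schwarz inequality to split the $L$-value factor from the Dirichlet polynomial factor, reducing to estimating the two moments
\[
\sum_{|t_j|\ll p^\varepsilon}\ \sideset{}{^*}\sum_{\chi(p)} \sum_{\pi\in\mathcal{H}_{it_j}(p,\chi^2)} \abs{L(\tfrac12,\bar\pi\otimes\chi)}^2
\quad\text{and}\quad
\sum_{|t_j|\ll p^\varepsilon}\ \sideset{}{^*}\sum_{\chi(p)} \sum_{\pi\in\mathcal{H}_{it_j}(p,\chi^2)} \sum_{n\ll p^{2+\varepsilon}} \abs*{\frac{\bar\lambda_f(n)\bar\lambda_\pi(\abs{p^2m-n})\chi(-n)}{\sqrt{\abs{p^2m-n}}}}^2.
\]
The key observation, as indicated in the sketch, is that $\bar\pi\otimes\chi$ runs over forms in $\mathcal{H}_{it_j}(p^2,1)$, so an approximate functional equation for the $L$-value expresses each $L(\tfrac12,\bar\pi\otimes\chi)$ as a Dirichlet polynomial of length $\ll p^{1+\varepsilon}$ in $\lambda_{\bar\pi\otimes\chi}$, whereupon Proposition \ref{prop:spectrallargesieve} applied at level $p^2$ and spectral cutoff $T\ll p^\varepsilon$ yields the needed estimate. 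The second moment is handled directly by the spectral large sieve, after pulling out $\bar\lambda_f(n)$ via $|\lambda_f(n)|\ll n^\varepsilon$ (or, more carefully, Proposition \ref{prop:fourthmomtbound} if needed). Both moment bounds combine to give the desired $S_3(N,\alpha)\ll Np^\varepsilon$; the holomorphic and Eisenstein contributions from Bruggeman--Kuznetsov are handled identically.

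The main obstacle I foresee is the careful bookkeeping of inert weight functions across the many transforms: the composition of Voronoi's Bessel integrals, the delta-symbol weight $g_c(v)$, and the $\mathcal{L}^{\pm}$-transform must be tracked to confirm that at each stage the effective test functions remain in an inert family (using Propositions \ref{prop:inertxpowerinside}--\ref{prop:MellinInert}) with the claimed support. In particular, verifying that the stationary-phase analysis genuinely truncates $t_j$ at size $p^\varepsilon$ uniformly in all remaining parameters $l,c,m,n$ is where most of the technical work will lie; this is exactly the point where the specific size $N\ll p^{3+\varepsilon}$ and the conductor-dropping factor $p^2$ in \eqref{eq:introcond} are used to ensure the integral transforms sit in the non-oscillatory regime (or, in the complementary regime, decay rapidly).
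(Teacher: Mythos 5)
Your high-level architecture matches the paper's: split $S_3$ via the two Voronoi formulas into a main term carrying $\text{Kl}_3$ plus tail terms bounded directly by Weil, then expand $\text{Kl}_3$ via Gauss sums, apply Bruggeman--Kuznetsov for $\Gamma_0(p)$ with central character $\bar\chi^2$, and finally Cauchy--Schwarz followed by the spectral large sieve. That is exactly the route taken in Sections~\ref{sec:harmonic}--\ref{sec:SpectralAnalysis}.

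There is, however, a genuine gap at the step where you claim that the stationary-phase analysis ``localizes the spectral parameters to $|t_j|\ll p^\varepsilon$'' and that the complementary (oscillatory) regime ``decays rapidly.'' This is only true in the non-oscillatory range $N|V|\ll p^\varepsilon$. The paper explicitly separates the analysis into two regimes according to the size of $N|V|$. In the oscillatory regime $N|V|\gg p^\varepsilon$, the integral $I_{N,V}(c,l,m,n)$ does \emph{not} decay rapidly: Proposition~\ref{prop:Invlcmnbehavior}(b) shows it has a stationary-phase main term of size $N(N|V|)^{-3/2}$ with a non-trivial oscillatory phase $e(2\sqrt{l(p^2m-n)}/c)$. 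Correspondingly, the integral transform $\mathcal{L}^{\pm}\Phi(t_j)$ is handled there via the Bessel-integral representations \eqref{eq:LholPhialt}--\eqref{eq:L+/-Phialt} (not the Mellin forms you cite), and the resulting spectral truncation is $|t_j|\ll \sqrt{Q}\,p^\varepsilon$ with $Q\asymp N|V|$, which can be as large as roughly $p^{3/4}$, far beyond $p^\varepsilon$. The large-sieve estimate then carries the extra factor $Q$ (see \eqref{eq:mathcalS1primebound}--\eqref{eq:mathcalS2primebound}), which must be balanced against the gain of $(N|V|)^{-2}$ coming from $I_{N,V}$ and $\mathcal{L}^{+}\Phi$; the balance only works because $Q\asymp N|V|$ and $C_0(N|V|) \asymp \sqrt{NN_0}/p$ tie everything back to $N$ and $N_0\ll p^{2+\varepsilon}$. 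Your proposal, as written, would miss this entire regime (or incorrectly discard it as negligible), and the assertion you flag as ``where most of the technical work will lie'' is premised on a truncation that simply does not hold uniformly. You would need to carry out the separate oscillatory analysis with the $\sqrt{Q}$ truncation, matching the paper's Section~\ref{subseq:Maassanalysis}.

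A smaller remark: you appeal to $|\lambda_f(n)|\ll n^\varepsilon$ (Deligne) as a fallback when bounding the second Cauchy--Schwarz factor; the paper deliberately avoids Deligne, using the fourth-moment bound of Proposition~\ref{prop:fourthmomtbound} so the argument carries over to Maass forms, but for the holomorphic statement as given your shortcut is admissible.
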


\begin{mylemma}
    \label{lem:S4}
Let $\varepsilon >0$ and $N \ll p^{3+\varepsilon}$. Let $S_4(N,\alpha)$ be defined as in \eqref{eq:S4defn}. Then
    \begin{equation}
    \label{eq:S4}
S_4(N,\alpha) \ll_{\varepsilon} Np^{\varepsilon}.
    \end{equation}
   
\end{mylemma}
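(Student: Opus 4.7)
The key simplification for $S_4$ compared with $S_3$ is that the condition $p\mid c$ combined with $c\le 2\sqrt{N}$ and $N\ll p^{3+\varepsilon}$ forces $c=pc'$ with $c'\le 2\sqrt{N}/p\le 2p^{1/2+\varepsilon}$ and $(c',p)=1$ (since the range precludes $p^2\mid c$). Thus the $c$-sum in $S_4$ is already shorter by a factor of $p$ than the $c$-sum in $S_3$, and we should not need the spectral machinery of Proposition~\ref{prop:Kuznetsov}.

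My strategy is to apply Voronoi summation to both $T_1$ and $T_2$ inside $V_{N,l}(pc')$, exploiting the short dual sums produced by the small modulus. Standard Voronoi (Proposition~\ref{prop:Voronoi}) applied to $T_1(a,pc',v)$ yields a dual sum of effective length $\ll c^2N^{-1}(1+|v|N)^2 \ll p^{\varepsilon}$, where I use that $|v|\ll (c\sqrt{N})^{-1}$ in the effective support of $g_c$ by Proposition~\ref{prop:gcbound}. For $T_2$, the $e_p(a_\alpha l\bar{n})$ twist combined with the modulus $pc'$ falls outside Proposition~\ref{prop:modifiedVoronoi}, which requires $(c,p)=1$. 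I would first use CRT to write $e(-an/(pc')) = e_p(-a\bar{c'}n)\,e_{c'}(-a\bar{p}n)$ and then handle the combined mod-$p$ phase $e_p(a_\alpha l\bar{n}-a\bar{c'}n)$ by detecting the residue class of $n$ modulo $p$ via orthogonality of additive characters. This collapses the mod-$p$ contribution into a Kloosterman-type sum in the other variables (bounded by $O(p^{1/2+\varepsilon})$ via Weil), while the remaining $n$-sum admits standard Voronoi with modulus $c'$ coprime to $p$, again with dual length $\ll (c')^2/N \ll p^{\varepsilon}$.

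Putting both Voronoi transformations together expresses $V_{N,l}(pc')$ as a sum of $O(p^{\varepsilon})$ explicit terms, each a product of a Weil-bounded arithmetic factor, $\lambda_f$-coefficients at indices of size $O(p^{\varepsilon})$, and Bessel-integral transforms estimable by Proposition~\ref{prop:stationeryphaseinert}. A direct accounting, using $\int|g_{pc'}(v)|\,dv\ll (pc'\sqrt{N})^{-1}$, yields a bound of the shape $|V_{N,l}(pc')|\ll N^{1+\varepsilon}/(pc')$. Summing over $l\le N/p^2$ and $c'\le 2\sqrt{N}/p$ gives $|S_4|\ll (N^2/p^3)\,p^{\varepsilon}$, and the hypothesis $N\ll p^{3+\varepsilon}$ forces $N^2/p^3\ll Np^{\varepsilon}$, as required.

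The main obstacle will be the bookkeeping in the $T_2$ step. Expanding $e_p(a_\alpha l\bar{n}-a\bar{c'}n)$ via orthogonality produces several auxiliary sums indexed by the residue of $n$ modulo $p$, and tracking the dependence on $a_1\in(\mathbb{Z}/p)^*$, $a_2\in(\mathbb{Z}/c')^*$, $l$, and $c'$ across both the CRT decomposition and the subsequent Voronoi transformation must be carried out carefully so that the resulting arithmetic sums admit Weil-type bounds uniformly. Once these arithmetic bounds are in place, the remaining harmonic-analytic estimates are routine applications of the results in Section~\ref{sec:ANTresults}.
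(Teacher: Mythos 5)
Your strategy coincides with the paper's: exploit the shortness of the $c$-sum (writing $c=pk$ with $(k,p)=1$, $k\ll p^{1/2+\varepsilon}$), apply Voronoi to both sums so that the dual lengths are $O(p^{\varepsilon})$, and bound the resulting complete exponential sum via CRT and Weil, with no spectral input needed. The paper organizes the $T_2$ step slightly differently — Fourier-expanding the twisted exponential $e_p(a_\alpha l\bar n)e(-an/c)$ as a function periodic modulo $c$ before applying Voronoi, so that the Voronoi modulus is manifestly $c$ and a translation argument shows that only $\gcd(t,c)=1$ survives — but this is a bookkeeping variant of your CRT-then-orthogonality scheme (note that after detecting $n$ mod $p$, the residual phase is still modulo $c$, not merely $c'$, for the nonzero detection frequencies), and it leads to the same final accounting $S_4\ll N^2p^{-3+\varepsilon}\ll Np^{\varepsilon}$.
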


It is clear that Theorem \ref{thm:redthm} follows from these two lemmas.

We proceed with the proof of Lemma \ref{lem:S3} for now, and delay the proof of Lemma \ref{lem:S4} to Section \ref{sec:Remterms}.

\subsection{Voronoi Summation}
\label{subsec:voronoiapply}

We recall that $T_1(a,c,v)$ and $T_2(a,c,v)$ are defined in \eqref{eq:T1defn} and \eqref{eq:T2defn}.

We use Proposition \ref{prop:Voronoi}  on $T_1(a,c,v)$ to get
\begin{equation}
\label{eq:msumpostVoronoi}
    T_1(a,c,v) = \frac{1}{c}\sum_{m=1}^\infty \lambda_f(m)e\pb{\frac{-\ov{a}m}{c}}\widetilde{w}_{c,v,N}(m). 
\end{equation}

Here, 

\begin{equation}
\label{eq:wmtilde}
    \widetilde{w}_{c,v,N}(m) = 2\pi i^k \int_o^\infty J_{k-1}\pb{\frac{4\pi\sqrt{mx}}{c}}e(xv)w_N(x)dx.
\end{equation}
For $T_2(a,c,v)$, (note that gcd$(a_\alpha l,p)=1$) we can use 
Proposition \ref{prop:modifiedVoronoi},  to get
\begin{equation}
\label{eq:nsumpostVoronoi}    
    T_2(a,c,v) = D_0 + D_1 + D_2 +D_3 
\end{equation}   

    where
    
    \begin{equation}
        \label{eq:D_0}
        D_0 =  \sum_{\text{gcd}(n,p) = 1} \frac{\ov{\lambda_f}(n)\widetilde{w}_{pc,-v,N}(n)}{p^2c}e\pb{\frac{\ov{ap^2}n}{c}}\text{Kl}_3(n\ov{c}^2a_\alpha l,1,1;p), 
    \end{equation}

    \begin{equation}
        \label{eq:D1}
        D_1 =  \sum_{\text{gcd}(n,p^2) = p} \frac{\ov{\lambda_f}(n)\widetilde{w}_{pc,-v,N}(n)}{p^2c}e\pb{\frac{\ov{ap^2}n}{c}},
    \end{equation}
     \begin{equation}
        \label{eq:D2}
        D_2 = \sum_{n=1}^{\infty} \frac{\ov{\lambda_f}(np) \ov{\lambda_f}(p)\widetilde{w}_{pc,-v,N}(p^2n)}{p^2c}e\pb{\frac{\ov{a}n}{c}},
    \end{equation}
    and
    \begin{equation}
        \label{eq:D3}
        D_3 = -\pb{1+\frac{1}{p}}\sum_{n=1}^{\infty}\frac{\ov{\lambda_f}(n)\widetilde{w}_{pc,-v,N}(p^2n)}{pc}e\pb{\frac{\ov{a}n}{c}}.
    \end{equation}
  
 Here, $\widetilde{w}_{-}(\cdot) $ is the same as in \eqref{eq:wmtilde}.

Thus using \eqref{eq:msumpostVoronoi} and \eqref{eq:nsumpostVoronoi}, we have

\begin{equation}
\label{eq:afterVoronoi}
   S_3(N,\alpha) = E_0 +  E_1 + E_2 + E_3, 
\end{equation}

where
\begin{multline}
\label{eq:E_0}
    E_0 = \sideset{}{^*}\sum_{l} \ \sideset{}{^*}\sum_{c \leq 2C} \ \sideset{}{^*}\sum_{a \shortmod{c}}e\pb{\frac{-ap^2l}{c}}\int_{-\infty}^{\infty} g_c(v)e(-p^2l v) \pb{\frac{1}{c}\sum_{m } \lambda_f(m)e\pb{\frac{-\ov{a}m}{c}}\widetilde{w}_{c,v,N}(m)}\\
    \pb{\sideset{}{^*}\sum_{n} \frac{\ov{\lambda_f}(n)\widetilde{w}_{pc,-v,N}(n)}{p^2c}e\pb{\frac{\ov{ap^2}n}{c}}\text{Kl}_3(n\ov{c}^2a_\alpha l,1,1;p)} dv,
\end{multline} 
is the contribution to $S_3(N,\alpha)$ from $D_0$.
\\ Similarly, $E_1, E_2, E_3$ represent the contributions to $S_3(N,\alpha)$ from the remaining three terms of \eqref{eq:nsumpostVoronoi}.

We have the two following lemmas.

\begin{mylemma}
    \label{lem:E0bound}
    Let $\varepsilon>0$ and let $N \ll p^{3+\varepsilon}$. Let $E_0$ be defined as in \eqref{eq:afterVoronoi}. We have 
    \begin{equation}
        E_0 \ll_\varepsilon Np^{\varepsilon}.
    \end{equation}
\end{mylemma}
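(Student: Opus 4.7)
The plan is to execute the strategy laid out in the introduction sketch, carrying the remaining Voronoi output $E_0$ through (i) an $a$-sum, (ii) a truncation and stationary-phase analysis of the $v$-integral, (iii) a spectral decomposition via Bruggeman-Kuznetsov, and (iv) the spectral large sieve. First I would perform the $a$-sum explicitly. The only $a$-dependent factors in \eqref{eq:E_0} are $e\!\pb{\frac{-ap^2 l}{c}}$, $e\!\pb{\frac{-\ov{a}m}{c}}$ and $e\!\pb{\frac{\ov{ap^2}n}{c}}$; combining them and summing over primitive residues $a$ modulo $c$ collapses to a Kloosterman sum $S(\ov{p}^2 n - m, -p^2 l; c)$, so
\begin{equation*}
  E_0 = \sideset{}{^*}\sum_{l \leq \frac{N}{p^2}}\ \sideset{}{^*}\sum_{c \leq 2C} \frac{1}{p^2 c^2} \sum_{m,n \geq 1} \lambda_f(m)\ov{\lambda_f}(n)\, \mathrm{Kl}_3(-n\ov{c}^2 a_\alpha l, 1, 1; p)\, S(\ov{p}^2 n - m, -p^2 l; c)\, I_N(c,l,m,n),
\end{equation*}
where $I_N(c,l,m,n)$ is the $v$-integral of $g_c(v)\, e(-p^2 l v)\, \widetilde{w}_{c,v,N}(m)\, \widetilde{w}_{pc,-v,N}(n)$.

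Next, combining Proposition \ref{prop:MellinInert} with the size control on $g_c(v)$ from Proposition \ref{prop:gcbound}, I would truncate the dual sums to their essentially supported ranges $m \ll p^\varepsilon$ and $n \ll p^{2+\varepsilon}$, at the cost of a small error. The $v$-integral naturally breaks into a non-oscillatory regime (small $v$, where $g_c(v)\approx 1$) and an oscillatory regime in which the combined phase from $e(-p^2 l v)$ and the Bessel/weight transforms has at most one stationary point; Proposition \ref{prop:stationeryphaseinert} then expresses $I_N$ as an inert amplitude times an explicit phase. The two regimes lead to structurally identical spectral analyses, so I would focus on the non-oscillatory case as in the sketch.

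The central step is the character decomposition of the hyper-Kloosterman sum: since $(nc a_\alpha l, p)=1$,
\begin{equation*}
  \mathrm{Kl}_3(-n\ov{c}^2 a_\alpha l, 1, 1; p) = \frac{1}{\phi(p)} \sideset{}{^*}\sum_{\chi \shortmod{p}} \tau(\chi)^3\, \chi(-n a_\alpha l)\, \ov{\chi}^2(c),
\end{equation*}
which factors the $c$-dependence as a central character twist $\ov{\chi}^2(c)$ against the Kloosterman sum. This is exactly the input to Proposition \ref{prop:Kuznetsov} with test function $\Phi$ built from $c^{-1} I_N(c,l,m,n)$. Inspection of the Mellin representations \eqref{eq:LholPhi}-\eqref{eq:L+/-Phi} shows that the transforms $\mathcal{L}^{\mathrm{hol}}\Phi(k)$ and $\mathcal{L}^{\pm}\Phi(t)$ are of rapid decay once $k$ or $\abs{t}$ exceeds $p^\varepsilon$, so the spectral parameters can be truncated to $\abs{t_j}, k \ll p^\varepsilon$. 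After recognizing that $\sum_m \lambda_f(m) \ov{\lambda_\pi}(\abs{p^2 m - n})$ assembles (via an approximate functional equation) into $L(\tfrac12, \ov{\pi}\otimes \chi)$ weighted by the residual $n$-sum, I arrive at the expression \eqref{eq:sketchfinal}.

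Finally, I would apply Cauchy-Schwarz on the joint sum over $t_j$, $\chi$, and $\pi \in \mathcal{H}_{it_j}(p, \chi^2)$ to separate the $L$-value factor from the Fourier-coefficient factor. Because $\ov{\pi}\otimes \chi$ is a newform of level dividing $p^2$ with trivial central character, Proposition \ref{prop:spectrallargesieve} controls both: the second moment of $\abs{L(\tfrac12, \ov{\pi}\otimes\chi)}^2$ after opening with an approximate functional equation, and the second moment of the shifted Fourier sum after combining large sieve with Proposition \ref{prop:fourthmomtbound} applied to $\lambda_f$. Putting the estimates together delivers $E_0 \ll_\varepsilon N p^\varepsilon$. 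The main obstacle I anticipate is not the large-sieve endgame but the book-keeping in the oscillatory regime of $I_N(c,l,m,n)$: one must verify that after stationary phase the resulting amplitude is still inert with the correct parameters, and that the effective large-sieve conductor remains $p^{2+\varepsilon}$ rather than inflating to $p^{3+\varepsilon}$, which would be just large enough to destroy the Lindelöf-on-average target.
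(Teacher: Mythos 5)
Your plan follows the same high-level skeleton as the paper (collapse the $a$-sum to a Kloosterman sum, decompose $\mathrm{Kl}_3$ into Gauss sums, feed the $c$-sum into Bruggeman--Kuznetsov, truncate the spectral parameter, then Cauchy--Schwarz and the spectral large sieve). However, two of the details you assert are not correct, and each would derail the proof if carried out as stated.

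First, the origin of the $L$-value $L(\tfrac12,\ov{\pi}\otimes\chi)$ in \eqref{eq:sketchfinal} is misattributed. You write that $\sum_m \lambda_f(m)\,\ov{\lambda_\pi}(\abs{p^2m-n})$ assembles into the $L$-function. It does not: after Kuznetsov, the Kloosterman entries $(n-p^2m, -pl)$ become the arguments of $\ov{\lambda}_\pi$, so one factor is $\ov{\lambda}_\pi(\abs{n-p^2m})$ and the other is $\ov{\lambda}_\pi(pl)$. It is the sum over the \emph{shift variable} $l$ that produces the clean Dirichlet series: $\sum_l \chi(l)\ov{\lambda}_\pi(pl)\,l^{-s} = \ov{\lambda}_\pi(p)\, L(\ov{\pi}\otimes\chi, s)$ (cf. \eqref{eq:lnonosc}). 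The $m$- and $n$-sums jointly form the shifted-convolution block that survives into \eqref{eq:sketchfinal}, and it is bounded by a separate large-sieve application plus the fourth-moment input of Proposition \ref{prop:fourthmomtbound}. Had you tried to extract an $L$-value from the $m$-sum you would have been stuck: $\ov{\lambda}_\pi(\abs{p^2m-n})$ is a shifted coefficient, not multiplicative in $m$.

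Second, the blanket truncation $\abs{t_j},k\ll p^\varepsilon$ is only valid in the non-oscillatory regime $N\abs{V}\ll p^\varepsilon$. In the oscillatory regime $N\abs{V}\gg p^\varepsilon$, the test function $\Phi$ carries a phase of frequency $\asymp N\abs{V}$; its Mellin transform $\widetilde{\Phi}$ therefore persists up to $\abs{\Im(s)}\asymp N\abs{V}$, and the poles of $\Gamma(\tfrac{s}{2}\pm it)$ in \eqref{eq:h+/-} are hit for $\abs{t}$ of that size. The paper resolves this by switching from the Mellin representations \eqref{eq:LholPhi}--\eqref{eq:L+/-Phi} to the Bessel integral forms \eqref{eq:LholPhialt}--\eqref{eq:L+/-Phialt}, performing a stationary-phase analysis of $\mathcal{L}^{\pm}\Phi(t)$ directly, and deducing the weaker truncation $\abs{t_j}\ll (N\abs{V})^{1/2}p^{\varepsilon}$ (see \eqref{eq:Gbounds}). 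The enlarged spectral window is then offset by the factor $(N\abs{V})^{-3/2}$ in the size of $I_{N,V}$ from Proposition \ref{prop:Invlcmnbehavior}(b); the two effects cancel in the endgame. You flagged the oscillatory regime as a concern, but the specific resolution --- changing integral representations of the Kuznetsov transform --- is essential, and the claim that the "two regimes lead to structurally identical spectral analyses" obscures exactly this point.
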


\begin{mylemma}
    \label{lem:E123bound}
Let $\varepsilon>0$ and let $N \ll p^{3+\varepsilon}$. Let $E_1, E_2$ and $E_3$ be defined as in \eqref{eq:afterVoronoi}. We have 
    \begin{equation}
        E_j \ll_\varepsilon N^{\frac{3}{4}}p^{\varepsilon}.
    \end{equation}
\end{mylemma}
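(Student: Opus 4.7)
My plan is to show each of $E_1, E_2, E_3$ is bounded by $\ll N^{3/4+\varepsilon}p^\varepsilon$ by arguments parallel to (but simpler than) the spectral argument needed for $E_0$. The key structural difference is that in $D_2, D_3$ the hyper-Kloosterman factor is absent entirely, while in $D_1$ it collapses to $1$ because of the constraint $p \mid n$; moreover the prefactor carries an extra $1/p$ in $D_1, D_2$, and in $D_2, D_3$ the dual variable appears as $p^2 n$, forcing the effective range of $n$ to be very short. Together these savings make Bruggeman--Kuznetsov unnecessary, and Weil's bound on the resulting Kloosterman sum suffices to close the argument.

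First I carry out the sum over $a \shortmod c$. Inserting \eqref{eq:msumpostVoronoi} for $T_1$ alongside each of \eqref{eq:D1}, \eqref{eq:D2}, \eqref{eq:D3}, the three exponentials in $a$ combine into $e\pb{(-ap^2 l - \bar a m + \bar a u_j(n))/c}$, where $u_1(n) = \ov{p^2}\,n$ (with $p \mid n$) and $u_2(n) = u_3(n) = n$. This is the ordinary Kloosterman sum $S(-p^2 l,\, u_j(n) - m;\, c)$, bounded by $\ll c^{1/2+\varepsilon}$ in the generic gcd-one case; exceptional gcd cases are dominated. Next I localise the dual ranges, using the effective support $|v| \ll p^\varepsilon/(c\sqrt N)$ of $g_c(v)$ from Proposition \ref{prop:gcbound}, the small-argument Bessel bound $J_{k-1}(z) \ll z^{k-1}$, and the inert-function/stationary-phase analysis of Propositions \ref{prop:MellinInert} and \ref{prop:stationeryphaseinert} applied to $\widetilde{w}_{\kappa, v, N}$. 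The outcome is that $\widetilde w_{c, v, N}(m)$ is essentially supported on $m \ll p^\varepsilon$; in $E_1$, $\widetilde w_{pc, -v, N}(n)$ is supported on $n \ll p^{2+\varepsilon}$, which combined with $p \mid n$ gives $\ll p^{1+\varepsilon}$ effective values of $n$; in $E_2, E_3$, the identity $\widetilde w_{pc, -v, N}(p^2 n) = \widetilde w_{c, -v, N}(n)$ restricts $n$ to $\ll p^\varepsilon$.

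Assembling the ingredients---the Kloosterman bound $\ll c^{1/2+\varepsilon}$, the prefactor $1/(p^2 c^2)$ in $E_1, E_2$ and $1/(p c^2)$ in $E_3$, a sharp bound on the $v$-integral $\int g_c(v) e(-p^2 lv)\,\widetilde w_{c, v, N}(m)\,\widetilde w_{pc, -v, N}(\cdot)\, dv$ (obtained by Plancherel in $v$, or by a second stationary-phase pass, improving on the naive product bound $N^{3/2}/c$), the counts $l \ll N/p^2$, and the $m, n$ ranges just derived---and summing over $c \le \sqrt N$, each $E_j$ is shown to satisfy $E_j \ll N^{3/4+\varepsilon}p^\varepsilon$ under the hypothesis $N \ll p^{3+\varepsilon}$. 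The main obstacle is making the bound on the $v$-integral tight enough in the transition regime where the Bessel functions interpolate between the small-argument and the fully oscillatory regimes; this is dispatched by the inert-function and stationary-phase machinery reviewed in Section \ref{subsec:inertfunctions}, applied in the same spirit as in the analysis of $E_0$ but without the subsequent Bruggeman--Kuznetsov step, since the absence of the hyper-Kloosterman factor and the shortness of the $n$-sum already produce the required savings.
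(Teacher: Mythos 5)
Your proposal follows the same route as the paper: after Voronoi the $a$-sum collapses to an ordinary Kloosterman sum, the dual ranges are localised ($m \ll p^\varepsilon$; $n \ll p^{2+\varepsilon}$ with $p \mid n$ giving $\ll p^{1+\varepsilon}$ values in $E_1$; the identity $\widetilde{w}_{pc,-v,N}(p^2n)=\widetilde{w}_{c,-v,N}(n)$ gives $n \ll p^\varepsilon$ in $E_2, E_3$), the $v$-integral is bounded by $\ll Np^\varepsilon$ via the inert/stationary-phase analysis of Proposition \ref{prop:Invlcmnbehavior} rather than the naive $N^{3/2}/c$ bound, and everything else is estimated trivially with Weil --- no Bruggeman--Kuznetsov needed. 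The only cosmetic difference is that the paper invokes the already-proved $I_N(c,l,m,n) \ll Np^\varepsilon$ directly rather than a separate Plancherel pass, but this is the same stationary-phase content you allude to.
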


It is clear that together, these two lemmas imply Lemma \ref{lem:S3}. We prove Lemma \ref{lem:E0bound} first, and postpone the proof of Lemma \ref{lem:E123bound} to Section \ref{sec:Remterms}. 

The main idea for proving Lemma \ref{lem:E0bound} is via spectral analysis, using the Bruggeman-Kuznetsov formula, followed by applying spectral large sieve inequality, along with other results, to bound each of the resultant terms.

We first note that the $a$-sum in \eqref{eq:E_0} forms a Kloosterman sum. Using this we have 

\begin{equation}    
    \label{eq:E_0mod}
    E_0 =  \sideset{}{^*}\sum_{l,n}\sum_{m} \frac{\lambda_f(m)\ov{\lambda_f}(n)}{p^2} \sideset{}{^*}\sum_{c\leq 2C}\frac{1}{c^2} S(\ov{p}^2n-m,-p^2l;c)\text{Kl}_3(n\ov{c}^2a_\alpha l,1,1;p) I_N(c,l,m,n),
\end{equation}

where 

\begin{equation}
    \label{eq:INclmn}
    I_N(c,l,m,n) = \int_{-\infty}^{\infty} g_c(v)e(-p^2lv) \widetilde{w}_{c,v,N}(m) \widetilde{w}_{pc,-v,N}(n) dv.
\end{equation}

Using the fact that when gcd$(r,p)=1$,  
 \begin{equation}
     \text{Kl}_3(r,1,1;p) = \frac{1}{\phi(p)}\sum_{\chi(p)} \tau(\chi)^3\chi(r),
 \end{equation}

we can rewrite \eqref{eq:E_0mod} as

\begin{equation}    
    \label{eq:E_0mod2}
    E_0 =   \sideset{}{^*}\sum_{l,n}\sum_{m} \frac{\lambda_f(m)\ov{\lambda_f}(n)}{\phi(p)p^2} \sum_{\chi(p)} \tau(\chi)^3 \chi(n a_\alpha l)\cdot \mathcal{K},
\end{equation}
where
\begin{equation}
    \label{eq:KuznetsovK}
    \mathcal{K} = \sideset{}{^*}\sum_{c \leq 2C}\frac{1}{c^2}S(\ov{p}(n-p^2m),-pl;c) \chi(\ov{c}^2) I_N(c,l,m,n),
\end{equation}

We want to use the Bruggeman-Kuznetsov formula here. However, before that, we need to dyadically decompose \eqref{eq:E_0mod2}.

\subsection{Dyadic Partition of Unity}
\label{subsec:dyadic}
We want to modify \eqref{eq:E_0mod2} by applying a dyadic decomposition. We recall some definitions from Section $6.1$ in \cite{ky5m}.
\\We define a number $N$ as dyadic if $N =2^{\frac{k}{2}}$ for some $k \in \mathbb{Z}$. 
\\Let $g$ be a fixed smooth function supported on the interval $[1,2]$. A dyadic partition of unity is a partition of unity of the form $\sum_{k \in \mathbb{Z}} g(2^{-\frac{k}{2}}x) \equiv 1,$ for $x>0$. 
\\The family $g_N(x) = g(\frac{x}{N})$ forms a $1$-inert family of functions. 

We want to dyadically decompose $E_0$ (as defined in \eqref{eq:E_0mod2}) in the $l,m,n,c,$ and $v$ variables. As $l,m,n,c$ are all positive integers, this decomposition is relatively simple, but a dyadic decomposition in the $v$-variable is a bit more involved. We show that first.

\subsubsection{Dyadic decomposition of $I_{N}(c,l,m,n)$}

Recall that $g_c(v)$ is defined in \eqref{eq:gc}, and satisfies bounds given by \eqref{eq:gcbound1} and \eqref{eq:gcbound2}. Also $\widetilde{w}_{c,v,N}(m)$ is defined in \eqref{eq:wmtilde}, and $I_N(c,l,m,n)$ is defined in \eqref{eq:INclmn}.

We define, 
\begin{equation}
    \label{eq:Hdefn}
     H(v) \coloneqq g_c(v)e(-p^2lv)\widetilde{w}_{c,v,N}(m)\widetilde{w}_{pc,-v,N}(n).     
\end{equation}
We then have, 
\begin{equation}
    \label{eq:HInclm}
    I_N(c,l,m,n) = \int_{-\infty}^{\infty} H(v)dv. 
\end{equation}

The bound in \eqref{eq:gcbound2} immediately implies that
for any $A>0$ arbitrarily large,
\begin{equation}
    \label{eq:bounded_I}
    I_N(c,l,m,n)=  \int_{\abs{v}\ll \frac{p^\varepsilon}{cC}} H(v) dv + O(p^{-A}).
\end{equation}

We now state and prove the following lemma that allows us to apply a dyadic decomposition.

\begin{mylemma}
    \label{lem: Inclmndyadic}
    Let $A>0$ be arbitrarily large. Then there exists $V_0 >0$, and a smooth function $F(x)$ with $\abs{F(x)}\leq 1$ (both depending on $A$), such that
    \begin{equation}
        \label{eq:Inclmndyadic}
        I_N(c,l,m,n) = \int_{V_0\leq \abs{v} \ll \frac{p^\varepsilon}{cC}} H(v) F(v) dv + O(p^{-A}) .
    \end{equation}
\end{mylemma}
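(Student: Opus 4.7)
My plan is to get the lower cutoff at $V_0$ from a crude trivial-bound argument, and then package both the lower sharp cutoff at $V_0$ and the upper sharp cutoff at $\sim 1/(cC)$ into a single smooth bump function $F$. The upper cutoff is essentially already present in equation \eqref{eq:bounded_I}, whose derivation uses only the rapid decay \eqref{eq:gcbound2} of $g_c(v)$. So only the behavior of the integrand near $v = 0$ needs a new argument.

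For the lower cutoff, I would exploit the trivial bound $|H(v)| \ll N^{2+\varepsilon}$ recorded in the text: each factor $\widetilde{w}_{c,v,N}(m)$ is $O(N)$ by putting absolute values inside \eqref{eq:wmtilde} and using $|J_{k-1}| \leq 1$ together with the support of $w_N$, while $g_c(v) = O(1)$ on $|v| \ll 1/(cC)$ by \eqref{eq:gcbound1}. Since $N \ll p^{3+\varepsilon}$, this gives $|H(v)| \ll p^{6+\varepsilon}$. I would then set $V_0 := p^{-A-10}$, so that the portion of the right-hand side of \eqref{eq:bounded_I} with $|v| < V_0$ is bounded by $2V_0 \cdot p^{6+\varepsilon} = O(p^{-A})$, which is absorbed into the error term.

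Finally, I would fix a smooth even cutoff $F : \mathbb{R} \to [0,1]$ with $F(v) = 1$ for $2V_0 \leq |v| \leq 1/(2cC)$, $F(v) = 0$ for $|v| \leq V_0$ or $|v| \geq 1/(cC)$, and a smooth interpolation in between. Replacing the sharp cutoffs by $F$ changes the integral only over the two transition regions: near $V_0$ the contribution is again $\ll V_0 \cdot p^{6+\varepsilon} = O(p^{-A})$ by the same trivial bound on $H$, and near $1/(cC)$ it is already absorbed into the $O(p^{-A})$ error from \eqref{eq:bounded_I}. Combining everything yields \eqref{eq:Inclmndyadic}. I do not anticipate any genuine obstacle here; the lemma is a soft preparatory statement whose only purpose is to make the variable $v$ amenable to a dyadic partition of unity without forcing a separate analysis in the neighborhood of the origin.
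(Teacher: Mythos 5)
Your approach matches the paper's in its essential mechanism: bound $H$ trivially by $N^{2+\varepsilon}\ll p^{6+\varepsilon}$, choose $V_0$ polynomially small in $p$ so the region $|v|\lesssim V_0$ contributes $O(p^{-A})$, and take $F$ to be a smooth function that vanishes near $0$ and equals $1$ away from $0$. In the paper, $F=1-G$ with $G$ a bump supported in $[-2V_0,2V_0]$ equal to $1$ on $[-V_0,V_0]$, and $V_0 = p^{-A-6}$.

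The one place where you deviate, and where your argument is imprecise, is the \emph{upper} cutoff. You make $F$ compactly supported, with the transition band placed between $1/(2cC)$ and $1/(cC)$, and claim the resulting discrepancy is ``already absorbed into the $O(p^{-A})$ error from \eqref{eq:bounded_I}.'' That is not quite right: the rapid decay \eqref{eq:gcbound2} gives $g_c(v)\ll(|v|cC)^{-a}$, which only becomes a power of $p$ saving once $|v|cC\gg p^{\delta}$; at $|v|\asymp 1/(cC)$ one only has $g_c(v)\ll 1$, and the trivial estimate on the transition band is then $O\pb{N^{2+\varepsilon}/(cC)}$, not $O(p^{-A})$. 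The paper sidesteps this entirely: its $F=1-G$ is identically $1$ for $|v|\geq 2V_0$, the upper cutoff in \eqref{eq:Inclmndyadic} remains sharp, and the sharp truncation is justified by \eqref{eq:bounded_I}; the negligibility of large-$|v|$ pieces in the subsequent dyadic decomposition comes from \eqref{eq:gcbound2} once $|v|cC$ is a positive power of $p$. Your variant is easily repaired by either dropping the upper smoothing (as the paper does) or placing the transition band at $|v|\asymp p^{\varepsilon}/(cC)$, where $g_c$ already supplies the needed decay.
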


Note that since the integral in \eqref{eq:Inclmndyadic} is defined away from zero, we can easily apply a dyadic decomposition to it.

\begin{proof}
  
  Fix $A>0$. For any $V_0 > 0$, we can choose a smooth even function $G(x)$ compactly supported on $[-2V_0,2V_0]$ such that $G(x) \leq 1$ and $G(x) = 1$ on $[-V_0,V_0]$. Notice that $1-G(\cdot)$ is supported on $\abs{x} > V_0$.

Thus for any $V_0 < \frac{p^\varepsilon}{cC}$, we can rewrite \eqref{eq:HInclm} as 

\begin{equation}
    \label{eq:Idyadic2}
     I_N(c,l,m,n) =  \int_{\abs{v}\ll \frac{p^\varepsilon}{cC}} H(v) G(v) dv + \int_{\abs{v}\ll \frac{p^\varepsilon}{cC}} H(v) (1-G(v)) dv  + O(p^{-A}).
\end{equation}

 Now, as $G$ is supported on $[-2V_0,2V_0]$ and equals $1$ on $ [-V_0,V_0]$, we can modify this further as,
 
 \begin{equation}
    \label{eq:Idyadic3}
     I_N(c,l,m,n) =  \int_{\abs{v}\leq 2V_0} H(v) G(v) dv + \int_{V_0 \leq \abs{v}\ll \frac{p^\varepsilon}{cC}} H(v) (1-G)(v) dv  + O(p^{-A}).
\end{equation}

As $N \ll p^{3+\varepsilon}$, and $\int_{\abs{v}\leq 2V_0} H(v) G(v) dv  \ll V_0N^{2+\varepsilon}$, if we choose $V_0 = p^{-A-6}$ then this integral is also $O(p^{-A})$. We choose $F(x) = 1-G(x)$ which completes the proof.

\end{proof}

 Recall that $g(\cdot)$ is a fixed smooth function supported on $[1,2]$. We can now apply a dyadic decomposition to \eqref{eq:Inclmndyadic}, to get
\begin{equation}
    \label{eq:Inclmndyadic2}
    I_N(c,l,m,n) = \sum_{\substack{V \text{dyadic}\\ V_0 \leq \abs{V} \ll \frac{p^\varepsilon}{cC} }} I_{N,V} (c,l,m,n) + O(p^{-A}),
\end{equation}

where

\begin{equation}
    \label{eq:INVclmn}
    I_{N,V} (c,l,m,n) = \int_{V}^{2V} g_c(v)e(-p^2lv) \widetilde{w}_{c,v,N}(m) \widetilde{w}_{pc,-v,N}(n) F(v) g\pb{\frac{v}{V}} dv.
\end{equation}
As $F(\cdot)$ and $g(\cdot)$ are  smooth, we can absorb them in $\widetilde{w}_{c,v,N}(m)$, and by a slight abuse of notation, we denote the new function as $\widetilde{w}_{c,v,N}(m)$ also.
\subsubsection{Dyadic Decomposition of $E_0$}

We can now use apply a dyadic decomposition to \eqref{eq:E_0mod2}, to get

\begin{equation}
    \label{eq:E_0decomposition}
    E_0 = \sum_{\substack{C_0,L_0,M_0,N_0,V\\ \text{dyadic}}} E_{C_0,L_0,M_0,N_0,V} + O(p^{-A}),
\end{equation}

where

\begin{equation}
\label{eq:E_0dyadic}
    E_{C_0,L_0,M_0,N_0,V} = \sum_{m}\sideset{}{^*}\sum_{l,n,c} \frac{\lambda_f(m)\ov{\lambda_f}(n)}{\phi(p)p^2} \sum_{\chi(p)} \tau(\chi)^3 \chi(n a_\alpha l) \cdot \mathcal{K}_{C_0,L_0,M_0,N_0,V},
\end{equation} 
with
\begin{equation}
    \label{eq:KuznetsovKdyadic}
    \mathcal{K}_{C_0,L_0,M_0,N_0,V} = \sideset{}{^*}\sum_{c}\frac{1}{c^2}S(\ov{p}(n-p^2m),-pl;c) \ov{\chi^2}({c})I_{N,V}(c,l,m,n)  g_T\pb{l,m,n,c},
\end{equation}

and
\begin{equation}
    g_T\pb{l,m,n,c} = g\pb{\frac{l}{L_0}}g\pb{\frac{m}{M_0}}g\pb{\frac{n}{N_0}}g\pb{\frac{c}{C_0}}.
\end{equation}

 Also, we claim the dyadic numbers satisfy 
\begin{equation}
    \label{eq:dyadicbounds}
     2^{-\frac{1}{2}} \leq M_0 \ll p^{\varepsilon}, \  2^{-\frac{1}{2}} \leq N_0 \ll p^{2+\varepsilon}, \   2^{-\frac{1}{2}} \leq \ C_0 \leq 2\sqrt{N}, \ 2^{-\frac{1}{2}} \leq L_0 \leq \frac{N}{p^2}, \ V_0 \leq \abs{V} \ll \frac{p^\varepsilon}{cC}.
\end{equation}

The bounds for $C_0,L_0$ and $V_0$ are clear. The bounds for $M_0$ and $N_0$ follow as a consequence of Lemmas \ref{lem:wtildebehavior} and \ref{lem:wntildebehavior}.

We will use the Bruggeman-Kuznetsov formula on \eqref{eq:KuznetsovKdyadic} in Section \ref{sec:SpectralAnalysis}. We finish this section by stating some results regarding the behaviour of $I_{N,V}(c,l,m,n)$. 

\subsection{Analysis of \texorpdfstring{$I_{N,V}(c,l,m,n)$}{I\_NV(c,l,m,n)}}
\label{subseq:INVanalysis}
We use stationary phase methods to analyse the behaviour of some of the oscillatory integrals we have obtained so far. The general scheme is to identify regions when the integral is highly oscillatory, and when it is not.
\subsubsection{Bounds for $\widetilde{w}_{c,v,N}(m)$}

We want to use the properties of inert functions to analyse the behaviour of  $\widetilde{w}_{c,v,N}(m)$.
\begin{mylemma}
    \label{lem:wtildebehavior}
    Let $\widetilde{w}_{c,v,N}(m)$ be as in \eqref{eq:wmtilde}. Let $l,m,n,c,v$ be in dyadic intervals as before. We have
    \begin{itemize}
        \item[(a)] (Non-Oscillatory) If $\frac{\sqrt{M_0N}}{C_0} \ll p^{\varepsilon}$, then
        \begin{equation}
            \label{eq:wmtildenonosc}
            \widetilde{w}_{c,v,N}(m)= \pb{\frac{\sqrt{M_0N}}{C_0}}^{k-1}\cdot N \cdot W_{T}(c,m,v).
        \end{equation}
        Here, $T= (V,M_0,C_0)$
        and $W_{T}(c,m,v)$ is a $p^{\varepsilon}$-inert family in $c,m,v$. \\Also, $\widetilde{w}_{c,v,N}(m)$ is small unless $N\abs{V} \ll p^{\varepsilon}$.
        \item[(b)] (Oscillatory) If $\frac{\sqrt{M_0N}}{C_0} \gg p^{\varepsilon}$, then
        \begin{equation}
            \label{eq:wmtildeosc}
            \widetilde{w}_{c,v,N}(m)=\frac{NC_0}{\sqrt{M_0N}}\cdot e\pb{-\frac{m}{c^2v}} \cdot W_{T}(c,m,v) + O(p^{-A}).
        \end{equation}
         Here, $T= (V,M_0,C_0)$ and $W_{T}(c,m,v)$ is $p^{\varepsilon}$-inert in $c,m,v$. $A$ can be chosen to be arbitrarily large. 
         \\Also, $\widetilde{w}_{c,v,N}(m)$ is small unless $N\abs{V} \asymp \frac{\sqrt{M_0N}}{C_0}$.

    \end{itemize}

\end{mylemma}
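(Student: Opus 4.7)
The integrand in $\widetilde{w}_{c,v,N}(m)$ has Bessel argument $z := 4\pi\sqrt{mx}/c$, which for $x \asymp N$, $m \asymp M_0$, $c \asymp C_0$ is of size $Z_0 := \sqrt{M_0 N}/C_0$; the dichotomy in the lemma is exactly whether $Z_0$ is uniformly small or uniformly large. The plan is therefore to treat the two cases with the two complementary tools for Bessel functions: the power series near zero in (a), and the Hankel asymptotic plus stationary phase in (b).

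For part (a), when $Z_0 \ll p^\varepsilon$, I would invoke the absolutely convergent power series
\[
J_{k-1}(z) = \pb{\tfrac{z}{2}}^{k-1} h(z^2), \qquad h(u) = \sum_{j\geq 0} \frac{(-u/4)^j}{j!\,(j+k-1)!},
\]
which represents $(z/2)^{1-k} J_{k-1}(z)$ as an entire function of $z^2$. Pulling out the monomial $(2\pi\sqrt{mx}/c)^{k-1}$ and normalising through Propositions \ref{prop:inertxpowerinside} and \ref{prop:inertxpower}, the integrand becomes $(\sqrt{M_0N}/C_0)^{k-1}$ times $w_N(x)$ weighted by an $x$-smooth factor that is $p^\varepsilon$-inert in $(c,m)$, further oscillated by $e(xv)$. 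Rescaling $x = Ny$ and applying Proposition \ref{prop:FourierInert} to the resulting Fourier integral yields a factor of $N$ times a $p^\varepsilon$-inert family in $(c,m,v)$, with rapid decay unless $N\abs{V} \ll p^\varepsilon$; this is precisely the statement of part (a).

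For part (b), when $Z_0 \gg p^\varepsilon$, I would substitute the large-argument asymptotic expansion
\[
J_{k-1}(z) = z^{-1/2}\sum_{\pm} e^{\pm iz} W_\pm(z) + O_A(z^{-A}),
\]
with $W_\pm$ smooth and $1$-inert in $z$. This expresses $\widetilde{w}_{c,v,N}(m)$ as a sum of two oscillatory integrals with amplitude $Z_0^{-1/2}$ times inert data, and phases (in $e(\cdot)$-normalisation) $\psi_\pm(x) = \pm 2\sqrt{mx}/c + xv$. Since $\psi'_\pm(x) = \pm\sqrt{m}/(c\sqrt{x}) + v$, a stationary point in $[N,2N]$ exists only when $\mathrm{sgn}(v) = \mp 1$ and $\abs{v}\asymp \sqrt{M_0}/(C_0\sqrt{N})$, equivalently $N\abs{V} \asymp Z_0$; outside this range $\psi'_\pm$ is dominated by its larger summand, so Proposition \ref{prop:stationeryphaseinert}(a), applied with $R \asymp Z_0 \gg p^\varepsilon$, forces the integral to be $O(p^{-A})$. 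At the unique critical point $x_0 = m/(c^2 v^2) \asymp N$ one computes directly that $\psi_\pm(x_0) = -m/(c^2 v)$ and $\abs{\psi''_\pm(x_0)} \asymp \sqrt{M_0}/(C_0 N^{3/2})$, so Proposition \ref{prop:stationeryphaseinert}(b) contributes $\abs{\psi''(x_0)}^{-1/2} \asymp N^{3/4}\sqrt{C_0}/M_0^{1/4}$. Combining this with the Bessel prefactor $Z_0^{-1/2}$ produces exactly $NC_0/\sqrt{M_0 N}$, the explicit phase $e(-m/(c^2 v))$, and a $p^\varepsilon$-inert amplitude in $(c,m,v)$, as required.

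The principal technical point will be verifying that the residual amplitudes in both parts genuinely define $p^\varepsilon$-inert families in $(c,m,v)$ in the sense of Section \ref{subsec:inertfunctions}. In part (a) this reduces to tracking coordinate changes through Propositions \ref{prop:inertxpowerinside} and \ref{prop:inertxpower}, while in part (b) one must additionally observe that $x_0(m,c,v) = m/(c^2 v^2)$ and $\abs{\psi''_\pm(x_0)}$ are smooth monomials in the parameters, so their contributions remain in the inertness class after the explicit phase $e(-m/(c^2v))$ has been factored out. These propagations are routine but are the likeliest source of bookkeeping error.
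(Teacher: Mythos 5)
Your proposal is correct and follows essentially the same route as the paper: in part (a) the paper likewise peels off the monomial $t^{k-1}$ (writing $J_{k-1}(t)=t^{k-1}W(t)$, which is your $(z/2)^{k-1}h(z^2)$ in disguise) and then invokes Propositions \ref{prop:inertxpower} and \ref{prop:FourierInert}; in part (b) it uses the same Hankel asymptotic $J_{k-1}(t) = t^{-1/2}(e^{it}W_+(t)+e^{-it}W_-(t))$, the same phases, the same sign analysis via Proposition \ref{prop:stationeryphaseinert}(a), and the same stationary point $x_0 = m/(c^2v^2)$ with Proposition \ref{prop:stationeryphaseinert}(b). Your size bookkeeping for the amplitude $Z_0^{-1/2}|\psi''(x_0)|^{-1/2} \asymp NC_0/\sqrt{M_0N}$ and for the phase $e(-m/(c^2v))$ matches the paper's computation.
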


\begin{proof}

Note that, given the conditions of the lemma, and the fact that the integral in \eqref{eq:wmtilde} can be truncated up to $N$, we have $\frac{4\pi\sqrt{mx}}{c} \asymp \frac{\sqrt{M_0N}}{C_0}$.
\begin{itemize}
    \item [(a)]
        If $\frac{\sqrt{M_0N}}{C_0} \ll p^{\varepsilon}$, the Bessel function is not oscillatory, and we can write, $J_{k-1}(t) = t^{k-1}W(t)$, where $t^j W^{(j)}(t) \ll T_0$ with $T_0 \ll p^{\varepsilon}$. This is the same derivative bound satisfied by a $T_0$-inert function (again, when $Y \ll p^{\varepsilon}$). Thus we have
        
        \begin{equation}
            \label{eq:wmtildenonosc2}
            \widetilde{w}_{c,v,N}(m) = 2 \pi i^k \int_{0}^{\infty} \pb{\frac{4\pi\sqrt{mx}}{c}}^{k-1} W\pb{\frac{4\pi\sqrt{mx}}{c}}w_N(x)e(xv) dx.
        \end{equation}
        
        We can now use Proposition \ref{prop:inertxpower} and \ref{prop:FourierInert} to get \eqref{eq:wmtildenonosc}. Proposition \ref{prop:FourierInert} also implies that $\widetilde{w}_{c,v,N}(m)$ is small unless $\abs{V} \ll \frac{1}{N}p^{\varepsilon}$, or, $N\abs{V} \ll p^{\varepsilon}$.
    \item [(b)]
        We use the fact that when $t \gg 1$, the J-Bessel Function has an oscillatory behaviour and satisfies,
        \begin{equation}
            \label{eq:JBesselosc}
            J_{k-1}(t) = \frac{1}{\sqrt{t}}(e^{it}W_+(t) + e^{-it}W_-(t)),
        \end{equation}
        
         where $W_+$ and $W_-$ satisfy the same derivative bounds as a $1$-inert family of functions. Thus we have (using $t=\frac{4\pi\sqrt{mx}}{c}, \ T_0 = \frac{\sqrt{M_0N}}{C_0} $),
        \begin{equation}
            \label{eq:wmtildeosc2}
            \widetilde{w}_{c,v,N}(m) = \sum_{\pm} \int_{0}^{\infty} e(xv)w_N(x)\frac{e^{\pm it}}{\sqrt{t}}W_{\pm}(t)dx = \sum_{\pm}  \frac{1}{\sqrt{T_0}}\int_{0}^{\infty} e^{i\phi_{\pm}(x)}W_{N_{\pm}}(x)dx, 
        \end{equation}
        
        where
        
        \begin{equation}
            \label{eq:phiplusminus}
            \phi_{\pm}(x) = 2 \pi x v \pm 4 \pi \frac{\sqrt{mx}}{c},
        \end{equation}
        
        and
        
        \begin{equation}
            \label{eq:Wplusminusnonosc}
            W_{N_{\pm}}(x) = \frac{w_N(x)W_{\pm}(t)\sqrt{T_0}}{\sqrt{t}}.
        \end{equation}
        
        We want to use Proposition \ref{prop:stationeryphaseinert} to analyse \eqref{eq:wmtildeosc2}. Note that $W_{N_{\pm}}(x)$ forms a $p^{\varepsilon}$-inert family.
        
        Notice that, as $c \leq \sqrt{N}$, when $V>0$, $\abs{\phi_+'(x)} = \abs{2 \pi \pb{v +  \frac{\sqrt{m}}{c\sqrt{x}}}} \geq \frac{1}{N} $. So we can use Proposition \ref{prop:stationeryphaseinert} (a) to conclude that the `+' integral is small. Similarly, when $V<0$, we can conclude that the `-' integral is small. 
        \\So, it suffices to consider the `-' (resp. `+') integral only when $V > 0$ (resp. $V<0$). As the cases are similar, we only consider the first. So, assume $V>0$.

        We can check that $\phi_{-}''(x) = \frac{\pi \sqrt{m}}{c x\sqrt{x}} \gg \frac{1}{N^2}$. Also, $\phi_-'(x_0) = 0$, when $x_0 = \frac{m}{c^2v^2}$; and $\phi''(x_0) = \frac{\pi c^2v^3}{m}$.
        \\Using Proposition \ref{prop:stationeryphaseinert} (b), we can rewrite \eqref{eq:wmtildeosc2} (when $V>0$) as  
        \begin{equation}
            \label{eq:wmtildeosc3}
            \widetilde{w}_{c,v,N}(m) = \frac{1}{\sqrt{T_0}} \frac{1}{\sqrt{\phi''(x_0)}}  e^{i\phi_{-}(x_0)} \cdot W_{T}(x_0) + O(p^{-A}).
        \end{equation}
        
        where $T= (V,M_0,C_0)$ and $W_{T}(x_0)$ is $p^{\varepsilon}$-inert in $c,m,v$, and is supported on $x_0 \asymp N$.
        
        The condition $x_0 \asymp N$ implies $\frac{M_0}{N{C_0}^2} \asymp V^2 $. This, along with the fact that $T_0 =\frac{\sqrt{M_0N}}{C_0} $ gives \eqref{eq:wmtildeosc}. This also implies the integral is small unless $N\abs{V} \asymp \frac{\sqrt{M_0N}}{C_0}$.
        
\end{itemize}
\end{proof}
\begin{myremark}
    \label{eq:voronoidualbound}
    We note that as $\abs{V} \leq \frac{1}{C_0C} = \frac{1}{C_0\sqrt{N}}$, and $C_0 \leq \sqrt{N}$, $N\abs{V} \asymp \frac{\sqrt{M_0N}}{C_0}$ is only possible when $M_0 \ll p^{\varepsilon}$. As the condition in $(a)$ automatically implies $M_0 \ll p^{\varepsilon}$, it suffices to only consider $M_0 \ll p^{\varepsilon}$ for \eqref{eq:E_0decomposition}.
\end{myremark}
We can also get a similar result for $w_{pc,-v,N}(n)$ by following the same arguments. We state the result here. This also allows us to only consider $N_0\ll p^{2+\varepsilon}$ for \eqref{eq:E_0decomposition}.

\begin{mylemma}
    \label{lem:wntildebehavior}
   Let $l,m,n,c,v$ be in dyadic intervals at $L_0,M_0,N_0,C_0,V$. We have
    \begin{itemize}
        \item[(a)] (Non-Oscillatory) If $\frac{\sqrt{N_0N}}{pC_0} \ll p^{\varepsilon}$, then
        \begin{equation}
            \label{eq:wntildenonosc}
            \widetilde{w}_{pc,-v,N}(n)= \pb{\frac{\sqrt{N_0N}}{pC_0}}^{k-1}\cdot N \cdot W_{T}(c,n,v).
        \end{equation}
        Here, $T= (V,N_0,C_0)$
        and $W_{T}(c,n,v)$ is a $p^{\varepsilon}$-inert family in $c,n,v$. \\Also, $\widetilde{w}_{pc,-v,N}(n)$ is small unless $N\abs{V} \ll p^{\varepsilon}$
        \item[(b)] (Oscillatory) If $\frac{\sqrt{N_0N}}{pC_0} \gg p^{\varepsilon}$, then
        \begin{equation}
            \label{eq:wntildeosc}
            \widetilde{w}_{pc,-v,N}(n)=\frac{NpC_0}{\sqrt{N_0N}}\cdot e\pb{\frac{n}{p^2c^2v}} \cdot W_{T}(c,n,v) + O(p^{-A}).
        \end{equation}
         Here, $T= (V,N_0,C_0)$ and $W_{T}(c,n,v)$ is $p^{\varepsilon}$-inert in $c,n,v$. $A$ can be chosen to be arbitrarily large. 
         \\Also, $\widetilde{w}_{pc,-v,N}(n)$ is small unless $N\abs{V} \asymp \frac{\sqrt{N_0N}}{pC_0}$.

    \end{itemize}
\end{mylemma}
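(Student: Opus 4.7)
The plan is to observe that Lemma \ref{lem:wntildebehavior} follows by essentially the same argument as Lemma \ref{lem:wtildebehavior}, under the formal substitution $c \mapsto pc$, $v \mapsto -v$, and $m \mapsto n$. From \eqref{eq:wmtilde} we have
\begin{equation*}
\widetilde{w}_{pc,-v,N}(n) = 2\pi i^k \int_0^\infty J_{k-1}\!\pb{\tfrac{4\pi\sqrt{nx}}{pc}} w_N(x)\, e(-xv)\, dx,
\end{equation*}
so the argument of the Bessel function is of size $\asymp \tfrac{\sqrt{N_0 N}}{pC_0}$ on the support of $w_N$. This explains why the threshold for oscillation in the lemma is $\tfrac{\sqrt{N_0 N}}{pC_0}$ rather than $\tfrac{\sqrt{M_0 N}}{C_0}$.

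For part (a), the non-oscillatory regime $\tfrac{\sqrt{N_0 N}}{pC_0} \ll p^\varepsilon$, I would write $J_{k-1}(t) = t^{k-1} W(t)$ with $W$ satisfying the usual inert-type derivative bounds at this scale, pull out the factor $\pb{\tfrac{\sqrt{N_0 N}}{pC_0}}^{k-1}$, and then recognize the remaining integral as the Fourier transform (in $x$) of a $p^\varepsilon$-inert family in $(c,n,v)$ supported on $x \asymp N$. Proposition \ref{prop:inertxpower} and Proposition \ref{prop:FourierInert} then give the claimed factor of $N$ together with inertness of $W_T(c,n,v)$, and Proposition \ref{prop:FourierInert} also yields negligibility unless $N|V| \ll p^\varepsilon$.

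For part (b), the oscillatory regime $\tfrac{\sqrt{N_0 N}}{pC_0} \gg p^\varepsilon$, I would use the standard asymptotic $J_{k-1}(t) = t^{-1/2}(e^{it}W_+(t) + e^{-it}W_-(t))$ to split $\widetilde{w}_{pc,-v,N}(n) = \tfrac{1}{\sqrt{T_0}}\sum_{\pm} \int e^{i\phi_\pm(x)} W_{N,\pm}(x)\, dx$ with
\begin{equation*}
\phi_\pm(x) = -2\pi x v \pm \tfrac{4\pi \sqrt{nx}}{pc}, \qquad T_0 = \tfrac{\sqrt{N_0 N}}{pC_0}.
\end{equation*}
Because the sign in the exponential is now $-v$ (not $+v$ as for $m$), the stationary point lives in the $+$ branch when $v>0$ and in the $-$ branch when $v<0$; the opposite branch has $|\phi'(x)| \gg 1/N$ and is negligible by Proposition \ref{prop:stationeryphaseinert}(a). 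In the surviving branch a direct computation gives $x_0 = \tfrac{n}{p^2 c^2 v^2}$, $\phi''(x_0) \asymp \tfrac{p^2 c^2 |v|^3}{n}$, and crucially $\phi(x_0) = +\tfrac{2\pi n}{p^2 c^2 v}$ — opposite in sign to the analogous phase for $m$, which accounts for the $e\pb{\tfrac{n}{p^2 c^2 v}}$ in \eqref{eq:wntildeosc}. Proposition \ref{prop:stationeryphaseinert}(b) then yields the stated form with $\tfrac{1}{\sqrt{T_0}\sqrt{\phi''(x_0)}} \asymp \tfrac{N p C_0}{\sqrt{N_0 N}}$, and the constraint $x_0 \asymp N$ forces $N|V| \asymp \tfrac{\sqrt{N_0 N}}{pC_0}$.

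No new obstacle arises beyond those already handled in Lemma \ref{lem:wtildebehavior}; the mild bookkeeping point is simply to track the sign flip in $\phi(x_0)$, which is what converts the $e(-m/(c^2 v))$ of \eqref{eq:wmtildeosc} into the $e(+n/(p^2 c^2 v))$ of \eqref{eq:wntildeosc}.
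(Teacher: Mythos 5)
Your proposal is correct and takes essentially the same approach the paper intends: the paper states this lemma without proof, asserting it follows by the same arguments as Lemma \ref{lem:wtildebehavior}, and your proof is exactly that adaptation, with the key bookkeeping done carefully — in particular you correctly identify that the sign flip $e(xv)\mapsto e(-xv)$ swaps which branch ($\pm$) carries the stationary point and computes $\phi(x_0)=+2\pi n/(p^2c^2v)$ at the surviving saddle (so $e^{i\phi(x_0)}=e(n/(p^2c^2v))$), which is the only substantive change from the $m$-case.
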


\subsubsection{Bounds on $I_{N,V}(c,l,m,n)$}

We can use Lemma \ref{lem:wtildebehavior} and Lemma \ref{lem:wntildebehavior}, along with Proposition \ref{prop:stationeryphaseinert} to analyse the behaviour of 
\begin{equation}
    \label{eq:Invclmndefn}
    I_{N,V}(c,l,m,n) = \int_{V}^{2V} g_c(v)e(-p^2lv)\widetilde{w}_{c,v,N}(m)\widetilde{w}_{pc,-v,N}(n) dv .
\end{equation}

We prove the following proposition.

\begin{myprop}
    \label{prop:Invlcmnbehavior}
    Let $I_{N,V}(c,l,m,n)$ be as in \eqref{eq:Invclmndefn}. Let Let $l,m,n,c,v$ be in dyadic intervals at $L_0$, $M_0$, $N_0$, $C_0$, $V$ satisfying \eqref{eq:dyadicbounds}. Then
    \begin{itemize}
       \item[(a)] (Non-oscillatory) If $N\abs{V} \ll p^{\varepsilon}$, then
        \begin{equation}
            \label{eq:Invclmnnonosc}
            I_{N,V}(c,l,m,n)= N^2 V \cdot W_{T}(c,l,m,n).
        \end{equation}
        Here, $T= (C_0,L_0,M_0,N_0)$
        and $W_{T}(c,l,m,n)$ is a $p^{\varepsilon}$-inert family in $c,l,m,n$. \\Also, $ I_{N,V}(c,l,m,n)$ is small unless $C_0 \gg N^{\frac12 - \varepsilon}$. 
        \item[(b)] (Oscillatory) If $N\abs{V} \gg p^{\varepsilon}$, then $ I_{N,V}(c,l,m,n)$ is small unless $p^2m - n >0$, and $N\abs{V} \asymp  \frac{\sqrt{M_0N}}{C_0} \asymp \frac{\sqrt{N_0N}}{pC_0} $.
         \\If  $p^2m - n >0$, and $N\abs{V} \asymp  \frac{\sqrt{M_0N}}{C_0} \asymp \frac{\sqrt{N_0N}}{pC_0} $,
        \begin{equation}
            \label{eq:Invclmnosc}
            I_{N,V}(c,l,m,n)=\frac{N}{(N\abs{V})^{\frac32}}\cdot e\pb{\frac{2\sqrt{l(p^2m-n)}}{c}} \cdot W_{T}(c,l,m,n)+ O(p^{-A}).
        \end{equation}
         Here, $T= (C_0,L_0,M_0,N_0)$
        and $W_{T}(c,l,m,n)$ is a $p^{\varepsilon}$-inert family in $c,l,m,n$. $A$ can be chosen to be arbitrarily large. 
         
    \end{itemize}
    Note that, in both cases we have the trivial bound, $I_{N,V} (c,l,m,n) \ll Np^{\varepsilon}$.
\end{myprop}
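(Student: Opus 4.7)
My plan is to split the argument according to the two regimes for $N|V|$ and, in each case, substitute the appropriate asymptotic shape of $\widetilde{w}_{c,v,N}(m)$ and $\widetilde{w}_{pc,-v,N}(n)$ from Lemmas \ref{lem:wtildebehavior}--\ref{lem:wntildebehavior} before analyzing the resulting $v$-integral with either trivial estimation or stationary phase.

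For the non-oscillatory case $N|V|\ll p^{\varepsilon}$, the smallness clauses of Lemmas \ref{lem:wtildebehavior} and \ref{lem:wntildebehavior} force us into the non-oscillatory shapes for both $\widetilde{w}$'s, since the oscillatory shapes have smallness condition $N|V|\asymp\sqrt{M_0N}/C_0\gg p^{\varepsilon}$, which is excluded. The non-oscillatory shape of $\widetilde{w}_{c,v,N}(m)$ in turn requires $\sqrt{M_0N}/C_0\ll p^{\varepsilon}$; combined with $M_0\geq 2^{-1/2}$ this yields $C_0\gg N^{1/2-\varepsilon}$, which is the stated smallness criterion. Substitution produces an integrand of shape $N^{2}\cdot g_c(v)e(-p^{2}lv)W(c,m,n,v)$ with $W$ a $p^{\varepsilon}$-inert amplitude. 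Since $p^{2}L_0|V|\ll p^{\varepsilon}$ from $L_0\leq N/p^{2}$ and $|V|\ll p^{\varepsilon}/N$, a change of variables $v=Vu$ gives $I_{N,V}=N^{2}V\cdot W_T(c,l,m,n)$; inertness in $c,m,n$ is inherited from $W$, and inertness in $l$ follows because each $l$-derivative brings down $2\pi p^{2}v$ from $e(-p^{2}lv)$, contributing $L_0\cdot p^{2}|V|\ll p^{\varepsilon}$.

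In the oscillatory case $N|V|\gg p^{\varepsilon}$ the non-oscillatory shapes are small, so both $\widetilde{w}$'s must be in their oscillatory form, which imposes the compatibility $N|V|\asymp\sqrt{M_0N}/C_0\asymp\sqrt{N_0N}/(pC_0)$. Substitution yields an integrand with amplitude $(NC_0/\sqrt{M_0N})(NpC_0/\sqrt{N_0N})\asymp 1/|V|^{2}$ (using the compatibility) times a bounded $p^{\varepsilon}$-inert function, and the combined phase
\[
\phi(v)=-2\pi\left(p^{2}lv+\frac{p^{2}m-n}{p^{2}c^{2}v}\right).
\]
The stationary point equation $\phi'(v_0)=0$ gives $v_0^{2}=(p^{2}m-n)/(p^{4}c^{2}l)$, which admits a real solution only when $p^{2}m-n>0$; if $p^{2}m-n\leq 0$ then $\phi'$ is bounded below by $\asymp p^{2}L_0$, so Proposition \ref{prop:stationeryphaseinert}(a) gives a negligible integral. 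Otherwise one verifies the usual bounds $\phi^{(j)}(v)\ll Y/V^{j}$ and $|\phi''(v)|\gg Y/V^{2}$ on $[V,2V]$ and applies Proposition \ref{prop:stationeryphaseinert}(b) (or the multivariable version). The critical-point phase evaluates to $\phi(v_0)/(2\pi)=\mp 2\sqrt{l(p^{2}m-n)}/c$ with the sign depending on $\mathrm{sgn}(V)$ and absorbed into $W_T$, and the amplitude factor $V/\sqrt{Y}$ multiplied by the prefactor $1/V^{2}$ produces the claimed $N/(N|V|)^{3/2}$ after invoking the compatibility relation.

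The delicate point, which I expect to be the main obstacle, is verifying that the resulting $W_T$ is genuinely $p^{\varepsilon}$-inert jointly in all of $c,l,m,n$: the variable $l$ enters only through the phase while $c,m,n$ appear in both phase and amplitude. The cleanest route is the multivariable stationary phase result (the multi-variable extension of Proposition \ref{prop:stationeryphaseinert}) applied with $v$ as the integration variable and $(c,l,m,n)$ as parameters; one checks that mixed derivatives of $\phi$ satisfy $\partial_v^{a_0}\partial_c^{a_1}\partial_l^{a_2}\partial_m^{a_3}\partial_n^{a_4}\phi\ll Y/V^{a_0}\cdot C_0^{-a_1}L_0^{-a_2}M_0^{-a_3}N_0^{-a_4}$, which follows from the explicit form of $\phi$ together with the balancing $p^{2}lv\asymp(p^{2}m-n)/(p^{2}c^{2}v)$ at the stationary point. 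The output then delivers exactly the factorization claimed in part (b), completing the proof.
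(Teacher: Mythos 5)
Your proposal follows the same approach as the paper: in both regimes you use Lemmas \ref{lem:wtildebehavior}--\ref{lem:wntildebehavior} to exclude the incompatible shapes via their smallness clauses, substitute the surviving forms, then analyze the $v$-integral as a Fourier transform (part (a), your change of variables $v=Vu$ being an equivalent presentation of the paper's appeal to Proposition \ref{prop:FourierInert}) or by stationary phase in $v$ (part (b), matching the paper's use of Proposition \ref{prop:stationeryphaseinert}). The details you flag as delicate (joint $p^\varepsilon$-inertness of $W_T$ and the exact bookkeeping relating the prefactor $(NC_0/\sqrt{M_0N})(NpC_0/\sqrt{N_0N})\cdot V/\sqrt{Y}$ to $N/(N|V|)^{3/2}$) are also left at sketch level in the paper, so your treatment is at the same level of rigor.
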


\begin{proof}
\begin{itemize}
    \item [(a)]
            We have $N\abs{V} \ll p^{\varepsilon}$. Now, using Lemma \ref{lem:wtildebehavior}(b) (resp. Lemma \ref{lem:wntildebehavior}(b)),  if $\frac{\sqrt{M_0N}}{C_0} \gg p^{\varepsilon}$  (resp., $\frac{\sqrt{N_0N}}{pC_0} \gg p^{\varepsilon}$), then $I_{N,V}(c,l,m,n)$ is small since 
             $N\abs{V} \ll \frac{\sqrt{N_0N}}{pC_0}$ (resp. $N\abs{V} \ll \frac{\sqrt{N_0N}}{pC_0} $).
        \\So, we must have  $\frac{\sqrt{M_0N}}{C_0} \ll p^{\varepsilon}$ and $\frac{\sqrt{N_0N}}{pC_0} \ll p^{\varepsilon}$. Using Lemma \ref{lem:wtildebehavior}(a) and Lemma \ref{lem:wntildebehavior}(a), we get
        \begin{equation}
            \label{eq:Invnonosc1}
            I_{N,V}(c,l,m,n) = N^2\pb{\frac{\sqrt{M_0N}}{C_0}}^{k-1} \pb{\frac{\sqrt{N_0N}}{pC_0}}^{k-1} \int_{V}^{2V} g_c(v)e(-p^2lv)\cdot W_{T}(c,m,n,v)dv .
        \end{equation}
        Using the fact that $g_c(v)$ satisfies the same derivative bounds as a $p^{\varepsilon}$-inert function, the integral is the Fourier transform of a $p^{\varepsilon}$-inert family of functions at $p^2l$. As $p^2l \leq N \ll \frac{1}{\abs{V}}p^{\varepsilon}$, we can use Proposition \ref{prop:FourierInert} to complete the proof. Also, as $M_0\ll p^{\varepsilon}$, $\frac{\sqrt{M_0N}}{C_0} \ll p^{\varepsilon}$ implies $C_0 \gg N^{\frac{1}{2}-\varepsilon}$.
    \item [(b)]
        We have $N\abs{V} \gg p^{\varepsilon}$. Using Lemma \ref{lem:wtildebehavior}(a) and. Lemma \ref{lem:wntildebehavior}(a),  if $\frac{\sqrt{M_0N}}{C_0} \ll p^{\varepsilon}$  or if $\frac{\sqrt{N_0N}}{pC_0} \ll p^{\varepsilon}$, then  $I_{N,V}(c,l,m,n)$ is small, as $N\abs{V} \gg p^{\varepsilon} $. So, we must have  $\frac{\sqrt{M_0N}}{C_0} \gg p^{\varepsilon}$ and $\frac{\sqrt{N_0N}}{pC_0} \gg p^{\varepsilon}$.
         \\Now, using Lemma \ref{lem:wtildebehavior}(b) (resp. Lemma \ref{lem:wntildebehavior}(b)) the integral then is small unless $N\abs{V} \asymp  \frac{\sqrt{M_0N}}{C_0} \asymp \frac{\sqrt{N_0N}}{pC_0} $. If $N\abs{V} \asymp  \frac{\sqrt{M_0N}}{C_0} \asymp \frac{\sqrt{N_0N}}{pC_0} $, we have (for $A>0$ arbitrarily large) 
        \begin{equation}
            \label{eq:Invosc1}
            I_{N,V}(c,l,m,n) = \frac{Np{C_0}^2}{\sqrt{M_0N_0}}\int_{V}^{2V} g_c(v)e\pb{\frac{n}{p^2c^2v} - \frac{m}{c^2v} - p^2lv} W_{T}(c,m,n,v)dv  + O(p^{-A}).
        \end{equation}
            We can then use Proposition \ref{prop:stationeryphaseinert}(a) to conclude that the integral is small if $p^2m - n \leq 0$. If $p^2m-n >0$, we can locate the stationary point and complete the proof using Proposition \ref{prop:stationeryphaseinert}(b).
\end{itemize}
\end{proof}

\begin{myremark}
    \label{rem:IngT}
    In \eqref{eq:KuznetsovKdyadic}, we actually need to work with the product $I_{N,V}(c,l,m,n) \cdot g_T(l,m,n,c)$. However since $g_T(l,m,n,c)$ satisfies the same derivative bounds in $l,m,n,c$ as a $1$-inert family of functions, we can use the results proven in Proposition \ref{prop:Invlcmnbehavior} for the product $I_{N,V}(c,l,m,n) \cdot g_T(l,m,n,c)$ also. 
\end{myremark}

\section{Spectral Analysis}
\label{sec:SpectralAnalysis}
We continue with our proof of Lemma \ref{lem:E0bound}. One simplification is immediate.

If $\chi$ is trivial, then $\tau(\chi) = -1$ and we can trivially bound all the terms in \eqref{eq:E_0dyadic} (we can use the trivial bound $I_{N,V}(c,l,m,n) \ll N p^{\varepsilon}$) to get $E_{C_0,L_0,M_0,N_0,V} = O(Np^{\varepsilon})$. As the number of dyadic terms is also $O(p^{\varepsilon})$, we get the required bound for $E_0$. So, it suffices to work with $\chi$ non-trivial in \eqref{eq:E_0dyadic}.

Recall $\mathcal{K}_{C_0,L_0,M_0,N_0,V}$, as defined in \eqref{eq:KuznetsovKdyadic}. Comparing this with Proposition \ref{prop:Kuznetsov}, we see that the parameters $(q,m,n,\chi,\Phi(\cdot))$ in \eqref{eq:Kuznetsov} take the form $(p,n-p^2m,-pl,\chi^2, I_{N,V}(\cdot))$ in our case. Thus from \eqref{eq:KuznetsovKdyadic} and \eqref{eq:Kuznetsov}, we have

\begin{equation}
\label{eq:BK}    
    \mathcal{K}_{C_0,L_0,M_0,N_0,V} = \mathcal{K}_{\text{Maass}}+\mathcal{K}_{\text{hol}} +\mathcal{K}_{\text{Eis}},
\end{equation}
where

\begin{equation}
    \label{eq:KMaass0}
    \mathcal{K}_{\text{Maass}} =\sum_{t_j} \mathcal{L}^{\pm}\Phi(t_j) \sum_{r_1r_2=p} \sum_{\pi \in \mathcal{H}_{it}(r_2,\chi^2)} \frac{4\pi\epsilon_{\pi}}{V(p)\mathscr{L}_\pi^*(1)}\sum_{\delta \mid r_1} \ov{\lambda}^{(\delta)}_\pi(\abs{n -p^2m})\ov{\lambda}^{(\delta)}_\pi(\abs{-pl}).
\end{equation}

 $\mathcal{K}_{\text{hol}}$ and $\mathcal{K}_{\text{Eis}}$ are defined similarly as in Proposition \ref{prop:Kuznetsov}.

We can now rewrite \eqref{eq:E_0dyadic}  as 

\begin{equation}
\label{eq:E_0dyadicmod}
    E_{C_0,L_0,M_0,N_0,V}=\mathcal{M}_{\text{Maass}}+\mathcal{M}_{\text{hol}}+\mathcal{M}_{\text{Eis}},
\end{equation} 

where

\begin{equation}
    \label{eq:MMaass0}
     \mathcal{M}_{\text{Maass}} =  \sum_{\substack{l \asymp L_0,m \asymp M_0,\cdots \\ \text{gcd}(ln,p)=1}} \frac{\lambda_f(m)\ov{\lambda_f}(n)}{\phi(p)p^2}\sideset{}{^*} \sum_{\chi(p)} \tau(\chi)^3 \chi(n a_\alpha l) \cdot \mathcal{K}_{\text{Maass}},
\end{equation}

is the contribution from the $\mathcal{K}_{\text{Maass}}$ term. Similarly,  $\mathcal{M}_{\text{hol}}$ and $\mathcal{M}_{\text{Eis}}$ are the contributions of the $\mathcal{K}_{\text{Eis}}$ and $\mathcal{K}_{\text{hol}}$ terms respectively.

We note that since the number of dyadic components in \eqref{eq:E_0dyadic} is $O(p^{\varepsilon})$, Lemma \ref{lem:E0bound} follows if we can show that each of these terms, $\mathcal{M}_{\text{Maass}}, \ \mathcal{M}_{\text{Eis}}$ and $\mathcal{M}_{\text{hol}}$ are $O(Np^{\varepsilon})$. This is what we will show.

We use two different expressions for the integral transforms $\mathcal{L}^{\text{hol}}\Phi(\cdot)$ and $\mathcal{L}^{\pm}\Phi(\cdot)$. If $N\abs{V} \ll p^{\varepsilon}$ (non-oscillatory range), we use the ones given in \eqref{eq:LholPhi} and \eqref{eq:L+/-Phi}. If $N\abs{V} \gg p^{\varepsilon}$ (oscillatory range), we use the ones given in  \eqref{eq:LholPhialt} and \eqref{eq:L+/-Phialt}.

We first work with the Maass form term.
\subsection{Maass Form Term Analysis}
\label{subsec:Maassanalysis}

As $r_1r_2=p$  and $\delta \mid r_1$ in \eqref{eq:KMaass0}, the right side of \eqref{eq:KMaass0} can be rewritten as a sum of three terms corresponding to $(r_1,r_2,\delta) = (1,p,1)$, or $(p,1,1)$, or $(p,1,p)$. 

Also, recall that $\lambda_\pi^{(\delta)} (\cdot)$ is defined in \eqref{eq:lambdapidelta}. In particular, $\lambda_\pi^{(1)} (m) = \lambda_\pi (m)$.

We can use this to split \eqref{eq:MMaass0} as - 

\begin{equation}
    \label{eq:MMaassdefnsplit}
    \mathcal{M}_{\text{Maass}} =  \mathcal{M}_{\text{Maass}_0} + \mathcal{M}_{\text{Maass}_1} + \mathcal{M}_{\text{Maass}_2},
\end{equation}

where 
\begin{multline}
    \label{eq:MM0}
    \mathcal{M}_{\text{Maass}_0} =  \sum_{\substack{l \asymp L_0,m \asymp M_0,\cdots \\ \text{gcd}(ln,p)=1}} \frac{\lambda_f(m)\ov{\lambda_f}(n)}{\phi(p)p^2} \sideset{}{^*}\sum_{\chi(p)} \tau(\chi)^3 \chi(n a_\alpha l)   \\ \cdot \sum_{t_j} \mathcal{L}^{\pm}\Phi(t_j)  \sum_{\pi \in \mathcal{H}_{it_j}(p,\chi^2)} \frac{4\pi\epsilon_{\pi}}{V(p)\mathscr{L}_\pi^*(1)} \ov{\lambda}^{(1)}_\pi(\abs{n-p^2m })\ov{\lambda}^{(1)}_\pi(\abs{-pl}),
\end{multline}

is the contribution from $K_{\text{Maass}}$  when $(r_1,r_2,\delta) = (1,p,1)$. The other two are defined similarly; $\mathcal{M}_{\text{Maass}_1}$ corresponds to $(r_1,r_2,\delta) = (p,1,1)$, and $\mathcal{M}_{\text{Maass}_2}$ corresponds to $(r_1,r_2,\delta) = (p,1,p)$.

We note that, for $\mathcal{M}_{\text{Maass}_1}$ and $\mathcal{M}_{\text{Maass}_2}$, as $r_2=1$, we must have that $\chi^2$ is trivial. As $\chi$ itself is non-trivial, $\chi$ must be the unique quadratic character modulo $p$ (Legendre symbol).

We show that each of the three terms in \eqref{eq:MMaassdefnsplit} is $O(Np^{\varepsilon})$. We start with $\mathcal{M}_{\text{Maass}_0}$.

We split the analysis into two cases: non-oscillatory range (when $N\abs{V} \ll p^{\varepsilon}$) and oscillatory range (when $N\abs{V} \gg p^{\varepsilon}$).

\subsubsection{Non-Oscillatory Case}
We begin first by noting some properties of the functions $\Phi(\cdot)$ and $\mathcal{L}^{\pm}\Phi(\cdot)$.
\subsection* {\textbf{Analysis of \texorpdfstring{$\Phi(\cdot)$}{Phi}:}}

We have

\begin{equation}
    \label{eq:Phidefn}
    \Phi(y,\cdot) = \frac{p}{y^2}I_{N,V} \pb{\frac{y}{\sqrt{p}},l,m,n}.
    \end{equation}

The Mellin transform of $\Phi$ is given by,

\begin{equation}
    \label{eq:Phimellin}
    \widetilde{\Phi}(s,\cdot) = \int_0^{\infty} \Phi(y, \cdot)y^{s-1} dy ,
    \end{equation}

As $N\abs{V}\ll p^{\varepsilon}$, Proposition \ref{prop:Invlcmnbehavior}(a) implies that $I_{N,V}(c,l,m,n) = N^2\cdot V \cdot W_{T} (c,l,m,n) $, where $W_{T}$ is $p^{\varepsilon}$-inert in the variables $c,l,m,n$. We also recall that Proposition \ref{prop:Invlcmnbehavior}(a) allows us to restrict to $C_0 \geq N^{\frac12 - \varepsilon}$, as $I_{N,V}(\cdot)$ is small otherwise.

Thus $\widetilde{\Phi}(s+1,\cdot)$ is the Mellin transform of a $p^{\varepsilon}$-inert family of functions at $s-1$. Proposition \ref{prop:MellinInert} then implies,
\begin{equation}
    \label{eq:Phimellin2}
    \widetilde{\Phi}(s+1,\cdot) =(Np) (N\abs{V}) (\sqrt{p}{C_0})^{s-1} W_{T} (s-1,l,m,n).
\end{equation}
Here, $W_{T}(\cdot)$ is $p^{\varepsilon}$-inert in $l,m,n$ and is small when $\abs{\text{Im}(s-1)} \gg p^{\varepsilon}$.
\subsection*{\textbf{Analysis of \texorpdfstring{ $ \mathcal{L}^{\pm}\Phi(\cdot)$}{L+/-Phi}:}}

Recall that $h_{\pm}(s,t)$ was defined in \eqref{eq:h+/-}. We note that, for any $\sigma_0$ fixed and with $d(\frac{\sigma_0}{2}, \mathbb{Z}_{ \leq 0}) \geq \frac{1}{100}$, we have the bound,
\begin{equation}
    \label{eq:boundsforh}
    h_{\pm} (\sigma_0 + iv,t) \ll \pb{1 + \abs{t + \frac{v}{2}}}^{\frac{\sigma_0-1}{2}}\pb{1 + \abs{t - \frac{v}{2}}}^{\frac{\sigma_0-1}{2}}.
\end{equation}
Also recall that $\mathcal{L}^{\pm}\Phi(t)$ was defined in \eqref{eq:L+/-Phi}.
We define, 
\begin{equation}
    \label{eq:Hplusminus}
    H_{\pm} (s,t,l,m,n) = \frac{1}{2\pi i}h_{\pm}(s,t) \widetilde{\Phi}(s+1,\cdot) (4\pi)^{-s}.
    \end{equation}
So, we can rewrite \eqref{eq:L+/-Phi} as
\begin{equation}
    \label{eq:LPhi}
    \mathcal{L}^{\pm}\Phi(t)  = \int_{\Re(s)=\sigma} H_{\pm} (s,t,l,m,n) \lvert (n-p^2m) (-pl) \rvert ^ {-\frac{s}{2}} ds.
\end{equation}
We state the following lemma regarding the behaviour of $\mathcal{L}^{\pm}\Phi(t)$.

\begin{mylemma}
    \label{lem:tLemma}
    Let $H_{\pm} (s,t,l,m,n)$ and $\mathcal{L}^{\pm}\Phi(t)$ be as in \eqref{eq:Hplusminus} and \eqref{eq:LPhi}. Let $A>0$ be arbitrarily large.
    \begin{itemize}
    \item[(a)] If $\abs{t} \gg p^{\varepsilon},$ then 
    \begin{equation}
        \label{eq:LPhioscbound}
        \mathcal{L}^{\pm}\Phi(t) \ll_{A,\varepsilon} (1+ \abs{t})^{-A}(Np)^{-100}.
    \end{equation}
      
    \item[(b)] If $\abs{t} \ll p^{\varepsilon},$ Then for $\sigma = \Re(s) > \frac14$,  
    \begin{equation}
        \label{eq:LPhinonoscbound}
        \abs{H_{\pm}(s,t)} \ll (Np)^{1+\varepsilon} (\sqrt{p}{C_0})^{\sigma-1}(1+\abs{s})^{-A}.
    \end{equation}
    \end{itemize}
\end{mylemma}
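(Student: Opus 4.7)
The plan is to analyze the Mellin representation \eqref{eq:LPhi} by substituting the explicit form of $\widetilde\Phi(s+1)$ from \eqref{eq:Phimellin2} together with the bounds \eqref{eq:boundsforh} on $h_\pm(s,t)$. Throughout, I write $s = \sigma + iv$.

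For part (b), with $\sigma > 1/4$ and $|t| \ll p^\varepsilon$, the argument is direct bookkeeping. The factor $\widetilde\Phi(s+1)$ contributes $Np \cdot (N|V|) \cdot (\sqrt p\, C_0)^{\sigma-1}$ in absolute value; in the non-oscillatory range one has $N|V| \ll p^\varepsilon$, yielding the prefactor $(Np)^{1+\varepsilon}(\sqrt p\, C_0)^{\sigma-1}$. The decay $(1+|s|)^{-A}$ is obtained by splitting into the two regimes $|v| \ll p^\varepsilon$ and $|v| \gg p^\varepsilon$: in the former, both $|t|$ and $|v|$ are small, and the exponents $(\sigma-1)/2 > -3/8$ in \eqref{eq:boundsforh} keep $|h_\pm(s,t)|$ harmlessly bounded; in the latter, the rapid decay of $W_T(s-1)$ from Proposition \ref{prop:MellinInert} (for $|\Im(s-1)| \gg p^\varepsilon$) easily dominates the at-worst polynomial growth of $h_\pm$.

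For part (a), the strong bound must come from exploiting $|t| \gg p^\varepsilon$. On the contour $\Re(s) = \sigma$, rapid decay of $\widetilde\Phi(s+1)$ in $\Im(s)$ effectively restricts the integrand to $|v| \ll p^\varepsilon$. In the $+$ case, $h_+(s,t) = (2^{s-1}/\pi)\cos(\pi s/2)\Gamma(s/2 + it)\Gamma(s/2 - it)$: Stirling gives $|\Gamma(s/2 \pm it)| \ll |t|^{(\sigma-1)/2}e^{-\pi|t|/2}$, while $|\cos(\pi s/2)|$ stays bounded for $|v| \ll p^\varepsilon$, so $|h_+(s,t)| \ll |t|^{\sigma-1}e^{-\pi|t|}$, an exponential gain that easily yields both $(1+|t|)^{-A}$ and $(Np)^{-100}$. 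In the $-$ case, $\cosh(\pi t)$ exactly cancels this exponential, so I instead work directly with \eqref{eq:L+/-Phialt}: writing $\mathcal{L}^-\Phi(t) = \int_0^\infty B_{2it}^-(4\pi\sqrt{|mn|}/x)\Phi(x)\,dx$ with $\Phi$ a $p^\varepsilon$-inert function supported on $x \asymp C_0\sqrt p$, the argument $y = 4\pi\sqrt{|mn|}/x$ lies at scale $Y_0 \asymp \sqrt{|mn|}/(C_0\sqrt p)$. For $Y_0 \gg |t|^{1+\varepsilon}$, $K_{2it}(y)$ decays exponentially in $y$ and swamps the $\cosh(\pi t)$; for $Y_0 \ll |t|^{1-\varepsilon}$, $\cosh(\pi t)K_{2it}(y) \sim \mathrm{Re}(\Gamma(\pm 2it)(y/2)^{\mp 2it}e^{\pm i\pi t})$ oscillates in $y$ at frequency $|t|/y \gg |t|/Y_0$, so repeated integration by parts against $\Phi$ extracts $(Y_0/|t|)^A$, arbitrarily many powers of $|t|^{-1}$.

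The main obstacle is precisely the $-$ case of part (a) in the transition regime $Y_0 \asymp |t|$, where neither exponential damping nor rapid oscillation is directly available and the uniform (Airy-type) asymptotic expansion for $K_{2it}(y)$ near $y = 2|t|$ must be invoked to establish polynomial decay in $|t|$ of sufficient strength. Away from this transition, matching the $|t|^{-A}$ gain from integration by parts against $|t| \gg p^\varepsilon$ suffices to absorb the $(Np)^{-100}$ factor by choosing $A$ with $A\varepsilon \geq 100$.
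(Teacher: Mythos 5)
Your part (b) is essentially the paper's argument: insert \eqref{eq:Phimellin2} and \eqref{eq:boundsforh} into \eqref{eq:Hplusminus}, use $N|V|\ll p^{\varepsilon}$ from the non-oscillatory hypothesis, and let the rapid decay of $W_T(s-1,\cdot)$ in $\Im(s)$ absorb the polynomial behaviour of $h_{\pm}$. No objection there.

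Part (a) is where you diverge from the paper, and where a genuine gap appears. The paper's proof is a contour shift in \eqref{eq:LPhi}: moving $\Re(s)$ to the left one meets the poles of $\Gamma(\tfrac{s}{2}+it)\Gamma(\tfrac{s}{2}-it)$ at $s=-2j\mp 2it$, which all sit at height $|\Im(s)|\asymp 2|t|\gg p^{\varepsilon}$, precisely where the inert Mellin factor $W_T(s-1,\cdot)$ from \eqref{eq:Phimellin2} decays faster than any power; the shifted integral is likewise controlled because $|\Im(s)|\ll p^{\varepsilon}$ is enforced by the same factor. Crucially, this one argument treats the $+$ and $-$ signs uniformly, since it never needs exponential decay of $h_{\pm}$ in $t$ (which indeed fails for $h_-$ because of the $\cosh(\pi t)$ factor). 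Your treatment of the $+$ sign by Stirling is correct but different. For the $-$ sign, you abandon the Mellin representation, switch to \eqref{eq:L+/-Phialt}, and analyse $K_{2it}$ directly — and you correctly observe that $\cosh(\pi t)$ forces this — but you then explicitly leave the turning-point regime $y\asymp 2|t|$ unresolved, conceding that uniform Airy-type asymptotics ``must be invoked.'' That is exactly the place where your proof stops: you state that the expansion must be invoked, but you neither carry it out nor verify that it yields decay stronger than the $|t|^{-1/3}$ one gets from the crude bound $\cosh(\pi t)K_{2it}(y)\ll |t|^{-1/3}$ at the turning point, which is far from the required $(1+|t|)^{-A}(Np)^{-100}$.

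Two further remarks. First, the cleaner repair for your $-$ case is to simply do what you did not: use the Mellin form \eqref{eq:L+/-Phi} and shift the contour, as the paper does; the residue calculation works identically for $h_-$ because the $\cosh(\pi t)$ cancels against $|\Gamma(-j-2it)|\asymp |t|^{-j-1/2}e^{-\pi|t|}$ to leave a polynomial, and the smallness comes entirely from $W_T$ at height $2|t|$. Second, you could have tried to argue the transition regime never arises: in the non-oscillatory range one has $C_0\gg N^{1/2-\varepsilon}$, which forces $Y_0=\sqrt{|mn|}/(C_0\sqrt{p})\ll p^{O(\varepsilon)}$, so for $|t|\gg p^{\varepsilon}$ one is essentially always in your ``small argument'' regime $Y_0\ll |t|^{1-\varepsilon}$. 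But you did not make that observation, and making it rigorous requires tracking the implicit exponents of $\varepsilon$ carefully; the contour shift sidesteps the issue altogether.
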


\begin{proof}
    \begin{itemize}
        \item [(a)] If we take the contour of integration in \eqref{eq:LPhi}  far to the left, we encounter poles at $\frac{s}{2} \pm it = 0, -1, -2, \cdots$. This means that $\abs{\text{Im}(s)} \asymp \abs{t} \gg (Np)^{\varepsilon}$. Now, by \eqref{eq:Phimellin2}, $\widetilde{\Phi}(\cdot)$ is very small at this height.
        \item [(b)] This follows from repeated integration by parts on \eqref{eq:Phimellin}, and using Proposition \ref{prop:Invlcmnbehavior}(a) and \eqref{eq:boundsforh}.
    \end{itemize}
\end{proof} 

Lemma \ref{lem:tLemma} allows us to essentially restrict $t_j$ to $\abs{t_j} \ll p^{\varepsilon}$.
 \subsection*{\textbf{Mellin Transform of \texorpdfstring{$H_{\pm} (s,t,\cdot)$}{H+/- (s,t,..)}:}}
Consider the functions $H_{+}(\cdot)$ as defined in \eqref{eq:Hplusminus}. By the Mellin inversion theorem, we have for $\Re(u_j) = \sigma_j > 0$, $j=1,2,3$

\begin{equation}
    \label{eq:H+mellininverse}
    H_+(s,t,l,m,n) = \int_{\Re (u_j) = \sigma_j} l^{-u_1} m^{-u_2} n^{-u_3} \widetilde{H}(s,t,u_1,u_2,u_3) \,  du_1 \, du_2  \, du_3,
\end{equation}
where $\widetilde{H}(s,t,u_1,u_2,u_3)$ is the (partial) Mellin transform of $H_+(\cdot)$ with respect to the variables $l,m,n$. More specifically,
\begin{equation}
    \label{eq:H2mellin}
    \widetilde{H}(s,t,u_1,u_2,u_3) = \int_0^\infty\int_0^\infty\int_0^\infty H_+(s,t,l,m,n) l^{u_1-1} m^{u_2-1} n^{u_3-1} \, dl \, dm  \, dn.
\end{equation}

Using the fact that $\widetilde{\Phi}(\cdot)$ is $p^{\varepsilon}$-inert in $l,m,n$, when $\sigma = \Re(s) > 1$ and $\sigma_j = \Re(u_j) >0 $ for $j=1,2,3$, we have

\begin{equation}
    \label{eq:H2bounds}
     \widetilde{H}(s,t,u_1,u_2,u_3) \ll  (\sqrt{p}C_0)^{\sigma-1} (Np)^{1+\varepsilon}  L_0^{\sigma_1} M_0^{\sigma_2} N_0^{\sigma_3} (1+ \abs{s})^{-A} \prod_{j=1}^{3} (1 + \abs{u_j})^{-A} .  
\end{equation}

Using \eqref{eq:LPhi} and \eqref{eq:H+mellininverse}, we get that,

\begin{equation}
    \label{eq:LPhirewritten}
    \mathcal{L}^{+}\Phi(t)  = \int_{\Re(s)=\sigma} \int_{\Re (u_j) = \sigma_j} l^{-u_1} m^{-u_2} n^{-u_3} \widetilde{H}(s,t,u_1,u_2,u_3) \pb{(p^2m-n) pl} ^ {-\frac{s}{2}}  \, ds  \,  du_1 \, du_2  \, du_3.
\end{equation}

\begin{myremark}
    We can repeat these steps for $H_-(\cdot)$, and get an expression analogous to \eqref{eq:LPhirewritten} for $\mathcal{L}^{-}\Phi(t)$. 
\end{myremark}
\subsection*{\textbf{Final Steps:}}
We show the bounds required for $\mathcal{M}_{\text{Maass}_0}$ in the non-oscillatory region, when the sign is $+$. The proof when the sign is $-$ is very similar. 
\\We recall that the sign in \eqref{eq:MM0} depends on 
\begin{equation*}
    \text{sgn}(-pl(n-p^2m)) = \text{sgn}(p^2m-n)), \text{ as } l >0.
\end{equation*}
So, we only consider terms in \eqref{eq:MM0} with $p^2m -n >0$, and $NV \ll p^{\varepsilon}$.

Using \eqref{eq:LPhirewritten}, we can rewrite \eqref{eq:MM0} as,
\begin{multline}
\label{eq:MMaass}
    \mathcal{M}_{\text{Maass}_0}=  \sum_{\substack{l,m,n  \\ \text{gcd}(ln,p)=1}} \frac{\lambda_f(m)\ov{\lambda_f}(n)}{\phi(p)p^2} \sideset{}{^*}\sum_{\chi(p)} \tau(\chi)^3 \chi(n a_\alpha l)  \\ \cdot \sum_{t_j} \sum_{\pi \in \mathcal{H}_{it_j}(p,\chi^2)} \frac{4\pi\epsilon_{\pi}}{V(p)\mathscr{L}_\pi^*(1)} \ov{\lambda}_\pi(p^2m - n)\ov{\lambda}_\pi(pl) \int_{\substack{\Re(s)=\sigma \\ \Re(u_j)=\sigma_j}}  \frac{\widetilde{H}(s,t,u_1,u_2,u_3)}{\pb{(p^2m-n)pl} ^ {\frac{s}{2}} }  \frac{ds \  du_1 \ du_2  \ du_3}{l^{u_1} m^{u_2} n^{u_3}}.
\end{multline}
Rearranging the expression, we have
\begin{multline}
\label{eq:MMaassren}
    \mathcal{M}_{\text{Maass}_0}=  \frac{1}{\phi(p)p^2} \sideset{}{^*}\sum_{\chi(p)}  \cdot \sum_{t_j} \sum_{\pi \in \mathcal{H}_{it_j}(p,\chi^2)}  \int_{\substack{\Re(s)=\sigma \\ \Re(u_j)=\sigma_j}}\frac{\tau(\chi)^3 \chi( a_\alpha) 4\pi\epsilon_{\pi} }{V(p)\mathscr{L}_\pi^*(1)p^{\frac{s}{2}}} \widetilde{H}(s,t,u_1,u_2,u_3) \\ \pb{\sum_{\substack{l  \\ \text{gcd}(l,p)=1}} \frac{\ov{\lambda}_\pi(pl) \chi(l)}{l ^ {\frac{s}{2} + u_1} }} \pb{ \sum_{\substack{m,n  \\ \text{gcd}(n,p)=1}} \frac{\chi(n)\lambda_f(m)\ov{\lambda_f}(n)\ov{\lambda}_\pi(p^2m - n)}{\pb{p^2m-n} ^ {\frac{s}{2}}m^{u_2} n^{u_3}}} \ ds \  du_1 \ du_2  \ du_3.
\end{multline}

We can combine the $l$ terms in \eqref{eq:MMaassren} to form an L-function. In particular, we have
\begin{equation}
    \label{eq:lnonosc}
    \sum_{l} \frac{\chi(l)\ov{\lambda_{\pi}}(pl)}{l^{\frac{s}{2}+u_1}} = \ov{\lambda}_{\pi}(p) \sum_{l} \frac{\chi(l)\ov{\lambda}_{\pi}(l)}{l^{\frac{s}{2}+u_1}} = \ov{\lambda}_{\pi}(p) L(\ov{\pi} \otimes \chi,\tfrac{s}{2}+u_1). 
\end{equation}

We bound \eqref{eq:MMaassren} by taking the absolute value of the integrand. Note that as $\widetilde{H}(\cdot) $ decays rapidly along vertical lines (using \eqref{eq:H2bounds}), we can restrict the integral up to $\abs{\Im(s)} \ll p^{\varepsilon}$ and $\abs{\Im(u_j)} \ll p^{\varepsilon}$. We thus get, for arbitrarily large $A>0$,

\begin{multline}
\label{eq:MMaassrenewed}
    \mathcal{M}_{\text{Maass}_0}\ll \frac{(\sqrt{p}C_0)^{\sigma-1} (Np)^{1+\varepsilon}  L_0^{\sigma_1} M_0^{\sigma_2} N_0^{\sigma_3}}{\phi(p)p^2} \sideset{}{^*}\sum_{\chi(p)}  \cdot \sum_{t_j} \sum_{\pi \in \mathcal{H}_{it_j}(p,\chi^2)} \left \vert \frac{\tau(\chi)^3 \chi(a_\alpha) 4\pi\epsilon_{\pi} \ov{\lambda}_{\pi}(p)}{V(p)\mathscr{L}_\pi^*(1)p^\frac{\sigma}{2}} \right \vert  \\ 
     \int_{\substack{\Re(s)=\sigma \\ \Re(u_j)=\sigma_j}} \left \vert  \sum_{\substack{m,n  \\ \text{gcd}(n,p)=1}} \frac{\chi(n)\lambda_f(m)\ov{\lambda_f}(n)\ov{\lambda}_\pi(p^2m - n)}{\pb{(p^2m-n)} ^ {\frac{s}{2}}m^{u_2} n^{u_3}} \right \vert  \frac{\abs{L(\ov{\pi} \otimes \chi,\tfrac{s}{2}+u_1)}}{ (1+ \abs{s})^{A}  \prod_{j=1}^{3} (1 + \abs{u_j})^{A} } \ ds \  du_1 \ du_2  \ du_3 .  
\end{multline}

Lemma \ref{lem:tLemma} and Proposition  \ref{prop:Invlcmnbehavior}(a) allow us to restrict (up to a small error) to when $\abs{t_j} \ll p^{\varepsilon}$, and $N^{\frac{1}{2}-\varepsilon} \leq C_0 \leq \sqrt{N}$. 

Additionally we have the bounds  $\abs{\tau(\chi)} = \sqrt{p}, \ \abs{V(p)} \asymp p$. Also, as $\pi \in \mathcal{H}_{it_j}(p,\chi^2)$, $\abs{\lambda_{\pi}(p)} \leq 1$ ($p$ divides the level). These, along with \eqref{eq:lnonosc}, and the fact that $\chi(n) = -\chi (p^2m -n)$ give us,
\begin{equation}
\label{eq:MMaass2}
    \mathcal{M}_{\text{Maass}_0}\ll  \ \frac{p^{\frac32} L_0^{\sigma_1} M_0^{\sigma_2} N_0^{\sigma_3} (Np)^{\frac{1+\sigma}{2}}}{p\cdot p^3 \cdot p^{\frac{\sigma}{2}}}\cdot \mathcal{S}, 
    \end{equation}
where
\begin{multline}
    \label{eq:mathcalS}
    \mathcal{S} =p^{\varepsilon} \cdot \sum_{t_j \ll p^{\varepsilon}}\sideset{}{^*}\sum_{\chi(p)}  \sum_{\pi \in \mathcal{H}_{it_j}(p,\chi^2)} \int_{\substack{\Re(s)=\sigma, \ \Im(s) \ll p^{\varepsilon} \\ \Re(u_j)=\sigma_j \ \Im(u_j) \ll p^{\varepsilon}}}  \abs{L(\ov{\pi} \otimes \chi,\tfrac{s}{2}+u_1)} \\ \cdot \left \vert \sum_{m \asymp M_0}\frac{\lambda_f(m)}{m^{u_2}} \sum_{n \asymp N_0} \frac{\ov{\lambda}_f(n)\ov{\lambda}_\pi(p^2m - n)\chi(p^2m - n)}{ (p^2m-n)^{\frac{s}{2}}n^{u_3}} \right \vert    \ ds \  du_1 \ du_2  \ du_3 . 
\end{multline}

Using the Cauchy-Schwarz inequality on $\mathcal{S}$ we get
\begin{equation}
    \label{eq:boundsonmathcals}
    \abs{\mathcal{S}} \leq (\mathcal{S}_1)^{\frac12} \cdot (\mathcal{S}_2)^{\frac12},
\end{equation}
where
\begin{equation}
    \label{eq:mathcalS1}
    \mathcal{S}_1 = p^{\varepsilon} \cdot \sum_{t_j \ll p^{\varepsilon}} \sideset{}{^*} \sum_{\chi(p)}  \sum_{\pi \in \mathcal{H}_{it_j}(p,\chi^2)}
    \int_{\substack{\Re(s)=\sigma, \ \Im(s) \ll p^{\varepsilon} \\ \Re(u_j)=\sigma_j \ \Im(u_j) \ll p^{\varepsilon}}}  \left \vert  L(\ov{\pi} \otimes \chi,\tfrac{s}{2}+u_1) \right \vert ^2 \ ds \  du_1 \ du_2  \ du_3    ,
\end{equation}
 and
\begin{multline}
     \label{eq:mathcalS2}
     \mathcal{S}_2 = p^{\varepsilon} \cdot \sum_{t_j \ll p^{\varepsilon}}\sideset{}{^*}\sum_{\chi(p)}  \sum_{\pi \in \mathcal{H}_{it_j}(p,\chi^2)} \\ \cdot \int_{\substack{\Re(s)=\sigma, \ \Im(s) \ll p^{\varepsilon} \\ \Re(u_j)=\sigma_j \ \Im(u_j) \ll p^{\varepsilon}}} \left \vert \sum_{m \asymp M_0}\frac{\lambda_f(m)}{m^{u_2}} \sum_{n \asymp N_0} \frac{\ov{\lambda}_f(n)\ov{\lambda}_\pi(p^2m - n)\chi(p^2m - n)}{ (p^2m-n)^{\frac{s}{2}}n^{u_3}} \right \vert ^2 ds \  du_1 \ du_2  \ du_3 .
\end{multline}

Let $\Im(s) = v$, and $\Im(u_1) = v_1$, and $v_0 = \tfrac{v+2v_1}{2}$. We now choose $\sigma = \frac12, \ \sigma_1 = \sigma_3 = \frac{1}{4} + \frac{\varepsilon}{2}, \ \sigma_2 = \frac{\varepsilon}{2}$, and use the spectral large sieve inequality to get bounds on \eqref{eq:mathcalS1} and \eqref{eq:mathcalS2}.

We note that the map $(\pi,\chi) \rightarrow \ov{\pi} \otimes \chi$ is an at most two-to-one map, and $\ov{\pi} \otimes \chi  \in \mathcal{H}_{it_j}(p^2,1)$. 

Using Proposition \ref{prop:approxfe} (with $X=1$) in \eqref{eq:mathcalS1}, we get

\begin{multline}
    \label{eq:mathcalS1bound0}
    \mathcal{S}_1 \ll \sum_{t_j \ll p^{\varepsilon}} \sum_{\phi \in \mathcal{H}_{itj}(p^2,1)}\int_{\substack{v\ll p^{\varepsilon} \\ v_1 \ll p^{\varepsilon}}} \abs{L(\phi,\tfrac{1+\varepsilon}{2} + iv_0 }^2 \ dv \  dv_1 \\ \ll \int_{\substack{v\ll p^{\varepsilon} \\ v_1 \ll p^{\varepsilon}}} \sum_{t_j \ll p^{\varepsilon}} \sum_{\phi \in \mathcal{H}_{it_j}(p^2,1)}  \pb{\left \vert \sum_{n \ll p^{1+\varepsilon}}\frac{\lambda_\phi (n)}{n^{\tfrac{1+\varepsilon}{2} + iv_0}} V\pb{\frac{n}{p}}\right \vert ^2 + \left \vert \sum_{n \ll p^{1+\varepsilon}}\frac{\ov{\lambda_\phi} (n)}{n^{\tfrac{1-\varepsilon}{2} - iv_0}} V\pb{\frac{n}{p}}\right \vert ^2 } \ dv \  dv_1. 
\end{multline}
    
Using Proposition \ref{prop:spectrallargesieve} twice in \eqref{eq:mathcalS1bound0}, we get that
\begin{multline}
    \label{eq:mathcalS1bound}
    \mathcal{S}_1 \ll \pb{p^{2+\varepsilon} + p^{1+\varepsilon}}^{1+\varepsilon} \cdot \left (\sum_{n \leq p^{1+\varepsilon}} \frac{1}{n^{1+\varepsilon}} \pb{\int_{\substack{v\ll p^{\varepsilon} \\ v_1 \ll p^{\varepsilon}}} \abs*{n^{-iv_0} V\pb{\frac{n}{p}}}^2 \ dv \ dv_1} \right. \\+ \left.\sum_{n \leq p^{1+\varepsilon}} \frac{1}{n^{1-\varepsilon}} \pb{\int_{\substack{v\ll p^{\varepsilon} \\ v_1 \ll p^{\varepsilon}}} \abs*{n^{-iv_0} V\pb{\frac{n}{p}}}^2 \ dv \ dv_1} \right )  \ll p^{2+\varepsilon} . 
\end{multline}
    
For the bound on $\mathcal{S}_2$ we state and prove the following lemma. 

\begin{mylemma}
\label{lem:S2}
    For any $\varepsilon>0$,
    \begin{equation}
     \label{eq:s2lemma}   
        \mathcal{S}_2 \ll p^{2+\varepsilon}.
    \end{equation}
\end{mylemma}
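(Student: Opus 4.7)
The plan is to interpret the product $\chi(p^2m - n)\ov{\lambda}_\pi(p^2m - n)$ as a single Hecke eigenvalue, apply the spectral large sieve inequality (Proposition~\ref{prop:spectrallargesieve}) exactly as in the bound on $\mathcal{S}_1$, and then reduce the resulting $L^2$-norm of the coefficients to the fourth moment bound of Proposition~\ref{prop:fourthmomtbound}.

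First, since $(n,p) = 1$ forces $(p^2m - n, p) = 1$, I have $\chi(p^2m - n)\ov{\lambda}_\pi(p^2m - n) = \lambda_\phi(p^2m - n)$, where $\phi = \ov\pi \otimes \chi$ ranges over $\mathcal{H}_{it_j}(p^2, 1)$ through an at most two-to-one correspondence. Changing variable $k = p^2m - n > 0$, the inner double sum becomes $\sum_{k \geq 1} c_k \lambda_\phi(k)$, with
\[
c_k = k^{-s/2}\sum_{\substack{m \asymp M_0 \\ p^2m - k \asymp N_0}} \frac{\lambda_f(m)\ov{\lambda_f}(p^2m - k)}{m^{u_2}(p^2m - k)^{u_3}}.
\]
Since the $s$- and $u_j$-integrations are effectively restricted to $|\Im(s)|, |\Im(u_j)| \ll p^\varepsilon$ (by the rapid decay of $\widetilde H$ established in Section~\ref{subseq:Maassanalysis}), it suffices to bound the spectral sum pointwise. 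Invoking Proposition~\ref{prop:spectrallargesieve} with conductor $p^2$, spectral parameter $T \ll p^\varepsilon$ and range $k \ll p^{2+\varepsilon}$ yields
\[
\sum_{t_j \ll p^\varepsilon}\sum_{\phi \in \mathcal{H}_{it_j}(p^2, 1)}\left|\sum_k c_k \lambda_\phi(k)\right|^2 \ll p^{2+\varepsilon}\sum_k |c_k|^2,
\]
so the task becomes proving $\sum_k |c_k|^2 \ll p^\varepsilon$.

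For each fixed $k$, at most $M_0 \ll p^\varepsilon$ values of $m$ contribute. Cauchy--Schwarz on the $m$-sum, followed by interchanging summation and the substitution $n = p^2m - k$, reduces (with the prescribed $\sigma = 1/2$, $2\sigma_2 = \varepsilon$, $2\sigma_3 = 1/2 + \varepsilon$) to a bound of the shape
\[
\sum_k|c_k|^2 \ll M_0\sum_{m \asymp M_0}\frac{|\lambda_f(m)|^2}{m^\varepsilon}\sum_{\substack{n \asymp N_0 \\ n < p^2m}}\frac{|\lambda_f(n)|^2}{n^{1/2+\varepsilon}\,|p^2m - n|^{1/2}}.
\]
Applying Cauchy--Schwarz once more to the inner $n$-sum splits it into $\bigl(\sum_n|\lambda_f(n)|^4 n^{-1-2\varepsilon}\bigr)^{1/2}\bigl(\sum_n|p^2m - n|^{-1}\bigr)^{1/2}$: the first factor is $\ll p^\varepsilon$ by Proposition~\ref{prop:fourthmomtbound} together with partial summation, and the second is a harmonic sum over $n \asymp N_0$ with the $n = p^2m$ term removed, hence $\ll \log N_0 \ll p^\varepsilon$. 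Combined with the Rankin--Selberg bound $\sum_{m \asymp M_0}|\lambda_f(m)|^2 \ll M_0^{1+\varepsilon}$ and $M_0 \ll p^\varepsilon$, this produces $\sum_k|c_k|^2 \ll p^\varepsilon$ as needed, and hence $\mathcal{S}_2 \ll p^{2+\varepsilon}$.

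The main technical worry will be the near-diagonal contribution $n \approx p^2m$, where $|p^2m - n|^{-1/2}$ becomes large; the fourth moment bound is precisely what is needed to absorb this integrable singularity through the Cauchy--Schwarz step above without invoking Deligne's pointwise bound on $\lambda_f$, consistent with the introduction's remark that the argument should generalize to Hecke--Maass $f$.
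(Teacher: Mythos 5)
Your proof is correct and relies on the same three ingredients as the paper's argument: rewrite $\chi(p^2m-n)\ov\lambda_\pi(p^2m-n)$ as $\lambda_\phi(p^2m-n)$ with $\phi = \ov\pi\otimes\chi \in \mathcal{H}_{it_j}(p^2,1)$, apply the spectral large sieve (Proposition~\ref{prop:spectrallargesieve}), and then control the resulting coefficient norm via Cauchy--Schwarz together with the fourth-moment bound of Proposition~\ref{prop:fourthmomtbound}. The only organisational difference is in the final estimate: the paper first Cauchy--Schwarzes out the $m$-sum, applies the large sieve for each fixed $m$, and then splits the resulting double sum into the ranges $n\gtrless p^2m/2$, with a dyadic decomposition in the small-$|p^2m-n|$ range; you instead package everything into a single coefficient sequence $c_k$ before invoking the large sieve and then use the pairing $|\lambda_f(n)|^4 n^{-1-2\varepsilon}$ against $|p^2m-n|^{-1}$, which absorbs the near-diagonal singularity and the bulk range simultaneously without a case split. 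Both routes are valid and of comparable length, so this is essentially the paper's proof with a slightly cleaner closing Cauchy--Schwarz.
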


Notice that assuming Lemma \ref{lem:S2}, we can complete the proof pretty easily. Using \eqref{eq:boundsonmathcals},\eqref{eq:mathcalS1bound} and \eqref{eq:s2lemma} in \eqref{eq:MMaass2}, we have (with $\sigma = \frac12, \ \sigma_1 = \sigma_3  = \frac14+\frac{\varepsilon}{2}, \sigma_2 = \frac{\varepsilon}{2}$),
\begin{equation}
\label{MMaass5}
    \mathcal{M}_{\text{Maass}_0}\ll \frac{p^{\frac32} (L_0M_0N_0)^{\frac{\varepsilon}{2}}(Np)^{1+\frac{\varepsilon}{2}}}{p\cdot p^3 \cdot p^{\frac{1}{2}}}  \cdot p^{2+\varepsilon} \ll Np^{\varepsilon}.
\end{equation}

\subsubsection*{Proof of Lemma \ref{lem:S2}}

We first define,
\begin{equation}
    \label{eq:S2N0m}
    S_{2,N_0}(m) = \sum_{n \asymp N_0} \frac{\ov{\lambda}_f(n)\ov{\lambda}_\pi(p^2m - n)\chi(p^2m - n)}{ (p^2m-n)^{\frac{s}{2}}n^{u_3}}.
\end{equation}
Now, consider the $m$-sum in  \eqref{eq:mathcalS2} (with $\sigma_2 =\frac{\varepsilon}{2}$). We recall that $M_0 \ll p^{\varepsilon}$, and $N_0 \ll p^{2+\varepsilon}$. Using the Cauchy-Schwarz inequality here, we see that 
\begin{equation}
    \label{eq:msummathcals2}
     \left \vert \sum_{m \asymp M_0}\frac{\lambda_f(m)}{m^{u_2}} S_{2,N_0}(m)\right \vert ^2 \leq  \sum_{m \asymp M_0} \left \vert \frac{\lambda_f(m)}{m^{\frac{\varepsilon}{2}}} \right \vert ^2 \cdot \sum_{m \asymp M_0} \abs{ S_{2,N_0}(m)}^2 \ll p^{\varepsilon}\sum_{m \asymp M_0} \abs{ S_{2,N_0}(m)}^2.
\end{equation}
 
Now,changing variables $n \rightarrow p^2m-n$, we can rewrite \eqref{eq:S2N0m} as, 
\begin{equation}
    \label{eq:S2N0mnew}
    S_{2,N_0}(m) = \sum_{n \ll p^{2+\varepsilon}} \frac{\ov{\lambda}_f(p^2m-n)}{ (p^2m-n)^{u_3}}\cdot \frac{\ov{\lambda}_{\pi\otimes \chi}(n)}{n^{\frac{s}{2}}}.
\end{equation}

Now, we can use \eqref{eq:msummathcals2}, and \eqref{eq:S2N0mnew} in \eqref{eq:mathcalS2}, to get

\begin{equation}
     \label{eq:mathcalS2bound0}
     \mathcal{S}_2 \ll p^{\varepsilon} \int_{\substack{\Re(s)=\sigma \\ \Re(u_j)=\sigma_j}} \sum_{m \asymp M_0}\sum_{t_j \ll p^{\varepsilon}} \sum_{\phi \in \mathcal{H}_{itj}(p^2,1)}  \left \vert \sum_{n \ll p^{2+\varepsilon}} \frac{\ov{\lambda}_f(p^2m-n)}{ (p^2m-n)^{u_3}}\cdot \frac{\lambda_{\phi}(n)}{n^{\frac{s}{2}}}\right \vert ^2 \ ds \ du_1 \ du_2 \ du_3.
\end{equation}
Using Proposition \ref{prop:spectrallargesieve} in \eqref{eq:mathcalS2bound0}, we get 
\begin{equation}
    \label{eq:mathcalS2boundren}
    \mathcal{S}_2 \ll \int_{\substack{\Re(s)=\sigma \\ \Re(u_j)=\sigma_j}} \pb{p^{2+\varepsilon} + p^{2+\varepsilon}}^{1+\varepsilon} \sum_{m \asymp M_0} \sum_{n \ll p^{2+\varepsilon}}\left \vert  \frac{\ov{\lambda}_f(p^2m-n)}{ (p^2m-n)^{u_3}}\cdot \frac{1}{n^{\frac{s}{2}}}\right \vert ^2 \ ds \ du_1 \ du_2 \ du_3.
\end{equation}

Using the fact that the integrals can be restricted up to $\abs{\Im(s)} \ll p^{\varepsilon}$ and $\abs{\Im(u_j)} \ll p^{\varepsilon}$, and that $\sigma=\tfrac12$, and $\sigma_3=\tfrac14 + \tfrac{\varepsilon}{2}$, we can simplify \eqref{eq:mathcalS2boundren} as,

\begin{equation}
    \label{eq:mathcalS2bound}
    \mathcal{S}_2 \ll  \pb{p^{2+\varepsilon} + p^{2+\varepsilon}}^{1+\varepsilon} \sum_{m \asymp M_0} \sum_{n \ll p^{2+\varepsilon}}\left \vert  \frac{\ov{\lambda}_f(p^2m-n)}{ (p^2m-n)^{\frac{1}{4}+\frac{\varepsilon}{2}}}\cdot \frac{1}{n^{\frac{1}{4}}}\right \vert ^2 .
\end{equation}

Let $c_f(m,n) \coloneqq \frac{\abs{\lambda_f(p^2m-n)}^2}{(p^2m-n)^{\frac12+\varepsilon}n^{\frac12}}$. It suffices to show that $\sum_{m \asymp M_0}\sum_{n \ll p^{\varepsilon}} c_f(m,n) \ll p^{\varepsilon}.$

We separate the $n$-sum into two parts depending on if $n \geq \frac{p^2m}{2}$ or not.

In the former case, we can utilize the Rankin-Selberg bound for the square mean of the coefficients $\lambda_f(\cdot)$ to get 
\begin{multline}
\label{eq:cf1}
 \sum_{m \asymp M_0}\sum_{\frac{p^2m}{2} \leq n \ll p^{\varepsilon}} c_f(m,n) \ll  \sum_{m \asymp M_0}\sum_{\frac{p^2m}{2} \leq n \ll p^{2+\varepsilon}}  \frac{\abs{\lambda_f(p^2m-n)}^2}{(p^2m-n)^{\frac12+\varepsilon}n^{\frac12}} \\ \ll \sum_{m \asymp M_0} \sum_{n \ll p^{2+\varepsilon}} \frac{\abs{\lambda_f(n)}^2}{n^{1+\varepsilon}} \ll M_0 \cdot p^{\varepsilon} \ll p^{\varepsilon}.
\end{multline}

The latter case (when $n < \frac{p^2m}{2}$), requires more work. We split the $n$-sum into dyadic segments of size, $n \asymp N_1$, $N_1 < p^2m$. 

\begin{multline}
\label{eq:cf2}
 \sum_{m \asymp M_0}\sum_{n < \frac{p^2m}{2}} c_f(m,n) \ll  \sum_{m \asymp M_0}\sum_{N_1 \text{ dyadic}}\sum_{n \asymp N_1  }  \frac{\abs{\lambda_f(p^2m-n)}^2}{(p^2m-n)^{\frac12+\varepsilon}n^{\frac12}} \\ \ll \sum_{\substack{m \asymp M_0 \\ N_1 \text{ dyadic}}} \frac{1}{p^{1+\varepsilon}N_1^{\frac12}}\sum_{n \asymp N_1 }  \abs{\lambda_f(p^2m-n)}^2 \ll \sum_{\substack{m \asymp M_0 \\ N_1 \text{ dyadic}}}  \frac{1}{pN_1^{\frac12}}\pb{\sum_{n \asymp N_1 }  \abs{\lambda_f(p^2m-n)}^4}^{\frac12} \cdot N_1^{\frac12}.
\end{multline}

Here the last inequality follows from an application of Cauchy's inequality. Using \eqref{eq:fouthmomtbound}, we can now get

\begin{equation}
    \label{eq:cf3}
    \sum_{n \asymp N_1 }  \abs{\lambda_f(p^2m-n)}^4 \ll \sum_{n \ll p^2m} \abs{\lambda_f(n)}^4 \ll (p^2m)^{1+\varepsilon}.
\end{equation}

Using \eqref{eq:cf3} in \eqref{eq:cf2}, we get that,
\begin{equation}
\label{eq:cf4}
     \sum_{m \asymp M_0}\sum_{n < \frac{p^2m}{2}} c_f(m,n) \ll  \sum_{m \asymp M_0}\sum_{N_1 \text{ dyadic}}\sum_{n \asymp N_1  }  \frac{\abs{\lambda_f(p^2m-n)}^2}{(p^2m-n)^{\frac12}n^{\frac12+\varepsilon}} \ll p^{\varepsilon}.
\end{equation}

Now, \eqref{eq:cf1} and \eqref{eq:cf4} jointly imply that $\sum_{m \asymp M_0}\sum_{n \ll p^{\varepsilon}} c_f(m,n) \ll p^{\varepsilon}.$ This completes the proof of the lemma.

\subsubsection{Oscillatory Case}
In this range, $NV \gg p^{\varepsilon}$. The analysis is similar to the non-oscillatory case, although we use the Bessel integral form for the Bruggeman-Kuznetsov formula. Once again, we begin by noting some properties of the functions $\Phi(\cdot)$ and $\mathcal{L}^{\pm}\Phi(\cdot)$.
\subsection*{\textbf{Analysis of \texorpdfstring{$\Phi(\cdot)$}{Phi()}:}}
Recall $ \Phi(y,\cdot)$, as defined in \eqref{eq:Phidefn}.  As $N\abs{V} \gg p^{\varepsilon}$, Proposition \ref{prop:Invlcmnbehavior}(b) implies that $I_{N,V} (c,l,m,n)$ is small unless $p^2m -n > 0$ \textbf{and} $NV \asymp \frac{\sqrt{M_0N}}{C_0} \asymp \frac{\sqrt{N_0N}}{pC_0}$, in which case we have (restating \eqref{eq:Invclmnosc})
\begin{equation}
    I_{N,V} (c,l,m,n)= \frac{N}{(NV)^{\frac32}} e \pb{\frac{2\sqrt{l(p^2m-n)}}{c}} \cdot W_{T}(l,m,n,c) + O(p^{-A}) .
\end{equation}
We recall that the sign in \eqref{eq:MM0} is $\text{sgn}((n-p^2m)(-pl)) = \text{sgn}(p^2m-n)$. Now, as  $I_{N,V} (c,l,m,n)$ is small if $p^2m - n <0$, we only need to consider the case when the sign is $+$. 
\subsection*{\textbf{Analysis of \texorpdfstring{$\mathcal{L}^{+}\Phi(\cdot)$}{L+Phi() }:} }

Recall that  $\mathcal{L}^{+}\Phi(\cdot)$ is defined as in \eqref{eq:L+/-Phialt}.
\\Let $z= 2\sqrt{pl(p^2m-n)}$. As $p^2m -n >0$, this is well defined. Note that $z \asymp \pb{\frac{NN_0}{p}}^{\frac12}$.
\\Now, 
\begin{equation}
    \label{eq:B+alt}
    \frac{J_{2ir(x)}-J_{-2ir}(x)}{\sinh \pb{\pi r}} = \frac{2}{\pi i} \int_{-\infty}^{\infty} \cos(x \cosh y) e\pb{\frac{ry}{\pi}} dy.
\end{equation}
 We can use \eqref{eq:Invclmnosc} and \eqref{eq:B+alt} to rewrite $\mathcal{L}^{+}\Phi(\cdot)$  (up to a small error term) as,
\begin{equation}
    \label{eq:L0+ver1}
    \mathcal{L}^{+} \Phi(t_j)  =   \frac{Np}{\pi(NV)^{\frac32}}\int_0^\infty \int_{-\infty}^{\infty} \cos \pb{\frac{2 \pi z}{x} \cosh y} e\pb{\frac{t_j y}{\pi}} e\pb{\frac{z}{x}}W_{T}(\frac{x}{\sqrt{p}},\cdot) \frac{ dy \ dx}{x^2} . 
\end{equation}
Changing variables, $x \rightarrow \frac{C_0 \sqrt{p}}{x} $, this becomes,
\begin{equation}
    \label{eq:L0+ver2}
    \mathcal{L}^{+} \Phi(t_j) =\frac{N\sqrt{p}}{\pi C_0(NV)^{\frac32}}  \int_0^\infty   \int_{-\infty}^{\infty} \cos \pb{\frac{2 \pi zx}{C_0\sqrt{p}} \cosh y} e\pb{\frac{t_j y}{\pi} + \frac{zx}{C_0 \sqrt{p}}}W_{T'}(x,\cdot) dy \ dx. 
\end{equation}
Here, $W_{T'}$ is another $p^\varepsilon$-inert family, supported on $x \asymp 1$.

We can extend the $x-$integral to all of $\mathbb{R}$  as $W_{T'}(\cdot)$ vanishes in the negative reals. Also,  as $\cos{u} = \frac{1}{2} \pb{e^{iu} + e^{-iu}}$,
we can use Fubini's theorem once to get

\begin{equation}
    \label{eq:L0+ver3}
    \mathcal{L}^{+} \Phi(t_j)  =\frac{N\sqrt{p}}{\pi C_0(NV)^{\frac32}}  \int_{-\infty}^\infty  e\pb{\frac{t_j y}{\pi}}  \widehat{W_{T'}}\pb{-\frac{z}{C_0\sqrt{p}}(1 \pm \cosh y)} dy .
\end{equation}

Here,

\begin{equation}
    \label{eq:Wtprimehat}
     \widehat{W_{T'}}(t)= \int_{-\infty}^{\infty} e\pb{-xt}W_{T'}(x,\cdot) dx .
\end{equation}

Using Proposition \ref{prop:FourierInert}, we can show that the the right hand side in \eqref{eq:Wtprimehat} is small unless $\frac{z}{C_0 \sqrt{p}}(1\pm\cosh{y}) \ll p^\varepsilon$.

So, for the integral to not be small, the sign should be $-$. \\ As, $1-\cosh{y} = -(\frac{y^2}{2!} + \frac{y^4}{4!} + \dots)$, we should have
\begin{equation}
    \label{eq:ybounds}
     y \ll \pb{\frac{C_0\sqrt{p}}{z}}^\frac12 p^{\varepsilon} \asymp \pb{\frac{C_0 p}{\sqrt{NN_0}}}^{\frac12} p^{\varepsilon} \asymp \pb{\frac{1}{NV}}^{\frac12} p^{\varepsilon} .
\end{equation}
 
We can use a Taylor series expansion of $\widehat{W_{T'}}(\cdot)$ , with the leading term being $\widehat{W_{T'}}\pb{-\frac{z}{C_0\sqrt{p}}\frac{y^2}{2}}$, to rewrite \eqref{eq:L0+ver3} as,

\begin{equation}
    \label{eq:L0+ver4}
    \mathcal{L}^{+} \Phi(t_j)  =\frac{N\sqrt{p}}{\pi C_0(NV)^{\frac32}}  \int_{-\infty}^\infty  e\pb{\frac{t_j y}{\pi}}  \widehat{W_{T'}}\pb{\frac{z}{C_0\sqrt{p}}\frac{y^2}{2}, \cdot }  dy  \ + O(p^{-A}).
\end{equation}

We define
\begin{equation}
    \label{eq:Qdefn}
    Q \coloneqq \frac{z}{2C_0 \sqrt{p}} \asymp \frac{\sqrt{NN_0}}{C_0 p} \asymp NV. 
\end{equation}
 Using a change of variables, $y \rightarrow \sqrt{Q} y$ in \eqref{eq:L0+ver4}, we get 

\begin{equation}
    \label{eq:L0+ver5}
    \mathcal{L}^{+} \Phi(t_j)  =\frac{N\sqrt{p}}{\pi C_0(NV)^{\frac32}\sqrt{Q}}  \int_{-\infty}^\infty  e\pb{\frac{t_j y}{\pi\sqrt{Q}}}  \widehat{W_{T'}}\pb{y^2, \cdot }  dy .
\end{equation}

Now, we can use Proposition \ref{prop:FourierInert} once again on \eqref{eq:L0+ver5}, to get

\begin{equation}
    \label{eq:L0+ver6}
    \mathcal{L}^{+} \Phi(t_j) =\frac{N\sqrt{p}}{\pi C_0(NV)^{\frac32}\sqrt{Q}} G\pb{\frac{t_j}{\sqrt{Q}}, \cdot} ,
\end{equation}
 where $G(t,l,m,n)$ satisfies the same derivative bounds as a $p^{\varepsilon}$-inert family in the first variable. It is $p^{\varepsilon}$-inert in the other variables ($l,m,n$). Also, 
 \begin{equation}
     \label{eq:Gbounds}
     \text{$G\pb{\frac{t_j}{\sqrt{Q}},\cdot}$
is small unless  } \abs{t_j} \ll \sqrt{Q} p^{\varepsilon} \asymp (NV)^{\frac12} p^{\varepsilon} .
 \end{equation}
\subsection*{\textbf{Mellin Transform of \texorpdfstring{$G (t,\cdot)$}{G(t,.)}:}}

Similar to \eqref{eq:LPhirewritten}, we take a Mellin transform of $G(\cdot)$ with respect to the variables, $l,m,n$, and use Mellin inversion to rewrite \eqref{eq:L0+ver6} as

\begin{equation}
    \label{eq:L0+final}
    \mathcal{L}^{+} \Phi(t_j)  =\frac{N\sqrt{p}}{\pi C_0(NV)^{2}} \int_{\substack{\Re(u_1)=\sigma_1 \\ \Re (u_2) = \sigma_2 \\ \Re (u_3) = \sigma_3}}l^{-u_1} m^{-u_2} n^{-u_3} \widetilde{G}(t_j,u_1,u_2,u_3) \  du_1 \ du_2  \ du_3,
\end{equation}

where
\begin{equation}
    \label{eq:G2}
    \widetilde{G}(t,u_1,u_2,u_3) = \int_0^\infty\int_0^\infty\int_0^\infty G(t,l,m,n) l^{u_1-1} m^{u_2-1} n^{u_3-1} \ dl \ dm  \ dn,
\end{equation}
and $\sigma_j > 0$, for $j=1,2,3.$

We note here that, $\widetilde{G}(t,\cdot)$ is small unless $\abs{t_j} \ll \sqrt{Q}p^{\varepsilon} \asymp (NV)^{\frac12}p^{\varepsilon}$, because of a similar bound on $G(t,\cdot)$. Also, similar to \eqref{eq:H2bounds}, we have that, for arbitrarily large $A>0$,
 \begin{equation}
    \label{eq:G2bounds}
     \widetilde{G}(t,u_1,u_2,u_3) \ll  p^{\varepsilon} L_0^{\sigma_1} M_0^{\sigma_2} N_0^{\sigma_3} \prod_{j=1}^{3} (1 + \abs{u_j})^{-A} .  
\end{equation}
\subsection*{\textbf{Final Steps:}}
Using \eqref{eq:L0+final}, we can rewrite \eqref{eq:MM0} (up to a small error term) as,

\begin{multline}
\label{eq:M0Maass+ver1}
    \mathcal{M}_{\text{Maass}_0}= \frac{1}{\phi(p)p^2}\frac{N\sqrt{p}}{\pi C_0(NV)^{2}} \sideset{}{^*} \sum_{\chi(p)} \sum_{t_j} \sum_{\pi \in \mathcal{H}_{it_j}(p,\chi^2)} \int_{\Re(u_j)=\sigma_j}  \frac{\tau(\chi)^3 \chi(a_\alpha ) 4\pi\epsilon_{\pi}}{V(p)\mathscr{L}_\pi^*(1)}   \widetilde{G}(t_j,u_1,u_2,u_3)   \\ \cdot \pb{\sum_{\substack{l\\ \text{gcd}(l,p)=1}} \frac{\ov{\lambda}(pl) \chi(l)}{l^{u_1}}} \pb{ \sum_{\substack{m,n \\ \text{gcd}(n,p)=1}} \frac{\chi(n)\lambda_f(m)\ov{\lambda_f}(n)\ov{\lambda}_\pi(p^2m - n)}{m^{u_2} n^{u_3}}} \ du_1 \ du_2  \ du_3  .
\end{multline}

We can again combine the $l$ terms to get (similar to \eqref{eq:lnonosc}),

\begin{equation}
\label{eq:losc}
\sum_{l} \frac{\chi(l) \ov{\lambda}_\pi(pl)}{l^{u_1}} = \ov{\lambda}_\pi(p) \sum_{l} \frac{\chi(l)\ov{\lambda}_\pi(l)}{l^{u_1}}= \ov{\lambda}_\pi(p) {L}(\ov{\pi} \otimes \chi,u_1).
\end{equation}

We bound \eqref{eq:M0Maass+ver1} by taking the absolute value of the integrand.  Note that as $\widetilde{G}(\cdot) $ decays rapidly along vertical lines (using \eqref{eq:G2bounds}), we can restrict the integral up to $\abs{\Im(u_j)} \ll p^{\varepsilon}$. Using \eqref{eq:G2bounds} gives us (for arbitrarily large $A>0$), 

\begin{multline}
\label{eq:M0Maassrenewed}
    \mathcal{M}_{\text{Maass}_0}\ll \frac{L_0^{\sigma_1} M_0^{\sigma_2} N_0^{\sigma_3}}{\phi(p)p^2}\frac{N\sqrt{p}}{\pi C_0(NV)^{2}} \sideset{}{^*} \sum_{\chi(p)} \sum_{t_j} \sum_{\pi \in \mathcal{H}_{it_j}(p,\chi^2)} \left \vert \frac{\tau(\chi)^3 \chi(a_\alpha ) 4\pi\epsilon_{\pi}\ov{\lambda}_\pi(p)}{V(p)\mathscr{L}_\pi^*(1)}   \right \vert \\ \int_{\Re(u_j)=\sigma_j} \left \vert \sum_{\substack{m,n  \\ \text{gcd}(n,p)=1}}  \frac{\chi(n)\lambda_f(m)\ov{\lambda_f}(n)\ov{\lambda}_\pi(p^2m - n)}{m^{u_2} n^{u_3}} \right \vert \frac{\abs{{L}(\ov{\pi} \otimes \chi,u_1)}}{\prod_{j=1}^{3} (1 + \abs{u_j})^{A}} \ du_1 \ du_2  \ du_3.
\end{multline}

We can use \eqref{eq:Gbounds} to restrict (up to a small error) to when $\abs{t_j} \ll \sqrt{Q}p^{\varepsilon}$. Also, as $\pi \in \mathcal{H}_{it_j}(p,\chi^2)$, $\abs{\lambda_{\pi}(p)} \leq 1$ ($p$ divides the level). Additionally we have the bounds  $\abs{\tau(\chi)} = \sqrt{p}, \ \abs{V(p)} \asymp p$. These, along with \eqref{eq:losc}, and the fact that $\chi(n) = -\chi (p^2m -n)$ give us,

\begin{equation}
    \label{eq:M0Maass2}
    \mathcal{M}_{\text{Maass}_0}\ll   \frac{L_0^{\sigma_1} M_0^{\sigma_2} N_0^{\sigma_3} \cdot N\sqrt{p}\cdot p^{\frac32}}{\pi C_0(NV)^{2} \cdot  p^3 \cdot p}  \cdot   \mathcal{S'} . 
    \end{equation}
with
\begin{multline*}
    \label{eq:mathcalSprime}
    \mathcal{S'} =p^{\varepsilon} \cdot \sum_{t_j \ll \sqrt{Q}p^{\varepsilon}}\sideset{}{^*}\sum_{\chi(p)}  \sum_{\pi \in \mathcal{H}_{it_j}(p,\chi^2)}  \int_{\substack{\Re(u_j)=\sigma_j \\ \Im(u_j) \ll p^{\varepsilon}}}  \left \vert L(\ov{\pi} \otimes \chi,u_1)\right \vert \\ \cdot  \left \vert \sum_{m \asymp M_0} \frac{\lambda_f(m)}{m^{u_2}}\sum_{n \asymp N_0} \frac{\ov{\lambda}_f(n) \ov{\lambda}_\pi(p^2m - n)\chi(p^2m - n)}{ n^{u_3}} \right \vert \ du_1 \ du_2  \ du_3.
\end{multline*}

Once again, we can use Cauchy-Schwarz inequality on $\mathcal{S'}$ to get 

\begin{equation}
    \label{eq:mathcalsprimebound}
    \mathcal{S'} \leq (\mathcal{S'\!\!}_{1})^{\frac12} \cdot (\mathcal{S'\!\!}_2)^{\frac12},
\end{equation}
where
\begin{equation}
    \label{eq:Sprime1}
    \mathcal{S'\!\!}_{1}=  p^{\varepsilon} \cdot \sum_{t_j \ll \sqrt{Q} p^{\varepsilon}}\sideset{}{^*}\sum_{\chi(p)}  \sum_{\pi \in \mathcal{H}_{it_j}(p,\chi^2)} \int_{\substack{\Re(u_j)=\sigma_j \\ \Im(u_j) \ll p^{\varepsilon}}}  \left \vert L(\ov{\pi} \otimes \chi,u_1) \right \vert^2  \ du_1 \ du_2  \ du_3 ,
\end{equation}  
and
\begin{multline}
    \label{eq:Sprime2}
     \mathcal{S'\!\!}_{2}= p^{\varepsilon} \cdot \sum_{t_j \ll \sqrt{Q}p^{\varepsilon}}\sideset{}{^*}\sum_{\chi(p)}  \sum_{\pi \in \mathcal{H}_{it_j}(p,\chi^2)} \\ \cdot \int_{\substack{\Re(u_j)=\sigma_j \\ \Im(u_j) \ll p^{\varepsilon}}}  \left \vert \sum_{m \asymp M_0} \frac{\lambda_f(m)}{m^{u_2}}\sum_{n \asymp N_0} \frac{\ov{\lambda}_f(n) \ov{\lambda}_\pi(p^2m - n)\chi(p^2m - n)}{ n^{u_3}} \right \vert ^2 \ du_1 \ du_2  \ du_3 .
\end{multline}

We can now take $\sigma_1 = \sigma_3 = \frac12$, and $\sigma_2 = \varepsilon$, and similar to \eqref{eq:mathcalS1bound} and \eqref{eq:mathcalS2bound}, use the spectral large sieve inequality to bound $\mathcal{S'\!\!}_1$ and $\mathcal{S'\!\!}_2$. The analysis is actually simpler, especially for $\mathcal{S'\!\!}_2$, with the only difference being $t_j \ll \sqrt{Q}p^{\varepsilon}$ here. The analogous bounds then become,
\begin{equation}
    \label{eq:mathcalS1primebound}
    \mathcal{S'\!\!}_1 \ll \pb{Qp^{2+\varepsilon} + p^{1+\varepsilon}}^{1+\varepsilon} \cdot \sum_{n \leq p^{1+\varepsilon}} \frac{1}{n} \cdot p^{\varepsilon} \ll Qp^{2+\varepsilon}, 
\end{equation}
    and
    \begin{equation}
    \label{eq:mathcalS2primebound}
    \mathcal{S'\!\!}_2 \ll \pb{Qp^{2+\varepsilon} + p^{2+\varepsilon}}^{1+\varepsilon} \sum_{m \asymp M_0} \sum_{n \ll p^{2+\varepsilon}} \frac{\abs{\lambda_f(n)}^2}{n} \ll Qp^{2+\varepsilon}.
\end{equation}

Using \eqref{eq:mathcalsprimebound}, \eqref{eq:mathcalS1primebound}, and \eqref{eq:mathcalS2primebound} in \eqref{eq:M0Maass2} we get (with $\sigma_1 = \sigma_3 = \frac12, \sigma_2 = \varepsilon$,)
\begin{equation}
    \label{eq:M0Maass3}
    \mathcal{M}_{\text{Maass}_0}\ll    \frac{L_0^{\tfrac12} M_0^{\varepsilon} N_0^{\tfrac12} \cdot N\sqrt{p}\cdot p^{\frac32}}{\pi C_0(NV)^{2} \cdot  p^3 \cdot p}  \cdot Q p^{2+\varepsilon}.
\end{equation}

Using the fact that $Q \asymp \frac{\sqrt{N N_0}}{pC_0} \asymp NV$, and $L_0 \leq \frac{N}{p^2},$ and $ N_0 \ll p^{2+\varepsilon}$, taking absolute values in $\eqref{eq:M0Maass3}$ we get that,

\begin{equation}
\label{eq:M0Maassfinal}
    \mathcal{M}_{\text{Maass}_0}\ll N p^{\varepsilon}.
\end{equation}

\subsubsection{$\mathcal{M}_{\text{Maass}_1}$ and $\mathcal{M}_{\text{Maass}_2}$}
\label{subsubsec:MMaass12}

We briefly go over the process for bounding $\mathcal{M}_{\text{Maass}_1}$ and $\mathcal{M}_{\text{Maass}_2}$.  Notice that, in this case $\chi = \chi_0$ is fixed, it is the unique quadratic character modulo $p$. 

So compared to \eqref{eq:MM0}, the corresponding expressions for $\mathcal{M}_{\text{Maass}_1}$ and $\mathcal{M}_{\text{Maass}_2}$ do not have a sum over the characters $\chi$ modulo $p$; and the Maass forms are level $1$, instead of $p$. Both of these facts lead to additional cancellations.

However, we can no longer use the bound $\abs{\lambda_{\pi}(p)} \leq 1$. Nevertheless the trivial bound $\abs{\lambda_{\pi}(p)} \leq \sqrt{p}$ is sufficient for our result.

For $\mathcal{M}_{\text{Maass}_1}$, similar to \eqref{eq:MM0}, we have the expression,

\begin{multline}
    \label{eq:MM1}
    \mathcal{M}_{\text{Maass}_1} =  \sum_{\substack{l,m,n \\ \text{gcd}(ln,p)=1}} \frac{\lambda_f(m)\ov{\lambda_f}(n)}{\phi(p)p^2} \tau(\chi_0)^3 \chi_0(n a_\alpha l)   \\ \cdot \sum_{t_j} \mathcal{L}^{\pm}\Phi(t_j)  \sum_{\pi \in \mathcal{H}_{it_j}(1,1)} \frac{4\pi\epsilon_{\pi}}{V(p)\mathscr{L}_\pi^*(1)} \ov{\lambda}_\pi(p^2m - n)\ov{\lambda}_\pi(pl).
\end{multline}

We can now repeat the same steps as in $\mathcal{M}_{\text{Maass}_0}$ case and in fact get an even better bound of $O(Np^{-\frac{1}{2}+\varepsilon})$. (We save a factor of $p^2$ in the corresponding expressions for $\mathcal{S}_1$ and $\mathcal{S}_1'$)

For  $\mathcal{M}_{\text{Maass}_2}$, we have the expression,

\begin{multline}
    \label{eq:MM2}
    \mathcal{M}_{\text{Maass}_2} =  \sum_{\substack{l \asymp L_0,m \asymp M_0,\cdots \\ \text{gcd}(cln,p)=1}} \frac{\lambda_f(m)\ov{\lambda_f}(n)}{\phi(p)p^2} \tau(\chi)^3 \chi(n a_\alpha l)   \\ \cdot \sum_{t_j} \mathcal{L}^{\pm}\Phi(t_j)  \sum_{\pi \in \mathcal{H}_{it_j}(1,1)} \frac{4\pi\epsilon_{\pi}}{V(p)\mathscr{L}_\pi^*(1)} \ov{\lambda}^{(p)}_\pi(p^2m - n)\ov{\lambda}^{(p)}_\pi(pl),
\end{multline}

where (recalling from \eqref{eq:lambdapidelta})

\begin{equation}
    \label{eq:lambdapi(p)}
    \lambda_{\pi}^{(p)}(m) =  \xi_p(1) \lambda_{\pi} (m) + \sqrt{p} \  \xi_p(p) \lambda_{\pi} \pb{\frac{m}{p}}.  
\end{equation}

Here, $\lambda_{\pi} (\frac{m}{p})=0$ if $p \nmid m$.

Notice that, as gcd$(n,p)=1$, $p \nmid (n-p^2m)$. Also, $\xi_p(\cdot) \ll p^{\varepsilon}$. 

Comparing with \eqref{eq:MM1}, we see that there's an additional $\sqrt{p}$ factor. However, the cancellation we get from not having the $\chi$-sum and the level of the Maass forms dropping to $1$ is more than enough to take care of this. We can once again proceed as before and show that $\mathcal{M}_{\text{Maass}_2} \ll Np^{\varepsilon}$.
\begin{myremark}
    We could use non trivial bounds for $\abs{\lambda_{\pi}(p)}$ while analysing $\mathcal{M}_{\text{Maass}_1}$ and $\mathcal{M}_{\text{Maass}_2}$. In that case we would get $\mathcal{M}_{\text{Maass}_1} = O\pb{Np^{-1 + \theta + \varepsilon}}$, and $\mathcal{M}_{\text{Maass}_2} = O\pb{Np^{-\frac12 + \theta + \varepsilon}}$. Here, $\theta$ is current progress towards the Ramanujan Peterson conjecture, so for instance, $\theta \leq \frac{7}{64}$.

\end{myremark}
 
\subsection{Holomorphic Term Analysis}

\label{subseq:holoanalysis}

For the non-oscillatory case in $\mathcal{M}_{\text{hol}}$, we can very easily prove a variant of Lemma \ref{lem:tLemma} (with $t$ replaced by $k$), by choosing
\begin{equation}
    \label{eq:hol}
    h_{\text{hol}} (s,k) = \frac{2^{s-1}}{\pi} \frac{\Gamma \pb{\frac{s+k-1}{2}}}{\Gamma\pb{\frac{k-s+1}{2}}},
\end{equation}

and

\begin{equation}
    \label{eq:Hh}
    H_{\text{hol}} (s,k,l,m,n) = \frac{1}{2\pi i}h_{\text{hol}}(s,t) \widetilde{\Phi}(s+1,\cdot) (4\pi)^{-s}.
\end{equation}

The rest of the steps are identical to the corresponding Maass form case.

For the oscillatory case, instead of \eqref{eq:B+alt}, we use 

\begin{equation}
    \label{eq:Jalt}
    J_{l-1}(x) = \sum_{\pm} \frac{e^{\mp i (l-1)\frac{\pi}{2} }}{\pi} \int_0^{\frac{\pi}{2}} \cos\pb{(l-1) \theta} e^{\pm ix \cos\theta} \, d\theta. 
\end{equation}

We then proceed similarly, the only difference being, we use the power series expansion of $\cos(y)$ instead of $\cosh{y}$ in $\eqref{eq:L0+ver4}$.

\subsection{Eisenstein Term Analysis}
\label{subseq:Eisanalysis}
The kernel function for $\mathcal{M}_{\text{Eis}}$ is the same as $\mathcal{M}_{\text{Maass}}$, and hence, the analysis is pretty similar, except for replacing $\sum_{t_j}(\cdot)$ with $\int_{-\infty}^{\infty} (\cdot) dt$.
The Eisenstein contribution, analogous to \eqref{eq:MMaass0} is

\begin{equation}
    \label{eq:MEis}
     \mathcal{M}_{\text{Eis}} =  \sum_{\substack{l \asymp L_0,m \asymp M_0,\cdots \\ \text{gcd}(cln,p)=1}} \frac{\lambda_f(m)\ov{\lambda_f}(n)}{\phi(p)p^2}\sideset{}{^*} \sum_{\chi(p)} \tau(\chi)^3 \chi(n a_\alpha l) \cdot \mathcal{K}_{\text{Eis}},
\end{equation}

where

\begin{equation}
\label{eq:KEis0}
    \mathcal{K}_{\text{Eis}}= \frac{1}{4\pi} \int_{-\infty}^{\infty} \mathcal{L}^{\pm}\Phi(t) \sum_{r_1r_2=p} \sum_{\pi \in \mathcal{H}_{it,\text{Eis}}(r_2,\chi^2)} \frac{4\pi\epsilon_{\pi}}{V(p)\mathscr{L}_\pi^*(1)}\sum_{\delta \mid r_1} \ov{\lambda}^{(\delta)}_\pi(\abs{n-p^2m})\ov{\lambda}^{(\delta)}_\pi(\abs{-pl}) dt.
\end{equation}

Also,

\begin{equation}
    \label{eq:HitEisdefn}
    \mathcal{H}_{it,\text{Eis}}(r_2,\chi^2) = \{E_{\chi_1,\chi_2}\pb{z,\frac12 + it}, \chi_i \shortmod{q_i}, \text{ for } i=1,2; \ q_1q_2 = r_2, \ \chi_1\ov{\chi_2} \simeq \chi^2 \}, 
\end{equation}

where we use the notation $\psi \simeq \chi$ to mean that $\psi$ and $\chi$ share the same underlying primitive character.

In the spirit of \eqref{eq:MMaassdefnsplit}, as $r_1 r_2 =p$ and $\delta \mid r_1$, we can split \eqref{eq:MEis} as 

\begin{equation}
    \label{eq:MEisdefnsplit}
    \mathcal{M}_{\text{Eis}} =  \mathcal{M}_{\text{Eis}_0} + \mathcal{M}_{\text{Eis}_1} + \mathcal{M}_{\text{Eis}_2},
\end{equation}

by considering separately the cases where $(r_1,r_2,\delta)$ equals $(1,p,1)$, $(p,1,1)$, or $(p,1,p)$. 

Once again, the latter two cases appear only if $\chi$ is the unique quadratic character modulo $p$, and so $\mathcal{H}_{it,\text{Eis}}(r_2,\chi^2) = \{E_{1,1}\pb{z,\frac12 + it} \}$. Bounding these two is easier as there is no sum over $\chi$ anymore.

We focus on $\mathcal{M}_{\text{Eis}_0}$. We then proceed in a similar fashion to the way we did for $\mathcal{M}_{\text{Maass}_0}$, replacing $\sum_{t_j \ll p^\varepsilon}$ with $\int_{t \ll p^\varepsilon}$ where necessary.

In this case, $\mathcal{H}_{it,\text{Eis}}(p,\chi^2) = \{E_{1,\ov{\chi}^2}\pb{z,\frac12 + it}, E_{\chi^2,1}\pb{z,\frac12 + it} \}$, and $\lambda_{\pi} ^{(1)} (n) = \lambda_{\pi} (n)$.

Also,  for $\pi = E_{\chi_1,\chi_2} \pb{z, \frac12 + it}$, we have
\begin{equation}
    \label{eq:lambdaEis}
  \lambda_{\pi} (n) = \lambda_{\chi_1,\chi_2,t}(n) = \chi_2(\text{sgn} (n)) \sum_{ab = \abs{n}}\frac{\chi_1(a)\ov{\chi}_2(b)}{a^{it} b^{-it}}.  
\end{equation}

We note that the corresponding expressions for \eqref{eq:lnonosc} (and \eqref{eq:losc} for the oscillatory case) become

\begin{multline}
\label{eq:lnonoscEis}
    \sum_{l} \frac{\chi(l)\ov{\lambda_{\pi}}(pl)}{l^{\frac{s}{2}+u_1}} = \ov{\lambda_{\pi}}(p) \sum_{l} \frac{\chi(l){\lambda_{\ov{\chi_1},\ov{\chi_2},t}}(l)}{l^{\frac{s}{2}+u_1}} =\ov{\lambda_{\pi}}(p) \sum_{a}\sum_{b} \frac{\ov{\chi_1}(a)\chi_2(b)\chi(ab)}{(ab)^{\frac{s}{2}+u_1}a^{-it}b^{it}} = \ov{\lambda}_{\pi}(p) \sum_{l} \frac{\lambda_{\chi\ov{\chi_1}, \chi\ov{\chi_2},t}(l)}{l^{\frac{s}{2}+u_1}}\\ =\ov{\lambda}_{\pi}(p) {L}(\chi\cdot\ov{\chi_1},\frac{s}{2}+u_1-it){L}(\chi\cdot \chi_2,\frac{s}{2}+u_1 +it) .
\end{multline}

The last expression can be further simplified depending on if $(\chi_1, \chi_2) = (1,\ov{\chi}^2)$, or $(\chi^2,1)$. In each case, we get a product of Dirichlet $L$-functions for the characters $\chi$ and $\ov{\chi}$. Notice that as $\chi$ is non-trivial, these $L$-functions do not have a pole. Also, this product is the same as the twisted $L$-function, $L(\ov{\pi} \otimes \chi,\tfrac{s}{2}+u_1)$.

We also note that when $\pi = E_{\chi_1,\chi_2} \pb{z, \frac12 + it} \in \mathcal{H}_{it,\text{Eis}}(p,\chi^2)$, $\ov{\pi} \otimes \chi \in \mathcal{H}_{it,\text{Eis}}(p^2,1)$. So once again, we can eventually make use of Proposition \ref{prop:spectrallargesieve} for the family $\mathcal{H}_{it,\text{Eis}}(p^2)$.

Proceeding as before, we will get analogous versions of equations \eqref{eq:MMaassrenewed}, \eqref{eq:MMaass2}, \eqref{eq:mathcalS},\eqref{eq:mathcalS1}, \eqref{eq:mathcalS2}, and we can bound $\mathcal{M}_{\text{Eis}_0}$ just as we bounded $\mathcal{M}_{\text{Maass}_0}$. 

The analysis of the terms $ \mathcal{M}_{\text{Eis}_1} $ and $ \mathcal{M}_{\text{Eis}_2}$ is again similar to their Maass form counterparts. In fact, it is even easier because we can use stronger bounds on the size of $\lambda_\pi(p)$.

\section{Remaining Terms}
\label{sec:Remterms}

We complete the proof of Theorem \ref{thm:redthm} by proving Lemma \ref{lem:S4} (bounding the terms introduced by the delta symbol with $c$ divisible by $p$) and  Lemma \ref{lem:E123bound} (bounding the smaller terms introduced after Voronoi summation). We start with the latter.
\subsection{Proof of Lemma \ref{lem:E123bound}}
\label{subsec:Remtermsvoronoi}
Recall the term $E_1$ was defined in \eqref{eq:afterVoronoi}. This can be rewritten as,
\begin{multline}
\label{eq:E1new}
    E_1 =\sum_{\substack{l \leq \frac{N}{p^2}\\\text{gcd}(l,p)=1}} \sum_{m \ll p^{\varepsilon}}\sum_{\substack{n \ll p^{2+\varepsilon} \\ \text{gcd}(n,p^2)=p}} \sum_{\substack{c\\\text{gcd}(c,p)=1}}\frac{\lambda_f(m)\ov{\lambda_f}(n)}{p^2c^2}  I_N(c,l,m,n) \\ \cdot \  \sum_{\substack{a < c \\\text{gcd}(a,c)=1 }}e\pb{\frac{-ap^2l}{c}}e\pb{\frac{-a\ov{m}}{c}}e\pb{\frac{\ov{ap^2}n}{c}},
\end{multline}

where
\begin{equation}
\label{eq:Iintegral}
    I_N(c,l,m,n) = \int_{-\infty}^{\infty} g_c(v)e(-p^2lv)\widetilde{w}_{c,v,N}(m)\widetilde{w}_{pc,-v,N}(n) dv. 
\end{equation}

Here, $\widetilde{w}_{c,v,N}(\cdot)$ is given by \eqref{eq:wmtilde}. Similarly to Lemma \ref{lem:Ibound}, we claim that $I_N(c,l,m,n) \ll Np^{\varepsilon}$.  

The $a$-sum at the end is actually a Kloosterman sum,
\begin{equation}
 \sum_{\substack{a < c \\\text{gcd}(a,c)=1 }}e\pb{\frac{-ap^2l}{c}}e\pb{\frac{-a\ov{m}}{c}}e\pb{\frac{\ov{ap^2}n}{c}}=S(n\ov{p}^2-m,-p^2l;c) = S(\ov{p}(n-p^2m),-pl;c).
\end{equation}

Using the Weil bound, it follows that
\begin{equation}
    \label{eq:E1final}
    E_1 \ll \frac{N}{p^2}\cdot p \cdot \frac{1}{p^2} \cdot Np^{\varepsilon} \cdot \sum_{c \leq \sqrt{N}} c^{-\frac{3}{2}} \ll N^{\frac{3}{4}+\varepsilon}.
\end{equation}
The proofs for the corresponding bounds for $E_2$ and $E_3$ are very similar. Following the same steps, we can, in fact, obtain an even better bound of $O( N^{\frac34}p^{-1+\varepsilon})$.

\subsection{Proof of Lemma  \ref{lem:S4}}
\label{subseq:remtermsDelta}
We use the Voronoi summation twice to prove Lemma \ref{lem:S4}.
\\Using Proposition \ref{prop:Voronoi} on $T_1(a,c,v)$, we have
\begin{equation}
    \label{eq:mVoronoiEp}
T_1(a,c,v) = \frac{1}{c}\sum_{m \geq 1} \lambda_f(m) e\pb{-\frac{\ov{a}m}{c}}\widetilde{w}_{c,v,N}(m).
\end{equation}

Here, 
\begin{equation*}
    \widetilde{w}_{c,v,N}(m) = 2\pi i^k \int_o^\infty J_{k-1}\pb{\frac{4\pi\sqrt{mx}}{c}}e(xv)w_N(x)dx.
\end{equation*}

Using an integration by parts argument similar to the one in Lemma \ref{lem:wtildebehavior}, we can show that the sum on the right in \eqref{eq:mVoronoiEp} is effectively for $m \ll \frac{c^2}{N}$. As $c \leq C = \sqrt{N}$, this is non-trivial only when $c^2 \geq N^{1-\varepsilon}$, in which case $1 \leq m \ll p^\varepsilon $.

$T_2(a,c,v)$ needs more attention. Since $c$ is divisible by $p$, $ e\pb{\frac{a_\alpha l \ov{n}}{p}} e\pb{\frac{-an}{c}}$ is periodic modulo $c$ (whenever $n$ is coprime to $p$), and hence can be written as a finite sum of additive characters. Using this and taking $r = a_\alpha l$, we get

\begin{multline}
 \label{eq:nVoronoiEp}
   T_2(a,c,v) =\sum_{\substack{N \leq n \leq 2N\\\text{ gcd}(n,p)=1}} \ov{\lambda_f}(n) w_N(n) e(-nv) \frac{1}{c}\sum_{t\shortmod{c}} e\pb{\frac{nt}{c}}\sum_{\substack{u\shortmod{c}\\\text{gcd}(u,p)=1}} e\pb{\frac{r\ov{u}}{p}}  e\pb{-\frac{au+tu}{c}}\\ = \frac{1}{c}\sum_{t\shortmod{c}} \sum_{\substack{u\shortmod{c}\\ \text{gcd}(u,p)=1}} e\pb{\frac{r\ov{u}}{p}} e\pb{-\frac{au+tu}{c}}\sum_{\substack{N \leq n \leq 2N\\\text{ gcd}(n,p)=1}} \ov{\lambda_f}(n) e\pb{\frac{nt}{c}} w_N(n) e(-nv) .  
\end{multline}

We want to use the Voronoi summation for the last sum. Once again, because of an integration by parts argument, the length of the summation, after Voronoi summation, will effectively be up to $n \ll \frac{1}{N}\pb{\frac{c}{\text{gcd}(c,t)}}^2 $. As $c \leq \sqrt{N}$, this will have non-trivial contribution only when $\text{gcd}(c,t) \ll p^\varepsilon$, and $c^2 \geq N^{1-\varepsilon}$. We can in fact show that we can restrict to just $\text{gcd}(c,t)=1$ case.

Suppose $\text{gcd}(c,t)=g >1$. As $g \ll p^{\varepsilon}, \ p\mid \frac{c}{g}$. Now, we can write that the contribution of these terms in \eqref{eq:nVoronoiEp} is

\begin{equation}
\label{eq:nVoronoiEpalt}
    T_2'(a,c,v)=\frac{1}{c} \sum_{\substack{g \mid c \\ 1 < g \ll p^{\varepsilon}}} S_{a,c,v}(g), 
\end{equation}

where
\begin{equation}
\label{eq:Sg}
S_{a,c,v}(g) = \sum_{\substack{t \equiv 0\shortmod{c}\\ \text{gcd}(t,c)=g }} \sum_{\substack{N \leq n \leq 2N\\\text{ gcd}(n,p)=1}} \ov{\lambda_f}(n) e\pb{\frac{nt}{c}} w_N(n) e(-nv)\sum_{\substack{u\shortmod{c}\\ \text{gcd}(u,p)=1}}  e\pb{\frac{r\ov{u}}{p}} e\pb{-\frac{au+tu}{c}}.
\end{equation}

Consider the $u$-sum in \eqref{eq:Sg}. If we change variables with $u \rightarrow u + \frac{c}{g}$ in the $u$-sum in \eqref{eq:Sg} (this is well defined as $p \mid \frac{c}{g}$), we notice that 
\begin{equation}
    \label{Sgalt}
    \sum_{\substack{u\shortmod{c}\\ \text{gcd}(u,p)=1}}  e\pb{\frac{r\ov{u}}{p}} e\pb{-\frac{au+tu}{c}} = e\pb{\frac{a}{g}} \sum_{\substack{u\shortmod{c}\\ \text{gcd}(u,p)=1}}  e\pb{\frac{r\ov{u}}{p}} e\pb{-\frac{au+tu}{c}}.
\end{equation}
As gcd$(a,c)=1$, and $g \mid c$, $e\pb{-\frac{a}{g}} \neq 1$. So, the $u$-sum in \eqref{eq:Sg} must be zero. Thus $T_2'(a,c,v) =0$, and it suffices to only consider the case when gcd$(c,t)=1$.

Using Proposition \ref{prop:Voronoi} for that case, we have (up to a small error),
\begin{equation}
\label{eq:nVoronoiEp3}
   T_2(a,c,v)=\frac{1}{c}\sum_{\substack{t\shortmod{c}\\ \text{gcd}(t,c)=1}}  \sum_{\substack{u\shortmod{c}\\ \text{gcd}(u,p)=1}} e\pb{\frac{r\ov{u}}{p}} e\pb{-\frac{au+tu}{c}}\frac{1}{c} \sum_{n\ll p^{\varepsilon}} \ov{\lambda_f}(n) e\pb{-\frac{n\ov{t}}{c}} \widetilde{w}_{c,-v,N}(n). 
\end{equation}
So, using \eqref{eq:mVoronoiEp} and \eqref{eq:nVoronoiEp3} in \eqref{eq:S4defn}, we get that (up to a small error),

\begin{equation}
    \label{eq:EpafterVoronoi}
S_4(N,\alpha) = \sum_{\substack{l \leq N/p^2\\\text{gcd}(l,p)=1}}\sum_{m \ll p^{\varepsilon}} \sum_{n\ll p^{\varepsilon}} \sum_{\substack{c \\ \text{gcd}(c,p)=p}} \frac{\lambda_f(m)\ov{\lambda_f}(n)}{c^3} \tilde{I}_{N}(c,l,m,n) B(c,l,m,n),
\end{equation}

where

\begin{equation}
\label{eq:ItildeNlmnc}
\tilde{I}_{N}(c,l,m,n) = \int_{-\infty}^{\infty} g_c(v)e(-p^2l v) \widetilde{w}_{c,v,N}(m) \widetilde{w}_{c,-v,N}(n) dv, 
\end{equation}

and
\begin{equation}
    \label{eq:B}
     B(c,l,m,n) = \sum_{\substack{a \shortmod{c} \\ \text{gcd}(a,c)=1}}e\pb{\frac{-ap^2l}{c}}   e\pb{-\frac{\ov{a}m}{c}}
    \sum_{\substack{t\shortmod{c}\\ \text{gcd}(t,c)=1}}  \sum_{\substack{u\shortmod{c}\\ \text{gcd}(u,p)=1}} e\pb{\frac{r\ov{u}}{p}} e\pb{-\frac{au+tu}{c}} e\pb{-\frac{n\ov{t}}{c}}. 
\end{equation}
Let $c = k\cdot p$, and as $c \leq \sqrt N \ll p^{\frac{3}{2}+\varepsilon}$, $k \ll p^{\frac12 + \varepsilon}$ and gcd$(k,p)=1$. 
\\ We can then rewrite \eqref{eq:EpafterVoronoi} (up to small error) as

\begin{equation}
   \label{eq:EpafterCRT}
S_4(N,\alpha) =  \sum_{\substack{l \leq N/p^2\\\text{gcd}(l,p)=1}}\sum_{\substack{m,n \\m, n\ll p^{\varepsilon}}} \sum_{k \ll p^{\frac12+\varepsilon}} \frac{\lambda_f(m) \ov{\lambda_f}(n)}{k^3p^3}  \tilde{I}_N(kp,l,m,n) B(kp,l,m,n).
\end{equation}

We complete the proof by stating the following two lemmas. 

\begin{mylemma}
    \label{lem:Ibound}
Let $\varepsilon>0$ and let $N \ll p^{3+\varepsilon}$. Let $l \leq \frac{N}{p^2}$ with $(l,p)=1$, and $m,n \ll p^{\varepsilon}$. We define $\tilde{I}_N(c,l,m,n)$ as in \eqref{eq:ItildeNlmnc}. Now, if in addition $c$ satisfies $N^{1-\varepsilon} \ll c^2 \leq N$, then
\begin{equation}
    \label{eq:Ibound}
     \tilde{I}_N(c,l,m,n) \ll_{\varepsilon} Np^{\varepsilon}. 
\end{equation}
\end{mylemma}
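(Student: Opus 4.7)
The plan is to show that under the stated conditions on $c, m, n$, the integrand factors $\widetilde{w}_{c,v,N}(m)$ and $\widetilde{w}_{c,-v,N}(n)$ both lie in the non-oscillatory regime of Lemma \ref{lem:wtildebehavior}(a), after which the desired bound follows by a direct truncation argument in $v$. Indeed, since $m, n \ll p^{\varepsilon}$ and $c^2 \gg N^{1-\varepsilon}$, one has $\sqrt{mN}/c \ll p^{\varepsilon}$ and $\sqrt{nN}/c \ll p^{\varepsilon}$ (using $N \ll p^{3+\varepsilon}$). Hence by Lemma \ref{lem:wtildebehavior}(a) applied to each factor separately, both $\widetilde{w}_{c,v,N}(m)$ and $\widetilde{w}_{c,-v,N}(n)$ are pointwise bounded by $\ll N p^{\varepsilon}$, and each is negligible (i.e., $O_A(p^{-A})$) unless the dyadic scale $V$ of $v$ satisfies $N|V| \ll p^{\varepsilon}$.

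First I would dyadically decompose the $v$-integral; by the decay statement just noted, every dyadic piece with $|V| \gg p^{\varepsilon}/N$ contributes at most $O_A(p^{-A})$ and may be discarded. On the remaining range $|v| \ll p^{\varepsilon}/N$, I would combine the trivial bound $|g_c(v)| \ll 1$ (from Proposition \ref{prop:gcbound}) with $|\widetilde{w}_{c,\pm v,N}(\cdot)| \ll N p^{\varepsilon}$ to bound the integrand pointwise by $\ll N^2 p^{\varepsilon}$. Multiplying by the measure $\ll p^{\varepsilon}/N$ of the integration range then gives
\begin{equation*}
\tilde{I}_N(c,l,m,n) \ll \frac{p^{\varepsilon}}{N} \cdot N^2 p^{\varepsilon} \ll N p^{\varepsilon},
\end{equation*}
as desired. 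The additive character $e(-p^2 l v)$ plays no role in this argument: with $l \leq N/p^2$ and $|v| \ll p^{\varepsilon}/N$, its phase $p^2 l v$ is bounded by $p^{\varepsilon}$, so there is no oscillation to exploit, but none is needed either.

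I do not anticipate a genuine obstacle here, since the main analytic input, Lemma \ref{lem:wtildebehavior}(a), has already been established via Proposition \ref{prop:FourierInert}. The only point requiring a moment's care is verifying that the hypothesis $\sqrt{mN}/c \ll p^{\varepsilon}$ of that lemma holds under the present assumptions, which is immediate from the bounds $m, n \ll p^{\varepsilon}$, $c^2 \gg N^{1-\varepsilon}$, and $N \ll p^{3+\varepsilon}$. The conceptual content of the lemma is simply that, in the non-oscillatory regime, the Voronoi-transformed weights concentrate in a $v$-window of width $\ll p^{\varepsilon}/N$, and within this window the trivial pointwise bounds are already sufficient to recover the target estimate.
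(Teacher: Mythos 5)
Your proof is correct, but it routes the $v$-truncation through a different ingredient than the paper does, and it is worth noting the trade-off. The paper's own proof truncates the $v$-integral to $\abs{v} \ll \frac{1}{cC} = \frac{1}{c\sqrt{N}}$ directly from the decay of $g_c(v)$ in Proposition \ref{prop:gcbound} (equation \eqref{eq:gcbound2}), then applies only the crude bound $\widetilde{w}_{c,\pm v,N}(\cdot) \ll N$ to the Bessel integrals; since $c^2 \gg N^{1-\varepsilon}$, the factor $\frac{1}{cC}$ already supplies the needed $\ll N^{-1+\varepsilon}$ saving, so there is no need to invoke the inert-function machinery at all. You instead obtain the same $\abs{v} \ll p^{\varepsilon}/N$ window from the Fourier decay of $\widetilde{w}_{c,v,N}(m)$ and $\widetilde{w}_{c,-v,N}(n)$ via Lemma \ref{lem:wtildebehavior}(a), after checking that the non-oscillatory hypothesis $\sqrt{mN}/c \ll p^{\varepsilon}$ holds (which is exactly where your argument consumes the hypotheses $m,n \ll p^{\varepsilon}$, $c^2 \gg N^{1-\varepsilon}$, $N \ll p^{3+\varepsilon}$), and then use only the trivial bound $g_c(v) \ll 1$. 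Both routes restrict $v$ to an interval of measure $\asymp N^{-1+\varepsilon}$ and then bound the integrand by $N^{2+\varepsilon}$, so they are quantitatively equivalent; the paper's version is slightly more elementary since it sidesteps the dyadic decomposition and the stationary-phase/inert-function lemmas, while yours demonstrates that Lemma \ref{lem:wtildebehavior}(a) alone localizes the weights sufficiently. One small point: since here both Bessel integrals carry modulus $c$ (not $pc$), the companion statement you need for $\widetilde{w}_{c,-v,N}(n)$ is the $m \leftrightarrow n$ analogue of Lemma \ref{lem:wtildebehavior} rather than Lemma \ref{lem:wntildebehavior} as literally stated, though the proof is identical.
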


\begin{mylemma}
    \label{lem:Bbound}
    Let $\varepsilon, N, l, m, n$ be same as before, in Lemma \ref{lem:Ibound}. We define $B(c,l,m,n)$ as in \eqref{eq:B}. Now, if in addition, $c=k\cdot p$, with $(k,p)=1$, then
\begin{equation}
    \label{eq:Bbound}
     B(kp,l,m,n) \ll k\sqrt{k}\ p^{2}. 
\end{equation}
\end{mylemma}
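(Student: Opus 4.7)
The plan is to apply the Chinese Remainder Theorem to factor $B(kp, l, m, n) = B_k \cdot B_p$ into a piece modulo $k$ and a piece modulo $p$, and then estimate each piece via the Weil bound. Using the identity $\tfrac{1}{kp} \equiv \tfrac{\bar{p}}{k} + \tfrac{\bar{k}}{p} \pmod{1}$ (with $\bar{p}, \bar{k}$ the appropriate modular inverses), I would parametrise each of $a, t, u \in \mathbb{Z}/kp\mathbb{Z}$ as $(x_1, x_2)$ with $x_1 = x \bmod k$ and $x_2 = x \bmod p$, so that every additive character in \eqref{eq:B} splits cleanly: $e_c(-p^2 l a) = e_k(-pl\,a_1)$ depends only on $a_1$, $e_c(-\bar{a}m)$ factors as $e_k(-\bar{a}_1 m \bar{p}) \, e_p(-\bar{a}_2 m \bar{k})$, and $e_c(-(a+t)u)$ factors analogously. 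Separating the sums accordingly will produce $B(kp, l, m, n) = B_k \cdot B_p$.

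For the mod-$k$ piece, I would first execute the inner $u_1$-sum $\sum_{u_1 \bmod k} e_k(-(a_1 + t_1)\bar{p}\,u_1)$, which equals $k$ if $t_1 \equiv -a_1 \pmod{k}$ and vanishes otherwise. Substituting $t_1 \equiv -a_1$ collapses $B_k$ to $k \cdot S\bigl(-pl,\, (n-m)\bar{p};\, k\bigr)$, a classical Kloosterman sum, and the Weil bound will then give $|B_k| \ll k^{3/2+\varepsilon}$. The occasional degenerate configuration (for instance, $m = n$ together with $k \mid l$) gets absorbed into the $k^{\varepsilon}$ factor via the divisor bound.

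For the mod-$p$ piece, rather than identify the $u_2$-sum as the Kloosterman sum $S(r, -(a_2+t_2)\bar{k};\, p)$, I would swap the order of summation so that $u_2$ is fixed first. The $a_2$- and $t_2$-sums then separate and each becomes a Kloosterman sum modulo $p$ with non-vanishing second argument (since $(u_2, p) = 1$), so Weil's bound gives each of size $O(\sqrt{p})$. The required coprimality hypotheses ($m, n, r$ all coprime to $p$) hold since $m, n \ll p^{\varepsilon} < p$ and $r = a_\alpha l$ with $(a_\alpha l, p) = 1$. The outer $u_2$-sum, of length $p - 1$, can then be estimated trivially, producing $|B_p| \ll p^2$, and multiplying yields the claim.

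The only real obstacle is the CRT bookkeeping, ensuring that each inverse is taken modulo the correct factor and that the character decomposition is carried out consistently throughout; once this setup is done correctly, the remaining estimation is a direct triple application of Weil's bound.
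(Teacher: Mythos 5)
Your proposal follows the paper's proof essentially step for step: the same CRT factorization $B(kp,l,m,n) = B_k \cdot B_p$, the same orthogonality argument collapsing the $u_1$-sum in $B_k$ to force $t_1 \equiv -a_1 \pmod k$ and yield $k \cdot S(-l,\,n-m;\,k)$ with a Weil bound, and the same treatment of $B_p$ by fixing $u_2$, evaluating the $a_2$- and $t_2$-sums as Kloosterman sums $S(u_2\overline{k},m\overline{k};p)$ and $S(u_2\overline{k},n\overline{k};p)$ via Weil, and then bounding the outer $u_2$-sum trivially. The only cosmetic difference is that you record the possible $k^{\varepsilon}$ and $\gcd$ losses in $B_k$ explicitly (which the paper silently absorbs into the $p^{\varepsilon}$ in the final inequality \eqref{eq:Epfinal}), so this is the same argument.
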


We quickly show how Lemmas \ref{lem:Ibound} and \ref{lem:Bbound} imply Lemma \ref{lem:S4}. Using \eqref{eq:Ibound} and \eqref{eq:Bbound} in \eqref{eq:EpafterCRT}, we get (using trivial bounds)

\begin{equation}
 \label{eq:Epfinal}
    S_4(N,\alpha) \ll \sum_{\substack{l \leq N/p^2\\\text{gcd}(l,p)=1}}\sum_{\substack{m,n \\m, n\ll p^{\varepsilon}}} \sum_{k \ll p^{\frac12+\varepsilon}} \frac{\abs{\lambda_f(m) \ov{\lambda_f}(n)}}{k^3p^3} N k^{\frac{3}{2}}p^{2+\varepsilon} \ll N^2 p^{-3-\frac14+\varepsilon} \ll Np^{-\frac14+\varepsilon}.  
\end{equation}

Clearly, \eqref{eq:Epfinal} implies \eqref{eq:S4}.

\subsubsection{Proof of Lemma \ref{lem:Ibound}}
Using Proposition \ref{prop:gcbound}, we can restrict the integral in \eqref{eq:ItildeNlmnc}, up to a small error, to $\abs{v} \ll \frac{p^\varepsilon}{cC}$. Here, $C=\sqrt{N}$. Using  the trivial bounds $\widetilde{w}_{c,v,N}(m) \ll N $, we get

\begin{equation}
    \tilde{I}_{N}(c,l,m,n) \ll N^{2+\varepsilon} \frac{1}{cC}. 
\end{equation}

We have already noted that $c^2 \geq N^{1-\varepsilon} $. Using this, Lemma \ref{lem:Ibound} follows.

\subsubsection{Proof of Lemma \ref{lem:Bbound}}

Using the Chinese Remainder theorem, we can rewrite $B(kp,l,m,n)$ as

\begin{equation}
    B(kp,l,m,n) = B_k \cdot B_p
\end{equation}
where
\begin{equation}
\label{eq:Bk}
    B_k =  \sum_{\substack{a_1 \shortmod{k} \\ \text{gcd}(a_1,k)=1}} e\pb{\frac{-a_1pl}{k}}  e\pb{-\frac{\ov{a_1}m\ov{p}}{k}} 
 \sum_{\substack{t_1\shortmod{k}\\  \text{gcd}(t_1,k)=1}} e\pb{-\frac{n\ov{t_1}\ov{p}}{k}}  \sum_{u_1 \shortmod{k}}  e\pb{-\frac{a_1u_1\ov{p}}{k}}e\pb{-\frac{t_1u_1\ov{p}}{k}}, 
\end{equation}
and
\begin{equation}
    \label{eq:Bp}
   B_p =\sum_{\substack{u_2 \shortmod{p} \\\text{gcd}(u_2,p)=1}}  e\pb{\frac{r\ov{u_2}}{p}} \sum_{\substack{a_2 \shortmod{p} \\\text{gcd}(a_2,p)=1}}e\pb{-\frac{\ov{a_2}m\ov{k}}{p}}  e\pb{-\frac{a_2u_2\ov{k}}{p}} \sum_{\substack{t_2 \shortmod{p} \\\text{gcd}(t_2,p)=1}} e\pb{-\frac{n\ov{t_2}\ov{k}}{p}} e\pb{-\frac{t_2u_2\ov{k}}{p}} .  
\end{equation}

Consider $B_k$ first. Using orthogonality of characters modulo $k$ for the $u_1$ sum in \eqref{eq:Bk}, and rearranging terms, we get 
\begin{equation}
    \label{eq:Bk2}
      B_k= k \sum_{\substack{a_1 \shortmod{k} \\ \text{gcd}(a_1,k)=1}} e\pb{\frac{-a_1pl}{k}}e\pb{\frac{\ov{a_1}n\ov{p}}{k}} e\pb{-\frac{\ov{a_1}m\ov{p}}{k}}. 
\end{equation}

We note that the $a_1$-sum is in fact a Kloosterman sum. 
\begin{equation}
    \label{eq:Bk3}
    B_k = k \cdot S(-pl,(n-m)\ov{p};k) = k \cdot S(-l,(n-m);k).
\end{equation}
The Weil bound immediately gives,
\begin{equation}
    \label{eq:Bkfinal}
    B_k \ll k\sqrt{k}.
\end{equation}

Consider $B_p$ now. The $t_2$ and the $u_2$ sums in \eqref{eq:Bp} both define Kloosterman sums. This gives us 
\begin{equation}
    \label{eq:Bp2}
     B_p  =  \sum_{\substack{u_2\shortmod{p}\\ \text{gcd}(u_2,p)=1}} e\pb{\frac{r\ov{u_2}}{p}} S(u_2\ov{k},m\ov{k};p) S(u_2\ov{k}, n\ov{k};p). 
\end{equation}

Trivially bounding the $u_2$ sum, and using the Weil bound twice gives us

\begin{equation}
    \label{eq:Bpfinal}
    B_p \ll p^2.
\end{equation}

Now, \eqref{eq:Bkfinal} and \eqref{eq:Bpfinal} imply \eqref{eq:Bbound}. This completes the proof of Lemma \ref{lem:Bbound}.

\section{Application to Non-vanishing }
\label{sec:nonvanishing}
We use the upper bound obtained in Theorem \ref{thm:MainThm} to prove Theorem \ref{thm:nonv}.
We first state a lemma regarding the first moment for the family of $L$-functions we have been working with so far.

\begin{mylemma}
\label{lem:first moment}
Let $p,q,f,\alpha$ be as before. Then for $\varepsilon > 0$, 
\begin{equation}
\label{eq:first moment}
  \sum_{\psi \shortmod{p^2}} L\pb{\tfrac{1}{2}, f \otimes \pb{\alpha\cdot \psi}}     =  p^2  + O(p^{\frac{7}{4}+\varepsilon}).
\end{equation}
\end{mylemma}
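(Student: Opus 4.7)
The plan is to apply the approximate functional equation (Proposition \ref{prop:approxfe}) with $X = 1$ to each $L\pb{\tfrac{1}{2}, f\otimes(\alpha\psi)}$. The conductor of $f\otimes(\alpha\psi)$ equals $p^6$, so $\sqrt{q} = p^3$, and one writes
\begin{equation*}
    L\pb{\tfrac{1}{2}, f\otimes(\alpha\psi)} = A_\psi + \varepsilon\pb{f\otimes(\alpha\psi),\tfrac{1}{2}} \cdot B_\psi, \quad A_\psi = \sum_n \frac{\lambda_f(n)(\alpha\psi)(n)}{\sqrt{n}}V\pb{\frac{n}{p^3}},
\end{equation*}
with $B_\psi$ the analogous dual sum. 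Since $\alpha$ is primitive modulo $p^3$, Proposition \ref{prop:Postnikov} shows that $\alpha\psi$ is primitive modulo $p^3$ for every $\psi \Mod{p^2}$, so the root number equals $i^k\tau(\alpha\psi)^2/p^3$.

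I would sum $A_\psi$ over $\psi \Mod{p^2}$, swap the order of summation, and use orthogonality of Dirichlet characters modulo $p^2$ to select the congruence class $n \equiv 1 \Mod{p^2}$ with $(n,p) = 1$:
\begin{equation*}
    \sum_{\psi \Mod{p^2}} A_\psi = \phi(p^2) \sum_{\substack{n \equiv 1 \Mod{p^2} \\ (n,p) = 1}} \frac{\lambda_f(n)\alpha(n)}{\sqrt{n}} V\pb{\frac{n}{p^3}}.
\end{equation*}
The $n = 1$ term contributes the main term $\phi(p^2) + O_A(p^{-A}) = p^2 + O(p)$. The remaining terms $n = 1 + kp^2$ with $1 \leq k \ll p^{1+\varepsilon}$ are estimated trivially via $\abs{\lambda_f(n)} \ll n^\varepsilon$, contributing $\ll \phi(p^2)\sum_k p^{\varepsilon}/(p\sqrt{k}) \ll p^{3/2+\varepsilon}$, which is absorbed into the desired error.

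The main obstacle is bounding the dual contribution $\sum_\psi \varepsilon\pb{f\otimes(\alpha\psi),\tfrac{1}{2}} B_\psi$. Substituting the root number, expanding $\tau(\alpha\psi)^2 = \sum_{a,b \in (\mathbb{Z}/p^3)^\times} (\alpha\psi)(ab)\, e\pb{(a+b)/p^3}$, swapping summations, and invoking orthogonality in $\psi$ once more produces the constraint $ab \equiv n \Mod{p^2}$. Proposition \ref{prop:Postnikov} then converts the ratio $\alpha(ab)\ov{\alpha(n)}$ into an additive character modulo $p$, and the inner sum over $a,b$ reduces to a Kloosterman-type sum modulo $p^3$ of size $\ll p^{3/2+\varepsilon}$ by the Weil bound. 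The hardest step is combining this pointwise bound with cancellation from the outer sum over $n$---likely via Cauchy--Schwarz in $n$ together with a second-moment estimate on the Kloosterman sum's dependence on $n$ (through $\bar n$)---to extract the extra $p^{1/4}$ saving needed to reach $O(p^{7/4+\varepsilon})$. Combining the three contributions yields the claim.
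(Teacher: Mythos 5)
Your overall strategy mirrors the paper's: approximate functional equation, orthogonality over $\psi$, Postnikov to handle $\alpha(ab)\ov{\alpha(n)}$, and Gauss/Kloosterman evaluations for the dual sum. However, there is a genuine gap that your own writeup flags but does not resolve, and it stems from your choice $X=1$.

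With $X=1$, the dual sum $B_\psi$ runs over $n \ll q = p^3$. Carrying the paper's computation through (opening the Gauss sum, using orthogonality in $\psi$ to force $ab \equiv n \Mod{p^2}$, the Postnikov step, the forced congruence $n \equiv c_\alpha^2 \Mod{p}$, and the quadratic Gauss sum in the remaining free variable) leads to a bound of the shape $B \ll p^{1/2+\varepsilon}(q/X)^{1/2} = p^{2}X^{-1/2}$. At $X=1$ this is $O(p^{2+\varepsilon})$, which is the same size as the main term $p^2$ and therefore not acceptable. Your proposed fix --- Cauchy--Schwarz in $n$ plus a second-moment bound on the arithmetic factor --- is speculative: the remaining arithmetic factor already has absolute value $O(1)$ after the Gauss sum evaluation, so there is no second moment to win from, and the actual obstruction is that the $n$-sum is too long and carries only trivially estimated $\lambda_f$-coefficients. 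No $p^{1/4}$ saving is readily available from this direction. The paper sidesteps the issue entirely by keeping $X$ as a free parameter in the approximate functional equation. The $A$-side off-diagonal is $O(X^{1/2}p^{3/2})$ and the $B$-side is $O(p^{2}X^{-1/2})$; setting $X = p^{1/2}$ balances both to $O(p^{7/4})$. So the correct route is not to gain extra cancellation in $B$ at $X=1$, but simply to shorten $B$ by taking $X = p^{1/2}$. Your treatment of the $A$ term is essentially correct (including the observation that $\phi(p^2) = p^2 + O(p)$, absorbed into the error), and your root number $i^k\tau(\alpha\psi)^2/p^3$ is the right one; the gap is solely in the handling of $B$.
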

Assume Lemma \ref{lem:first moment} for now. Using Cauchy-Schwarz inequality, we have

\begin{multline*}
\left \vert \sum_{\psi \shortmod{p^2}} L\pb{\tfrac{1}{2}, f \otimes \pb{\alpha\cdot \psi}} \right \vert ^2 =\left \vert \sum_{\psi \shortmod{p^2}} L\pb{\tfrac{1}{2}, f \otimes \pb{\alpha\cdot \psi}} \cdot \delta \pb{L\pb{\tfrac{1}{2}, f \otimes \pb{\alpha\cdot \psi}}  \neq 0}\right \vert ^2
\\ \leq \sum_{\psi \shortmod{p^2}} \left \vert  L\pb{\tfrac{1}{2}, f \otimes \pb{\alpha\cdot \psi}} \right \vert ^2 \cdot  \sum_{\psi \shortmod{p^2}} \left \vert \delta \pb{L\pb{\tfrac{1}{2}, f \otimes \pb{\alpha\cdot \psi}}  \neq 0}  \right \vert ^2  \\ =  \sum_{\psi \shortmod{p^2}} \left \vert  L\pb{\tfrac{1}{2}, f \otimes \pb{\alpha\cdot \psi}} \right \vert ^2 \cdot \sum_{\substack{\psi \shortmod{p^2}\\L\pb{\tfrac{1}{2}, f \otimes \pb{\alpha\cdot \psi}}  \neq 0 }}1.
\end{multline*}
     
This implies, 

\begin{equation}
    \label{eq:nvfinal}
     \# \{ \psi \shortmod {p^2} ; \ L\pb{\tfrac{1}{2}, f \otimes \pb{\alpha\cdot \psi}}  \neq 0\}= \sum_{\substack{\psi \shortmod{p^2}\\L\pb{\tfrac{1}{2}, f \otimes \pb{\alpha\cdot \psi}}  \neq 0 }}1 \geq \ \ \frac{\left \vert \sum_{\psi \shortmod{p^2}} L\pb{\tfrac{1}{2}, f \otimes \pb{\alpha\cdot \psi}} \right \vert ^2}{\sum_{\psi \shortmod{p^2}} \left \vert  L\pb{\tfrac{1}{2}, f \otimes \pb{\alpha\cdot \psi}} \right \vert ^2}.
\end{equation}

Theorem \ref{thm:nonv} now follows using the corresponding bounds in Lemma \ref{lem:first moment}  and Theorem \ref{thm:MainThm} in \eqref{eq:nvfinal}.

\subsection{Proof of Lemma \ref{lem:first moment}}
\label{subsec:firstmombound}
Using the approximate functional equation in Proposition \ref{prop:approxfe}, we have

\begin{equation}
\label{eq:firstmom}
    \sum_{\psi \shortmod{p^2}} L\pb{\tfrac{1}{2}, f \otimes \pb{\alpha\cdot \psi}}   =  A+B,
 \end{equation} with   
    
    \begin{equation}
    \label{eq:firstmomA}
        A =  \sum_{\psi \shortmod{p^2}} \sum_{n \ll (Xq)^{1+\varepsilon}} \frac{\lambda_f(n) \alpha(n) \psi(n)}{\sqrt{n}}V\left ( \frac{n}{Xq} \right ), 
    \end{equation}

    and
    \begin{equation}
     \label{eq:firstmomB}
        B = \sum_{\psi \shortmod{p^2}}  \varepsilon\left(f \otimes (\alpha\cdot \psi),\tfrac{1}{2}\right)\sum_{n \ll \frac{q}{X}\cdot q^{\varepsilon}}\frac{\ov{\lambda_f}(n)\ov{\alpha}(n)\ov{\psi}(n)} {\sqrt{n}}V\left ( \frac{nX}{q} \right ). 
    \end{equation}

Here, $V(y)$ is a smooth function satisfying $V(y) \ll_{f,A} (1 + \tfrac{y}{\sqrt{\mathfrak{q}_\infty}})^{-A}$, for any $A>0$.

 \subsubsection{Bounds on A} 
Interchanging the order of summation in $A$, we get, via the orthogonality of characters (and separating the diagonal term),

\begin{equation}
\label{eq:BoundA}
    A = p^2 \cdot V\pb{\frac{1}{Xq}} + p^2 \cdot \sum_{1 \leq l \ll \frac{Xq}{p^2}} \frac{\lambda_f(1+p^2l) \alpha(1+p^2l) \psi(1+p^2l)}{\sqrt{1+p^2l}}V\left ( \frac{1+p^2l}{Xq} \right ). 
\end{equation}

 Using

 \begin{equation*}
    \sum_{1 \leq l \ll \frac{Xq}{p^2}} \frac{\lambda_f(1+p^2l) \alpha(1+p^2l) \psi(1+p^2l)}{\sqrt{1+p^2l}}V\left ( \frac{1+p^2l}{Xq} \right ) \ll   \frac{1}{p}\sum_{1 \leq l \ll \frac{Xq}{p^2}} \frac{1}{\sqrt{l}} \ll \frac{(Xq)^{\frac12}}{p^2}, 
 \end{equation*}
 we can get
\begin{equation}
\label{eq:BoundAfinal}
    A = p^2 + O(X^{\frac12} q^{\frac12}) = p^2 + O(X^{\frac12} p^{\frac32}).
\end{equation}
  \subsubsection{Bounds on B}
    Here, we prove that $B = O (p^2 X^{-\tfrac{1}{2}})$. Along with \eqref{eq:BoundA}, this proves Lemma \ref{lem:first moment} once we set $X=p^{\tfrac{1}{2}}$.

  The root number of $L\pb{{f \otimes \pb{\alpha \cdot \psi}}}$ is given by 
  \begin{equation}
      \label{eq:rootnumber}
       \varepsilon\left(f \otimes (\alpha\cdot \psi),\tfrac{1}{2}\right) = i^k \frac{\tau(\psi\cdot \alpha)^2}{p^3}.
  \end{equation}

  Using this on \eqref{eq:firstmomB}, we get that
  \begin{equation}
    \label{eq:BoundB}
    B = i^k\sum_{\psi \shortmod{p^2}} \frac{\tau(\psi \cdot \alpha)^2}{p^3} \sideset{}{^*}\sum_{n \ll \frac{q}{X}} \frac{\ov{\lambda_f}(n)\ov{\alpha}(n)\ov{\psi}(n)}{\sqrt{n}} V\pb{\frac{nX}{q}}.
  \end{equation}

  Expanding out the Gauss sum, and using orthogonality for the $\psi$-sum, we get

    \begin{equation}
    \label{eq:BoundB2}
    B =i^k\frac{1}{p^3} \sideset{}{^*}\sum_{1\leq a,b \leq p^3}\sideset{}{^*}\sum_{n \ll \frac{q}{X}}  \frac{\ov{\lambda_f}(n)}{\sqrt{n}} V\pb{\frac{nX}{q}} e\pb{\frac{a+b}{q}} \alpha(ab\ov{n}) p^2 \cdot \delta(ab \equiv n\shortmod{p^2}). 
  \end{equation}

Here, $\ov{n}\cdot n \equiv 1 \Mod{p^3}$. 
\\The $\delta(ab \equiv n\Mod{p^2})$ condition implies $\exists \ l, \ 1 \leq l \leq p$ with $b = \ov{a}n(1+p^2l)$, where  $\ov{a}\cdot a \equiv 1 \Mod{p^3}$. 

Using this,

    \begin{equation}
    \label{eq:BoundB3}
    B =i^k\frac{1}{p} \sideset{}{^*}\sum_{n \ll \frac{q}{X}}  \frac{\ov{\lambda_f}(n)}{\sqrt{n}} V\pb{\frac{nX}{q}}\sideset{}{^*}\sum_{1\leq a \leq p^3} \sum_{1\leq l \leq p} e\pb{\frac{a+\ov{a}n(1+p^2l)}{p^3}} \alpha(1+p^2l).
  \end{equation}

   Notice that $\alpha(1+p^2(\cdot))$ is an additive character mod $p$. So, $\exists \ c_\alpha \Mod{p}$ such that $\alpha(1+p^2l) = e\pb{\dfrac{c_\alpha\cdot l}{p}}$.

  Using this, and orthogonality in the $l$-sum,

    \begin{equation}
    \label{eq:BoundB4}
    B =i^k\frac{1}{p} \sideset{}{^*}\sum_{n \ll \frac{q}{X}}  \frac{\ov{\lambda_f}(n)}{\sqrt{n}} V\pb{\frac{nX}{q}}\sideset{}{^*}\sum_{1\leq a \leq p^3}  e\pb{\frac{a+\ov{a}n}{p^3}} p \cdot \delta\pb{c_\alpha \equiv -\ov{a}n \shortmod{p}}.  
  \end{equation}

Again, the $\delta\pb{c_\alpha \equiv -\ov{a}n \Mod{p}}$ condition implies $\exists \ y_0,y_1, \ 1 \leq y_0,y_1 \leq p$ with \\$a = - n\ov{c_\alpha}(1+py_0+p^2y_1)$, where  $\ov{c_\alpha}\cdot c_\alpha \equiv 1 \Mod{p^3}$. So,

    \begin{equation}
    \label{eq:BoundB5}
    B =i^k \sideset{}{^*}\sum_{n \ll \frac{q}{X}}  \frac{\ov{\lambda_f}(n)}{\sqrt{n}} V\pb{\frac{nX}{q}} e\pb{-\frac{c_\alpha+n\ov{c_\alpha}}{p^3}}\sum_{1\leq y_0 \leq p} e\pb{\frac{-c_\alpha p y_0^2 + c_\alpha - n\ov{c_\alpha}}{p^2}}\sum_{1 \leq y_1 \leq p}  e\pb{\frac{y_1(c_\alpha-n\ov{c_\alpha})}{p}}. 
  \end{equation}

Orthogonality in the $y_1$ sum implies that we have $n\equiv c_\alpha^2 \Mod{p}$. So,

   \begin{equation}
    \label{eq:BoundB6}
    B = i^kp\cdot \sum_{\substack{n \ll \frac{q}{X}\\ n\equiv c_\alpha^2 \shortmod{p}}}  \frac{\ov{\lambda_f}(n)}{\sqrt{n}} V\pb{\frac{nX}{q}} e\pb{-\frac{c_\alpha+n\ov{c_\alpha}}{p^3}}\cdot R,
  \end{equation}

  with 

   \begin{equation}
    \label{eq:RBoundB}
    R = \sum_{1\leq y_0 \leq p} e\pb{\frac{-c_\alpha y_0^2 + k_\alpha y_0}{p}} ,
  \end{equation}

 where $k_\alpha = \frac{c_\alpha - n\ov{c_\alpha}}{p}$. Completing the square in \eqref{eq:RBoundB}, we have

    \begin{equation}
    \label{eq:RBoundB2}
    R = e\pb{\frac{k\alpha^2}{4c_\alpha}}\sum_{1 \leq y_0 \leq p} e\pb{\frac{c_\alpha (y_0+\frac{k_\alpha}{2c_\alpha})^2}{p}} = e\pb{\frac{k\alpha^2}{p 4c_\alpha}} \varepsilon_p \sqrt{p} \cdot \legendre{c_\alpha}{p}.
  \end{equation}
Here $\varepsilon_p$ is a constant of absolute value $1$.

Using \eqref{eq:RBoundB2} in \eqref{eq:BoundB6}, we have
\begin{equation}
    \label{eq:BoundB7}
    B = i^kp\cdot \sum_{\substack{n \ll \frac{q}{X}\\ n\equiv c_\alpha^2 \shortmod{p}}}  \frac{\ov{\lambda_f}(n)}{\sqrt{n}} V\pb{\frac{nX}{q}} e\pb{\frac{k\alpha^2}{4p c_\alpha}-\frac{c_\alpha+n\ov{c_\alpha}}{p^3}}\cdot \varepsilon_p \sqrt{p} \cdot \legendre{c_\alpha}{p}.
  \end{equation}

Using trivial bounds, we finally get 

\begin{equation}
    \label{eq:BoundBfinal}
     B \ll p^{\frac12 + \varepsilon} \pb{\frac{q}{X}}^{\frac12} = O(p^2X^{-\frac12}).
\end{equation}

Using \eqref{eq:BoundAfinal} and \eqref{eq:BoundBfinal} in \eqref{eq:firstmom}, we get

\begin{equation}
     \sum_{\psi \shortmod{p^2}} L\pb{\tfrac{1}{2}, f \otimes \pb{\alpha\cdot \psi}} =  p^2 + O(X^{\frac12} p^{\frac32}) + O(p^{2}X^{-\frac12}).
\end{equation}

Choosing $X=p^\frac12$ now completes the proof of Lemma \ref{lem:first moment}.

\section{Outline for Higher Prime Power Cases}
\label{sec:higherppower}

In this section, we briefly sketch the steps required to prove an analogous version of Theorem \ref{thm:MainThm} when $q=p^j, j\geq 4$. The statement for the theorem is modified as follows.

\begin{mytheo}
    Let $f$ be a level $1$ cusp form. Let $p$ be an odd prime and $q =p^j$, $j\geq 4$. Let $d = p^{\lceil\frac{2j}{3}\rceil}$. Let $\alpha$ be a primitive character modulo $q$. For any $\varepsilon >0$, 
    \begin{equation} 
        \label{eq:MainThm2}
        \sum_{\psi\Mod{d}} \abs{ L\pb{ f \otimes \pb{\alpha\cdot \psi}, \tfrac{1}{2}}}^2 \ll_{f,\varepsilon} dq^{\varepsilon}.
    \end{equation}
\end{mytheo}

The proof strategy remains the same and once again most of the work is involved in bounding terms similar to $\mathcal{M}_{\text{Maass}_0} $ in the proof of Theorem \ref{thm:MainThm}. 

We begin with the usual application of approximate functional equation and a dyadic partition of unity. After some simplification, we end up trying to establish the following bound for the associated shifted convolution problem,

\begin{equation}
   \label{eq:scpppower}
    S^j(N,\alpha) \coloneqq \sideset{}{^*}\sum_{\substack{m \equiv n \Mod{d} \\ m \neq n }} \phi(d) \lambda_f(m)\alpha(m\ov{n})\ov{\lambda_f}(n)w_N(m)w_N(n) \ll_{f,\varepsilon} Ndq^{\varepsilon}.  
\end{equation}
Here, the dyadic variable $N$ satisfies $N \ll q^{1+\varepsilon}.$ We note that the expression inside the sum exhibits no cancellation when $m \equiv n \Mod{q}$. However, the number of such terms is $O(Nq^\varepsilon)$, so this does not pose any problem.

We modify $S^j(N,\alpha)$ further by choosing $l=\frac{m-n}{d}$. Note that from our choices of $q,d,N$, we have $l \asymp \frac{N}{d} \ll \frac{q}{d} q^\varepsilon = p^{\lfloor\frac{j}{3}\rfloor+\varepsilon}$. It suffices to restrict to positive values of $l$, and once again the dominant contribution to $S^j(N,\alpha)$ is from the terms $\textrm{gcd}(l,p)=1$. In this case, a variant of Proposition \ref{prop:Postnikov} allows us to rewrite $\alpha((n+dl)\ov{n})$ as $e_{\frac{q}{d}}\pb{a_\alpha l \ov{n}}$ for some $a_\alpha \Mod{\frac{q}{d}}$. As we have observed earlier, this is a conductor-dropping phenomenon.

For other cases like $\textrm{gcd}(l,p)=p$, $\textrm{gcd}(l,p)=p^2, \cdots$ , we have two additional benefits - these are fewer in number, and the term $\alpha((n+dl)\ov{n})$ is an additive character of a lower modulous than $\frac{q}{d}$, making these cases easier to handle.

We now introduce a delta symbol to separate the oscillations of the $m$ and $n$-sums. Similarly to \eqref{eq:S1Nalpha}, it suffices now to show the bound,

\begin{multline}
 \label{eq:S1Nalphappower}
    S_1^j(N,\alpha)\coloneqq \\ \sideset{}{^*} \sum_{m,l,n}\lambda_f(n+d l) \ov{\lambda_f}(n)e_{\frac{q}{d}}(a_\alpha l \ov{n})w_N(m) w_N(n)
    \sum_{c \leq 2C} S(0,m-n-dl;c)\int_{-\infty}^{\infty} g_c(v)e\pb{(m-n-dl)v}dv  \ll_{f,\varepsilon} Nq^\varepsilon.
\end{multline}

Here, too, the dominant contribution arises from terms with $\textrm{gcd}(c,p)=1$. 
We first use the Voronoi summation formula on the $m$ and $n$-sums. We should note that for the $m$-sum, excluding the length of the sum, everything remains exactly the same as \eqref{eq:T1defn}. For the $n$-sum, the exponential factor $e_p\pb{a_\alpha l \ov{n}}$ in \eqref{eq:T2defn} changes to $e_{\frac{q}{d}}\pb{a_\alpha l \ov{n}}$ here. We can use a suitably modified variant of Proposition \ref{prop:modifiedVoronoi} here. Similar to the right side of \eqref{eq:Voronoifinal}, there will be multiple terms but the main contribution will be from the term with the hyper Kloosterman sum. Eventually, in order to prove the bound in \eqref{eq:S1Nalphappower}, it will be sufficient to show the same bound for 

\begin{multline}
\label{eq:E_0ppower}
    E_0^j = \sideset{}{^*}\sum_{l} \ \sideset{}{^*}\sum_{c \leq 2C} \ \sideset{}{^*}\sum_{a \shortmod{c}}e\pb{\frac{-adl}{c}}\int_{-\infty}^{\infty} g_c(v)e(-dlv) \pb{\frac{1}{c}\sum_{m } \lambda_f(m)e\pb{\frac{-\ov{a}m}{c}}\widetilde{w}_{c,v,N}(m)}\\
    \pb{\sideset{}{^*}\sum_{n} \frac{\ov{\lambda_f}(n)\widetilde{w}_{\tfrac{qc}{d},-v,N}(n)}{\tfrac{q^2c}{d^2}}e\pb{\frac{\ov{\tfrac{aq^2}{d^2}}n}{c}}\text{Kl}_3(n\ov{c}^2a_\alpha l,1,1;p)} dv.
\end{multline} 

We note that just as before these dual sums are shorter - $m \ll q^\varepsilon$ and $n \ll \frac{q^{2+\varepsilon}c^2}{d^2N}$. We want to get further cancellations, and for this we will use the Bruggeman Kuznetsov formula. Similar to the $q=p^3$ case, we first decompose the hyper Kloosterman sum as 

\begin{equation*}
    \text{Kl}_3(r,1,1;\frac{q}{d}) = \frac{1}{\phi(\frac{q}{d})}\sum_{\chi(\frac{q}{d})} \tau(\chi)^3 \chi(r),
\end{equation*}

where $\frac{q}{d} = p^{ \lfloor \frac{j}{3} \rfloor}$, and $\textrm{gcd} (r,\frac{q}{d}) =1$. 

We can then dyadically decompose the variables $c,l,m,n,v$ similar to what was done in Section \ref{subsec:dyadic}, and we are finally ready to spectrally decompose. The analogous expressions to \eqref{eq:E_0decomposition}, \eqref{eq:E_0dyadic} and \eqref{eq:KuznetsovKdyadic} in this case are

\begin{equation}
    \label{eq:E_0decompositionppower}
    E_0^j = \sum_{\substack{C_0,L_0,M_0,N_0,V\\ \text{dyadic}}} E_{C_0,L_0,M_0,N_0,V}^j + O(p^{-A}),
\end{equation}

where

\begin{equation}
\label{eq:E_0dyadicppower}
    E_{C_0,L_0,M_0,N_0,V}^j = \sideset{}{^*}\sum_{m,l,n,c} \frac{\lambda_f(m)\ov{\lambda_f}(n)}{\phi(\tfrac{q}{d})\frac{q^2}{d^2}} \sum_{\chi(\frac{q}{d})} \tau(\chi)^3 \chi(n a_\alpha l) \cdot \mathcal{K}_{C_0,L_0,M_0,N_0,V}^j,
\end{equation} 
with
\begin{equation}
    \label{eq:KuznetsovKdyadicppower}
    \mathcal{K}_{C_0,L_0,M_0,N_0,V}^j = \sideset{}{^*}\sum_{c}\frac{1}{c^2}S\pb{\ov{\frac{q}{d}}\pb{n-\pb{\frac{q}{d}}^2m},-\frac{d^2l}{q};c}\ov{\chi^2}({c})I_{N,V}(c,l,m,n).
\end{equation}

Here, 

\begin{equation}
    I_{N,V} (c,l,m,n) = \int_{V}^{2V} g_c(v)e(-dlv) \widetilde{w}_{c,v,N}(m) \widetilde{w}_{\tfrac{qc}{d},-v,N}(n) dv.
\end{equation}

The dyadic variables satisfy - $C_0 \leq 2\sqrt{N}, \ M_0 \ll q^\varepsilon, N_0 \ll \frac{q^{2+\varepsilon}c^2}{d^2N}, \ L_0 \ll \frac{N}{d}, \abs{V} \ll \frac{q^\varepsilon}{cC}$.

We apply Proposition \ref{prop:Kuznetsov} to $\mathcal{K}_{C_0,L_0,M_0,N_0,V}^j$. Once again, this gives rise to three sets of contributions: from Maass forms, from holomorphic forms, and from Eisenstein series. The way to bound each of them is similar and we discuss only the Maass form contributions here. The contribution from Maass form terms is 

\begin{multline}
    \label{eq:MMaass0ppower}
     \mathcal{M}_{\text{Maass}} =  \sum_{\substack{l \asymp L_0,m \asymp M_0,\cdots \\ \text{gcd}(ln,p)=1}} \frac{\lambda_f(m)\ov{\lambda_f}(n)}{\phi(\tfrac{q}{d})\frac{q^2}{d^2}}\sideset{}{^*} \sum_{\chi(\tfrac{q}{d})} \tau(\chi)^3 \chi(n a_\alpha l) \\ \cdot \sum_{t_j} \mathcal{L}^{\pm}\Phi(t_j) \sum_{r_1r_2=\frac{q}{d}} \sum_{\pi \in \mathcal{H}_{it}(r_2,\chi^2)} \frac{4\pi\epsilon_{\pi}}{V(\frac{q}{d})\mathscr{L}_\pi^*(1)}\sum_{\delta \mid r_1} \ov{\lambda}^{(\delta)}_\pi\pb{\left \vert{n-\pb{\tfrac{q}{d}}^2m}\right \vert}\ov{\lambda}^{(\delta)}_\pi(\abs{-\tfrac{d^2l}{q}}).
\end{multline}

Note that since $\frac{q}{d}$ is no longer necessarily $p$, we might have more choices for the triplet $(r_1,r_2,\delta)$ than the three choices outlined in Section \ref{subsec:Maassanalysis}. However, the dominant contribution is once again from when $r_2$ is maximum, so $(r_1,r_2,\delta) = (1, \frac{q}{d},1)$. This is because, similar to the reasons outlined in Section \ref{subsubsec:MMaass12},  the relative increase in the sizes of $\lambda_\pi^{(\delta)}(\cdot)$ is more than compensated for by the fact that the level of the Maass forms $\pi \in \mathcal{H}_{it}(r_2,\chi^2)$ drop. The contribution of $(r_1,r_2,\delta) = (1, \frac{q}{d},1)$ terms is (analogous to \eqref{eq:MM0})- 

\begin{multline}
    \label{eq:MM0ppower}
    \mathcal{M}_{\text{Maass}_0} =  \sum_{\substack{l \asymp L_0,m \asymp M_0,\cdots \\ \text{gcd}(ln,p)=1}} \frac{\lambda_f(m)\ov{\lambda_f}(n)}{\phi(\tfrac{q}{d})\frac{q^2}{d^2}}\sideset{}{^*} \sum_{\chi(\tfrac{q}{d})} \tau(\chi)^3 \chi(n a_\alpha l) \\ \cdot \sum_{t_j} \mathcal{L}^{\pm}\Phi(t_j)  \sum_{\pi \in \mathcal{H}_{it_j}(\frac{q}{d},\chi^2)} \frac{4\pi\epsilon_{\pi}}{V(\frac{q}{d})\mathscr{L}_\pi^*(1)}\sum_{\delta \mid r_1} \ov{\lambda}^{(1)}_\pi\pb{\left \vert{n-\pb{\tfrac{q}{d}}^2m}\right \vert}\ov{\lambda}^{(1)}_\pi(\abs{-\tfrac{d^2l}{q}}).
\end{multline}
We bound this exactly as in Section \ref{subsec:Maassanalysis}. The integral transform can be analysed leading to an oscillatory and non-oscillatory case. For instance, for the non-oscillatory case, similar to \eqref{eq:MMaassren}, we will have

\begin{multline}
\label{eq:MMaassrenppower}
    \mathcal{M}_{\text{Maass}_0}=  \frac{1}{\phi(\tfrac{q}{d})\frac{q^2}{d^2}} \sideset{}{^*}\sum_{\chi(\frac{q}{d})}  \cdot \sum_{t_j} \sum_{\pi \in \mathcal{H}_{it_j}(\frac{q}{d},\chi^2)}  \int_{\substack{\Re(s)=\sigma \\ \Re(u_j)=\sigma_j}}\frac{\tau(\chi)^3 \chi( a_\alpha) 4\pi\epsilon_{\pi} }{V(\tfrac{q}{d})\mathscr{L}_\pi^*(1)\pb{\tfrac{d^2}{q}}^{\frac{s}{2}}} \widetilde{H}(s,t,u_1,u_2,u_3) \\ \pb{\sum_{\substack{l  \\ \text{gcd}(l,p)=1}} \frac{\ov{\lambda}_\pi\pb{\tfrac{d^2}{q}l} \chi(l)}{l ^ {\frac{s}{2} + u_1} }} \pb{ \sum_{\substack{m,n  \\ \text{gcd}(n,p)=1}} \frac{\chi(n)\lambda_f(m)\ov{\lambda_f}(n)\ov{\lambda}_\pi\pb{\pb{\tfrac{q}{d}}^2m - n}}{\pb{\pb{\tfrac{q}{d}}^2m-n} ^ {\frac{s}{2}}m^{u_2} n^{u_3}}} \ ds \  du_1 \ du_2  \ du_3.
\end{multline}

The transform $ \widetilde{H}(s,t,u_1,u_2,u_3)$ satisfies the bound

\begin{equation}
    \label{eq:H2boundsppower}
     \widetilde{H}(s,t,u_1,u_2,u_3) \ll  (\sqrt{\tfrac{q}{d}}C_0)^{\sigma-1} (N\tfrac{q}{d})^{1+\varepsilon}  L_0^{\sigma_1} M_0^{\sigma_2} N_0^{\sigma_3} (1+ \abs{s})^{-A} \prod_{j=1}^{3} (1 + \abs{u_j})^{-A} .  
\end{equation}

The rest of the analysis is similar except for one small distinction. If $j \not \equiv 0 \Mod{3}$, then $\tfrac{d^2}{q} \neq \tfrac{q}{d}$. In fact, $\tfrac{d^2}{q} > \tfrac{q}{d}$. This means unlike the $q=p^3$ case (or more generally the case when $q=p^j$ for any $j\equiv 0 \Mod{3}$), we cannot use $\left \vert {\lambda_\pi\pb{\frac{d^2}{q}}} \right \vert \leq 1$ as $\tfrac{q^2}{d}$ does not divide the level, $\frac{q}{d}$. On the flip side, we have additional savings in the denominator because of the $\pb{\tfrac{d^2}{q}}^{\frac{s}{2}}$ factor. As a result of this we need to readjust our contour by choosing different values of $\sigma$ and $\sigma_i$, $i=1,2,3$. In fact, we only need to alter $\sigma_3$ to $\frac{\varepsilon}{2}$, and keep $\sigma, \sigma_1$ and $\sigma_2$ unchanged ($\sigma = \frac12, \sigma_1 = \frac14 + \frac{\varepsilon}{2}$, and $\sigma_2 = \frac{\varepsilon}{2}$). We can similarly work out the remaining cases.

\printbibliography

@article {pyweyl,
    AUTHOR = {Petrow, Ian and Young, Matthew P.},
     TITLE = {The {W}eyl bound for {D}irichlet {$L$}-functions of cube-free
              conductor},
   JOURNAL = {Ann. of Math. (2)},
  FJOURNAL = {Annals of Mathematics. Second Series},
    VOLUME = {192},
      YEAR = {2020},
    NUMBER = {2},
     PAGES = {437--486},
      ISSN = {0003-486X,1939-8980},
   MRCLASS = {11M06 (11F66)},
  MRNUMBER = {4151081},
MRREVIEWER = {Wen-Wei\ Li},
       DOI = {10.4007/annals.2020.192.2.3},
       URL = {https://doi.org/10.4007/annals.2020.192.2.3},
}

@article {py4m,
    AUTHOR = {Petrow, Ian and Young, Matthew P.},
     TITLE = {The fourth moment of {D}irichlet {L}-functions along a coset
              and the {W}eyl bound},
   JOURNAL = {Duke Math. J.},
  FJOURNAL = {Duke Mathematical Journal},
    VOLUME = {172},
      YEAR = {2023},
    NUMBER = {10},
     PAGES = {1879--1960},
      ISSN = {0012-7094,1547-7398},
   MRCLASS = {11M06 (11F11 11F12 11F66 11L40)},
  MRNUMBER = {4624371},
       DOI = {10.1215/00127094-2022-0069},
       URL = {https://doi.org/10.1215/00127094-2022-0069},
}

@book {ikant,
    AUTHOR = {Iwaniec, Henryk and Kowalski, Emmanuel},
     TITLE = {Analytic number theory},
    SERIES = {American Mathematical Society Colloquium Publications},
    VOLUME = {53},
 PUBLISHER = {American Mathematical Society, Providence, RI},
      YEAR = {2004},
     PAGES = {xii+615},
      ISBN = {0-8218-3633-1},
   MRCLASS = {11-02 (11Fxx 11Lxx 11Mxx 11Nxx)},
  MRNUMBER = {2061214},
MRREVIEWER = {K.\ Soundararajan},
       DOI = {10.1090/coll/053},
       URL = {https://doi.org/10.1090/coll/053},
}

@article {kpyosc,
    AUTHOR = {Kiral, Eren Mehmet and Petrow, Ian and Young, Matthew P.},
     TITLE = {Oscillatory integrals with uniformity in parameters},
   JOURNAL = {J. Th\'{e}or. Nombres Bordeaux},
  FJOURNAL = {Journal de Th\'{e}orie des Nombres de Bordeaux},
    VOLUME = {31},
      YEAR = {2019},
    NUMBER = {1},
     PAGES = {145--159},
      ISSN = {1246-7405,2118-8572},
   MRCLASS = {41A60 (42A38)},
  MRNUMBER = {3994723},
MRREVIEWER = {Abdallah\ Benaissa},
       URL = {http://jtnb.cedram.org/item?id=JTNB_2019__31_1_145_0},
}

@article {ky5m,
    AUTHOR = {K\i ral, Eren Mehmet and Young, Matthew},
     TITLE = {The fifth moment of modular {$L$}-functions},
   JOURNAL = {J. Eur. Math. Soc. (JEMS)},
  FJOURNAL = {Journal of the European Mathematical Society (JEMS)},
    VOLUME = {23},
      YEAR = {2021},
    NUMBER = {1},
     PAGES = {237--314},
      ISSN = {1435-9855,1435-9863},
   MRCLASS = {11F66 (11F72)},
  MRNUMBER = {4186468},
MRREVIEWER = {Qingfeng\ Sun},
       DOI = {10.4171/jems/1011},
       URL = {https://doi.org/10.4171/jems/1011},
}

@incollection {iw4m,
    AUTHOR = {Iwaniec, Henryk},
     TITLE = {Fourier coefficients of cusp forms and the {R}iemann
              zeta-function},
 BOOKTITLE = {Seminar on {N}umber {T}heory, 1979--1980 ({F}rench)},
     PAGES = {Exp. No. 18, 36},
 PUBLISHER = {Univ. Bordeaux I, Talence},
      YEAR = {1980},
   MRCLASS = {10H05},
  MRNUMBER = {604215},
MRREVIEWER = {Robert\ Arnold\ Smith},
}

@article {go2m,
    AUTHOR = {Good, Anton},
     TITLE = {The square mean of {D}irichlet series associated with cusp
              forms},
   JOURNAL = {Mathematika},
  FJOURNAL = {Mathematika. A Journal of Pure and Applied Mathematics},
    VOLUME = {29},
      YEAR = {1982},
    NUMBER = {2},
     PAGES = {278--295},
      ISSN = {0025-5793},
   MRCLASS = {10D24 (10D12 10H10)},
  MRNUMBER = {696884},
MRREVIEWER = {J\"{u}rgen\ Elstrodt},
       DOI = {10.1112/S0025579300012377},
       URL = {https://doi.org/10.1112/S0025579300012377},
}

@article {ms19,
    AUTHOR = {Munshi, Ritabrata and Singh, Saurabh Kumar},
     TITLE = {Weyl bound for {$p$}-power twist of {$\rm GL(2)$}
              {$L$}-functions},
   JOURNAL = {Algebra Number Theory},
  FJOURNAL = {Algebra \& Number Theory},
    VOLUME = {13},
      YEAR = {2019},
    NUMBER = {6},
     PAGES = {1395--1413},
      ISSN = {1937-0652,1944-7833},
   MRCLASS = {11F66 (11F55 11M41)},
  MRNUMBER = {3994569},
MRREVIEWER = {Yoshinori\ Yamasaki},
       DOI = {10.2140/ant.2019.13.1395},
       URL = {https://doi.org/10.2140/ant.2019.13.1395},
}

@article {mdelta,
    AUTHOR = {Munshi, Ritabrata},
     TITLE = {The circle method and bounds for {$L$}-functions---{I}},
   JOURNAL = {Math. Ann.},
  FJOURNAL = {Mathematische Annalen},
    VOLUME = {358},
      YEAR = {2014},
    NUMBER = {1-2},
     PAGES = {389--401},
      ISSN = {0025-5831,1432-1807},
   MRCLASS = {11F66 (11M41)},
  MRNUMBER = {3158002},
MRREVIEWER = {A.\ Perelli},
       DOI = {10.1007/s00208-013-0968-4},
       URL = {https://doi.org/10.1007/s00208-013-0968-4},
}

@article {Nun,
    AUTHOR = {Nunes, Ramon M.},
     TITLE = {The twelfth moment of {D}irichlet {$L$}-functions with smooth
              moduli},
   JOURNAL = {Int. Math. Res. Not. IMRN},
  FJOURNAL = {International Mathematics Research Notices. IMRN},
    VOLUME = {2021},
      YEAR = {2021},
    NUMBER = {12},
     PAGES = {9180--9202},
      ISSN = {1073-7928,1687-0247},
   MRCLASS = {11M06 (11L05 33E05)},
  MRNUMBER = {4276317},
MRREVIEWER = {Liyang\ Yang},
       DOI = {10.1093/imrn/rnz089},
       URL = {https://doi.org/10.1093/imrn/rnz089},
}

@article {MW,
    AUTHOR = {Mili\'{c}evi\'{c}, Djordje and White, Daniel},
     TITLE = {Twelfth moment of {D}irichlet {$L$}-functions to prime power
              moduli},
   JOURNAL = {Ann. Sc. Norm. Super. Pisa Cl. Sci. (5)},
  FJOURNAL = {Annali della Scuola Normale Superiore di Pisa. Classe di
              Scienze. Serie V},
    VOLUME = {22},
      YEAR = {2021},
    NUMBER = {4},
     PAGES = {1879--1898},
      ISSN = {0391-173X,2036-2145},
   MRCLASS = {11M06 (11L07 11L40 26E30)},
  MRNUMBER = {4360606},
MRREVIEWER = {Kyle\ Pratt},
}

@article {KMN,
    AUTHOR = {Khan, Rizwanur and Mili\'{c}evi\'{c}, Djordje and Ngo, Hieu
              T.},
     TITLE = {Non-vanishing of {D}irichlet {$L$}-functions in {G}alois
              orbits},
   JOURNAL = {Int. Math. Res. Not. IMRN},
  FJOURNAL = {International Mathematics Research Notices. IMRN},
    VOLUME = {2016},
      YEAR = {2016},
    NUMBER = {22},
     PAGES = {6955--6978},
      ISSN = {1073-7928,1687-0247},
   MRCLASS = {11F67},
  MRNUMBER = {3632072},
MRREVIEWER = {Alexandru\ A.\ Popa},
       DOI = {10.1093/imrn/rnv320},
       URL = {https://doi.org/10.1093/imrn/rnv320},
}

@article {Li,
    AUTHOR = {Li, Wen Ch'ing Winnie},
     TITLE = {Newforms and functional equations},
   JOURNAL = {Math. Ann.},
  FJOURNAL = {Mathematische Annalen},
    VOLUME = {212},
      YEAR = {1975},
     PAGES = {285--315},
      ISSN = {0025-5831,1432-1807},
   MRCLASS = {10D05},
  MRNUMBER = {369263},
MRREVIEWER = {R.\ A.\ Rankin},
       DOI = {10.1007/BF01344466},
       URL = {https://doi.org/10.1007/BF01344466},
}

@article {laulu,
    AUTHOR = {Lau, Yuk-Kam and L\"{u}, Guangshi},
     TITLE = {Sums of {F}ourier coefficients of cusp forms},
   JOURNAL = {Q. J. Math.},
  FJOURNAL = {The Quarterly Journal of Mathematics},
    VOLUME = {62},
      YEAR = {2011},
    NUMBER = {3},
     PAGES = {687--716},
      ISSN = {0033-5606,1464-3847},
   MRCLASS = {11F30 (11F37)},
  MRNUMBER = {2825478},
MRREVIEWER = {Paul\ M.\ Jenkins},
       DOI = {10.1093/qmath/haq012},
       URL = {https://doi.org/10.1093/qmath/haq012},
}

@article {morsha,
    AUTHOR = {Moreno, Carlos J. and Shahidi, Freydoon},
     TITLE = {The fourth moment of {R}amanujan {$\tau $}-function},
   JOURNAL = {Math. Ann.},
  FJOURNAL = {Mathematische Annalen},
    VOLUME = {266},
      YEAR = {1983},
    NUMBER = {2},
     PAGES = {233--239},
      ISSN = {0025-5831,1432-1807},
   MRCLASS = {11F12 (11F70 11M41 22E55)},
  MRNUMBER = {724740},
MRREVIEWER = {Stephen\ Gelbart},
       DOI = {10.1007/BF01458445},
       URL = {https://doi.org/10.1007/BF01458445},
}

@article {lu,
    AUTHOR = {L\"{u}, Guangshi},
     TITLE = {Average behavior of {F}ourier coefficients of cusp forms},
   JOURNAL = {Proc. Amer. Math. Soc.},
  FJOURNAL = {Proceedings of the American Mathematical Society},
    VOLUME = {137},
      YEAR = {2009},
    NUMBER = {6},
     PAGES = {1961--1969},
      ISSN = {0002-9939,1088-6826},
   MRCLASS = {11F30 (11F11 11F66)},
  MRNUMBER = {2480277},
MRREVIEWER = {B.\ Ramakrishnan},
       DOI = {10.1090/S0002-9939-08-09741-4},
       URL = {https://doi.org/10.1090/S0002-9939-08-09741-4},
}

@article {Huang,
    AUTHOR = {Huang, Bingrong},
     TITLE = {On the {R}ankin-{S}elberg problem},
   JOURNAL = {Math. Ann.},
  FJOURNAL = {Mathematische Annalen},
    VOLUME = {381},
      YEAR = {2021},
    NUMBER = {3-4},
     PAGES = {1217--1251},
      ISSN = {0025-5831,1432-1807},
   MRCLASS = {11F30 (11F66 11L07)},
  MRNUMBER = {4333413},
MRREVIEWER = {Ritabrata\ Munshi},
       DOI = {10.1007/s00208-021-02186-7},
       URL = {https://doi.org/10.1007/s00208-021-02186-7},
}

\end{document}